\newcommand{\R}{\mathbb{R}}
\renewcommand{\Im}{\mathop{\mathrm{Im}}}
\renewcommand{\hat}{\widehat}
\numberwithin{equation}{section}
\newtheorem{thm}{Theorem}[section]
\newtheorem{lem}[thm]{Lemma}
\newtheorem{prop}[thm]{Proposition}
\theoremstyle{remark}
\newtheorem{rem}{Remark}[section]
\newtheorem{defn}{Definition}[section]
\newcommand{\Del}[1]{}
\begin{document}

\title[Concentric Two Bubble collapse]{Concentric bubbles concentrating in finite time \\ for the energy critical Wave Maps equation}

\author{Jacek Jendrej}
\address{Institut de Math\'ematiques de Jussieu,
Sorbonne Universit\'e,
4 place Jussieu, 75005 Paris, France
}
\email{jendrej@imj-prg.fr}
\author{Joachim Krieger}
\address{EPFL SB MATH PDE,
		B\^{a}timent MA,
		Station 8,
		CH-1015 Lausanne}
	\email{joachim.krieger@epfl.ch}
\subjclass{35L05, 35B40}
	
	\keywords{critical wave maps, blowup, bubble trees}	
\begin{abstract}
We show that the energy critical Wave Maps equation from $\mathbb{R}^{2+1}$ to $\mathbb{S}^2$ and restricted to the co-rotational setting with co-rotation index $k = 2$ admits finite time blow up solutions of finite energy on $(0, t_0]\times \mathbb{R}^2$, $t_0>0$, and concentrating two concentric bubble profiles at the frequency scales 
$\lambda_1(t) = e^{\alpha(t)},\,\alpha(t)\sim \big|\log t\big|^{\beta+1}$, as well as $\lambda_2(t) = t^{-1}\cdot \big|\log t\big|^{\beta}$. The parameter $\beta>\frac32$ can be chosen arbitrarily. This shows that soliton resolution scenarios with finite time blow up and $N = 2$ collapsing profiles, i. e. bubble trees, do occur for this equation. 
\end{abstract}

\maketitle

\section{Introduction}

We consider $k$-co-rotational Wave Maps from $\mathbb{R}^{2+1}\longrightarrow \mathbb{S}^2$. Using the polar angle $u$ to characterise such a map, one arrives at the following scalar wave equation 
\begin{equation}\label{eq:kcorotational}
-u_{tt} + u_{rr} + \frac{1}{r}u_r = k^2\cdot \frac{\sin (2u)}{2r^2}. 
\end{equation}
In this paper we shall restrict to the case $k = 2$, which results in the model we shall henceforth consider
\begin{equation}\label{eq:2corotational}
-u_{tt} + u_{rr} + \frac{1}{r}u_r = 2\frac{\sin (2u)}{r^2}. 
\end{equation}
This equation admits the static solutions $Q(r) = 2\arctan(r^2)$. These solutions are known to play the role of fundamental building blocks for general solutions due to the soliton-resolution theorem of \cite{JL3}, see also \cite{DKMM} for the case $k = 1$. 
\\
While the latter result gives an a priori description of general solutions of \eqref{eq:2corotational}, in particular those blowing up in finite time, the question of whether such blow up solutions with more than one collapsing bubbling profile can actually arise has been open thus far. 
We observe that the existence of such solutions for the related {\it{harmonic map heat flow}}
\[
u_t = u_{rr} + \frac{1}{r}u_r - k^2\frac{\sin (2u)}{2r^2}
\]
has been disproved for the case $k = 1$ in \cite{VanderHout}, and a similar result may be expected for the case $k\geq 2$. 
\\

Here we show that \eqref{eq:2corotational} admits finite time blow up solutions with two collapsing profiles:
\begin{thm}\label{thm:Main} Let $\beta>\frac32$, and let $Q = 2\arctan(r^2)$. Fixing the scaling parameter $\lambda_2(t) = t^{-1}\cdot\big|\log t\big|^{\beta}$, there is a higher frequency scale 
\[
\lambda_1(t) = e^{\alpha(t)},\,\alpha(t)\sim \big|\log t\big|^{\beta+1},
\]
as well as $t_0 = t_0(\beta)>0$, such that there exists a finite energy solution of \eqref{eq:2corotational} of the form 
\[
u(t, r) = Q(\lambda_1(t)r) - Q(\lambda_2(t)r) + \epsilon(t, r),
\]
satisfying the relation 
\[
\lim_{t\rightarrow 0}\big\|\nabla_{t,r}\epsilon\big\|_{L^2_{r\,dr}(r\leq t)} = 0. 
\]
\end{thm}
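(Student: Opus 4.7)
The plan is to construct the solution by a modulated-ansatz method in the spirit of Raphael-Rodnianski and Krieger-Schlag-Tataru, adapted to the two-bubble configuration. One starts from the leading ansatz $u_0(t,r) = Q(\lambda_1(t) r) - Q(\lambda_2(t) r)$; the minus sign is forced by the topology of $\mathbb{S}^2$ together with the finite-energy requirement, since the inner bubble $Q(\lambda_1 r)$ brings $u$ from $0$ at $r = 0$ up to $\pi$ on the scale $r \sim 1/\lambda_1$, and the outer bubble is subtracted so that $u$ returns to $0$ at infinity. Substituting $u_0$ into \eqref{eq:2corotational} produces an error with three sources: self-modulation terms like $\ddot\lambda_i\, r\partial_r Q_{\lambda_i}$ and $(\dot\lambda_i)^2\,\Lambda^2 Q_{\lambda_i}$; bubble-bubble interaction from expanding $\sin\bigl(2(Q_{\lambda_1} - Q_{\lambda_2})\bigr) = \sin(2Q_{\lambda_1})\cos(2Q_{\lambda_2}) - \cos(2Q_{\lambda_1})\sin(2Q_{\lambda_2})$; and Laplacian cross-terms. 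Because $Q(\rho)\sim \rho^2$ at $0$ and $Q(\rho)-\pi\sim -2\rho^{-2}$ at $\infty$ in the $k=2$ setting, the cross terms are of size $\mu^2 := (\lambda_2/\lambda_1)^2$ in each relevant region, integrable in the natural energy norm. This structural gain is why $k = 2$ is singled out: the analogous tails for $k = 1$ are only $O(\mu)$ and create a logarithmic obstruction consistent with the parabolic no-go result of \cite{VanderHout}.

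I would then improve the ansatz by a cascade of corrections $v_j$ solving linear elliptic problems in the self-similar variables $\rho_i = \lambda_i r$, each shrinking the error by an extra factor of $\mu$ or $\dot\lambda_i/\lambda_i$. Projecting the residual error onto the scaling resonances $\Lambda Q_{\lambda_i} = r\partial_r Q_{\lambda_i}$ at each bubble yields a coupled second-order modulation system of schematic form
\[
\ddot\lambda_i/\lambda_i + (\text{lower order in } \dot\lambda_i)\; =\; \Phi_i(\lambda_1, \lambda_2),
\]
where $\Phi_1, \Phi_2$ encode the bubble-tail interaction. A formal WKB analysis of this system near $t = 0$ selects $\lambda_2(t)\sim t^{-1}|\log t|^{\beta}$, which in turn drives the inner ODE to the double-logarithmic rate $\lambda_1(t) = \exp\bigl(c|\log t|^{\beta+1}\bigr)$. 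The constraint $\beta > 3/2$ should be precisely the condition under which every correction term in this expansion is strictly subleading with respect to the driving force.

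The last step is to promote the approximate solution $u_{\text{app}}$ (the ansatz $u_0$ plus the $v_j$) to an exact one $u = u_{\text{app}} + \varepsilon$ by solving the linear-plus-nonlinear equation satisfied by $\varepsilon$ backward from $t = 0$. I would work on the backward light-cones $\{r \le t \le t_0\}$ by means of a weighted energy (and, if needed, Morawetz) estimate, with the two codimension-$1$ instabilities from the scaling zero modes handled by absorbing the projections into a final fine-tuning of $\lambda_1, \lambda_2$ through a Brouwer-type argument. The principal obstacle is that the linearization around $u_{\text{app}}$ is not uniformly coercive: each bubble carries a scaling zero mode and the potential $\cos(2 u_{\text{app}})$ has two widely separated wells at scales $1/\lambda_1$ and $1/\lambda_2$, whose interaction through $\mu$-size tails must be tracked uniformly up to the blow-up time. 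Combined with the logarithmic losses inherent in the WKB expansion, this forces a carefully weighted function space; the threshold $\beta > 3/2$ is exactly what one needs to accommodate all these losses within a closed fixed-point scheme, yielding the required $\|\nabla_{t,r}\varepsilon\|_{L^2_{r\,dr}(r\le t)} \to 0$ as $t\to 0$.
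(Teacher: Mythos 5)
Your outline captures the broad KST/Raphael--Rodnianski philosophy, but it deviates from what actually makes this construction work, and two of its central claims would not survive contact with the details. First, the scales are not determined by a coupled modulation system with a WKB analysis ``selecting'' $\lambda_2(t)\sim t^{-1}|\log t|^{\beta}$: in the actual argument $\lambda_2$ is \emph{prescribed freely} (any $\beta>\frac32$, whence a continuum of solutions), and the first--and essential--step is to build an \emph{exact} one-bubble blow-up solution $\tilde{Q}_2 = Q(\lambda_2 r)+v$ at that rate (Theorem~\ref{thm:outersolnYManalogue}, in analogy with the critical Yang--Mills construction), together with higher regularity bounds $\|\nabla_{t,r}S^k v\|_{L^2}\lesssim_k |\log t|^{-1}$ for the scaling vector field $S$. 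These $S^k$-bounds are the input that makes every subsequent estimate at the inner scale close; nothing in your cascade of ``linear elliptic problems in the self-similar variables $\rho_i=\lambda_i r$'' produces them, and a purely elliptic correction scheme at both scales does not gain smallness. The gain in the paper comes from alternating an \emph{outer wave equation} with the $\tilde{Q}_2$-potential (whose Duhamel solution has a smoothing effect at scale $\lambda_2^{-1}$, yielding the crucial bound on $v_0/r^2$) with an \emph{inner elliptic problem} around $Q(\lambda_1 r)$; the orthogonality condition needed to solve the inner problem (projection onto $\Phi(R)=RQ'(R)$ at the inner bubble only, not at both bubbles) is exactly what forces the scalar law \eqref{eq:lambdaoneleadingorder} for $\lambda_1$ alone, giving $\alpha(t)\sim|\log t|^{\beta+1}$. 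There is no projection onto an outer zero mode and no coupled ODE system.

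Second, the final perturbation step as you describe it (weighted energy/Morawetz estimates on backward cones plus a Brouwer-type fine-tuning to kill two codimension-one instabilities) is not what is needed and is not obviously viable here: the linearization has a time-dependent potential concentrated at two scales diverging from each other, and plain energy estimates lose too much relative to the logarithmic gains available. The paper instead solves the linear inhomogeneous problems via the distorted Fourier transform of the linearized operator $\tilde{\mathcal{L}}$ (spectral measure, Weyl--Titchmarsh asymptotics, transference operator $\mathcal{K}$), with backward Duhamel propagators chosen to vanish as $t\to 0$; this removes any growing modes at the linear level, so no finite-dimensional topological argument is required, and the exact solution is then obtained by a Banach fixed point once $N$ is large. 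Finally, your assertion that $\beta>\frac32$ is ``exactly'' the threshold making all corrections subleading is not substantiated by the sketch; in the construction it enters through the outer-profile estimates and the bookkeeping of logarithmic losses, not through a clean WKB criterion. As written, the proposal is a heuristic roadmap rather than a proof: the missing ingredients are the exact outer solution with $S^k$-regularity, the two-scale outer-wave/inner-elliptic iteration with its orthogonality-driven determination of $\lambda_1$, and a linear theory strong enough to handle the two-scale potential.
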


We observe that the solution is only barely above energy regularity. This is due to the construction which is based on the choice of an {\it{outer profile}} built in close analogy to \cite{KST3}. It is expected that a suitable choice of $\beta$ as in \cite{RaRod} results in a smooth solution. 
We also note that our techniques do not yield any stability assertion for the constructed solution, although it appears likely that they are unstable. We further note that while there is freedom in choosing the scale $\lambda_2(t)$, the inner scale $\lambda_1(t)$ is determined in terms of the outer one via a differential equation. 

\subsection{Known multi bubble type solutions} The first examples of {\it{concentric two bubble solutions}}
for \eqref{eq:kcorotational} with $k \geq 3$, as well as for the equivariant Yang-Mills problem closely related to \eqref{eq:2corotational}, were obtained in \cite{J1, J2}. These solutions are {\it{pure two bubble solutions}} which exist globally in one time direction. More precisely, they decouple asymptotically into two bubbles, one of which is static while the other one gets re-scaled dynamically
(its scale converges to $0$ in large time). In particular, the energy of these solutions needs to be exactly twice the energy of one static bubble. 
Uniqueness of this type of solutions for $k \geq 4$ was proved in \cite{JL2},
and the work \cite{JL4} gave their asymptotic description in both time directions
(in particular proving inelastic collision of the two bubbles).
\\
The case $k = 1$ was analyzed in \cite{Rodrig}, where among other results,
it was proved that pure two bubble solutions do not exist in this case.
\\

We emphasize that the solutions constructed in the present paper {\it{are not of threshold type}},
in the sense that their energy is strictly larger than twice the energy of one static bubble.
Moreover, there is a continuum of such solutions. 
We also note that it is likely that our technique can be adapted to construct finite time blow up solutions with more than two concentric and collapsing bubbles. 

\subsection{Other related results}
An extensive literature is devoted to the study of small solutions to the general (non co-rotational) wave maps equation. Influential papers in this direction include \cite{KlM, KS97, Tao01, Tataru01}. We emphasise that the first two of these papers show that is natural to consider initial data of regularity only barely above energy class for the energy critical wave maps, even outside the equivariant setting. 
\\
First fundamental results about the dynamical behavior of large solutions of \eqref{eq:kcorotational} were obtained in \cite{CTZcpam, CTZduke, STZ92, STZ94}, see also \cite[Chapter 8]{ShSt00},
the main conclusion being the decay of energy at the self-similar scale,
which in particular excludes self-similar blow-up, but also, as proved in \cite{Struwe03}, leads to \emph{bubbling}:
if $u$ is a solution of \eqref{eq:kcorotational} which blows up at time $0$, then there exist
sequences $t_n \to 0$ and $0 < \lambda_n \ll t_n$ such that
\begin{equation}
(u(t_n, \lambda_n \cdot), \lambda_n u_t(t_n, \lambda_n \cdot)) \to m\pi \pm Q,
\end{equation}
the convergence being understood in the topology induced by the energy locally (on bounded sets).

The bubbling also implies that a solution whose energy is smaller than the energy of $Q$ cannot blow up.
It was proved in \cite{CKLS15} that such a solution actually has radiative behavior.
Above this threshold energy, finite time blow-up can occur, as was proved in the works \cite{KST1, RoSt10, RaRod}.
Important progress toward the soliton resolution was obtained in \cite{CKLS2}.
\emph{Sequential} soliton resolution, that is convergence to a superposition of solitons for a sequence of times,
was proved in \cite{Cote15} for $k \in \{1, 2\}$, and in~\cite{JK} for $k \geq 3$.

In \cite{JL4}, continuous in time resolution was proved at the minimal possible energy level allowing for existence of a two-bubble. As a relatively simple consequence, continuous in time resolution
was proved in \cite{JL5} under the assumption that the solution contains at most two bubbles.

\subsection{Notation} 
An inequality
\[
F\lesssim G
\]
for nonnegative quantities $F, G$ means the existence of a universal positive constant $C$ such that 
\[
F\leq C\cdot G.
\]

We shall sometimes use the notation
\[
F\lesssim \tau^{-p+}
\]
to imply that for any $\epsilon>0$ there is a constant $C_{\epsilon}$ such that 
\[
F\leq C_{\epsilon}\cdot \tau^{-p+\epsilon}. 
\]
Such inequalities will always occur in a setting where $\tau\geq \tau_*$ for some sufficiently large $\tau_*\gg 1$. 
\\
We shall also write 
\[
\mu(t)\sim \lambda(t)
\]
for $t\in (0, t_0]$ provided there are positive constants $C_{1,2}>0$ such that 
\[
C_1\cdot\lambda_1(t)\leq \mu(t)\leq C_2\cdot \lambda_2(t)\,\forall t\in (0, t_0].
\]
For positive functions $\mu(t), \lambda(t)$ defined on $(0, t_0]$, we shall write $\mu(t)\ll\lambda(t)$, provided 
\[
\lim_{t\rightarrow 0_+}\frac{\mu(t)}{\lambda(t)} = 0. 
\]

\section{Strategy behind the construction}

In essence we implement the following steps:
\begin{itemize}
\item Construction of the 'outer profile' solution, which is a finite time blow up solution of energy regularity of the form 
\[
\tilde{Q}_2(t, r) = Q\big(\lambda_2(t)r\big) + v(t, r),\,\lambda_2(t) = t^{-1}\big|\log t\big|^{\beta},\,\beta>\frac32, 
\]
where 
\[
\lim_{t\rightarrow 0}\big\|\nabla_{t,r}v(t,\cdot)\big\|_{L^2_{r\,dr}(r\lesssim t)} = 0
\]
While the correction $v$ experiences a shock across the light cone $t = r$ for generic choice of $\beta$, it satisfies the following crucial higher regularity bounds:
\[
\big\|\nabla_{t,r}S^kv(t,\cdot)\big\|_{L^2_{r\,dr}(r\lesssim t)}\lesssim_k \big|\log t\big|^{-1},
\]
where $k\geq 0$ is arbitrary. Here $S = t\partial_t + r\partial_r$ is the {\it{scaling vector field}}. 
\item Construction of approximate solution of the form 
\begin{equation}\label{eq:uNansatz}
u_N(t, r) = Q\big(\lambda_1(t)r) - \tilde{Q}_2(t, r)  + v_N(t, r), 
\end{equation}
which satisfies the estimate 
\begin{equation}\label{eq:approxsolnerror}
 \big\|{-}u_{N, tt} + u_{N, rr} + \frac{1}{r}u_{N,r} - 2\frac{\sin (2u_N)}{r^2}\big\|_{L^2_{r\,dr}(r\lesssim t)}\lesssim \tau^{-N}
\end{equation}
provided we restrict $t\in (0, t_0]$ with $t_0 = t_0(\beta, N)>0$. We use the notation 
\[
\tau = e^{(\beta+1)^{-1}\cdot \big|\log t\big|^{\beta+1}}. 
\]
The construction of this highly accurate approximate solution involves a careful construction of the scaling function $\lambda_1(t)$ at the same time. 
\item Completion of the approximate solution $u_N$ to an exact solution 
\[
u = u_N + \epsilon
\]
by solving the nonlinear wave equation for $\epsilon$ via a suitable parametrix method. This requires the conditions $N\gg 1$, $0<t\leq t_0 = t_0(\beta, N)$. 
\end{itemize}

\subsection{Some comments on the second step}

The construction of the approximate solution is the most delicate part of our method. It consists in the construction of 
\[
v_N = \sum_{j=0}^N h_j
\]
by means of $N$ refinements, which are obtained inductively. Each of the corrections $h_j$ is obtained by solving a wave equation with a time dependent potential term concentrating at two diverging frequency scales. Then the exact equation which $v_N$ would have to satisfy in order that $u_N$ becomes a solution is  
given by 
 \begin{equation}\label{eq:exactvNequation}\begin{split}
  &{-}v_{N,tt} + v_{N, rr} + \frac{1}{r}v_{N, r} - \frac{4\cos\big(2Q_1- 2\tilde{Q}_2\big)}{r^2}v_N = \sum_{j=1}^3 E_j,\\
  &E_1 = \frac{2\cos\big(2Q_1- 2\tilde{Q}_2\big)}{r^2}\cdot \big[\sin\big(2v_N\big) - 2v_N\big],\\
  &E_2 = \frac{2}{r^2}\big[\sin\big(2Q_1- 2\tilde{Q}_2\big) - \sin\big(2Q_1\big) + \sin\big(2\tilde{Q}_2\big)\big] +Q_{1,tt},\\
  &E_3 =  \frac{2\sin\big(2Q_1- 2\tilde{Q}_2\big)}{r^2}\cdot \big[\cos\big(2v_N\big) - 1\big]. 
  \end{split}\end{equation}
  Throughout we set $Q_1 = Q(\lambda_1(t)r)$. \\
  The key difficulty in solving this problem consists in dealing with the inhomogeneous equation 
  \begin{equation}\label{eq:keylineartwofreqprofile}
  -v_{tt} + v_{rr} + \frac{1}{r}v_{r} - \frac{4\cos\big(2Q_1- 2\tilde{Q}_2\big)}{r^2}v = f, 
  \end{equation}
which contains a potential term $\frac{4\cos\big(2Q_1- 2\tilde{Q}_2\big)}{r^2}$ which is time dependent and concentrated at the frequency scales $\lambda_1(t), \lambda_2(t)$, and where it is our intention to have $\lambda_1(t)\gg \lambda_2(t)$. The strategy is to approach \eqref{eq:keylineartwofreqprofile} in a two-step process, resulting only in an approximate solution but with a more rapidly decaying source term. This is quite analogous to the procedure in \cite{KST1}, \cite{KST2}, \cite{KST3}, except that we do not use the {\it{free}} wave equation near the light cone, but the one involving the {\it{lower frequency bubble potential}}. Precisely, we make the ansatz
\[
v = v_0 + v_1,
\]
where the function $v_0$ solves the wave equation 
\begin{equation}\label{eq:outerwavevzero}
 -v_{0,tt} + v_{0,rr} + \frac{1}{r}v_{0,r} - \frac{4\cos\big(2\tilde{Q}_2\big)}{r^2}v_0 = f
 \end{equation}
 Solving the wave equation via the Duhamel formula has a smoothing effect at the frequency scale of the outer profile, namely $\lambda_2(t)$. 
 \\
 The function $v_1$ is then chosen to compensate for the error produced by $v_0$, namely 
 \[
  \frac{4\cos\big(2Q_1- 2\tilde{Q}_2\big) - 4\cos\big(2\tilde{Q}_2\big)}{r^2}v_0
  \]
  Instead of working with a wave equation for $v_1$, we neglect time derivatives at the inner scale and let $v_1$ solve 
  \begin{equation}\label{eq:innerwavevone}
   v_{1,rr} + \frac{1}{r}v_{1,r} - \frac{4\cos\big(2Q_1\big)}{r^2}v_1 =  \frac{4\cos\big(2Q_1- 2\tilde{Q}_2\big) - 4\cos\big(2\tilde{Q}_2\big)}{r^2}v_0.
   \end{equation}
   This can be accomplished via a variation of constants formula after passing to the variable $R = \lambda_1(t)\cdot r$ and leads to good bounds, {\it{provided one forces a suitable orthogonality property of the right hand side via addition of a correction term}}. In fact, these correction terms need to be added at each inductive stage for the construction of $h_j$, and in turn lead to the precise choice of the inner scaling parameter $\lambda_1(t)$. 
   The fact that one {\it{gains smallness}} in the construction of $v_1$ is due to the smoothing effect inherent in the construction of $v_0$, which leads to a good bound on 
   \[
   \frac{v_0}{r^2}.
   \]
   The ansatz $v = v_0 + v_1$ combining the solutions of \eqref{eq:outerwavevzero} and \eqref{eq:innerwavevone}
only approximately solves \eqref{eq:keylineartwofreqprofile}, and so the process is repeated. As it results in derivative losses, one implements only finitely many steps. This suffices, however, as it will enable us to achieve the estimate \eqref{eq:approxsolnerror}. 
   
   To begin with, we first construct the 'outer bubble solution' $\tilde{Q}_2$. The precise dynamics of the inner bubble will hinge on the fine structure of the outer bubble. 
   
   \section{The outer solution}

Here we provide details on the construction of $\tilde{Q}_2(t, r) = Q(\lambda_2(t)r) + v(t, r)$, $\lambda_2(t) = \frac{\big|\log t\big|^{\beta}}{t}$, in analogy to the paper \cite{KST3} devoted to the energy critical Yang Mills equation. The precise result we are aiming for is the following 
\begin{thm}\label{thm:outersolnYManalogue} Let $\lambda_2(t) = t^{-1}\cdot\big({-}\log t\big)^{\beta}$, $\beta\geq \frac32$. Then there is $t_0 = t_0(\beta)>0$ such that the equation 
\begin{equation}\label{eq:keq2corotational}
\big({-}\partial_{tt} + \partial_{rr} + \frac{1}{r}\partial_r\big)u = 2\frac{\sin(2u)}{r^2}
\end{equation}
admits a finite time blow up solution on $(0, t_0]\times \mathbb{R}_+$ of the form 
\[
\tilde{Q}_2(t, r) = Q(\lambda_2(t)r) + v(t, r),\,Q(r) = 2\arctan(r^2),
\]
where we have the bounds 
\[
\sum_{k=0}^K\|\nabla_{t,r}S^k v\|_{L^2}\lesssim_K |\log t|^{-1},\,S = t\partial_t + r\partial_r,
\]
as well as the bound $\big|v(t, r)\big|\lesssim |\log t|^{-1}$. 
\end{thm}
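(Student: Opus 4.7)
The plan is to adapt the Krieger--Schlag--Tataru construction from \cite{KST3}, which treated the energy critical Yang--Mills equation, to the $k=2$ co-rotational wave maps equation \eqref{eq:keq2corotational}. Substituting $u(t,r) = Q(\lambda_2(t)r) + v(t,r)$ and using the rescaled spatial variable $R = \lambda_2(t)r$, I would derive the reduced equation
\begin{equation*}
-v_{tt} + v_{rr} + r^{-1}v_r - \frac{4\cos(2Q(R))}{r^2}\,v = \mathcal{N}(v) + e_0(t,r),
\end{equation*}
where $\mathcal{N}(v)$ collects the quadratic and higher nonlinearities in $v$ and $e_0$ is the driving error produced by the fact that $Q(\lambda_2(t)r)$ is not an exact solution of \eqref{eq:keq2corotational}; explicit computation for $\lambda_2(t) = t^{-1}|\log t|^{\beta}$ shows $e_0$ is of size $|\log t|^{-1}$ in appropriate rescaled norms, which is the source of the final $|\log t|^{-1}$ smallness.

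The central step is an iterative construction of an approximate solution organized in powers of a formal small parameter such as $b(t) = -\lambda_2'(t)/\lambda_2(t)^2 \sim t$ or $|\log t|^{-1}$. In the inner region $r \lesssim t$ I would work in the variable $R$ and invert the stationary linearized operator $\mathcal{L} = -\partial_R^2 - R^{-1}\partial_R + 4\cos(2Q)/R^2$ on radial functions. This operator possesses the explicit zero mode $\Lambda Q(R) = RQ'(R) = 4R^2/(1+R^4)$, and secular resonances are removed by forcing orthogonality conditions against it; these conditions translate into modulation equations that are automatically satisfied by the prescribed $\lambda_2$. In the outer self-similar region $r \sim t$ one switches to the variable $a = r/t$ and solves a regular-singular ODE, matching the interior expansion. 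Iterating yields a sequence of correctors with residual errors decaying to arbitrary polynomial order in $|\log t|^{-1}$.

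The third step is the passage from the approximate solution $\tilde{Q}_2^{(N)}$ to an exact one, through a parametrix for the linearization around the bubble in self-similar coordinates, following \cite{KST1, KST2, KST3}. The iterated scaling bounds on $\|\nabla_{t,r}S^k v\|_{L^2}$ are then propagated using the tangency of $S = t\partial_t + r\partial_r$ to the light cone together with the commutation relation $[\Box, S] = 2\Box$: one runs a hierarchy of energy estimates for the equations satisfied by $S^k v$, exploiting that the potential $4\cos(2Q(\lambda_2 r))/r^2$ is scaling-covariant modulo modulation errors controlled by the choice of $\lambda_2$. The pointwise bound $|v(t,r)| \lesssim |\log t|^{-1}$ then follows from the $L^2$ bounds on $v$ and $Sv$ via a Sobolev-type embedding in the self-similar variable.

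The principal obstacle is the outer region near the light cone $r = t$. Since $\beta$ is generic (not resonantly tuned as in \cite{RaRod}), the constructed $v$ develops a shock across this cone, so one cannot expect $H^s$-regularity for $s>1$ in the usual sense. Working with powers of $S$ circumvents this because $S$ is tangential to the cone and hence preserves the characteristic phase structure; the technical cost is a careful matched asymptotic analysis between an inner expansion regular at $r=0$ and an outer expansion which may be singular at $r = t$, together with a commutator analysis ensuring the orthogonality conditions used to choose each corrector are stable under $S^k$. Getting the hierarchy of $S$-weighted energy identities to close uniformly in $k$, while absorbing the derivative losses introduced by each iteration stage, is the main technical work needed to establish the theorem.
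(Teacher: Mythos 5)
Your overall architecture --- substitute $u = Q(\lambda_2(t)r) + v$, build an approximate solution by an iteration alternating between the inner variable $R=\lambda_2 r$ and the self-similar variable $a=r/t$, then complete to an exact solution by a KST-type parametrix and propagate $S^k$ control --- is the same as the paper's, which follows \cite{KST3} nearly verbatim. However, the mechanism you propose for the inner step is not correct, and it is the one point where your scheme would actually break down. You claim that secular growth is removed by forcing orthogonality of each error against the zero mode $\Phi(R)=RQ'(R)$, and that the resulting modulation equations are ``automatically satisfied by the prescribed $\lambda_2$''. They are not: $\lambda_2(t)=t^{-1}|\log t|^{\beta}$ is prescribed with $\beta$ arbitrary, so there is no free parameter left with which to enforce orthogonality, and beyond the principal part of the first error (where the identity $\int_0^\infty (R\Phi'+\Phi)\Phi\,R\,dR=0$ happens to hold, producing the bounded correction $v_{10}$) the projections onto $\Phi$ do not vanish --- already the $O(|\log t|^{-1})$ part of $t^2e_0$ has a nonzero component along $\Phi$, which is why in the paper $v_{11}$ lies in $|\log t|^{-1}IS^4(R^2)$ and grows quadratically in $R$. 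The point of the proof of Theorem~\ref{thm:approxoutersolnYManalogue} is precisely that no orthogonality is imposed for the outer profile: the variation-of-constants solutions are allowed to grow like $R^2(\log R)^j$, which is harmless inside the cone because there $R\lesssim \lambda_2 t=|\log t|^{\beta}$, so each correction still carries a gain in powers of $|\log t|^{-1}$ after multiplication by $(t\lambda_2)^{-2}$; the genuinely delicate behaviour is transferred to the light cone and absorbed by the $(a,b,b_1)$-framework (the algebra $\mathcal{Q}_k$ of Definition~\ref{def:finalSspaceinvolvingQ}, the operator $L_{ab}$ and Lemma~\ref{lem:abwaveeqnlemma} with its $(1-a^2)^{\pm\frac12}$ expansions). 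If you insisted on orthogonality instead, the conditions would either be unsatisfiable or would dictate a specific scaling law --- this is exactly the mechanism the paper uses later to determine the \emph{inner} scale $\lambda_1$, not $\lambda_2$ --- and your iteration would stall at the first stage with nonvanishing projection.

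Two smaller corrections. Your candidate expansion parameter $b=-\lambda_2'/\lambda_2^2$ is of size $|\log t|^{-\beta}$, not $t$; the hierarchy is organized in powers of $|\log t|^{-1}$ through the $\mathcal{Q}_k$-classes, which is what yields errors $O(t^{-2}|\log t|^{-N})$ and the $|\log t|^{-1}$ smallness of $v$. And the completion step is not a hierarchy of $S$-weighted energy identities: it is a fixed point in distorted-Fourier variables, resting on the spectral analysis of $\tilde{\mathcal{L}}=R^{\frac12}\mathcal{L}R^{-\frac12}$ (Propositions~\ref{prop:Fourierbasis}--\ref{prop:transferencemappingbounds}, in particular the transference operator $\mathcal{K}$) and the weighted norms \eqref{eq:epsnorm}, \eqref{eq:L2typenorm}; the $S^k$ bounds for the exact correction then come from applying $S$ to the equation for $\epsilon$ and re-running the same linear estimates, while for the approximate solution they are built into the definition of the function spaces. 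Your commutator/energy plan for $S^k$ is not absurd, but on its own it does not handle the strong $r^{-2}$ potential at scale $\lambda_2^{-1}$ nor the square-root singularity across $a=1$, which is where the actual work lies.
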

The proof of this result proceeds in exact analogy to the one in \cite{KST3}. We outline the intermediate key steps. To begin with, we need to construct an approximate solution $u_N$ by relying on a suitable algebraic framework. The precise statement for this is the following:
\begin{thm}\label{thm:approxoutersolnYManalogue} For every integer $N\geq 1$ there exists an approximate solution $u_N$ for \eqref{eq:keq2corotational} of the form 
\[
u_N(t, r) = Q(\lambda_2(t)r) + v_{10}(t, r) + v_N(t, r),\,\lambda_2(t) = t^{-1}\cdot\big({-}\log t\big)^{\beta},
\]
such that 
\[
e_N = \Box u_N -  2\frac{\sin(2u_N)}{r^2}
\]
satisfies for any $K\geq 0$ 
\[
\sum_{k=0}^K\big|S^ke_N\big|\lesssim_{K,N} t^{-2}|\log t|^{-N},\,r\lesssim t. 
\]
The first correction $v_{10}$ is given by
\[
v_{10}  = \frac{1}{(t\lambda_2)^2}\cdot \frac{R^4}{1+R^4},
\]
where $R = \lambda_2(t)\cdot r$, a notation we adhere to in this section. 
For the remaining correction we have the pointwise bound
\begin{align*}
\sum_{k=0}^K\big|S^kv_N(t, r)\big|\lesssim_{K,N} \frac{r^2}{t^2|\log t|},\,r\lesssim t,\,S = t\partial_t + r\partial_r.
\end{align*}
\end{thm}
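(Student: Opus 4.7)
The plan is to implement the iterative parametrix construction of \cite{KST3} in the $k=2$ wave maps setting. I work in the self-similar variable $R = \lambda_2(t) r$, in which the linearization of the equation about $Q$ becomes the stationary operator
\[
L_0 = \partial_{RR} + R^{-1}\partial_R - 4\cos(2Q(R))/R^2.
\]
The kernel of $L_0$ contains the scaling mode $\Lambda Q(R) = 4R^2/(1+R^4)$ with $\Lambda = R\partial_R$, and a second fundamental solution growing like $R^2$ at infinity is obtained by Wronskian reduction. The corrections will live inside the algebra of smooth functions of $R$ that are rational in $R^2$ with denominators given by powers of $1+R^4$ (and polynomial factors in $\log R$ when forced by the inversion), multiplied by time-dependent coefficients that are polynomial in the small parameters $(t\lambda_2)^{-2}$ and $|\log t|^{-1}$.

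First I would compute the error of the bare profile $Q(\lambda_2 r)$. Since $Q$ solves the static equation, only the time derivatives contribute, and setting $\nu = \dot\lambda_2/\lambda_2$ one obtains
\[
-\partial_{tt}Q(\lambda_2 r) = -\dot\nu\, \Lambda Q(R) - \nu^2\, \Lambda^2 Q(R),
\]
of size $t^{-2}$ and localized at $R \sim 1$, with a subleading $\dot\nu$ piece of relative size $|\log t|^{-1}$. The prescribed first correction $v_{10} = (t\lambda_2)^{-2} R^4/(1+R^4)$ is tuned to the linearized action of the wave operator: since the operator $\partial_{rr} + r^{-1}\partial_r - 4\cos(2Q(\lambda_2 r))/r^2$ applied to a self-similar profile $c(t) f(R)$ produces $c(t)\lambda_2^2 L_0 f(R)$, the choice $f = R^4/(1+R^4)$ with prefactor $(t\lambda_2)^{-2}$ yields a term of the correct size $t^{-2}$ matching the principal part of the leading error, and the residual inherits a gain of one factor of $|\log t|^{-1}$. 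The time derivatives of $v_{10}$ themselves contribute only at lower order, since each $\partial_t$ landing on the explicit $t$-coefficient produces a factor of $\nu$ balanced by a further $|\log t|^{-1}$ via $|S\log\lambda_2| \lesssim |\log t|^{-1}$.

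The further corrections $v_N = h_1 + h_2 + \cdots + h_{N-1}$ are built inductively. At stage $j$, I decompose the current error as a finite sum $\sum c_{\ell m}(t)\, g_{\ell m}(R)$ with $c_{\ell m}$ a product of $(t\lambda_2)^{-2\ell}$ and $|\log t|^{-m}$ and $g_{\ell m}$ in the algebraic class, invert $L_0$ on each profile $g_{\ell m}$ by the variation of constants formula against $\Lambda Q$ and its Wronskian partner (choosing the solution bounded at $R = 0$), and set $h_j$ equal to the resulting sum with the matching $t$-weight. Each inversion improves the decay in $|\log t|$ by one unit, either because the source has already gained a factor $|\log t|^{-1}$ from the $\dot\nu$ mechanism, or because extra $\partial_t$ derivatives have fallen on the explicit $t$-coefficient of a previous $h_k$ rather than on its profile. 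The pointwise bound $|S^k v_N| \lesssim r^2/(t^2|\log t|)$ is then a consequence of the closure of the algebra under $\Lambda$, the identity $[S,\Lambda] = 0$, the smallness $|S\log\lambda_2| \lesssim |\log t|^{-1}$, and the fact that every profile in the algebra vanishes to order $R^2$ at the origin.

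The main obstacle I anticipate is controlling the behavior at the upper endpoint $R \sim \lambda_2 t = |\log t|^\beta$. The second fundamental solution of $L_0$ grows like $R^2$, and inverting $L_0$ against sources not orthogonal to $\Lambda Q$ in $L^2(R\,dR)$ introduces $\log R$ factors; the algebraic class must therefore tolerate polynomial dependence on $\log R$, and each inversion may cost one such factor. Balancing the $|\log t|^{-1}$ gain per step against this accumulation of log-losses, and choosing the constants of integration in the variation of constants formula so that no resonance with the scaling mode produces runaway growth in $R$ beyond what the class allows, is the principal bookkeeping effort of the proof; but for any fixed $N$ only finitely many iterations are needed, and the resulting bound $|S^k e_N| \lesssim t^{-2}|\log t|^{-N}$ then follows.
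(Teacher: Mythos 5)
Your construction has a genuine structural gap: you only ever invert the stationary elliptic operator $L_0$ in the variable $R$, treating the time derivatives as negligible, and you claim that each such inversion gains a factor $|\log t|^{-1}$. That gain fails precisely in the region that matters, near the light cone $r\sim t$, i.e.\ $R\sim \lambda_2 t=|\log t|^{\beta}$. Indeed, $t\partial_t$ acting on a profile $f(R)$ gives $\approx -(R\partial_R)f(R)$ up to $O(|\log t|^{-1})$ corrections, so a derivative landing on the profile is \emph{not} lower order for profiles with polynomial growth in $R$; concretely, after the first correction the error contains a term of self-similar size $t^{-2}a^2|\log t|^{-1}$ (with $a=r/t$), and if you remove it by another variation-of-constants inversion of $L_0$ the resulting correction behaves like $|\log t|^{-1}a^4$ (up to logs), whose neglected $\partial_{tt}$ is again of size $t^{-2}|\log t|^{-1}$ near $a=1$ --- exactly the size of the term you just removed. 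So the purely elliptic iteration stalls at one power of $|\log t|$ and cannot produce errors of size $t^{-2}|\log t|^{-N}$ for arbitrary $N$; the difficulty is not bookkeeping of $\log R$ factors and constants of integration, as you suggest in your last paragraph, but the absence of any smallness mechanism at the cone.

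This is why the paper (following the Yang--Mills construction of Krieger--Schlag--Tataru) alternates two different kinds of steps: odd steps invert $\mathcal{L}$ in $R$ as you do, but even steps solve the genuine self-similar wave equation $L_{ab}v = t^2 e$ in the variables $a=r/t$, $b=|\log t|$ (Lemma~\ref{lem:abwaveeqnlemma}), which keeps the time derivatives exactly and gains powers of $b$ rather than powers of $R$. This forces the introduction of the algebras $\mathcal{Q}_k$, $\mathcal{Q}_k'$ and the spaces $IS^m(R^k(\log R)^l,\mathcal{Q}_n)$ with the variables $b,b_1$, whose $(1-a^2)^{1/2}$ expansions encode the limited regularity (the shock) across the light cone; none of this structure can be dispensed with, since the solution is genuinely only of finite regularity at $r=t$ and your algebra of rational functions of $R^2$ with $\log R$ factors cannot represent it. To repair the proposal you would need to add this second, hyperbolic step near the cone and prove an analogue of Lemma~\ref{lem:abwaveeqnlemma} for the spaces $\mathcal{F}_{k,m}$, then interleave it with your elliptic inversions as in Steps 2--3 of the paper's proof.
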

The proof of this is in essence identical to the one in \cite{KST3}. 
\begin{proof}
In the sequel, we shall frequently encounter the operator 
\begin{equation}\label{eq:operatormathcalL}\begin{split}
\mathcal{L}: &= -\partial_R^2 - \frac{1}{R}\partial_R + \frac{4\cos\big(2Q(R)\big)}{R^2}\\
& =  -\partial_R^2 - \frac{1}{R}\partial_R  + \frac{4}{R^2} - \frac{32R^2}{(1+R^4)^2}.
\end{split}\end{equation}
This gets conjugated into 
\begin{equation}\label{eq:operatortildemathcalL}
\tilde{\mathcal{L}} = R^{\frac12}\mathcal{L}R^{-\frac12} = -\partial_R^2 + \frac{15}{4R^2} -  \frac{32R^2}{(1+R^4)^2}.
\end{equation}
The operator $\mathcal{L}$ arises upon linearizing the wave maps equation around the bulk part $Q(R) = 2\arctan(R^2)$. 
\\

{\bf{Spaces I}}: For the sequel, we first introduce the spaces $S^m\big(R^k(\log R)^l\big)$ consisting of analytic functions $v: [0,\infty)\longrightarrow \mathbb{R}$ satisfying the following properties:
\begin{itemize}
\item $v$ vanishes to order $m$ at $R = 0$ and $R^{-m}v$ has an even Taylor expansion there. 
\item $v$ has an absolutely convergent epxansion near $R = +\infty$ of the form
\[
v(R) = \sum_{0\leq j\leq l+2i}c_{ij}R^{k-2i}(\log R)^j. 
\]
\end{itemize}
This is essentially Definition 3.4 from \cite{KST3}. 
\\

{\bf{Step 1}}:
We start by observing that the error $e_0$ arising upon substituting the approximation $u_0: = Q(R)$, $R = \lambda_2(t)r$ into \eqref{eq:keq2corotational} satisfies  
\[
t^2 e_0 = -t^2\partial_t^2 Q(R) = -\big(1+\frac{\beta}{|\log t|}\big)^2\cdot R\Phi'(R) - \big(1 + \frac{\beta}{|\log t|} - \frac{\beta}{|\log t|^2}\big)\Phi(R),
\]
where $\Phi(R) = \frac{4R^2}{1+R^4}$. We then construct a first correction $v_1$ by setting 
\begin{equation}\label{eq:voneequation}
\big(t\lambda_2\big)^2\tilde{\mathcal{L}}\sqrt{R}v_1 = \sqrt{R} t^2 e_0. 
\end{equation}
A fundamental system for $\tilde{\mathcal{L}}$ is given by 
\begin{equation}\label{eq:phi0theta0}
\phi_0(R) := R^{\frac12}\Phi(R),\,\theta_0(R): = \frac{-1 + 8R^4\log R + R^8}{16 R^{\frac32}(1+R^4)}. 
\end{equation}
A solution of \eqref{eq:voneequation} corresponding to the leading order term $R\Phi'(R) + \Phi(R)$ in $t^2 e_0$ is given by 
\[
v_{10}  = \frac{1}{(t\lambda_2)^2}\cdot \frac{R^4}{1+R^4}, 
\]
as indeed $(t\lambda_2)^2\mathcal{L}v_{10} = R\Phi'(R) + \Phi(R)$. Setting $t^2\tilde{e}_0: = t^2 e_0  + R\Phi'(R) + \Phi(R)$, we can then set 
\[
v_1 = v_{10} + v_{11},
\]
where\footnote{The term is indeed slightly 'better' than what is indicated since there are in effect no logarithmic corrections $(\log R)^j$ for the terms of order $R^0$ in the expansion.} 
\begin{align*}
(t\lambda_2)^2v_{11} &= \frac{\theta_0(R)}{\sqrt{R}}\cdot \int_0^R\phi_0(R')\sqrt{R'}t^2\tilde{e}_0(R')\,dR' - \frac{\phi_0(R)}{\sqrt{R}}\cdot \int_0^R\theta_0(R')\sqrt{R'}t^2\tilde{e}_0(R')\,dR'\\
&\in |\log t|^{-1}IS^4(R^2) + |\log t|^{-2}IS^4(R^2). 
\end{align*}
The error $e_1$ generated by the approximation $u_1 = Q(\lambda_2(t)r) + v_1$ satisfies
\begin{align*}
t^2e_1 = t^2\partial_t^2v_1 + (t\lambda_2)^2\cdot\Big(\frac{4\cos(2Q)\cdot\big(\frac{\sin (2v_1)}{2} - v_1\big)}{R^2} + \frac{2\sin(2Q)\cdot\big(\cos(2v_1) - 1\big)}{R^2}\Big). 
\end{align*}
Here we abbreviate $Q = Q(\lambda_2(t)r)$. We expand the trigonometric functions 
\[
\sin (2v_1), \cos(2v_1)
\]
 into Taylor series up to order $N$. Then we easily infer that\footnote{These terms are again somewhat better in the sense of the preceding footnote.} 
\begin{equation}\label{eq:eone}
t^2e_1\in (t\lambda_2)^{-2}\Big(IS^4(1) + \frac{1}{|\log t|}IS^4(R^2) + \sum_{N\geq k\geq 1} \frac{1}{|\log t|^{k+1}}\cdot (t\lambda_2)^{-2k}IS^4(R^{2k})\Big) + t^2 E_1,
\end{equation}
where the remaining error term satisfies (with arbitrary $K\geq 0$)
\begin{align*}
\big||\chi_{r\lesssim t} t^2 E_1\big| + \big||\chi_{r\lesssim t} t^2 S E_1\big| +\ldots + \big||\chi_{r\lesssim t} t^2 S^K E_1\big|\lesssim_{N,K,\beta} |\log t|^{-N}. 
\end{align*}
To obtain this last estimate, we observe that $\chi_{r\lesssim t}\cdot  (t\lambda_2)^{-2k}\cdot R^{2k}\lesssim_k 1$. We can then neglect this term as it satisfies the error estimate postulated in Theorem~\ref{thm:approxoutersolnYManalogue}. 
\\

The preceding $S$-spaces need to be much refined to control the behavior of the approximate solution near the characteristic light cone $r = t$, or alternatively $a = \frac{r}{t} = 1$. This we do next.
\\

{\bf{Spaces II}}: Following Definition 3.2 of \cite{KST3}, we introduce the algebra $\mathcal{Q}_k$, $k\geq 0$ not necessarily an integer, and consisting of continuous functions 
\[
q: (0,1]\times \mathbb{R}_+\times \mathbb{R}_+\longrightarrow \mathbb{R},\,q = q(a,\alpha,\alpha_1), 
\]
with the following properties:
\begin{itemize}
\item $q$ is $C^\infty$ with respect to $a\in (0,1)$, and meromorphic and even around $a = 0$. The restriction to the diagonal 
\[
q(a,b,b)
\]
extends analytically to $a = 0$ and has an even expansion there. 
\item $q$ has the form 
\[
q(a,\alpha,\alpha_1) = \sum_{i+j\leq -k}^{j\leq 0, i\leq \frac{|j|}{2}}q_{ij}(a, \log\alpha, \log\alpha_1)\alpha^i \alpha_1^j
\]
with $q_{ij}$ polynomial in $\log\alpha, \log\alpha_1$. The sum only has finitely many terms. 
\item Near $a = 1$, $q$ can be expanded as 
\[
q(a,\alpha,\alpha_1) = q_0(a,\alpha) + (1-a^2)^{\frac12}\cdot q_1(a,\alpha,\alpha_1),
\]
where the coefficients $q_j$ are analytic in $a$ around $a = 1$ and $C^\infty$ with respect to $\alpha,\alpha_1$. 
\end{itemize}
We next introduce the space corresponding to Definition 3.3 in \cite{KST3}, which essentially corresponds to the effect of $\partial_a$ on the preceding space. Thus we let 
$\mathcal{Q}_k'$, $k\geq 0$, be the space of continuous functions 
\[
q: (0,1]\times \mathbb{R}_+\times \mathbb{R}_+\longrightarrow \mathbb{R},\,q = q(a,\alpha,\alpha_1), 
\]
with the following properties:
\begin{itemize}
\item $q$ is $C^\infty$ with respect to $a\in (0,1)$, and meromorphic and even around $a = 0$. The restriction to the diagonal 
\[
q(a,b,b)
\]
extends analytically to $a = 0$.
\item $q$ admits an expansion as in the preceding list of properties near $a = 0$. 
\item Near $a = 1$, $q$ can be expanded as 
\[
q(a,\alpha,\alpha_1) = q_0(a,\alpha) + (1-a^2)^{\frac12}\cdot q_1(a,\alpha,\alpha_1) + (1-a^2)^{-\frac12}\cdot q_2(a,\alpha,\alpha_1),
\]
where the coefficients $q_j$ are analytic in $a$ around $a = 1$. The coefficient $q_2$ has the same representation as the term $q$ in the previous item near $a = 0$, except that $k$ is replaced by $k+1$ and $j\leq -1$. 
\end{itemize}

Finally, we combine the asymptotic expansions with respect to $R$ as well as the structure with respect to $a$ in the following definition, which corresponds to Definition 3.5 in \cite{KST3}: 
\begin{defn}\label{def:finalSspaceinvolvingQ} (a) We denote by 
\[
S^m\big(R^k(\log R)^l, \mathcal{Q}_n\big)
\]
the space of analytic functions 
\[
v: [0,\infty)\times [0,1)\times \mathbb{R}_+^2\longrightarrow \mathbb{R}
\]
with the following properties:
\begin{itemize}
\item $v$ is analytic as a function of $R$, and $v: [0,\infty)\longrightarrow \mathcal{Q}_n$. 
\item $v$ vanishes of order $m$ at $R = 0$. 
\item $v$ has an absolutely convergent epxansion 
\[
v\big(R,\cdot,b, b_1\big) =  \sum_{0\leq j\leq l+2i}c_{ij}(\cdot, b, b_1)\cdot R^{k-2i}(\log R)^j
\]
 with coefficients $c_{ij}\in \mathcal{Q}_n$. 
 \item Letting $\tilde{S} = R\partial_R$, the functions 
 \[
 \tilde{S}^{l_1}\partial_b^{l_2}\partial_{b_1}^{l_3}\big(v\big(R,\cdot,b, b_1\big)\big),\,l_{1,2,3}\geq 0
 \]
 have analogous properties. 
\end{itemize}
\end{defn}

The variables $b, b_1$ will be chosen of the following specific form:
\begin{equation}\label{eq:bbone}
b = \big|\log t\big|,\,b_1 = \big|\log t\big| + \big|\log p(a)\big|. 
\end{equation}
Here $p$ is a real even polynomial satisfying the conditions 
\[
p(1) = 0,\,p'(1) = -1,\,p(a) = 1 + O(a^M),
\]
where $M = M(N)$ is sufficiently large, with $N$ as in the statement of the Theorem. With these substitutions, it is then seen that the scaled functions
\[
S^lv,\,S = t\partial_t + r\partial_r,\,l\geq 0,
\]
with $v$ as in the preceding admit similar expansions. 
\\
(b) We denote by 
\[
IS^m\big(R^k(\log R)^l, \mathcal{Q}_n\big)
\]
the space of functions $w(t, r)$ which can be written as 
\[
w(t, r) = v\big(R,a,b, b_1)
\]
with $v\in S^m\big(R^k(\log R)^l, \mathcal{Q}_n\big)$, provided $r\leq t$. 
\\

{\bf{Step 2}}: We now take advantage of the formal framework introduced above. First, we approximate the error $e_1$, see \eqref{eq:eone}, by the expression 
\begin{align*}
t^2 e_1^{00} := c_1\cdot \frac{a^2}{b} + \sum_{2\beta\geq k\geq 1}\frac{c_{2k}\cdot a^{2k}}{b^{k+1}},\,a = \frac{r}{t},\,b = \big|\log t\big|. 
\end{align*}
The first term on the right arises by replacing a function in $(\lambda_2 t)^{-2}\cdot \big|\log t\big|^{-1}\cdot IS^4(R^2)$ by its leading term, which for $e_1$ is of the form 
\[
c_1\cdot (\lambda_2 t)^{-2}\cdot \big|\log t\big|^{-1}\cdot R^2 = c_1\cdot  \big|\log t\big|^{-1}\cdot a^2
\]
for a suitable constant $c_1$, and analogously for the other terms on the right. 
With this choice, we have 
\begin{align*}
t^2e_1 - t^2E_1 - t^2e_1^{00}\in IS^2\big(1, \mathcal{Q}_{2\beta}\big),
\end{align*}
which will then be improved in the subsequent steps. 
\\

Returning to the original variables $(t, r)$, we determine $v_2$ by solving the following model equation approximately: 

\begin{equation}\label{eq:waveat1}
t^2\cdot\Big({-}\partial_t^2 + \partial_r^2 + \frac{1}{r}\partial_r - \frac{4}{r^2}\Big)v_2 = t^2e_1^{00}.
\end{equation}
As in \cite{KST3}, interpreting $v_2 = v_2(a, b)$, $a = \frac{r}{t}$, this can be rewritten in the form 
\begin{align*}
L_{ab}v_2 = t^2e_1^{00},\,L_{ab} = -\big(\partial_b + a\partial_a\big)^2 - \big(\partial_b + a\partial_a\big) + \partial_a^2 + \frac{1}{a}\partial_a - \frac{4}{a^2}. 
\end{align*}
In order to solve this, we introduce the general function space $\mathcal{F}_{k,m},\,2k\geq m\geq k\geq 1$ stemming from \cite{KST3}:
\begin{defn}\label{defn:Fkmspace} For $2k\geq m\geq k\geq 1$ integers, we define (recall \eqref{eq:bbone})
\begin{align*}
\mathcal{F}_{k,m} = \{f_k\,\big|\, f_k &= b^{-k}e_0(a, \log b) + (1-a)^{\frac12}\sum_{j=1}^m e_j^0(a, \log b, \log b_1)b^{j-k}b_1^{-j}\\
& + (1-a)^{-\frac12}\sum_{j=1}^m e_j^1(a, \log b, \log b_1)b^{j-k-1}b_1^{-j}\}
\end{align*}
Here the functions $e_0, e_j^0, e_j^1$ are smooth for $a\in (0,1)$, analytic around $a = 1$, and meromorphic and even around $a = 0$. They are furthermore polynomials in $\log b, \log b_1$. Finally, 
\[
f_k = O(a^2)
\]
for $0<a\ll 1$. 
\end{defn}

It is immediate that the function $t^2 e_1^{00}$ belongs to
\[
\sum_{1\leq k\leq 2\beta}\mathcal{F}_{k,m_k}
\]
where the $m_k$ can be chosen arbitrarily in $[k, 2k]$. In fact, the components of $t^2 e_1^{00}$ are of the much more restrictive type $b^{-k}\cdot e_0(a)$. We now require the following basic result proved in \cite{KST3}:
\begin{lem}\label{lem:abwaveeqnlemma}(\cite{KST3}) Letting $f_k\in \mathcal{F}_{k,m}$, the equation 
\[
L_{ab}v = f_k
\]
admits an approximate solution 
\[
v(a, b) = b^{-k}V_0(a, \log b) + (1-a)^{\frac12}\cdot\sum_{j=1}^{m}V_j(a,\log b, \log b_1)b^{j-k}b_1^{-j}, 
\]
where $V_0, V_j$ are smooth in $a\in (0, 1)$, analytic around $a = 1$, meromorphic around $a = 0$, and polynomial in the variables $\log b, \log b_1$. Furthermore, $v$ vanishes to fourth order at $a = 0$ and 
\[
L_{ab}v - f_k\in \mathcal{F}_{k+1,m} + \mathcal{F}_{k+2,m}. 
\]
\end{lem}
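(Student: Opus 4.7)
Decompose the operator as $L_{ab} = M - \partial_b^2 - 2a\partial_a\partial_b - \partial_b$ with
\begin{equation*}
M = (1-a^2)\partial_a^2 + (1/a - 2a)\partial_a - 4/a^2.
\end{equation*}
The three $\partial_b$-dependent terms always produce an extra factor $b^{-1}$ or $b^{-2}$ when applied to our ansatz (noting that $\partial_b b_1 = 1$, so they also shift $b_1^{-j}$ to $b_1^{-j-1}$), and will therefore contribute only to the residual. Matching the coefficient of $b^{-k}$ on both sides of $L_{ab}v = f_k$ accordingly reduces to the ODE $M V_0(\cdot,\log b) = e_0(\cdot,\log b)$, with $\log b$ a passive parameter. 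The operator $M$ is regular-singular at the origin with indicial exponents $\pm 2$, and regular-singular at the cone $a = 1$ with exponents $0$ and $1/2$. Since $e_0 = O(a^2)$, variation of parameters yields a particular solution vanishing to fourth order at $a = 0$; adjusting by the homogeneous solution that is analytic at $a = 1$ removes the $(1-a)^{1/2}$ branch, making $V_0$ smooth across the cone as required. Polynomial dependence on $\log b$ is inherited from $e_0$.

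\textbf{Higher iterations.} Substituting the full ansatz and collecting, at each level $j = 1, \ldots, m$, the coefficient of $(1-a)^{1/2} b^{j-k} b_1^{-j}$ yields an inhomogeneous ODE
\begin{equation*}
\widetilde{M} V_j = e_j^0 + R_j,
\end{equation*}
where $\widetilde{M}$ is the conjugate of $M$ produced by the $(1-a)^{1/2}$ prefactor and by the $\log b_1$-dependence of the ansatz, and $R_j$ packages the $(1-a)^{-1/2}$ source $e_j^1$ from $f_k$ (absorbed through the commutator of $\partial_a$ with $(1-a)^{1/2}$ and with $b_1 = b + |\log p(a)|$) together with all prior contributions of $V_0, V_1, \ldots, V_{j-1}$. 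Each such ODE is solved by the same variation-of-parameters recipe as for $V_0$: select the branch vanishing quadratically at the origin, and subtract the analytic-at-$a=1$ homogeneous piece to suppress any $(1-a)^{1/2}$ component in $V_j$ itself. Iterating from $j = 1$ to $j = m$ produces all of the $V_j$ with the required smoothness on $(0,1)$, analyticity near $a = 1$, meromorphic-even structure at $a = 0$, and polynomial dependence on $(\log b, \log b_1)$.

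\textbf{Residual and principal obstacle.} Once $V_0, V_1, \ldots, V_m$ are fixed, the remainder $L_{ab} v - f_k$ consists solely of the $\partial_b, \partial_b^2, 2a\partial_a\partial_b$ derivatives applied to those functions, plus the $b_1^{-j} \to b_1^{-j-1}$ shifts driven by $\partial_b b_1 = 1$; each such contribution carries one or two additional factors of $b^{-1}$ and inherits the structural properties required for membership in $\mathcal{F}_{k+1,m} + \mathcal{F}_{k+2,m}$. The main obstacle throughout is the light-cone behavior at $a = 1$: one must verify the algebraic cancellation, parallel to \cite{KST3}, that the $(1-a)^{-1/2}$ coefficient produced when the spatial part of $L_{ab}$ acts on a term $(1-a)^{1/2} V_j$ vanishes to first order at $a = 1$, so that the iteration never generates a genuine $(1-a)^{-1/2}$ singularity inside $v$. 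Combined with the simultaneous selection of the $a^2$-branch at $a = 0$, this algebraic coincidence is exactly what keeps the ansatz closed under the hierarchy.
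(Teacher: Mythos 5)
First, note that the paper does not prove this lemma at all; it is quoted from \cite{KST3}, so your attempt has to be measured against the scheme of that paper. Your general strategy is the right one and matches it in outline: the splitting $L_{ab} = M - \partial_b^2 - 2a\partial_a\partial_b - \partial_b$ with $M = (1-a^2)\partial_a^2 + (a^{-1}-2a)\partial_a - 4a^{-2}$ is correct, the indicial exponents $\pm 2$ at $a=0$ and $0,\tfrac12$ at $a=1$ are correct, the fourth-order vanishing of the variation-of-parameters solution at $a=0$ for an $O(a^2)$ source is correct, and you correctly identify the crucial algebraic cancellation at the cone: the coefficient of $(1-a)^{-1/2}$ produced by $M$ acting on $(1-a)^{1/2}V$, namely $-\tfrac{1+a}{4}-\tfrac12(a^{-1}-2a)$, vanishes to first order at $a=1$.

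However, there is a genuine gap at the heart of the construction, and one step as written is wrong. You claim that "adjusting by the homogeneous solution that is analytic at $a=1$ removes the $(1-a)^{1/2}$ branch" of $V_0$: a homogeneous solution that is analytic at $a=1$ contains no $(1-a)^{1/2}$ component, so adding it cannot remove that branch; you would have to subtract the exponent-$\tfrac12$ Frobenius solution, and that solution, continued to $a=0$, generically carries the $a^{-2}$ branch and destroys the vanishing of $b^{-k}V_0$ at the origin. More structurally, solving $MV_0=e_0$ exactly you have only a two-parameter family of solutions but three linear constraints (no $a^{-2}$ pole at $0$, vanishing of the $a^2$ coefficient to get $O(a^4)$, no $(1-a)^{1/2}$ branch at $1$), so the properties asserted in the lemma cannot be achieved level by level by homogeneous adjustments of a single second-order ODE. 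This is exactly where the $b_1$-hierarchy of \cite{KST3} does real work: because $b_1 = b + |\log p(a)|$ with $p(1)=0$, $p'(1)=-1$, a $\partial_a$ falling on the $b_1$-dependence costs $(1-a)^{-1}$ but gains $b_1^{-1}$, so the half-power singularities generated at level $j-1$, the cross term $-2a\partial_a\partial_b$ acting at level $j$, and the singular sources $(1-a)^{-1/2}e_j^1 b^{j-k-1}b_1^{-j}$ all meet at the shifted weight $b^{j-k-1}b_1^{-j}$ and must be cancelled there; in particular $e_j^1$ does not enter your level-$j$ ODE $\widetilde{M}V_j = e_j^0 + R_j$ at the weight you match (it is down by one power of $b$ relative to $(1-a)^{1/2}b^{j-k}b_1^{-j}$). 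Likewise, near $a=0$ one has $b_1 = b + O(a^M)$, so the fourth-order vanishing of $v$ is a statement about the sum over levels, not about each $V_j$ separately. Your sketch replaces this coupled bookkeeping by a clean uncoupled hierarchy of ODEs, each made regular at both endpoints by homogeneous corrections, and that is precisely the step that fails; to repair it you need the KST mechanism in which the $(1-a)^{1/2}$ branches are not eliminated but propagated into the $b_1^{-j}$-weighted corrections, with the remaining mismatches shown to lie in $\mathcal{F}_{k+1,m}+\mathcal{F}_{k+2,m}$.
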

Iteratively applying Lemma 3.8 to the equation
\[
L_{ab}v_2 = t^2e_1^{00},
\]
we construct 
\[
v_2\in a^4IS\big(1, \mathcal{Q}_1\big)
\]
such that the corresponding error satisfies 
\begin{align*}
L_{ab}v_2 - t^2e_1^{00}\in a^2 IS^4\big(1, \mathcal{Q}_{2\beta}'\big) + IS^4\big(1, \mathcal{Q}_{2\beta}\big). 
\end{align*}
It remains to determine to what extent this approximation solves the original equation, which is of course only approximated by \eqref{eq:waveat1}. We encounter the following types of errors $E_j,\,j = 1,\ldots, 5$.
First, an error due to the modification of the potential term in \eqref{eq:waveat1}, which equals
\begin{align*}
E_1 = \big(\frac{4\cos(2Q)}{r^2} - \frac{4}{r^2}\big)\cdot v_2. 
\end{align*}
We determine the potential term explicitly using $R = \lambda_2(t)\cdot r$, as follows:
\begin{align*}
\big(\frac{4\cos(2Q)}{r^2} - \frac{4}{r^2}\big) =-32r^{-2}\cdot \frac{R^4}{(1+R^4)^2}. 
\end{align*}
Using the fact that $v_2\in a^4IS\big(1, \mathcal{Q}_1\big)$, we infer that 
\begin{align*}
t^2E_1&\in  t^2\cdot 32r^{-2}\cdot \frac{R^4}{(1+R^4)^2}\cdot a^4IS\big(1, \mathcal{Q}_1\big) = \frac{R^4}{(1+R^4)^2}\cdot a^2IS\big(1, \mathcal{Q}_1\big)\\
& =  \frac{R^6}{(1+R^4)^2}\cdot (t\lambda_2)^{-2}\cdot IS\big(1, \mathcal{Q}_1\big)\subset IS^4\big(1, \mathcal{Q}_{2\beta}\big).
\end{align*}
Another error accounts for the fact that we omitted the effect of $v_1$ in the linear potential term. This is the expression 
\begin{align*}
E_2 = \frac{4\cos(2Q)\cdot(\cos(2v_1) - 1)}{r^2}\cdot v_2. 
\end{align*}
Here the fact that $v_1$ only decays like $|\log t|^{-1}$ from Step 2 means that we only infer the weaker conclusion 
\[
t^2E_2\in  IS^4\big(1, \mathcal{Q}_2\big). 
\]
Similarly, for the remaining nonlinear error terms 
\begin{align*}
&E_3 =\frac{2\cos\big(2Q+2v_1\big)\cdot\big(\sin\big(2v_2\big) - 2v_2\big)}{r^2},\,E_4 = \frac{4\sin(2Q)\cdot \sin(2v_1)\cdot v_2}{r^2},\\
&E_5 = \frac{2\sin\big(2Q + 2v_1\big)\cdot [\cos(2v_2)-1]}{r^2},  
\end{align*}
we infer the inclusion 
\[
t^2E_j\in  IS^4\big(1, \mathcal{Q}_2\big),\,j = 3, 4, 5. 
\]
Thus the errors $E_j, j = 2,\ldots, 5$ are worse than $E_1$ provided $\beta>1$, and we have to re-iterate the process leading to $v_2$ to improve the remaining error to $t^{-2}IS^4\big(1, \mathcal{Q}_{2\beta}\big)$. Repeating this procedure finitely many times leads to our final $v_2$ with an error $e_2$ satisfying 
\[
v_2\in a^4IS\big(1, \mathcal{Q}_1\big),\,t^2e_2\in a^2 IS^4\big(1, \mathcal{Q}_{2\beta}'\big) + IS^4\big(1, \mathcal{Q}_{2\beta}\big).
\]
{\bf{Step 3}}: Here we perform the general inductive step. Our goal shall be to prove the following inclusions:
\begin{align}
&v_{2k-1}\in IS^4\big(R^2\big(\log R\big)^{3k-5},\,\mathcal{Q}_{2\beta k}\big),\,k\geq 2\\
&t^2 e_{2k-1}\in IS^2\big(R^2\big(\log R\big)^{3k-5},\,\mathcal{Q}_{2\beta k}'\big),\,k\geq 2\\
&v_{2k}\in a^4IS\big(\big(\log R\big)^{3(k-1)},\,\mathcal{Q}_{2\beta(k-1)}\big),\,k\geq 1,\\
&t^2e_{2k}\in  a^2IS\big(\big(\log R\big)^{3(k-1)},\,\mathcal{Q}_{2\beta k}'\big) +  IS^4\big(\big(\log R\big)^{3(k-1)},\,\mathcal{Q}_{2\beta k}\big),\,k\geq 1. 
\end{align}
We note right away that the starting point with the last two inclusions for $k = 1$ is satisfied for $v_2, e_2$ constructed above. 
\\

We distinguish between the construction of even and odd indices. Start with a given $e_{2k}$, $k\geq 1$. We only need to deal with the part $e_{2k}^0$ in 
\[
IS^4\big((\log R)^{3(k-1)}, \mathcal{Q}_{2\beta k}\big),
\]
as the remaining part can be included into $e_{2k+1}$ at the next stage of the iteration. 
We construct $v_{2k+1}$ by applying the procedure used for $v_1$, i. e. applying the simple variation of constants formula to $e_{2k}^0$, treating the variables $a, b, b_1$ as constants. Thus we set 
\begin{align*}
(t\lambda_2)^2v_{2k+1}(R,a,b,b_1) &=  \frac{\theta_0(R)}{\sqrt{R}}\cdot \int_0^R\phi_0(R')\sqrt{R'}t^2e_{2k}^0(R',a,b,b_1)\,dR'\\& - \frac{\phi_0(R)}{\sqrt{R}}\cdot \int_0^R\theta_0(R')\sqrt{R'}t^2e_{2k}^0(R',a,b,b_1)\,dR'.
\end{align*}
From this we conclude directly that 
\[
v_{2k+1}\in IS^4\big(R^2(\log R)^{3k-2},\,\mathcal{Q}_{2\beta(k+1)}\big).
\]
We generate errors due to having treated $a, b, b_1$ as constant parameters and to having neglected the time derivative term in the wave operator, as well as errors due to the nonlinear interactions, and having used $\mathcal{L}$ for the linearisation around the approximate solution $u_{2k} = Q + \sum_{j=1}^{2k}v_j$. We deal with the latter two types of errors, as the former can be handled exactly as in \cite{KST3}. 
\\

In fact, we can argue exactly as for the error terms $E_j$ in step 2 but with $v_1$ replaced by $ \sum_{j=1}^{2k}v_j$ and $v_2$ replaced by $v_{2k+1}$, since 
\[
\sum_{j=1}^{2k}v_j\in \sum_{j=1}^{2k}IS^4\big(R^2(\log R)^{l_j}, \mathcal{Q}_{2\beta \lfloor \frac{j+1}{2}\rfloor}\big),\,l_{2k} = 3(k-1),\,l_{2k-1} = (3k-5)_+. 
\]
Furthermore we observe that (for $j\geq 1$)
\[
IS^4\big(R^2(\log R)^{l_j}, \mathcal{Q}_{2\beta \lfloor \frac{j+1}{2}\rfloor}\big)\subset a^2IS^4\big((\log R)^{l_j}, \mathcal{Q}_{2\beta (\lfloor \frac{j+1}{2}\rfloor-1)}\big)
\]

We then have for $j\geq 1$
\begin{align*}
&a^2IS^4\big((\log R)^{l_j}, \mathcal{Q}_{2\beta (\lfloor \frac{j+1}{2}\rfloor-1)}\big)\cdot  IS^4\big(R^2(\log R)^{3k-2},\,\mathcal{Q}_{2\beta(k+1)}\big)\\
&\subset  a^2IS^4\big(R^2(\log R)^{3(k+\lfloor \frac{j+1}{2}\rfloor)-5},\,\mathcal{Q}_{2\beta(k+\lfloor \frac{j+1}{2}\rfloor)}\big)
\end{align*}
Expanding the trigonometric functions into power series (only up to errors of size $|\log t|^{-N}$ is required), this yields 
\begin{align*}
t^2 E_2 &=  t^2\cdot \frac{4\cos(2Q)\cdot\big(\cos\big(2 \sum_{j=1}^{2k}v_j\big) - 1\big)}{r^2}\cdot v_{2k+1}\\
&\in \sum_{1\leq j\leq N}IS^4\big(R^2(\log R)^{3(k+\lfloor \frac{j+1}{2}\rfloor)-5},\,\mathcal{Q}_{2\beta(k+\lfloor \frac{j+1}{2}\rfloor)}\big) + O\big(|\log t\big|^{-N}\big), 
\end{align*}
where the error term satisfies the same estimate as the term $E_1$ in step 1. Each of the term in the finite sum can be attributed to a source term
\[
t^2e_{2(k+\lfloor \frac{j+1}{2}\rfloor)-1},\,j\geq 1,
\]
which gets reduced at a later stage of the induction. 
\\
Next consider the analogue of the error terms $E_3$ in step 2. Here we use that 
\begin{align*}
v_{2k+1}\in a^2\cdot IS^4\big((\log R)^{3k-2},\,\mathcal{Q}_{2\beta k}\big)
\end{align*}
and further (with $k, k'\geq 1$)
\begin{align*}
&a^2\cdot IS^4\big((\log R)^{3k-2},\,\mathcal{Q}_{2\beta k}\big)\cdot IS^4\big(R^2(\log R)^{3k'-2},\,\mathcal{Q}_{2\beta(k'+1)}\big)\\
&\subset a^2\cdot IS^4\big(R^2(\log R)^{3(k'+k)-2},\,\mathcal{Q}_{2\beta(k'+k+1)}\big)
\end{align*}
Using Taylor expansion for $\sin(2 v_2)$, expanding 
\[
\cos\big(2Q+2\sum_{j=1}^{2k}v_j\big) = \cos(2Q)\cdot \cos\big(2\sum_{j=1}^{2k}v_j\big) -  \sin(2Q)\cdot \sin\big(2\sum_{j=1}^{2k}v_j\big),
\]
using Taylor expansion for the factors involving $\sum_{j=1}^{2k}v_j$ and  applying the preceding inclusion inductively, we infer that
\begin{align*}
&t^2\frac{2\cos\big(2Q+2\sum_{j=1}^{2k}v_j\big)\cdot\big(\sin\big(2v_{2k+1}\big) - 2v_{2k+1}\big)}{r^2}\\
&\in \sum_{1\leq k'\leq N}IS^4\big(R^2(\log R)^{3(k'+k)-2},\,\mathcal{Q}_{2\beta(k'+k+1)}\big) + O\big(|\log t\big|^{-N}\big),
\end{align*}
with the remaining error term analogous to the one for $t^2E_2$. The analogues of the remaining terms $E_{4,5}$ from step 2 are handled similarly. 
\\

Next, we start with an error term of type $e_{2k-1}$, $k\geq 2$, and construct the corresponding correction $v_{2k}$. Here, proceeding in exact analogy to \cite{KST3}, we select the leading order term $e_{2k-1}^0$ in $e_{2k-1}$ which satisfies 
\[
t^2e_{2k-1}^0 = R^2\sum_{j=0}^{3k-5}g_j(a,b,b_1)\big(\log R\big)^j,\,g_j(a,\cdot,\cdot)\in \mathcal{Q}'_{2\beta k}. 
\]
The remaining error $e_{2k-1}^1 = e_{2k-1} - e_{2k-1}^0$ then satisfies 
\begin{align*}
t^2e_{2k-1}^1 &= a^2\cdot t^2e_{2k-1}^1 + (1-a^2)\cdot t^2e_{2k-1}^1\\&\in a^2IS\big(\big(\log R\big)^{3(k-1)},\,\mathcal{Q}_{2\beta k}'\big) +  IS^4\big(\big(\log R\big)^{3(k-1)},\,\mathcal{Q}_{2\beta k}\big),
\end{align*}
hence can be incorporated into $t^2 e_{2k}$. 
Re-write the preceding expression for $t^2e_{2k-1}^0$ as 
\[
\sum_{j=0}^{3k-5}h_j(a,b,b_1)\big(\log R\big)^j,\,h_j = a^2\cdot (\lambda_2 t)^2\cdot g_j\in a^2 \mathcal{Q}'_{2\beta (k-1)}
\]
The correction $v_{2k}$ will then be the result of an iterative process. To begin with, we start with a first correction $w_{2k}$ for which we make the ansatz 
\[
w_{2k} = \sum_{j=0}^{3k-5}z_j(a,b,b_1)\big(\log R\big)^j,\,z_j\in a^4\mathcal{Q}_{2\beta(k-1)}
\]
Then we determine the coefficient functions $z_j$ inductively by formulating equations analogously to \eqref{eq:waveat1}. Proceeding as in \cite{KST3} indeed leads to 
\begin{equation}\label{eq:zjsystem}\begin{split}
&L_{ab}z_j = h_j + (j+1)\big(1+2(a-a^{-1})\partial_a\big)z_{j+1} - (j+1)(j+2)(1+a^{-2})z_{j+2},\\&z_{3k-4} = z_{3k-3} = 0,\,0\leq j\leq 3k-5. 
\end{split}\end{equation}
As in \cite{KST3}, section 3, this system can be solved inductively by applying Lemma~\ref{lem:abwaveeqnlemma}. We thereby obtain coefficient functions 
\[
z_j\in a^4\cdot\mathcal{Q}_{2\beta(k-1)}. 
\]

Again in analogy to step 2 we now consider the error terms generated by this procedure, which are like the terms $E_j, j = 1,\ldots 5$ there, but with $v_1$ replaced by 
\[
\sum_{j=1}^{2k-1}v_j\in \sum_{j=1}^{2k-1}IS^4\big(R^2(\log R)^{l_j}, \mathcal{Q}_{2\beta \lfloor \frac{j+1}{2}\rfloor}\big),\,l_{2k} = 3(k-1),\,l_{2k-1} = (3k-5)_+. 
\]
We need to show that these errors can be placed into $e_{2k}$. 
To begin with, we have (arguing similarly to the beginning of step 2)
\begin{align*}
t^2E_1 &= t^2\big(\frac{4\cos(2Q)}{r^2} - \frac{4}{r^2}\big)\cdot w_{2k}\\
&\in a^{-2}\cdot IS^4(R^{-4})\cdot a^4\sum_{j=1}^{3k-5}IS^4\big((\log R)^{j}, \mathcal{Q}_{2\beta(k-1)}\big)\\
&\in IS^4\big((\log R)^{3k-5}, \mathcal{Q}_{2\beta k}\big)
\end{align*}
For the term $E_2$, we use that 
\[
\sum_{j=1}^{2k-1}v_j\in  \sum_{j=1}^{2k-1}a^2IS^2\big((\log R)^{l_j}, \mathcal{Q}_{2\beta (\lfloor \frac{j+1}{2}\rfloor-1)}\big)
\]
Furthermore\footnote{Recall that for our construction of $w_{2k}$ we only required $3k - 5$ powers of $\log R$.}, we have that with $j\geq 3, k\geq 1$, 
\begin{align*}
&IS^2\big((\log R)^{l_j}, \mathcal{Q}_{2\beta (\lfloor \frac{j+1}{2}\rfloor-1)}\big)\cdot a^4IS\big(\big(\log R\big)^{3k- 5},\,\mathcal{Q}_{2\beta(k-1)}\big)\\
&\subset a^2\cdot \sum_{k'\geq 2}IS^4\big(\big(\log R\big)^{3(k+[k'-1]-1)},\,\mathcal{Q}_{2\beta(k+k'-1)}\big)
\end{align*}
Further, from steps 1 and 2 we recall that $v_1\in a^2 \cdot IS^4\big(1, \mathcal{Q}_1\big), v_2\in a^4IS\big(1, \mathcal{Q}_1\big)$, and we have 
\begin{align*}
&a^2 \cdot IS^4\big(1, \mathcal{Q}_1\big)\cdot a^4IS\big(\big(\log R\big)^{3k- 5},\,\mathcal{Q}_{2\beta(k-1)}\big)\\
&\subset a^6IS\big(\big(\log R\big)^{3k- 5},\,\mathcal{Q}_{2\beta(k-1)+1}\big)
\end{align*}
Using Taylor expansion and the preceding observations, we infer that 
\begin{align*}
t^2 E_2 &=  t^2\cdot \frac{4\cos(2Q)\cdot\big(\cos\big(2 \sum_{j=1}^{2k}v_j\big) - 1\big)}{r^2}\cdot w_{2k}\\
&\in  \sum_{k'\geq 2}IS^4\big(\big(\log R\big)^{3(k+[k'-1]-1)},\,\mathcal{Q}_{2\beta(k+k'-1)}\big)\\
& + IS^4\big(\big(\log R\big)^{3k- 5},\,\mathcal{Q}_{2\beta(k-1)+1}\big)
\end{align*}
If a term is in the first sum of spaces it can be included in $\sum_{l\geq 0}t^2e_{2k+l}$, but terms in the last space on the right are not good enough and we have to re-iterate application of Lemma~\ref{lem:abwaveeqnlemma} to its principal part, just as before. Sufficiently many iterations of this procedure lead to the desired correction $v_{2k}$ with an error term of the form $\sum_{l\geq 0}t^2e_{2k+l}$.
\\

The remaining error terms $t^2E_j, j = 3, 4, 5$, which are defined as in step 2, except $v_2$ is replaced by $w_{2k}$ and $v_1$ by $\sum_{j=1}^{2k-1}v_j$, are handled similarly. 
Consider for example $E_4$, which satisfies 
\[
t^2 E_4 = t^2\frac{2\sin(2Q)\cdot \sin\big(\sum_{j=1}^{2k-1}v_j\big)\cdot v_{2k}}{r^2}.
\]
Here we can use that 
\begin{align*}
\sin(2Q)\in IS^2\big(R^{-2}\big),
\end{align*}
and further in light of the preceding comments on the structure of $v_1, v_2, v_{k},\,k\geq 3$, we have 
\begin{align*}
 t^2\frac{2\sin(2Q)\cdot \sin\big(\sum_{j=1}^{2k-1}v_j\big)}{r^2}&\in a^{-2}\cdot  IS^2\big(R^{-2}\big)\cdot  a^2 \cdot IS^4\big(1, \mathcal{Q}_1\big)\\
 &\subset a^{-2}\cdot IS^4\big(1, \mathcal{Q}_{2\beta + 1}\big)
\end{align*}
We conclude that with 
\[
v_{2k}\in a^4IS\big(\big(\log R\big)^{3(k-1)},\,\mathcal{Q}_{2\beta(k-1)}\big),\,k\geq 1,
\]
we have 
\begin{align*}
t^2\frac{2\sin(2Q)\cdot \sin\big(\sum_{j=1}^{2k-1}v_j\big)\cdot v_{2k}}{r^2}\in a^2IS^4\big(\big(\log R\big)^{3(k-1)},\,\mathcal{Q}_{2\beta k + 1}\big),
\end{align*}
which is better than required for terms of the type $t^2 e_{2k},\,k\geq 1$. 
\\

The proof of Theorem~\ref{thm:approxoutersolnYManalogue} is now completed by iterating the preceding step 3 sufficiently many times such that the remaining error is a sum of terms of the type $e_j$ with $j$ sufficiently large. 
The fact that the differentiated bounds (applying powers of $S$) apply to both $v_N, e_N$ follow from the final property of the spaces in Definition~\ref{def:finalSspaceinvolvingQ}.

\end{proof}

Completion of the approximate solution to the exact solution asserted in Theorem~\ref{thm:outersolnYManalogue} requires a number of facts from spectral theory. Recall the fundamental system \eqref{eq:phi0theta0}.
The following proposition is the exact analogue of Proposition 4.5 in \cite{KST3}. Recall the operator $\tilde{\mathcal{L}}$ from \eqref{eq:operatortildemathcalL}. 
\begin{prop}\label{prop:Fourierbasis} 
The operator $\tilde{\mathcal{L}}$ restricted to $(0,\infty)$ is self-adjoint with domain
\[
\text{Dom}\big(\tilde{\mathcal{L}}\big) = \{f\in L^2\big((0,\infty)\big)\,\,|\,\,f,\,f'\in \text{AC}\big((0,\infty)\big),\,\tilde{\mathcal{L}}f\in L^2\big((0,\infty)\big)\}. 
\]
The spectrum of $\tilde{\mathcal{L}}$ equals $[0,\infty)$, and is purely absolutely continuous. \\
There is a fundamental system $\phi(R,z), \theta(R,z)$ for $\tilde{\mathcal{L}} - z$ and arbitrary $z\in \mathbb{C}$, which depends analytically on $(z, R)$ for $R>0$, and has the asymptotic behavior 
\[
\phi(R;z)\sim R^{\frac52},\,\theta(R,z)\sim \frac14R^{-\frac32}
\]
as $R\rightarrow 0$. 

For any $z\in \mathbb{C}$, the function $\phi(R, z)$ admits an absolutely convergent asymptotic expansion 
\[
\phi(R, z) = \phi_0(R) + R^{-\frac32}\cdot\sum_{j=1}^\infty \big(R^2z\big)^j\tilde{\phi}_j(R^2).
\]
The functions $\tilde{\phi}_j$ are holomorphic on the region $\Omega: = \{\big|\Im u\big|<\frac12\}$, and satisfy the point wise bounds 
\begin{align*}
\big|\tilde{\phi}_j(u)\big|\leq \frac{C^j}{j!}\cdot \frac{|u|^{2}}{\langle u\rangle},\,j\geq 1.
\end{align*}

\end{prop}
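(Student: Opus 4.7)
The plan is to follow the strategy of Proposition 4.5 in \cite{KST3}, to which $\tilde{\mathcal{L}}$ is structurally analogous, adapting the quantitative estimates to the specific potential $\frac{15}{4R^2} - \frac{32R^2}{(1+R^4)^2}$. Self-adjointness comes from Weyl's limit-point/limit-circle criterion: near $R = 0$ the potential is dominated by $\frac{15}{4R^2}$, whose coefficient exceeds $3/4$, so we are in the limit-point case at the origin (with indicial exponents $5/2$ and $-3/2$); near $R = \infty$ the perturbation decays like $R^{-6}$, giving limit-point there as well. Hence $\tilde{\mathcal{L}}$ restricted to $C_c^\infty(0,\infty)$ is essentially self-adjoint and its closure has the stated maximal domain. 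Non-negativity $\tilde{\mathcal{L}} \geq 0$ follows from a Darboux/Bogomolny factorization $\mathcal{L} = A^*A$ with $A = -\partial_R + \Phi'/\Phi$, using that $\Phi = \Lambda Q = 4R^2/(1+R^4)$ is a positive zero-energy solution of $\mathcal{L}$. The absolutely continuous spectrum fills $[0,\infty)$ by a standard limiting absorption argument, and the absence of embedded positive eigenvalues together with the absence of singular continuous spectrum follows from the rapid decay of the short-range piece of the potential via a Kato/Agmon argument.

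For the fundamental system, fix $z \in \mathbb{C}$. The ODE $(\tilde{\mathcal{L}} - z)\phi = 0$ is regular-singular at $R = 0$ with indicial exponents $5/2$ and $-3/2$, independent of $z$; since the $z$-dependence is linear, the Frobenius construction produces the unique local solutions $\phi(R,z) \sim R^{5/2}$ and $\theta(R,z) \sim \tfrac14 R^{-3/2}$, which extend to entire functions of $z$ and analytic functions of $R>0$ by standard ODE theory. To obtain the refined expansion of $\phi$, I would write $\phi(R,z) = \sum_{j\geq 0} z^j \phi_j(R)$ with $\phi_0 = \sqrt{R}\,\Phi$ the explicit zero-energy solution of $\tilde{\mathcal{L}}$, and determine $\phi_j$ inductively as the unique solution of $\tilde{\mathcal{L}}\phi_j = \phi_{j-1}$ vanishing like $R^{5/2+2j}$ at the origin. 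Variation of parameters against the fundamental system $(\phi_0,\theta_0)$ of \eqref{eq:phi0theta0} gives the explicit formula
\[
\phi_j(R) = \phi_0(R)\int_0^R \theta_0(s)\phi_{j-1}(s)\,ds \;-\; \theta_0(R)\int_0^R \phi_0(s)\phi_{j-1}(s)\,ds.
\]
Changing variables to $u = R^2$ and inspecting the explicit form of $\phi_0,\theta_0$, one checks by induction that $\phi_j(R) = R^{-3/2}R^{2j}\tilde{\phi}_j(R^2)$ for a holomorphic function $\tilde{\phi}_j$ of $u$.

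The main obstacle is propagating the bound $|\tilde{\phi}_j(u)| \leq \frac{C^j}{j!}\frac{|u|^2}{\langle u\rangle}$ holomorphically on the strip $\Omega = \{|\Im u| < 1/2\}$. The crucial structural observation is that the singularities of $\theta_0(R)$ occur at $R^4 = -1$, equivalently at $u = \pm i$, which lie outside $\Omega$; consequently $\phi_0$ and $\theta_0$, viewed as functions of $u$, are holomorphic on a neighborhood of $\overline{\Omega}$ modulo the explicit factors $R^{\pm 3/2}$ and the logarithm already accounted for in the ansatz. The inductive step then reduces to estimating the Volterra-type integrals above after converting $R\,dR = \tfrac12\,du$: the factor $1/j$ in the factorial bound is extracted from integrating the weight $R^{2j-1}\,dR = \frac{1}{2j}\,d(u^j)$, and the decay $1/\langle u\rangle$ is preserved because both kernels $\phi_0(R)\theta_0(s)$ and $\theta_0(R)\phi_0(s)$ are bounded on $\Omega\times\Omega$ by explicit universal constants. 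Summing the resulting geometric series then yields absolute convergence on compact subsets of $(0,\infty)\times\mathbb{C}$, which delivers the joint analyticity of $\phi(R,z)$ and the stated expansion with its uniform pointwise bounds.
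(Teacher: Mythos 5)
Your construction of the expansion is essentially the paper's own argument: the paper also writes $\phi(R,z)=\sum_j z^j\phi_j(R)$, solves $\tilde{\mathcal{L}}\phi_j=\phi_{j-1}$ by variation of constants against $(\phi_0,\theta_0)$, substitutes $u=R^2$, and runs an induction on the strip $|\Im u|<\tfrac12$ (chosen precisely because the only singularities, at $R^4=-1$, i.e.\ $u=\pm i$, lie outside it); the self-adjointness/spectral assertions you treat via Weyl's alternative and limiting absorption are simply quoted from \cite{KST3} in the paper, so your sketch there is acceptable extra detail.

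One point in your inductive step is stated incorrectly, though the induction itself is salvageable. The kernels $\phi_0(R)\theta_0(s)$ and $\theta_0(R)\phi_0(s)$ are \emph{not} bounded by universal constants: $\theta_0(R)\sim \tfrac{1}{16}R^{5/2}$ (up to logarithms) as $R\to\infty$, so after the substitution $u=R^2$, $v=s^2$ the kernel contains a piece of size roughly $|u|^2/\langle v\rangle^2$ (coming from the term $v^2u^4/(2v^2(1+u^2)(1+v^2))$), which grows in $u$. Consequently you cannot propagate the bound $|\tilde{\phi}_j(u)|\le \frac{C^j}{j!}\frac{|u|^2}{\langle u\rangle}$ directly by a "bounded kernel" argument. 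The correct bookkeeping — and the one the paper uses — is to propagate the bound for $\tilde f_j(u):=u^j\tilde\phi_j(u)$, namely $|\tilde f_j(u)|\le \frac{C^j}{j!}\frac{|u|^{2+j}}{\langle u\rangle}$: the extra growth $|u|^j$ absorbs the unbounded part of the kernel, the factor $1/j$ comes (as you say) from integrating the monomial weight $|v|^{j+1}$, and dividing by $u^j$ at the end yields the stated bound for $\tilde\phi_j$. With that adjustment your argument closes and coincides with the paper's proof.
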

\begin{proof} It follows closely the one in \cite{KST3}. Write 
\[
\phi(R,z) = \sum_{j=0}^\infty z^j\cdot \phi_j(R),
\]
where the coefficient functions $\phi_j$ solve the iterative system 
\[
\tilde{\mathcal{L}}\phi_j = \phi_{j-1},\,j\geq 1. 
\]
Write $\phi_j = R^{-\frac32}f_j$, $j\geq 0$, whence in particular $f_0 = \frac{R^4}{1+R^4}$. Using the fundamental system \eqref{eq:phi0theta0} to construct the Green's function, we infer the iterative relations 
\begin{align*}
&f_j(R) = \int_0^R R^{\frac32}\big(R'\big)^{-\frac32}\cdot \big(\phi_0(R)\theta_0(R') - \phi_0(R')\theta_0(R)\big)f_{j-1}(R')\,dR',\\
&f_0(R) = \frac{R^4}{1+R^4}.
\end{align*}
The inductive formula can be rendered explicit as 
\begin{align*}
f_j(R) &= \int_0^R\Big[R^4\big({-}1 + 8(R')^4\log R' + (R')^8\big)\\&\hspace{1.5cm} - (R')^4\big({-}1+8R^4\log R + R^8\big)\Big)\cdot \frac{f_{j-1}(R')R'}{(R')^4\big(1+R^4\big)(1+(R')^4\big)}\,dR'. 
\end{align*}
By the exact same argument as in \cite{KST3} one infers by induction that $f_j$ extends to a holomorphic function on any simply connected region which does not contain any fourth root of $-1$. Writing 
$f_j(R) = \tilde{f}_j(R^2)$, whence in particular $\tilde{f}_0(u) = \frac{u^2}{1+u^2}$, we obtain the inductive relation 
\begin{align*}
\tilde{f}_j(u) &= \int_0^u \Big(u^2\big({-}1 + 8v^2\log v + v^4\big)\\&\hspace{2cm} - v^2\big({-}1 + 8u^2\log u + u^4\big)\Big)\frac{\tilde{f}_{j-1}(v)}{2v^2(1+u^2)(1+v^2)}\,dv
\end{align*}
Restricting to $\big|\Im u\big|<\frac12$ in order to avoid a singularity, we then infer the bounds 
\begin{align*}
\big|\tilde{f}_j(u)\big|\leq \frac{C^j}{j!}\cdot \frac{|u|^{2+j}}{\langle u\rangle}, 
\end{align*}
for a suitable constant $C$, where $\langle u\rangle: = \sqrt{1+|u|^2}$, via induction. Finally using that 
\[
\tilde{\phi}_j(u) = u^{-j}\cdot \tilde{f}_j(u), 
\]
the bounds asserted in the proposition follow. 
\end{proof}

The preceding proposition is useful in the non-oscillatory region $R^2\xi\lesssim 1$, while in the oscillatory regime $R^2\xi\gg 1$, a different asymptotic expansion is required, as in the following Proposition, whose proof is identical to the one given in \cite{KST3}: 
\begin{prop}\label{prop:WeylTitchmarsh} For any $\xi>0$, the equation $(\tilde{\mathcal{L}} - \xi)f = 0$ admits a solution  $\psi^+(\cdot, \xi) = \psi^{+}(\cdot, \xi + i 0)$ arising as limit of the Weyl-Tichmarsh solutions on the upper half plane, and of the form 
\[
\psi^+(R,\xi) = \xi^{-\frac14}e^{iR\xi^{\frac12}}\sigma(R\xi^{\frac12}, R),\,R^2\xi\gtrsim 1, 
\]
where $\sigma = \sigma(q, R)$ admits an asymptotic series expansion of the form 
\[
\sum_{j=0}^\infty q^{-j}\psi^+_j(R),\,\psi_0^+ = 1,\,\psi_1^+ = \frac{15i}{8} + O\big(\langle R\rangle^{-2}\big),
\]
with zero order symbols $\psi_j^+(R)$ that are analytic at infinity, in the sense that 
\begin{align*}
\sup_{R>0}\Big|\big(R\partial_R\big)^{\alpha}\big(q\partial_q\big)^{\beta}\big[\sigma(q, R) - \sum_{j=0}^{j_0}q^{-j}\psi_j^+(R)\big]\Big|\leq c_{\alpha,\beta,j_0}q^{-j_0 - 1}. 
\end{align*}
Writing 
\begin{align*}
\phi(R;\xi) = a(\xi)\psi^+(R,\xi) + \overline{a(\xi)\psi^+(R,\xi)},
\end{align*}
the function $a(\cdot)$ is smooth on $(0,\infty)$,  always non zero, and we have 
\begin{align*}
&\big|a(\xi)\big|\sim 1,\,0<\xi\lesssim 1,\\
&\big|a(\xi)\big|\sim \xi^{-1},\,\xi\gtrsim 1.
\end{align*}
The absolutely continuous part of the spectral measure has density 
\[
\rho(\xi) = \frac{1}{\pi}\big|a(\xi)\big|^{-2},
\]
and hence obeys the asymptotic relations 
\begin{align*}
&\big|\rho(\xi)\big|\sim 1,\,0<\xi\lesssim 1,\\
&\big|\rho(\xi)\big|\sim \xi^{2},\,\xi\gtrsim 1.
\end{align*}
\end{prop}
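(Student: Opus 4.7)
The approach is to build $\psi^+(R,\xi)$ via the oscillatory ansatz and match it against the analytic fundamental solution $\phi(R;\xi)$ from Proposition~\ref{prop:Fourierbasis}. First, I would substitute the ansatz $\psi^+(R,\xi) = \xi^{-\frac14}e^{iR\xi^{\frac12}}\sigma(q,R)$, $q = R\xi^{\frac12}$, into $(\tilde{\mathcal{L}}-\xi)\psi^+=0$. Using $\partial_R = \xi^{\frac12}\partial_q + \partial_R|_q$, the exponential factor cancels the leading $-\xi$ and the equation reduces to a transport-type equation for $\sigma$ of the schematic form
\begin{equation*}
2i\xi^{\frac12}\bigl(\partial_q + \tfrac{\xi^{-\frac12}}{2}\partial_R\bigr)\sigma = -\partial_R^2\sigma - \partial_q^2\sigma\cdot\xi - 2\xi^{\frac12}\partial_q\partial_R\sigma + V(R)\sigma,
\end{equation*}
where $V(R) = \frac{15}{4R^2} - \frac{32R^2}{(1+R^4)^2}$. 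Inserting the formal series $\sigma(q,R) = \sum_{j\geq 0} q^{-j}\psi_j^+(R)$ and collecting powers of $q$ generates the recursion
\begin{equation*}
2i\,\partial_R\psi_{j+1}^+ = -\partial_R^2\psi_j^+ + V(R)\psi_j^+ + (\text{lower-order terms in }\psi_{\leq j}),
\end{equation*}
with $\psi_0^+ \equiv 1$ as an obvious solution. Integration in $R$ yields smooth coefficients that are analytic at infinity, and the leading correction $\psi_1^+(R) = \frac{15i}{8} + O(\langle R\rangle^{-2})$ emerges by inspection of the coefficient of $R^{-2}$ in $V$.

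To control the tails, I would truncate at order $j_0$ and apply a Volterra iteration on $[R_0,\infty)$ with $R_0\xi^{\frac12}$ large, exactly as in \cite{KST3}: bounding the remainder $\sigma - \sum_{j=0}^{j_0}q^{-j}\psi_j^+$ by the operator norm of an integral operator whose kernel decays like $q^{-2}$. This establishes the claimed symbol-type bounds, with derivatives in $R\partial_R$ and $q\partial_q$ handled by differentiating under the Volterra iteration. The uniqueness of the Weyl--Titchmarsh solution at $\xi + i0$, combined with the absence of an $L^2$ solution at $R=\infty$ (granted by the fact that $\tilde{\mathcal{L}}$ is limit-point at $\infty$), identifies the constructed $\psi^+$ as the correct boundary value.

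Next, I would relate $\psi^+$ to $\phi$. Since $\phi(R;\xi)$ and $\bar\phi(R;\xi) = \phi(R;\xi)$ (for real $\xi>0$ with the choice of real power-series coefficients made in Proposition~\ref{prop:Fourierbasis}) form one basis, while $\{\psi^+, \overline{\psi^+}\}$ forms another, we can write $\phi(R;\xi) = a(\xi)\psi^+(R,\xi) + \overline{a(\xi)\psi^+(R,\xi)}$. Taking the Wronskian with $\overline{\psi^+}$ gives
\begin{equation*}
a(\xi) = \frac{W\bigl(\phi(\cdot;\xi), \overline{\psi^+(\cdot,\xi)}\bigr)}{W\bigl(\psi^+(\cdot,\xi), \overline{\psi^+(\cdot,\xi)}\bigr)},
\end{equation*}
and a direct computation from the asymptotic form of $\psi^+$ shows $W(\psi^+,\overline{\psi^+}) = -2i$. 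The smoothness and non-vanishing of $a(\xi)$ follow from the analytic dependence of $\phi(R;z)$ on $z$ established in Proposition~\ref{prop:Fourierbasis}; non-vanishing crucially uses the absence of eigenvalues embedded in the continuous spectrum, which is guaranteed by $\tilde{\mathcal{L}}\geq 0$ (the potential $V(R)$ yields a non-negative operator by the factorization inherent in linearizing around $Q$) together with the limit-point nature at infinity.

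Finally, the size bounds $|a(\xi)|\sim 1$ for $\xi\lesssim 1$ and $|a(\xi)|\sim \xi^{-1}$ for $\xi\gtrsim 1$ are obtained by evaluating the Wronskian at a convenient matching scale $R\sim \xi^{-\frac12}$: for small $\xi$, the expansion $\phi(R,\xi) = \phi_0(R) + O(R^2\xi)$ from Proposition~\ref{prop:Fourierbasis} matches against $\psi^+ \sim \xi^{-\frac14}$ to give $a(\xi) = O(1)$ bounded below, while for large $\xi$ the matching is performed near $R\sim 1$ where $\phi(R,\xi)$ is $O(1)$ in size but $\psi^+$ carries an $\xi^{-\frac14}$ prefactor, and one additional factor of $\xi^{-\frac12}$ appears from balancing derivatives near $R=0$ (the $R^{5/2}$ behavior of $\phi$ versus $R^{-3/2}$ of $\theta$). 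The spectral density formula $\rho(\xi) = \frac{1}{\pi}|a(\xi)|^{-2}$ is then standard Weyl--Titchmarsh theory for self-adjoint Sturm--Liouville operators with one regular and one limit-point endpoint. The main obstacle I anticipate is the non-vanishing of $a(\xi)$ across the full positive axis, since one needs to rule out resonances; as in \cite{KST3}, this is handled through the non-negativity of $\tilde{\mathcal{L}}$ and the real-analytic dependence of $\phi(R;z)$ on $z$.
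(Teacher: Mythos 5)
Your proposal is correct and follows essentially the same route as the paper, which proves this proposition simply by invoking the argument of \cite{KST3} verbatim (oscillatory ansatz $\xi^{-\frac14}e^{iR\xi^{1/2}}\sigma(q,R)$ with a formal symbol expansion and Volterra-type control of the remainder, Wronskian identification of $a(\xi)$ with $W(\psi^+,\overline{\psi^+})=-2i$, matching of the two representations at $R\sim\xi^{-1/2}$ for the asymptotics of $a$, and standard Weyl--Titchmarsh theory for $\rho=\frac{1}{\pi}|a|^{-2}$), noting only that $\psi_1^+$ has the same leading behavior as in the Yang--Mills case because the leading part of $\tilde{\mathcal{L}}$ agrees with the operator there. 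The only cosmetic remark is that the non-vanishing of $a(\xi)$ is immediate from the representation $\phi=a\psi^++\overline{a\psi^+}$ (since $a(\xi)=0$ would force $\phi\equiv 0$), so no separate argument about embedded eigenvalues or resonances is needed for $\xi>0$.
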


The proof of this is exactly like the one in \cite{KST3}. We note that $\psi_1^+$ has the same leading order asymptotic as in the Yang-Mills case since the leading order part of the operator $\tilde{L}$ agrees with the one of the operator occurring in \cite{KST3}.  
\\

Using the distorted Fourier representation 
\begin{align*}
\tilde{\epsilon}(\tau, R) = x_0(\tau)\phi_0(R) + \int_0^\infty x(\tau,\xi)\phi(R;\xi)\rho(\xi)\,d\xi, 
\end{align*}
we need to express the operator $R\partial_R$ in terms of the distorted Fourier coefficients. The fact that there is both continuous as well as discrete spectrum requires the introduction of a matrix valued operator $\mathcal{K}$: write 
\begin{align*}
&\widehat{(R\partial_R)u} = -2\xi\partial_{\xi}\hat{u} + \mathcal{K}\hat{u},\,\hat{u} = \left(\begin{array}{c}x_0\\ x\end{array}\right),\\
&\mathcal{K} = \left(\begin{array}{cc}\mathcal{K}_{ee}& \mathcal{K}_{ec}\\\mathcal{K}_{ce}&\mathcal{K}_{cc}\end{array}\right),
\end{align*}
where the entries are either operators or functions (which act by multiplication), as in the following list, keeping in mind \eqref{eq:phi0theta0}:
\begin{align*}
&\mathcal{K}_{ee} = \langle R\partial_R\phi_0(R),\,\phi_0(R)\rangle_{L^2_{dR}}\\
& \mathcal{K}_{ec}f = \langle\int_0^\infty f(\xi)R\partial_R\phi(R;\xi)\rho(\xi)\,d\xi,\,\phi_0(R)\rangle_{L^2_{dR}}\\
& \mathcal{K}_{ce}(\eta) = \langle  R\partial_R\phi_0(R),\,\phi(R;\eta)\rangle_{L^2_{dR}}\\
&\mathcal{K}_{cc}f(\eta) =  \langle\int_0^\infty f(\xi)R\partial_R\phi(R;\xi)\rho(\xi)\,d\xi,\,\phi(R;\eta)\rangle_{L^2_{dR}}\\
&\hspace{1.3cm}+  \langle\int_0^\infty 2\xi\partial_{\xi}f(\xi)R\partial_R\phi(R;\xi)\rho(\xi)\,d\xi,\,\phi(R;\eta)\rangle_{L^2_{dR}}\\
\end{align*}
Using integration by parts, we can directly compute 
\begin{align*}
 \langle R\partial_R\phi_0(R),\,\phi_0(R)\rangle_{L^2_{dR}} = -\frac12\big\|\phi_0\big\|_{L^2_{dR}}^2 = -\frac{3\pi}{32}. 
\end{align*}
On the other hand, the operator $ \mathcal{K}_{ec}: L^2_{\rho\,d\xi}\longrightarrow \mathbb{C}$ is given by integration against the kernel $K_e(\xi) = \rho(\xi)\cdot K_e(\xi)$, where 
\[
K_e(\xi) = -\langle R\partial_R\phi_0(R),\,\phi(R;\xi)\rangle_{L^2_{dR}}. 
\]
Using the commutation relation 
\begin{align*}
\big[\tilde{\mathcal{L}}, R\partial_R\big] = 2\tilde{\mathcal{L}} - \big(R\partial_R\big)\Big(\frac{32 R^2}{(1+R^4)^2}\Big), 
\end{align*}
as well as repeated integration by parts, one infers that $K_e$ is rapidly decaying towards $\xi = +\infty$. Of course we also have 
\[
 \mathcal{K}_{ce} = -K_e.
 \]
 The operator $\mathcal{K}_{cc}$, which is the most delicate, can be analyzed in close analogy to \cite{KST3}:
 \begin{prop}\label{prop:Kcctransference} We can write 
 \begin{align*}
 \mathcal{K}_{cc} = -\big(\frac32 + \frac{\eta\rho'}{\rho}\big)\delta(\xi - \eta) + \mathcal{K}_0,
 \end{align*}
 where the operator $\mathcal{K}_0$ is given by integration against the kernel
 \[
 K_0(\eta, \xi) = \frac{\rho(\xi)}{\eta - \xi}F(\xi, \eta),
 \]
 where the function $F$ is of regularity $C^2$ on $(0, \infty)\times (0,\infty)$, continuous on the closure of this set, and satisfies the bounds stated in \cite{KST3}. 
  \end{prop}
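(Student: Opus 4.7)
The plan is to follow the derivation of the analogous transference formula in \cite{KST3} by exploiting the commutator identity displayed just above the statement, namely
\[
[\tilde{\mathcal{L}}, R\partial_R] = 2\tilde{\mathcal{L}} - (R\partial_R)\Bigl(\tfrac{32R^2}{(1+R^4)^2}\Bigr).
\]
Applying both sides to $\phi(R;\xi)$ and using $\tilde{\mathcal{L}}\phi(R;\xi) = \xi\phi(R;\xi)$ yields
\[
(\tilde{\mathcal{L}} - \eta)\bigl(R\partial_R\phi(R;\xi)\bigr) = (\xi - \eta)R\partial_R\phi(R;\xi) + 2\xi\phi(R;\xi) - W(R)\phi(R;\xi),
\]
where $W(R) := (R\partial_R)\bigl(\tfrac{32R^2}{(1+R^4)^2}\bigr)$ decays like $R^{-6}$ at infinity and vanishes to fourth order at the origin. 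The first step is to pair this identity with $\phi(R;\eta)$ in $L^2_{dR}$ on $(0,\infty)$; since $(\tilde{\mathcal{L}} - \eta)\phi(R;\eta) = 0$, integration by parts leaves only a boundary contribution $\mathcal{B}(\xi,\eta)$ at $R = \infty$ (the $R = 0$ endpoint vanishes thanks to the normalization $\phi(R;z) \sim R^{5/2}$), so that for $\xi \ne \eta$,
\[
\langle R\partial_R\phi(\cdot;\xi),\phi(\cdot;\eta)\rangle_{L^2_{dR}} = \frac{\mathcal{B}(\xi,\eta) - 2\xi\langle\phi(\cdot;\xi),\phi(\cdot;\eta)\rangle_{L^2_{dR}} + \langle W\phi(\cdot;\xi),\phi(\cdot;\eta)\rangle_{L^2_{dR}}}{\xi - \eta}.
\]

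Next, the three contributions are analyzed separately. The term involving $W$ is immediate: because $W$ is smooth and rapidly decaying, the integral $\langle W\phi(\cdot;\xi),\phi(\cdot;\eta)\rangle$ is real-analytic in $(\xi,\eta)$ on $(0,\infty)^2$, with the uniform bounds inherited from Propositions \ref{prop:Fourierbasis} and \ref{prop:WeylTitchmarsh}; this is one of the two pieces that populates the off-diagonal kernel $F(\xi,\eta)$. The orthogonality relation for the continuous spectrum gives $\langle\phi(\cdot;\xi),\phi(\cdot;\eta)\rangle = \rho(\xi)^{-1}\delta(\xi-\eta)$, so the middle term contributes only on the diagonal and must be combined with the delta-component already present in $\mathcal{B}(\xi,\eta)$. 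The boundary term $\mathcal{B}(\xi,\eta)$ itself is computed by replacing $\phi(R;\xi)$ and $\phi(R;\eta)$, for $R$ large, by their Weyl--Titchmarsh representation $\phi(R;\xi) = a(\xi)\psi^+(R,\xi) + \overline{a(\xi)\psi^+(R,\xi)}$ from Proposition \ref{prop:WeylTitchmarsh}, and expanding $\sigma(R\xi^{1/2},R) = 1 + (R\xi^{1/2})^{-1}\psi_1^+(R) + \cdots$. The non-oscillatory cross terms produce a distribution proportional to $\delta(\xi-\eta)$ via a Plemelj--Sokhotski argument, while the oscillatory cross terms produce a smooth off-diagonal piece after integration by parts.

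Collecting everything and solving for the kernel of $\mathcal{K}_{cc}$, the coefficient of $\delta(\xi-\eta)$ is determined by the leading symbol $\psi_1^+ = \tfrac{15i}{8} + O(\langle R\rangle^{-2})$, the factor $2\xi$ from the commutator identity, and the spectral weight $\rho(\xi)$; together these reproduce $-(\tfrac{3}{2} + \tfrac{\eta\rho'}{\rho})$ exactly as in \cite{KST3}, which is consistent because the leading parts of $\tilde{\mathcal{L}}$ and of the operator appearing in the Yang--Mills case coincide. The $C^2$ regularity of the resulting $F(\xi,\eta)$ on $(0,\infty)^2$ and its continuity up to the boundary follow from the uniform control over the expansions of $\phi(R;\xi)$ in the two regimes $R^2\xi \lesssim 1$ and $R^2\xi \gtrsim 1$ provided by Propositions \ref{prop:Fourierbasis} and \ref{prop:WeylTitchmarsh}, together with the rapid decay of $W$, which localizes the $W$-integral and, via integration by parts against the oscillatory factor $\psi^+(R,\xi)\overline{\psi^+(R,\eta)}$, yields the required pointwise bounds.

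The main obstacle is the clean extraction of the distributional diagonal term from the boundary contribution $\mathcal{B}(\xi,\eta)$: this requires a careful regularization $\eta \to \eta + i0$, application of the Plemelj--Sokhotski formula to the oscillatory Wronskian at $R = \infty$, and a bookkeeping that correctly combines the $-2\xi\langle\phi(\cdot;\xi),\phi(\cdot;\eta)\rangle$ delta with the boundary-at-infinity delta, all while accounting for the fact that $-2\eta\partial_\eta$ has already been extracted as the transport part of $\mathcal{K}$. This bookkeeping is what produces the precise coefficient $-(\tfrac{3}{2} + \tfrac{\eta\rho'}{\rho})$, and it is the most delicate part of the argument; everything else is a faithful transcription of the calculation in \cite{KST3} to the present operator.
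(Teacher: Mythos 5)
Your proposal is correct and takes essentially the same route as the paper, which simply defers to the proof of Theorem 5.1 in \cite{KST3} run with the modified commutator relation $[\tilde{\mathcal{L}}, R\partial_R] = 2\tilde{\mathcal{L}} - (R\partial_R)\bigl(\tfrac{32R^2}{(1+R^4)^2}\bigr)$, whose potential terms decay faster than their Yang--Mills counterparts; your sketch is a faithful outline of that argument, including the delicate Plemelj-type extraction of the diagonal delta term and the identification of the coefficient $-\bigl(\tfrac32 + \tfrac{\eta\rho'}{\rho}\bigr)$ via the matching leading symbols. (One cosmetic slip: $W(R)=(R\partial_R)\bigl(\tfrac{32R^2}{(1+R^4)^2}\bigr)$ vanishes only to second order at $R=0$, not fourth, which is immaterial since the vanishing of the boundary term there rests on $\phi(R;z)\sim R^{5/2}$.)
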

  
  The proof of this follows exactly the steps in [KST], Theorem 5.1, except that one uses the relation 
  \[
  \big[\tilde{\mathcal{L}}, R\partial_R\big] = 2\tilde{\mathcal{L}} - \frac{64 R^2}{(1+R^4)^2} + \frac{256 R^6}{(1+R^4)^3}. 
  \]
  The last two terms here have in fact a better decay rate than the corresponding terms in \cite{KST3}. 
\\

Following loc.cit., we then deduce the following important mapping bounds:
\begin{prop}\label{prop:transferencemappingbounds} The operators 
\[
\mathcal{K}_0, \mathcal{K}
\]
have the following mapping properties 
\[
\mathcal{K}_0: L^{2,\alpha}_{\rho}\longrightarrow L^{2,\alpha+\frac12}_{\rho},\,\mathcal{K}: L^{2,\alpha}_{\rho}\longrightarrow L^{2,\alpha}_{\rho}.
\]
for arbitrary $\alpha\in \mathbb{R}$. We also have the commutator bound 
\[
\big[\mathcal{K}, \xi\partial_{\xi}\big]: L^{2,\alpha}_{\rho}\longrightarrow L^{2,\alpha}_{\rho},
\]
where $\xi\partial_\xi$ only acts non-trivially on the continuous spectral part. 
\end{prop}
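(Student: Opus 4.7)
The plan is to reduce the proof to analyzing the continuous-continuous block $\mathcal{K}_{cc}$. The diagonal entry $\mathcal{K}_{ee}$ is multiplication by the scalar $-3\pi/32$ and requires no argument; the off-diagonal blocks $\mathcal{K}_{ec}$ and $\mathcal{K}_{ce}$ are integration against the kernel $\langle R\partial_R\phi_0,\phi(R;\xi)\rangle_{L^2_{dR}}\rho(\xi)$ (or its transpose), which by iterated integration by parts using $[\tilde{\mathcal{L}},R\partial_R]$ and the eigenrelation $\tilde{\mathcal{L}}\phi(\cdot,\xi)=\xi\phi(\cdot,\xi)$ is rapidly decaying at $\xi=\infty$, hence smoothing between $\mathbb{C}$ and every $L^{2,\alpha}_\rho$. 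The main work is therefore to analyze $\mathcal{K}_{cc}=m(\xi)\delta(\xi-\eta)+\mathcal{K}_0$ with $m(\xi)=-\tfrac{3}{2}-\xi\rho'(\xi)/\rho(\xi)$.

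For the multiplier, Proposition~\ref{prop:WeylTitchmarsh} gives $\rho\sim 1$ near zero and $\rho\sim\xi^2$ at infinity, with smooth symbolic transitions inherited from the expansion of $\psi^+$. Consequently $\xi^k\partial_\xi^k m$ is bounded for every $k\geq 0$, so multiplication by $m$ is bounded on every $L^{2,\alpha}_\rho$, and the identity $[\xi\partial_\xi,m]=\xi m'(\xi)$ produces another bounded multiplier, which handles the multiplier contribution to both the boundedness and the commutator statement.

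The heart of the proof is the kernel operator $\mathcal{K}_0$ with $K_0(\eta,\xi)=\rho(\xi)F(\xi,\eta)/(\eta-\xi)$. Following \cite{KST3}, I would conjugate the claimed $L^{2,\alpha}_\rho\to L^{2,\alpha+1/2}_\rho$ bound to an unweighted $L^2(d\xi)$ bound for the symmetric kernel
\[
\widetilde{K}(\eta,\xi)=\langle\eta\rangle^{\alpha+1/2}\rho(\eta)^{1/2}\,\frac{F(\xi,\eta)}{\eta-\xi}\,\rho(\xi)^{1/2}\langle\xi\rangle^{-\alpha},
\]
and decompose via a smooth cut-off into a diagonal piece $|\eta-\xi|\lesssim\min(\xi,\eta)$ and an off-diagonal piece. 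The diagonal piece has Calder\'on--Zygmund structure: the principal-value singularity $1/(\eta-\xi)$ is modulated by a symbol that is $C^2$ and satisfies the size and first-derivative estimates on $F$ quoted from \cite{KST3}, so $L^2$-boundedness follows from the $T(1)$ theorem, or equivalently from a direct cancellation argument exploiting the symmetry of the principal value. The off-diagonal piece is a Schur-test estimate, where the smoothness of $F$ away from the diagonal furnishes the decay in $|\eta-\xi|$ needed to absorb the extra weight $\langle\eta\rangle^{1/2}$. Combining both pieces gives $\mathcal{K}_0:L^{2,\alpha}_\rho\to L^{2,\alpha+1/2}_\rho$, and together with the bounded multiplier $m$ and the smoothing cross-blocks this yields $\mathcal{K}:L^{2,\alpha}_\rho\to L^{2,\alpha}_\rho$.

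For the commutator, integration by parts in $\xi$ against a test function gives
\[
[\mathcal{K}_0,\xi\partial_\xi]f(\eta)=-\int\bigl[(\xi\partial_\xi+\eta\partial_\eta+1)K_0(\eta,\xi)\bigr]f(\xi)\,d\xi.
\]
Since $1/(\eta-\xi)$ is homogeneous of degree $-1$, the Euler vector field $\xi\partial_\xi+\eta\partial_\eta$ acts on it as multiplication by $-1$, so the derivatives effectively fall on $\rho(\xi)F(\xi,\eta)$. Using boundedness of $\xi\rho'/\rho$ and the $C^2$ regularity of $F$, the commutator kernel has exactly the structure of $K_0$ with $F$ replaced by a new symbol $\widetilde{F}$ still satisfying the \cite{KST3}-type bounds, so the $\mathcal{K}_0$-analysis applies and produces $[\mathcal{K}_0,\xi\partial_\xi]:L^{2,\alpha}_\rho\to L^{2,\alpha+1/2}_\rho\hookrightarrow L^{2,\alpha}_\rho$. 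The main obstacle throughout is to verify that the Calder\'on--Zygmund estimates and Schur bounds apply uniformly down to the spectral threshold $\xi=0$ where $\rho$ transitions between its two asymptotic regimes; as in \cite{KST3}, this is handled by the symbolic smoothness asserted in Propositions~\ref{prop:WeylTitchmarsh} and \ref{prop:Kcctransference}.
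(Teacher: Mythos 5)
Your outline is essentially the argument the paper itself relies on: the paper gives no self-contained proof of this proposition but defers to \cite{KST3}, where precisely your scheme (weight conjugation, a near-diagonal Calder\'on--Zygmund/Hilbert-transform piece plus an off-diagonal Schur test for $\mathcal{K}_0$, boundedness of the multiplier $-\tfrac32-\xi\rho'/\rho$, rapid decay of the cross blocks, and the Euler-homogeneity computation for the commutator) is carried out, using the structure of $\mathcal{K}_{cc}$ established in Proposition~\ref{prop:Kcctransference}. One small imprecision: the off-diagonal decay that absorbs the extra weight $\langle\eta\rangle^{1/2}$ comes from the quantitative bounds on $F$ quoted from \cite{KST3}, not from mere smoothness of $F$ away from the diagonal, but since those are exactly the bounds the paper invokes, your proof matches the intended one.
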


Finally we need to implement the fixed point argument to construct the final perturbation. Write the equation for to $\epsilon$ as follows:
\begin{equation}\label{eq:epsfinalequation}\begin{split}
\big({-}\partial_{tt} + \partial_{rr} +\frac1r \partial_r - \frac{4\cos\big(2Q(R)\big)}{r^2} + \frac{8\sin(2Q(R))}{r^2}v_{10}\big)\epsilon = \mathcal{N}(\epsilon) + \chi_{r\lesssim t}e_N, 
\end{split}\end{equation}
where we write the source term as (here $v_{app} = v_{10} + v_N$, $Q = Q(R) = Q(\lambda_2(t)r)$)
\begin{align*}
&\mathcal{N}(\epsilon) = \sum_{j=1}^3\mathcal{N}_j(\epsilon),\\
&\mathcal{N}_1(\epsilon) = \frac{2\sin[2(Q+v_{app})]\cdot \big(\cos(2\epsilon) - 1\big)}{r^2},\\
& \mathcal{N}_2(\epsilon) = \frac{2\cos[2(Q+v_{app})]\cdot \big(\sin(2\epsilon) - 2\epsilon\big)}{r^2},\\
& \mathcal{N}_3(\epsilon) = \frac{4\cos\big(2(Q+v_{app})\big)+8\sin(2Q)v_{10}-4\cos\big(2Q(R)\big)}{r^2}\epsilon. 
\end{align*}
We note that the time dependent term 
\[
 \frac{8\sin(2Q(R))}{r^2}v_{10}\epsilon,
\]
when multiplied by $\lambda_2^{-2}$, admits the explicit form 
\begin{align*}
(\lambda_2 t)^{-2}\cdot \frac{32(1-R^2)}{(1+R^4)^2}\cdot \epsilon. 
\end{align*}
This term plays a role analogous to the one of 
\[
12\omega^2\frac{R^2(1-R^2)}{(1+R^2)^3}\epsilon
\]
in \cite{KST3}. 
\\

In the following, we take advantage of the new time variable 
\begin{equation}\label{eq:tautimeinlambda2setting}
\tau: = \int_{t}^{t_0} \lambda_2(s)\,ds\sim \big|\log t\big|^{\beta+1}, 
\end{equation}
as well as the parameter 
\begin{equation}\label{eq:omegainlambda2setting}
\omega = \frac{\lambda_{2,\tau}}{\lambda_2}. 
\end{equation}
We shall restrict $t_0 = t_0(\beta)>0$ to be sufficiently small at the end. 
We pass to the variable $\tilde{\epsilon} = R^{\frac12}\cdot \epsilon$, where we think of $\epsilon$ as a function of $\tau, R$, to derive the equation 
\begin{equation}\label{eq:tildeepsagain}
\Big[{-}\big(\partial_\tau + \frac{\lambda_{\tau}}{\lambda}R\partial_R\big)^2 + \frac12\dot{\omega} + \frac14 \omega^2 - \tilde{\mathcal{L}}+(\lambda_2 t)^{-2}\cdot \frac{32(1-R^2)}{(1+R^4)^2}\Big]\tilde{\epsilon} = \lambda_2^{-2}R^{\frac12}\mathcal{N}\big(\epsilon\big)
\end{equation}
In order to complete a setup which allows us to replicate the method from \cite{KST3}, we need to introduce 
\begin{align*}
\mathcal{F}\big((\lambda_2 t)^{-2}\cdot \frac{32(1-R^2)}{(1+R^4)^2}\tilde{\epsilon}\big) =:\mathcal{J}\big(\mathcal{F}\tilde{\epsilon}\big),\,\mathcal{J} = \left(\begin{array}{cc}\mathcal{J}_{ee}& \mathcal{J}_{ec}\\
 \mathcal{J}_{ce}&  \mathcal{J}_{cc}\end{array}\right).
\end{align*}
In analogy to the operator $\mathcal{K}_{nd}$ before, one verifies that $\mathcal{J}_{ee} = \gamma_*$ for some $\gamma_*\in \mathbb{R}$, 
\[
 \mathcal{J}_{ec}x = \int_0^\infty \rho(\xi)J_e(\xi)x(\xi)\,d\xi,
 \]
 where the function $J_e(\xi)$ is smooth and decays rapidly for large $\xi$, while $ \mathcal{J}_{ce}$ is given by multiplication with $J_e$. Finally, in analogy to $\mathcal{K}_0$, we have the integral representation 
 \begin{align*}
  \mathcal{J}_{cc} = \int_0^\infty \rho(\xi)J_{cc}(\xi,\eta)x(\eta)\,d\eta, 
 \end{align*}
 where the function $J_{cc}(\xi,\eta)$ is of regularity $C^2$ and enjoys the off-diagonal decay 
 \[
 \big|J_{cc}(\xi,\eta)\big|\lesssim (1+\xi)^{-\frac12}\cdot \big(1+|\xi^{\frac12} - \eta^{\frac12}|\big)^{-N}. 
 \]
 We have now all the ingredients that are required, precisely following the method detailed in \cite{KST3}, to prove the required existence result for a solution of \eqref{eq:epsfinalequation}. For this we need to specify the space in which the solution $\epsilon$ lives, which we do via the following norm: 
\begin{equation}\label{eq:epsnorm}
\big\|\epsilon\big\|_{H^1_N}: = \sup_{0<t<t_0}\big|\log t\big|^{N-\beta-1}\big(\big\|L_t^{\frac12}\epsilon\big\|_{L^2_{r\,dr}} + \big\|\partial_t\epsilon\big\|_{L^2_{r\,dr}} + \lambda_2(t)\big|\log t\big|^{-\beta}\big\|\epsilon\big\|_{L^2_{r\,dr}}\big). 
\end{equation}
We also have the $L^2$-based norm 
\begin{equation}\label{eq:L2typenorm}
\big\|f\big\|_{L^2_N}: = \sup_{0<t<t_0}\lambda_2^{-1}(t)\cdot \big|\log t\big|^N\cdot \big\|f(t,\cdot)\big\|_{L^2_{r\,dr}}. 
\end{equation}

We can then formulate the following main result 
\begin{thm}\label{thm:epssolutioncompletion} The equation \eqref{eq:epsfinalequation} admits a solution which is $C^\infty$ in the interior of the light cone centered at $(0,0)$, and which satisfies the bound 
\begin{align*}
\big\|\epsilon\big\|_{X}: = \big\|\epsilon\big\|_{H^1_{N_0}} + \big\|S\epsilon\big\|_{H^1_{N_1}} + \big\|S^2\epsilon\big\|_{H^1_{N_2}}\ll 1,
\end{align*}
where $S = t\partial_t + r\partial_r$, $N_0\gg N_1\gg N_2$, and the parameter $N$ from Theorem~\ref{thm:approxoutersolnYManalogue} is chosen sufficiently large, $N\gg N_0$. 
\end{thm}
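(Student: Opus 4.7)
The plan is to implement a variant of the parametrix / contraction scheme of \cite{KST3} in the distorted Fourier representation already assembled in Propositions~\ref{prop:Fourierbasis}--\ref{prop:transferencemappingbounds}. First I would rewrite \eqref{eq:epsfinalequation} in the $(\tau,R)$ coordinates with $\tilde\epsilon=R^{1/2}\epsilon$ to arrive at \eqref{eq:tildeepsagain}, and then apply the distorted Fourier transform $\mathcal{F}$ adapted to $\tilde{\mathcal{L}}$ to both sides. Writing $\hat{\tilde\epsilon}=(x_0(\tau),x(\tau,\xi))$ and using $\widehat{(R\partial_R)u}=(-2\xi\partial_\xi+\mathcal{K})\hat u$ together with the representation of the $v_{10}$ induced potential by $\mathcal{J}$, the equation becomes the vector valued transport system
\[
\bigl[-(\partial_\tau+\omega(-2\xi\partial_\xi+\mathcal{K}))^2+\tfrac12\dot\omega+\tfrac14\omega^2-\xi+\mathcal{J}\bigr]\hat{\tilde\epsilon}=\lambda_2^{-2}\,\mathcal{F}\bigl(R^{1/2}[\mathcal{N}(\epsilon)+\chi_{r\lesssim t}e_N]\bigr),
\]
where on the discrete component $\xi$ is set to $0$.

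Next I would construct a parametrix for the principal linear part exactly as in Sections~6--7 of \cite{KST3}. Changing variables along the characteristics of the transport $\partial_\tau-2\omega\xi\partial_\xi$ reduces the equation to a family of oscillators in $\tau$ of frequency $\sim\xi^{1/2}$, and variation of parameters produces a linear solution operator $\mathcal{T}$. The contributions of $\mathcal{K}$ and of $\mathcal{J}$ are treated as perturbations: by Proposition~\ref{prop:transferencemappingbounds} they act boundedly on the weighted spaces $L^{2,\alpha}_\rho$, and they are multiplied by $\omega(\tau)\sim(\beta+1)|\log t|^{-1}$ relative to the main transport, yielding a small factor at each iteration. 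The smoothing inherent in the oscillatory Duhamel integrals then gives $\mathcal{T}:L^2_N\to H^1_N$ with the correct weights, and the source bound $\|\chi_{r\lesssim t}e_N\|_{L^2_N}\lesssim|\log t_0|^{-\beta}$ coming from Theorem~\ref{thm:approxoutersolnYManalogue} (once $N$ is chosen sufficiently large) makes $\mathcal{T}(\chi_{r\lesssim t}e_N)$ arbitrarily small in $H^1_{N_0}$.

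With the linear parametrix in hand, the exact solution is produced by a contraction mapping for $\epsilon=\mathcal{T}(\mathcal{N}(\epsilon)+\chi_{r\lesssim t}e_N)$ in a small ball of $X$. The nonlinearities $\mathcal{N}_1$ and $\mathcal{N}_2$ are at least quadratic in $\epsilon$ and are controlled in the weighted norms by Sobolev and Hardy inequalities on $\mathbb{R}^2$. The apparently linear $\mathcal{N}_3$ is, upon Taylor expanding the numerator and using the defining choice of the linear potential on the left hand side, of schematic form $r^{-2}g(R)[v_N+v_{app}^2+\cdots]\epsilon$ with coefficients whose $L^\infty$ norm decays like $|\log t|^{-1}$, and is absorbed into the small factor from $\omega$ in one additional application of $\mathcal{T}$. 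To propagate the bounds on $S\epsilon$ and $S^2\epsilon$ in $X$ I would commute $S$ through the fixed point equation; on the Fourier side this produces the commutators $[\mathcal{K},\xi\partial_\xi]$ and $[\mathcal{J},\xi\partial_\xi]$, which are bounded on $L^{2,\alpha}_\rho$ by Proposition~\ref{prop:transferencemappingbounds}, at the price of one power of $|\log t|$ per $S$, and this is affordable by the hierarchy $N\gg N_0\gg N_1\gg N_2$. Interior $C^\infty$ regularity on the interior of the light cone finally follows by a standard local energy and finite propagation speed argument, bootstrapping from the low regularity solution on compact subsets.

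The main obstacle I anticipate is not the contraction itself but the precise bookkeeping of logarithmic weights through $\mathcal{T}$. One must verify that the slow decay $\omega\sim|\log t|^{-1}$ produces a genuine gain of a power of $|\log t|$ at each application of $\mathcal{T}$, so that the source bound on $e_N$ transfers to $H^1_{N_0}$ smallness for $\epsilon$, and that this gain survives the two commutations with $S$ needed to close the $X$ norm. This is also the step where the condition $\beta>3/2$ enters, ensuring that the logarithmic losses generated by $\mathcal{J}$ and by $[S,\mathcal{K}]$ are strictly dominated by the logarithmic gain delivered by the construction in Theorem~\ref{thm:approxoutersolnYManalogue}.
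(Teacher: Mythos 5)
Your proposal follows essentially the same route as the paper: pass to the $(\tau,R)$ variables and the distorted Fourier representation, solve the linear problem by a KST3-type parametrix using the transference operators $\mathcal{K},\mathcal{J}$, run a contraction for the nonlinearity, and commute $S$ twice using the hierarchy $N\gg N_0\gg N_1\gg N_2$. One clarification on the bookkeeping you flag as the main obstacle: in the paper the smallness that closes the contraction, in particular for the linear-in-$\epsilon$ term $\mathcal{N}_3$ (which by itself only satisfies $\|\mathcal{N}_3(\epsilon)\|_Y\lesssim\|\epsilon\|_X$ with no gain), comes from the factor $N^{-1}$ (effectively $N_2^{-1}$) in Proposition~\ref{prop:linearinhombounds} together with the exponent hierarchy, and the fixed point is run on the source, $f\mapsto\mathcal{N}(\Phi f)$ in $Y$, rather than from a gain of $\omega\sim|\log t|^{-1}$ per application of the parametrix; correspondingly, $\beta>\frac32$ is not what makes this step close (the outer construction needs only $\beta\geq\frac32$, and one instead takes $N_2$ large relative to $\beta$).
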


The proof if this theorem follows exactly as in \cite{KST3} via the following two ingredients:
\begin{itemize}
\item Solution of the linear problem: here we generalize \eqref{eq:tildeepsagain} to the inhomogeneous linear wave equation with suitable source term, and establish bounds for the solution. 
\item Nonlinear estimates: here we show that the specific source term in \eqref{eq:tildeepsagain} satisfies the required bounds. 
\end{itemize}

For the first item, we first reformulate the general inhomogeneous problem as in \eqref{eq:tildeepsagain} with the source $ \lambda^{-2}R^{\frac12}\mathcal{N}\big(\epsilon\big)$ replaced by $\tilde{f}$ as follows: letting $\omega = \frac{\lambda_{\tau}}{\lambda}$, 
\begin{equation}\label{eq:generalinhomlin}
\Big[{-}\big(\partial_\tau + \omega R\partial_R\big)^2 - \omega\big(\partial_\tau + \omega R\partial_R\big) +  \frac12\dot{\omega} + \frac14 \omega^2 - \tilde{\mathcal{L}}\Big]\tilde{\epsilon} = \tilde{f} - (\lambda_2 t)^{-2}\cdot \frac{32(1-R^2)}{(1+R^4)^2}\tilde{\epsilon}. 
\end{equation}
Then we have the following result: 
\begin{prop}\label{prop:linearinhombounds} Recalling that $\tilde{\epsilon} = R^{\frac12}\epsilon$, $\tilde{f} = \lambda_2^{-2}R^{\frac12}f$, there is a solution of \eqref{eq:generalinhomlin} satisfying the bound 
\begin{align*}
\big\|\epsilon\big\|_{H^1_{N}}\lesssim N^{-1}\cdot \big\|f\big\|_{L^2_{N}}. 
\end{align*}
We also have the estimates 
\begin{align*}
&\big\|S\epsilon\big\|_{H^1_{N}}\lesssim N^{-1}\cdot \big(\big\|Sf\big\|_{L_N^2} + \big\|f\big\|_{L^2_N}\big),\\
&\big\|S^2\epsilon\big\|_{H^1_{N}}\lesssim N^{-1}\cdot \big(\big\|S^2f\big\|_{L_N^2} +  \big\|Sf\big\|_{L_N^2} + \big\|f\big\|_{L^2_N}\big)
\end{align*}
The implied constant depends only on $\beta$. 
\end{prop}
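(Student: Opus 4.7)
The plan is to diagonalize $\tilde{\mathcal{L}}$ via the distorted Fourier transform $\mathcal{F}$ of Propositions~\ref{prop:Fourierbasis}--\ref{prop:WeylTitchmarsh}, reducing \eqref{eq:generalinhomlin} to a coupled system in the spectral variables, and then run a Duhamel parametrix matched to the norms \eqref{eq:epsnorm}--\eqref{eq:L2typenorm}. Writing $\hat\epsilon(\tau) = \mathcal{F}\tilde\epsilon(\tau,\cdot) = (x_0(\tau),x(\tau,\xi))^\top$ and using the transference identities $\widehat{(R\partial_R)u} = -2\xi\partial_\xi\hat u + \mathcal{K}\hat u$ and $\mathcal{F}\big((\lambda_2 t)^{-2}\cdot\tfrac{32(1-R^2)}{(1+R^4)^2}\tilde\epsilon\big) = \mathcal{J}\hat\epsilon$, the equation becomes, schematically,
\begin{equation}\label{eq:spectralform}
\Big[\big(\partial_\tau - 2\omega\xi\partial_\xi + \omega\mathcal{K}\big)^2 + \omega\big(\partial_\tau - 2\omega\xi\partial_\xi + \omega\mathcal{K}\big) - \tfrac12\dot\omega - \tfrac14\omega^2 + \xi\Big]\hat\epsilon = -\mathcal{F}\tilde f + (\lambda_2 t)^{-2}\mathcal{J}\hat\epsilon,
\end{equation}
where $\xi$ is to be read as the multiplication operator on the continuous spectral part (and $0$ on the discrete part).

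Ignoring the operators $\mathcal{K},\mathcal{J}$ and the subprincipal $\omega$-terms yields, along the characteristics $\xi(\tau) = \xi_0\exp\big({-}2\int\omega\,d\tau\big)$, an inhomogeneous one-dimensional oscillator with frequency $\xi^{1/2}$ at each fixed characteristic. A standard Duhamel parametrix for this oscillator produces a first approximation $\hat\epsilon_0$, and energy-type estimates against the spectral weight $\rho(\xi)\,d\xi$ transfer back through $\mathcal{F}^{-1}$ to control the $L^2$-based pieces of \eqref{eq:epsnorm}. The decisive gain is that the temporal weight in $L^2_N$ exceeds that in $H^1_N$ by the integrable factor $\lambda_2(t)$ (which is exactly the Jacobian $|d\tau/dt|$), so that integrating the Duhamel formula in $\tau$ against the $|\log t|^{-N}$ decay of the source produces the factor $N^{-1}$ asserted in the statement, via the elementary bound $\int_\tau^\infty \sigma^{-\gamma}\,d\sigma \lesssim (\gamma-1)^{-1}\tau^{-\gamma+1}$ applied with $\gamma = N/(\beta+1)$.

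The perturbative terms $\omega\mathcal{K}$ and $(\lambda_2 t)^{-2}\mathcal{J}$ on the right of \eqref{eq:spectralform} are then absorbed by a contraction argument based on Proposition~\ref{prop:transferencemappingbounds}: the bounds $\mathcal{K}:L^{2,\alpha}_\rho\to L^{2,\alpha}_\rho$, $\mathcal{K}_0:L^{2,\alpha}_\rho\to L^{2,\alpha+1/2}_\rho$, and the commutator bound $[\mathcal{K},\xi\partial_\xi]:L^{2,\alpha}_\rho\to L^{2,\alpha}_\rho$ show that these operators are either bounded or genuinely smoothing on the Fourier-side analogue of $H^1_N$, while $\omega\sim\tau^{-\beta/(\beta+1)}$ and $(\lambda_2 t)^{-2}\sim|\log t|^{-2\beta}$ together with the off-diagonal decay of the kernel $J_{cc}$ supply the required smallness for $t_0 = t_0(\beta)$ small. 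For the bounds on $S\epsilon$ and $S^2\epsilon$, I commute $S = t\partial_t + r\partial_r$ through \eqref{eq:generalinhomlin}: in $(\tau, R)$ coordinates $S$ acts essentially as $\partial_\tau + \omega R\partial_R$ up to a smooth $\omega$-factor, and the commutators $[S,\tilde{\mathcal{L}}]$ and $[S,\omega R\partial_R]$ produce potentials of the same schematic form as the one in \eqref{eq:operatormathcalL} with improved $R$-decay, plus additional transport terms whose coefficients are polynomially bounded in $|\log t|^{-1}$ by the explicit expression for $\lambda_2$. Thus $S\epsilon$ solves an equation of the same type with source $Sf$ plus lower-order contributions in $\epsilon$, and iterating the previous argument yields the bounds as stated; the small loss in the temporal weight at each commutation is exactly accommodated by the hierarchy $N_0\gg N_1\gg N_2$ of Theorem~\ref{thm:epssolutioncompletion}.

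The main technical obstacle is the simultaneous presence of the transport operator $-2\omega\xi\partial_\xi$ and the operator $\omega\mathcal{K}$, which couple different spectral frequencies and prevent a pointwise-in-$\xi$ solution of the reduced oscillator equation. Following \cite{KST3} closely, one first passes to the characteristic variable eliminating $-2\omega\xi\partial_\xi$ at leading order, then treats the residue involving $\mathcal{K}$ perturbatively via Proposition~\ref{prop:transferencemappingbounds}, while carefully tracking powers of the spectral weight $\xi^\alpha$ so that the final estimate may be converted back to the physical $H^1_N$ norm of \eqref{eq:epsnorm}. Keeping the polynomial-in-$|\log t|^{-1}$ bookkeeping tight enough to preserve the $N^{-1}$ gain, while simultaneously absorbing the potential-type error $(\lambda_2 t)^{-2}\mathcal{J}\hat\epsilon$ into the same contraction, is the most delicate component and dictates the choice of $t_0 = t_0(\beta)$.
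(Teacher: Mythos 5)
Your proposal follows essentially the same route as the paper: the paper's proof of Proposition~\ref{prop:linearinhombounds} simply invokes the scheme of \cite{KST3} (distorted Fourier representation of $\tilde{\epsilon}$, transference operators $\mathcal{K}$ and $\mathcal{J}$ handled via Proposition~\ref{prop:Kcctransference} and Proposition~\ref{prop:transferencemappingbounds}, a Duhamel propagator along the rescaled spectral characteristics giving the $N^{-1}$ gain, perturbative absorption of the $\omega\mathcal{K}$ and $(\lambda_2 t)^{-2}\mathcal{J}$ terms, and commutation with $S$ for the higher bounds), which is exactly what you outline and which the paper itself executes in detail in the analogous Lemma~\ref{lem:outerpropagatorinhombounds} and Lemma~\ref{lem:highfreqinhom}. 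Your sketch is consistent with that argument, so no substantive discrepancy to report.
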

The proof of this is essentially identical to the one given in \cite{KST3}, thanks to the preceding comments on $\mathcal{K}, \mathcal{J}$. In the sequel, we shall sometimes denote the solution described in the preceding proposition by 
\[
\epsilon = \Phi(f).
\]

We next turn to the second item, for which the list of terms after \eqref{eq:epsfinalequation} need to be bounded in the norm 
\begin{align*}
\big\|f\big\|_{Y}: = \big\|f\big\|_{L^2_{N_0}} +  \big\|Sf\big\|_{L^2_{N_0}} +  \big\|S^2f\big\|_{L^2_{N_0}}
\end{align*}

We state the 
\begin{lem}\label{lem:sourcetermnonlinbounds} We have the estimates
\[
\big\|\mathcal{N}_j(\epsilon)\big\|_{Y}\lesssim \big\|\epsilon\big\|_{X},\,j = 1, 2, 3. 
\]
\end{lem}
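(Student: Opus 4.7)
The plan is to treat the three pieces $\mathcal{N}_1, \mathcal{N}_2, \mathcal{N}_3$ separately, reducing each to an application of Hardy and Sobolev-type inequalities in two dimensions, combined with the pointwise bounds on $Q(R), v_{10}, v_N$ supplied by Theorem~\ref{thm:approxoutersolnYManalogue}. Throughout I think of the $L^2_N$ norm as measuring $r\,dr$-integrability on $r\lesssim t$ with a decay factor $\lambda_2^{-1}|\log t|^{N}$ built in; likewise the $H^1_N$ norm controls $L_t^{1/2}\epsilon$, $\partial_t\epsilon$, and $\lambda_2|\log t|^{-\beta}\epsilon$, which by Hardy yields control of $\epsilon/r$ with the right weights.

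For $\mathcal{N}_1(\epsilon)$, Taylor expand $\cos(2\epsilon) - 1 = -2\epsilon^2 + O(\epsilon^4)$, so that
\[
\mathcal{N}_1(\epsilon) = \frac{\sin[2(Q+v_{app})]}{r^2}\cdot\bigl({-}4\epsilon^2 + O(\epsilon^4)\bigr).
\]
The bounded prefactor $\sin[2(Q+v_{app})]$ is not an issue, so I estimate $\|\epsilon^2/r^2\|_{L^2_{r\,dr}}$ by splitting one factor as an $L^\infty$ bound and the other as $\epsilon/r$ in $L^2_{r\,dr}$. The $L^\infty$ bound on $\epsilon/r$ on $r\lesssim t$ comes from Sobolev-type (radial) embeddings applied to the energy-type components of the $H^1_N$ norm, i.e.\ to $L_t^{1/2}\epsilon$ and $\partial_r\epsilon$ (for the latter one uses $\epsilon/r\lesssim \|\partial_r\epsilon\|_{L^\infty}$ combined with radial Sobolev). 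This gives $\|\mathcal{N}_1(\epsilon)\|_{L^2_{N_0}}\lesssim \|\epsilon\|_{H^1_{N_0}}^2\lesssim \|\epsilon\|_X$ once one absorbs the extra logarithmic smallness produced by $\|\epsilon\|_{H^1_{N_0}}\ll 1$. For $\mathcal{N}_2$, the analogous expansion $\sin(2\epsilon)-2\epsilon = -\tfrac{4}{3}\epsilon^3 + \ldots$ yields an $\epsilon^3/r^2$-type quantity, which is even more favorable.

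The linear piece $\mathcal{N}_3(\epsilon)$ is the most delicate, but the crucial observation is that the potential coefficient is designed precisely so that the linear-in-$v_{10}$ term cancels. Indeed, expanding
\[
4\cos\bigl(2(Q+v_{app})\bigr)-4\cos(2Q) = -8\sin(2Q)\,v_{app} + O(v_{app}^2),
\]
and recalling $v_{app}=v_{10}+v_N$, one sees that the leading obstruction $-8\sin(2Q)v_{10}$ is exactly canceled by the $+8\sin(2Q)v_{10}$ term in $\mathcal{N}_3$, leaving
\[
\mathcal{N}_3(\epsilon) = \frac{1}{r^2}\Bigl({-}8\sin(2Q)v_N + O(v_{app}^2)\Bigr)\epsilon.
\]
Both residual coefficients are $O(|\log t|^{-1})$ pointwise by Theorem~\ref{thm:approxoutersolnYManalogue} (note $v_{10}\sim (t\lambda_2)^{-2}\cdot R^4/(1+R^4)$ is $O(|\log t|^{-2\beta})$, far more than needed), and they decay in $R$ like $R^{-2}$ near infinity so that, multiplied by $r^{-2}$, they pair perfectly against $\epsilon/r$ in an application of Hardy. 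The resulting bound $\|\mathcal{N}_3(\epsilon)\|_{L^2_{N_0}}\lesssim |\log t_0|^{-1}\|\epsilon\|_{H^1_{N_0}}$ is much better than required.

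To extend these estimates to $S\mathcal{N}_j$ and $S^2\mathcal{N}_j$, one applies the Leibniz rule and uses the fact that $S$ preserves the spaces $IS^m(\cdots,\mathcal{Q}_n)$ of Definition~\ref{def:finalSspaceinvolvingQ} up to a fixed loss of logarithmic weight, so that $S^k$ acting on any factor built from $Q$, $v_{10}$, $v_N$, or the coefficient $\sin(2(Q+v_{app}))$ etc.\ produces functions with pointwise and $L^\infty$ bounds of the same structural form (the commutator $[S,r^{-2}]=-2r^{-2}$, and $[S,\partial_t]=-\partial_t$, $[S,\partial_r]=-\partial_r$ are harmless). Any $S$ that lands on $\epsilon$ is absorbed into $\|S\epsilon\|_{H^1_{N_1}}$ or $\|S^2\epsilon\|_{H^1_{N_2}}$, the loss of a power of $N$ in the weight being precisely why the norm $X$ is set up with $N_0\gg N_1\gg N_2$. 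The main obstacle in the argument is bookkeeping: making sure that every time one trades an $L^\infty$ bound for an $L^2$ bound, the number of $S$-derivatives available suffices and the logarithmic weights add up correctly; this is exactly why the chain $N_0\gg N_1\gg N_2$ is imposed. Summing the three contributions yields the claimed bound on $\|\mathcal{N}_j(\epsilon)\|_Y$.
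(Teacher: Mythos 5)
There is a genuine gap, and it sits exactly where the lemma is delicate: the handling of the $r^{-2}$ weights. For $\mathcal{N}_1$ you declare the prefactor $\sin[2(Q+v_{app})]$ ``not an issue'' and then propose to bound $\big\|\epsilon^2/r^2\big\|_{L^2_{r\,dr}}$ by $\big\|\epsilon/r\big\|_{L^\infty}\cdot\big\|\epsilon/r\big\|_{L^2_{r\,dr}}$, with the $L^\infty$ factor coming from a radial Sobolev bound applied to $\partial_r\epsilon$. That step fails: the norm $\big\|\cdot\big\|_X$ built from \eqref{eq:epsnorm} only gives energy-level control of $\epsilon$, $S\epsilon$, $S^2\epsilon$, and neither $\big\|\epsilon/r\big\|_{L^\infty}$ nor $\big\|\partial_r\epsilon\big\|_{L^\infty}$ is controlled at that regularity (these are $\dot H^2$-level quantities in two dimensions; a radial function behaving like $r\log\log(1/r)$ near $r=0$ has finite energy while $\epsilon/r\notin L^\infty$). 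The paper never needs such a bound precisely because it keeps the prefactor: $\sin[2(Q+v_{app})]$ vanishes like $r^2$ at the origin and decays at spatial infinity of the bubble, whence $\sum_{k\le 2}\big|S^k\big(\sin[2(Q+v_{app})]/r^2\big)\big|\lesssim\lambda_2^2$, so the entire $r^{-2}$ is absorbed by the coefficient and the quadratic term is estimated by $\big\|S^{k_1}\epsilon\big\|_{L^\infty}\cdot\lambda_2\big\|S^{k_2}\epsilon\big\|_{L^2_{r\,dr}}$, both available at energy regularity via $\big\|\epsilon\big\|_{L^\infty}\lesssim\big\|\nabla\epsilon\big\|_{L^2_{r\,dr}}+\big\|\epsilon/r\big\|_{L^2_{r\,dr}}$.

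The same issue is fatal for your treatment of $\mathcal{N}_2$, which you dismiss as ``even more favorable'': there the prefactor $\cos[2(Q+v_{app})]$ does \emph{not} vanish at $r=0$, so the $r^{-2}$ must be loaded onto the $\epsilon$-factors, and $\big\|\epsilon^3/r^2\big\|_{L^2_{r\,dr}}$ cannot be bounded by the $X$-norm alone. This is exactly why the paper invokes the additional weighted estimate $\big\|r^{-2}\epsilon\big\|_{L^2_{r\,dr}}\lesssim\lambda_2(t)|\log t|^{-N_2+1}\big\|\epsilon\big\|_X+\lambda_2(t)\big\|f\big\|_{L^2_{r\,dr}}$, which does not follow from the norm but from the equation (the degenerate elliptic identity together with the $S$, $S^2$ control) and carries the source $f$ --- and why the estimates are ultimately applied with $\epsilon=\Phi f$. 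Your sketch contains no substitute for this ingredient. Your $\mathcal{N}_3$ discussion does identify the same cancellation of the $8\sin(2Q)v_{10}$ term as the paper, but the claimed extra gain of $|\log t_0|^{-1}$ is not there: with the coefficient bound $\lesssim t^{-2}|\log t|^{-1}$ the bookkeeping gives exactly $\lesssim\big\|\epsilon\big\|_X$ with no smallness, as the paper states, and smallness for the later contraction is recovered only through $\epsilon=\Phi f$ and the factor $N_2^{-1}$; this is harmless for the lemma's literal statement but matters for how it is used.
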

\begin{proof} We use the bounds 
\begin{align*}
&\big\|S^k\epsilon\big\|_{L^\infty}\lesssim \big|\log t\big|^{1+2\beta - N_k}\big\|\epsilon\big\|_{X},\,k = 0, 1, 2,\\
&\big\|r^{-2}\epsilon\big\|_{L^2_{r\,dr}}\lesssim \lambda_2(t)\big|\log t\big|^{-N_2+1}\big\|\epsilon\big\|_{X} + \lambda_2(t)\cdot \big\|f\big\|_{L^2_{r\,dr}}. 
\end{align*}
The first one can be rendered more precise when interpreting $\epsilon = \Phi f$ where $\Phi$ is the linear operator assigning the solution $\epsilon$ to $f$ as described in Proposition~\ref{prop:linearinhombounds}: 
\begin{align*}
\big\|S^k\epsilon\big\|_{L^\infty}\lesssim N_k^{-1} \big|\log t\big|^{1+2\beta - N_k}\big\|f\big\|_{Y},\,k = 0, 1, 2. 
\end{align*}
Using interpolation between the preceding bounds, we can also infer the following estimate:
\begin{align*}
\lambda_2^{-\frac12}\cdot \big\|r^{-1}S\epsilon\big\|_{L^4_{r\,dr}}&\lesssim \big|\log t\big|^{-N_2 + 1 + \beta}\cdot \big(\big\|\epsilon\big\|_{X} + \big\|f\big\|_{Y}\big)^{\frac12}\cdot \big\|\epsilon\big\|_{X}^{\frac12}.\\
&\lesssim N_2^{-\frac12}\cdot \big|\log t\big|^{-N_2 + 1 + \beta}\cdot \big\|f\big\|_{Y},
\end{align*}
the last bound holding provided $\epsilon = \Phi(f)$.
\\

We now estimate each of the terms $\mathcal{N}_j(\epsilon)$: 
\\

{\it{The estimate for $\mathcal{N}_1(\epsilon)$.}}  We infer from Theorem~\ref{thm:approxoutersolnYManalogue} that we have the bound 
\begin{align*}
\sum_{k=0}^2\big|S^k\Big(\frac{2\sin[2(Q+v_{app})}{r^2}\Big)\big|\lesssim \lambda_2^2. 
\end{align*}
Taking advantage of the Leibniz rule for differentiation of products, we infer the estimate 
\begin{align*}
\big\|\mathcal{N}_1(\epsilon)\big\|_{Y}\lesssim \sup_{0<t<t_0}\lambda_2(t)\cdot \sum_{k=0}^2\big|\log t\big|^{N_k}\cdot\sum_{k_1+k_2=k}\big\|S^{k_1}\epsilon\cdot S^{k_2}\epsilon\big\|_{L^2_{r\,dr}}.
\end{align*}
Then we take advantage of the bounds stated at the beginning of this proof, as well as \eqref{eq:epsnorm}, to infer the estimate 
\begin{align*}
\big\|\mathcal{N}_1(\epsilon)\big\|_{Y}&\lesssim \sup_{0<t<t_0} \sum_{k=0}^2\sum_{k_1+k_2=k}\big|\log t\big|^{N_k}\cdot\big\|S^{k_1}\epsilon\big\|_{L^\infty}\cdot \lambda_2(t)\big\|S^{k_2}\epsilon\big\|_{L^2_{r\,dr}}\\
&\lesssim \sup_{0<t<t_0} \sum_{k=0}^2\sum_{k_1+k_2=k}\big|\log t\big|^{2+4\beta + N_k - N_{k_1} - N_{k_2}}\cdot \big\|\epsilon\big\|_{X}^2.
\end{align*}
Since our assumptions on the $N_j$ imply 
\begin{align*}
2+4\beta + N_k - N_{k_1} - N_{k_2}\ll -1
\end{align*}
provided $k = k_1+k_2$, $0\leq k, k_{1,2}\leq 2$, we infer the better than required bound 
\[
\big\|\mathcal{N}_1(\epsilon)\big\|_{Y}\ll\big\|\epsilon\big\|_{X},
\]
provided $\big\|\epsilon\big\|_{X}\ll 1$ 
\\

{\it{The estimate for $\mathcal{N}_2(\epsilon)$.}} Again taking advantage of Theorem~\ref{thm:approxoutersolnYManalogue} and the Leibniz rule, we estimate this contribution by 
\begin{align*}
\big\|\mathcal{N}_2(\epsilon)\big\|_{Y}\lesssim  \sup_{0<t<t_0}\lambda_2^{-1}(t)\cdot \sum_{k=0}^2\sum_{k_1+k_2 = k}\big|\log t\big|^{N_k}\prod_{j=1,2}\big\|S^{k_j}\epsilon\big\|_{L^\infty}\cdot \big\|r^{-2}\epsilon\big\|_{L^2_{r\,dr}}.
\end{align*}
Again taking advantage of the bounds stated at the beginning of this proof, we can bound the preceding by 
\begin{align*}
 \sup_{0<t<t_0}\sum_{k=0}^2\sum_{k_1+k_2 = k}\big|\log t\big|^{3 + 4\beta +N_k - N_{k_1} - N_{k_2}-N_2}\cdot \big\|\epsilon\big\|_{X}^2\cdot \big(\big\|\epsilon\big\|_{X}+\big\|f\big\|_{Y}\big) \ll  \big\|\epsilon\big\|_{X} + \big\|f\big\|_{Y}.
\end{align*}

{\it{The estimate for $\mathcal{N}_3(\epsilon)$.}} Again in light of Theorem~\ref{thm:approxoutersolnYManalogue} we infer the estimate 
\begin{align*}
\Big|S^k\Big(\frac{4\cos\big(2(Q+v_{app})\big)+8\sin(2Q)v_{10} - 4\cos\big(2Q\big)}{r^2}\Big)\Big|\lesssim \frac{1}{t^2\cdot \big|\log t\big|}. 
\end{align*}
We conclude that 
\begin{align*}
\big\|\mathcal{N}_3(\epsilon)\big\|_{Y}\lesssim  \sup_{0<t<t_0}\lambda_2^{-1}(t)\cdot\sum_{0\leq k\leq 2}\big|\log t\big|^{N_k}\cdot \frac{1}{t^2\cdot \big|\log t\big|}\cdot \big\|S^k\epsilon\big\|_{L^2_{r\,dr}}
\end{align*}
Since we have 
\begin{align*}
\lambda_2^{-1}(t)\cdot  \frac{1}{t^2\cdot \big|\log t\big|}\lesssim \lambda_2(t)\cdot \big|\log t\big|^{-2\beta - 1}, 
\end{align*}
recalling the definition of the norm $\big\|\cdot\big\|_{X}$, we deduce the bound 
\begin{align*}
\big\|\mathcal{N}_3(\epsilon)\big\|_{Y}\lesssim \big\|\epsilon\big\|_{X}. 
\end{align*}
Here we do not gain any smallness, but this gets rectified by using the norm $\big\|f\big\|_{Y}$ instead on the right hand side, provided $\epsilon = \Phi f$: 
\begin{align*}
\big\|\mathcal{N}_3(\epsilon)\big\|_{Y}\lesssim N_2^{-1}\big\|f\big\|_{Y}. 
\end{align*}
\end{proof}

Combining the preceding lemma with Proposition~\ref{prop:linearinhombounds}, we deduce the 
\begin{prop}\label{prop:contraction} The map 
\[
f\longrightarrow \mathcal{N}\big(\Phi f\big)
\]
is a contraction in $Y$ with Lipschitz constant $\sim N_2^{-1}$. In particular, it has a fixed point provided $N_2$ is sufficiently large (in relation to $\beta$). 
\end{prop}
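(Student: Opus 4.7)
The plan is to derive the contraction property as a formal consequence of Proposition~\ref{prop:linearinhombounds} and Lemma~\ref{lem:sourcetermnonlinbounds}, with a small additional argument to upgrade the nonlinear bounds to \emph{difference} estimates. Since $N_0 \gg N_1 \gg N_2$, Proposition~\ref{prop:linearinhombounds} shows that $\Phi: Y \to X$ is bounded with $\|\Phi f\|_X \lesssim N_2^{-1}\|f\|_Y$, the weakest gain being that at the highest derivative level $S^2$ governed by $N_2$. Combining this with the refined form of Lemma~\ref{lem:sourcetermnonlinbounds} (in particular, the $\mathcal{N}_3$ bound which relies on writing $\epsilon = \Phi f$ to gain a factor $N_2^{-1}$) yields
\begin{equation*}
\|\mathcal{N}(\Phi f)\|_Y \lesssim N_2^{-1}\|f\|_Y,
\end{equation*}
so the map $f \mapsto \mathcal{N}(\Phi f)$ is well-defined and has small operator norm on any ball in $Y$.

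To obtain a genuine contraction, I would redo the estimates of Lemma~\ref{lem:sourcetermnonlinbounds} for the difference $\mathcal{N}(\Phi f_1) - \mathcal{N}(\Phi f_2)$. Setting $\epsilon_i = \Phi f_i$ and $\epsilon_\sigma = \sigma \epsilon_1 + (1-\sigma)\epsilon_2$, the mean value theorem yields factorizations
\begin{equation*}
\mathcal{N}_j(\epsilon_1) - \mathcal{N}_j(\epsilon_2) = \int_0^1 K_j(\tau, r; \epsilon_\sigma)\,d\sigma \cdot (\epsilon_1 - \epsilon_2),
\end{equation*}
where the kernel $K_j$ shares exactly the structure used to estimate $\mathcal{N}_j(\epsilon)$ itself: for $\mathcal{N}_1, \mathcal{N}_2$ one obtains a kernel that still has a factor of $\epsilon_\sigma$ (providing the quadratic structure), while for $\mathcal{N}_3$ the kernel coincides with the potential already controlled via Theorem~\ref{thm:approxoutersolnYManalogue}. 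Since $S$ satisfies the Leibniz rule and the norms $X, Y$ are tailored to such bounds, repeating the three estimates of Lemma~\ref{lem:sourcetermnonlinbounds} line-by-line and using the linearity of $\Phi$ to write $\epsilon_1 - \epsilon_2 = \Phi(f_1 - f_2)$ produces
\begin{equation*}
\|\mathcal{N}(\Phi f_1) - \mathcal{N}(\Phi f_2)\|_Y \lesssim N_2^{-1}\|f_1 - f_2\|_Y,
\end{equation*}
with an implied constant depending only on $\beta$ and on the $X$-size of $\epsilon_1, \epsilon_2$, which is itself controlled by $N_2^{-1}\|f_i\|_Y$.

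Choosing $N_2$ sufficiently large in relation to $\beta$ makes this Lipschitz constant strictly smaller than $1$, so the Banach fixed point theorem applied to a sufficiently small closed ball in $Y$ produces a unique fixed point $f_* = \mathcal{N}(\Phi f_*)$; the function $\epsilon_* := \Phi f_*$ is then the desired solution of \eqref{eq:epsfinalequation}, completing the proof of Theorem~\ref{thm:epssolutioncompletion} and hence of Theorem~\ref{thm:outersolnYManalogue}. The main obstacle is the term $\mathcal{N}_3$, which is linear in $\epsilon$ and whose coefficient does not produce quadratic smallness; the gain must therefore come entirely from the smoothing of $\Phi$, which is why the refined form of the $\mathcal{N}_3$ estimate (using $\|f\|_Y$ rather than just $\|\epsilon\|_X$ on the right hand side) is essential. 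All other contributions, being at least quadratic in $\epsilon$ and hence doubly smoothed by $\Phi$, give a much stronger $N_2^{-2}$ gain and cause no difficulty.
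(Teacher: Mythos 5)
Your argument is correct and is essentially the paper's own proof: the paper deduces Proposition~\ref{prop:contraction} precisely by combining Proposition~\ref{prop:linearinhombounds} with Lemma~\ref{lem:sourcetermnonlinbounds}, the at-least-quadratic terms $\mathcal{N}_1,\mathcal{N}_2$ supplying smallness directly while the linear term $\mathcal{N}_3$ gains its factor $N_2^{-1}$ only through writing $\epsilon=\Phi f$. Your explicit treatment of the difference estimates (mean value factorization for $\mathcal{N}_1,\mathcal{N}_2$ and linearity of $\Phi$ for $\mathcal{N}_3$) simply fills in a step the paper leaves implicit.
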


The proof of Theorem~\ref{thm:epssolutioncompletion} is completed by setting 
\[
\epsilon = \Phi f. 
\]
In order to derive Theorem~\ref{thm:outersolnYManalogue}, we also requite control over $S^k\epsilon$ for $k\leq K$. This follows by applying $S$ sufficiently many times to the equation for $\epsilon$ and using the same estimates as before, together with Theorem~\ref{thm:approxoutersolnYManalogue}.

\section{Technical preparations for construction of an approximate two bubble solution: bounds for the linearisation around the outer profile}

The preceding section depended importantly on bounds for the solution of the linear problem \eqref{eq:generalinhomlin}. In the next section, we shall have to rely on a priori bounds for the propagator of the linearisation around the full outer profile 
\begin{equation}\label{eq;outerprofile}
\tilde{Q}_2: = Q(\lambda_2(t)r) + v(t, r),
\end{equation}
which is as described in Theorem~\ref{thm:outersolnYManalogue}. More specifically, we shall need bounds for the inhomogeneous linear equation with suitably decaying source terms. Consider the equation
\begin{equation}\label{eq:linearisationaroundtildeQtwo}
-h_{tt} +  h_{rr} + \frac{1}{r}h_r - 4\frac{\cos\big(2\tilde{Q}_2\big)}{r^2}\cdot h = F. 
\end{equation}
We next introduce the following time variable, which re-defines the meaning of $\tau$ for the rest of this section
\begin{equation}\label{eq:newtaudefi}
\tau := e^{\frac{c}{(\beta+1)}\cdot\big|\log t\big|^{\beta+1}},\,c>0. 
\end{equation}
The old time variable $\int_t^{t_0}\lambda_2(s)\,ds$ which we used in the previous section shall be labelled $\tau_2$ below. 
The reason for introducing $\tau$ shall become clear when we introduce the leading order of the inner frequency scale, in terms of a suitable scaling parameter $\lambda_1(t)$. 
Then we have 
\begin{lem}\label{lem:outerpropagatorinhombounds}
Assume the function $F$ satisfies the bounds 
\begin{align*}
\lambda_2^{-1}\cdot \Big\|S^lF(t,\cdot)\Big\|_{L^2_{r\,dr}}\lesssim \tau^{-p},\,p>0,\,0\leq l\leq M\geq 2. 
\end{align*}
Then there is $t_0 = t_0(p,M,\beta)$, such that \eqref{eq:linearisationaroundtildeQtwo} admits a solution on $(0, t_0]$ satisfying the bounds 
\begin{align*}
\big\|S^lh(t,\cdot)\big\|_{H^1_{r\,dr}}\lesssim_{l,p}\tau^{-p+},0\leq l\leq M,
\end{align*}
where we define the energy norm at the end by 
\[
\big\|h\big\|_{H^1_{r\,dr}}: = \big\|L_t^{\frac12}h\big\|_{L^2_{r\,dr}} + \big\|h_t\big\|_{L^2_{r\,dr}} + \lambda_2(t)\cdot\big\|h\big\|_{L^2_{r\,dr}},
\]
with $L_t = \partial_{rr} + \frac{1}{r}\partial_r - \frac{4\cos\big(2Q_2\big)}{r^2}$. 
Furthermore, we have the estimates 
\begin{align*}
\big\|\frac{S^lh}{r^2}\big\|_{L^2_{r\,dr}} + \big\|\frac{S^l h}{r}\big\|_{L^4_{r\,dr}}\lesssim_{l,\beta,p}\tau^{-p+},\,0\leq l\leq M-2. 
\end{align*}
\end{lem}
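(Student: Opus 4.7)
The plan is to reduce \eqref{eq:linearisationaroundtildeQtwo} to the linear framework developed in Section 3, particularly Proposition~\ref{prop:linearinhombounds}, by treating the difference between the full potential $4\cos(2\tilde{Q}_2)/r^2$ and the one already handled there, namely $4\cos(2Q_2)/r^2-8\sin(2Q_2)v_{10}/r^2$, as a small perturbation. The first key observation is that the new time variable $\tau$ in \eqref{eq:newtaudefi} satisfies $\tau^{-p}=\exp\bigl({-}\tfrac{cp}{\beta+1}|\log t|^{\beta+1}\bigr)$, so it decays faster than any inverse power of $|\log t|$. Hence a bound $\lambda_2^{-1}\|S^l F\|_{L^2_{r\,dr}}\lesssim \tau^{-p}$ automatically implies $\|S^l F\|_{L^2_N}\lesssim_{N,p} 1$ for every $N$ on $(0,t_0]$ with $t_0$ small enough. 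In particular, the polynomial-in-$|\log t|$ weights appearing in Proposition~\ref{prop:linearinhombounds} can be taken as large as we please without spoiling the source estimate.

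Expanding $\cos(2\tilde{Q}_2)=\cos(2Q_2)\cos(2v)-\sin(2Q_2)\sin(2v)$ and using the bounds for $v$ from Theorem~\ref{thm:outersolnYManalogue} together with the explicit form of $v_{10}$ from Theorem~\ref{thm:approxoutersolnYManalogue}, I would rewrite
\begin{equation*}
\frac{4\cos(2\tilde{Q}_2)}{r^2}=\frac{4\cos(2Q_2)}{r^2}-\frac{8\sin(2Q_2)}{r^2}v_{10}+\frac{1}{r^2}W(t,r),
\end{equation*}
where $W$ collects the higher-order Taylor terms in $v_{10}$ together with the contribution of $v-v_{10}$. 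Because $|v-v_{10}|\lesssim |\log t|^{-1}$ (with analogous control on $S^k(v-v_{10})$ inherited from Theorems~\ref{thm:outersolnYManalogue}--\ref{thm:approxoutersolnYManalogue}), the operator $r^{-2}W$ is a perturbation of order $|\log t|^{-1}$ that loses at most a fixed polynomial factor in $|\log t|$ per application of $S$. I would then set up a Picard iteration $h^{(n+1)}=\Phi\bigl(F+r^{-2}W\,h^{(n)}\bigr)$, where $\Phi$ is the linear solver of Proposition~\ref{prop:linearinhombounds}. Each step gains a factor $\lesssim N^{-1}|\log t|^{O(1)}$, which is contractive once the weight $N$ is chosen sufficiently large; the fixed point satisfies $\|h\|_{H^1_N}\lesssim_N 1$ for every $N$, so by definition of $\|\cdot\|_{H^1_N}$ it obeys $\|h\|_{H^1_{r\,dr}}\lesssim\tau^{-p+}$.

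Higher-order $S$-derivatives are recovered by commuting the equation with $S$. Using the identity $[S,-\partial_t^2+\partial_r^2+r^{-1}\partial_r]=-2(-\partial_t^2+\partial_r^2+r^{-1}\partial_r)$ and the $S$-bounds on the potential, one gets inductively in $l$ an equation for $S^l h$ whose source is controlled by $S^jF$ and by the previously constructed $S^j h$, $j<l$; applying $\Phi$ and absorbing the arbitrarily small polynomial losses in $|\log t|$ into the $\tau^{-p+}$ bound yields the claim for $0\le l\le M$. For the weighted estimates, I would use the equation to write $L_t h=F+h_{tt}-4\bigl(\cos(2\tilde{Q}_2)-\cos(2Q_2)\bigr)r^{-2}h$ and then invoke elliptic regularity for the static operator $L_t$ (whose explicit fundamental system \eqref{eq:phi0theta0} makes this straightforward), obtaining $\|S^l h/r^2\|_{L^2_{r\,dr}}\lesssim \|L_t S^l h\|_{L^2_{r\,dr}}+\lambda_2^2\|S^l h\|_{L^2_{r\,dr}}\lesssim \tau^{-p+}$. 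The $L^4$-bound for $r^{-1}S^l h$ then follows by Gagliardo--Nirenberg interpolation between $\|S^l h/r\|_{L^2_{r\,dr}}$ (controlled by the $H^1_{r\,dr}$ norm via Hardy) and $\|S^l h/r^2\|_{L^2_{r\,dr}}$.

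The main obstacle I anticipate is organizing the Picard iteration so that the polynomial losses in $|\log t|$ coming from the perturbation $r^{-2}W$ and from the commutators with $S$ never compete with the exponentially small gain coming from $\tau^{-p}$. This is handled by choosing the weight $N$ in $\|\cdot\|_{L^2_N}$ much larger than $p$ and $M$, which is exactly what forces the restriction $t_0=t_0(p,M,\beta)$ in the statement; no genuinely new linear theory beyond Section 3 is required.
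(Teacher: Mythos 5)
Your reduction to the Section~3 machinery stumbles on a quantitative point that is in fact the heart of the lemma. The norms $\|\cdot\|_{H^1_N}$, $\|\cdot\|_{L^2_N}$ of \eqref{eq:epsnorm}--\eqref{eq:L2typenorm} carry only \emph{polynomial} weights $|\log t|^N$, whereas the conclusion requires decay $\tau^{-p+}$ with $\tau$ as in \eqref{eq:newtaudefi}, i.e.\ exponential decay in $|\log t|^{\beta+1}$. Your key step --- ``the fixed point satisfies $\|h\|_{H^1_N}\lesssim_N 1$ for every $N$, so by definition it obeys $\|h\|_{H^1_{r\,dr}}\lesssim\tau^{-p+}$'' --- is a non sequitur: a family of bounds $\|h(t,\cdot)\|_{H^1_{r\,dr}}\le C_N|\log t|^{-N+\beta+1}$ with $N$-dependent constants gives nothing beyond super-polynomial decay (compare $e^{-\sqrt{|\log t|}}$). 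One could in principle recover exponential decay by tracking the $N$-dependence of \emph{every} constant and optimizing $N=N(t)$, but your scheme does not do this, and the constants are not benign: already $\sup_{0<t\le t_0}|\log t|^N\tau^{-p}$ grows super-exponentially in $N$, and the Picard iteration against $r^{-2}W$ and the $S$-commutators each contribute further $N$- and $|\log t|$-dependent losses, while the contraction itself requires ``$N$ large''. The paper avoids this entirely by re-running the distorted-Fourier/transference analysis in norms weighted by $e^{cq\tau_2}$, $q<p$ arbitrary (see \eqref{eq:normforxcomponents0}, \eqref{eq:sourceforfnorm0}, and the propagator bounds \eqref{eq:tildeUbounds}--\eqref{eq:tautwoinhomeqnbounds}), treating $4[\cos(2\tilde{Q}_2)-\cos(2Q_2)]r^{-2}h$ perturbatively since it carries the factor $(\lambda_2 t)^{-2}\sim\tau_2^{-2\beta/(\beta+1)}$; the exponential weight must be built into the fixed point, not extracted afterwards.

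The mechanism you propose for the weighted bound also does not close. Writing $L_tS^l h=S^lF+(S^lh)_{tt}-\dots$ and invoking elliptic regularity for the static operator $L_t$ requires $\|(S^lh)_{tt}\|_{L^2_{r\,dr}}$, which is not controlled by the $H^1$-type bounds (only $h_t$, $L_t^{1/2}h$, $\lambda_2 h$ are), and substituting $h_{tt}$ back from the equation is circular, since it reintroduces exactly the second-order spatial terms you are trying to estimate. The paper's route is the cone-adapted identity \eqref{eq:trickidentity}, which trades $\partial_{tt}$ for $S^2$, $t\partial_tS$, $t\partial_t$ and the degenerate operator $\frac{t^2-r^2}{t^2}\partial_{rr}+\frac1r\partial_r-\frac4{r^2}$, combined with the Hardy-type inequality \eqref{eq:rminustwoestimate}; this is precisely what produces the two-derivative loss $0\le l\le M-2$ that you quote but do not account for. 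Your final interpolation step for the $L^4$ bound is consistent with the paper, but it rests on the $r^{-2}$ estimate, so the gap propagates.
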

\begin{proof} We rewrite \eqref{eq:linearisationaroundtildeQtwo} in the form 
\begin{equation}\label{eq:heqnreformulated}
-h_{tt} +  h_{rr} + \frac{1}{r}h_r - 4\frac{\cos\big(2Q_2\big)}{r^2}\cdot h = F +  4\frac{\cos\big(2\tilde{Q}_2\big) - \cos\big(2Q_2\big)}{r^2}\cdot h. 
\end{equation}
We shall first establish the desired bounds for the case $l = 0$ and then pass to the derivatives inductively. Passing to the variable $R = \lambda_2(t)\cdot r$, and letting 
\[
\tilde{h} = R^{\frac12}\cdot h,
\]
we describe $\tilde{h}$ in terms of its distorted Fourier components
\begin{equation}\label{eq:tildehFourier}
\mathcal{F}\tilde{h} = \left(\begin{array}{c}\langle \tilde{h}, \phi_0(R)\rangle_{L^2_{dR}}\\ \langle \tilde{h}, \phi(R, \xi)\rangle_{L^2_{dR}}\end{array}\right). 
\end{equation}
We interpret $\mathcal{F}\tilde{h}$ as a vector valued function of $\big(\tau_2, \xi\big)$, where we set 
\[
\tau_2: = \int_t^1\lambda_2(s)\,ds = \frac{|\log t|^{\beta+1}}{\beta+1}. 
\]
In particular, we have that 
\[
\tau^{-p} = e^{-cp\cdot \tau_2}.
\]
for suitable $c>0$. 
Defining 
\begin{equation}\label{eq:omegalambdatwo}
\omega = \frac{\lambda_{2,\tau_2}}{\lambda_2} \sim \tau_2^{-\frac{\beta}{\beta+1}},\,\dot{\omega} = \partial_{\tau_2}\omega, 
\end{equation}
and further setting 
\[
D_{\tau_2} = \partial_{\tau_2} - \omega(1+\mathcal{K}_d), 
\]
where we use the notation (recall the discussion after Proposition~\ref{prop:WeylTitchmarsh})
\begin{equation}\label{eq:KdandKnd}
\mathcal{K}_d = \left(\begin{array}{cc}0 & 0 \\ 0& 2\xi\partial_{\xi} + 1 + \frac{\xi\rho'(\xi)}{\rho(\xi)}\end{array}\right),\,\mathcal{K}_{nd} = -\left(\begin{array}{cc}0 & \mathcal{K}_{ec}\\ \mathcal{K}_{ce}& \mathcal{K}_0\end{array}\right),
\end{equation}
we infer the following system of equations:
\begin{equation}\label{eq:distFouriereqnKST0}\begin{split}
\Big[{-}D_{\tau_2}^2 - \omega D_{\tau_2} - \xi\Big]\mathcal{F}\tilde{h} &= \mathcal{F}\tilde{G} - 2\omega\mathcal{K}_{nd}D_{\tau_2}\mathcal{F}\tilde{h} + \omega^2\big[\mathcal{K}_{nd}, \mathcal{K}_d\big]\mathcal{F}\tilde{h} \\
& + \omega^2\cdot\big(\mathcal{K}_{nd}^2 - \mathcal{K}_{nd}\big)\mathcal{F}\tilde{h} - \dot{\omega}\mathcal{K}_{nd}\mathcal{F}\tilde{h}.
\end{split}\end{equation} 
The term $\tilde{G}$ is defined as 
\[
\tilde{G} = R^{\frac12}\cdot \big(\lambda_2^{-2}F +  4\frac{\cos\big(2\tilde{Q}_2\big) - \cos\big(2Q_2\big)}{R^2}\cdot h\big). 
\]
We shall solve \eqref{eq:distFouriereqnKST0} by means of a suitable fixed point argument, which will then furnish the desired bounds of the lemma with $l = 0$. For this first introduce the norm (here we denote $\underline{x} = \left(\begin{array}{c}x_1\\ x_0\end{array}\right)$)
\begin{equation}\label{eq:normforxcomponents0}\begin{split}
&\big\|\underline{x}\big\|_{\tilde{X}_0}:\\& =  \sup_{0<t\leq t_0}e^{cq\cdot\tau_2}\cdot\big\|\rho^{\frac12}\cdot x_1(\tau_2,\cdot)\big\|_{L^2_{d\xi}} +   \sup_{0<t\leq t_0} \tau_2^{\frac{\beta}{\beta+1}}e^{cq\cdot\tau_2}\cdot\big\|\rho^{\frac12}\cdot D_{\tau_2}x_1(\tau_2,\cdot)\big\|_{L^2_{d\xi}}\\
& +  \sup_{0<t\leq t_0} \tau_2^{\frac{\beta}{\beta+1}}e^{cq\cdot\tau_2}\cdot\big\|\rho^{\frac12}\cdot \xi^{\frac12}x_1(\tau_2,\cdot)\big\|_{L^2_{d\xi}} + \sup_{0<t\leq t_0}e^{cq\cdot \tau_2}\cdot\big(\big|x_0(\tau_2)\big| +  \tau_2^{\frac{\beta}{\beta+1}}\big|\dot{x}_{0}(\tau_2)\big|\big).
\end{split}\end{equation}
Here $q<p$ can be chosen arbitrarily close to $p$. 
From Plancherel's theorem for the distorted Fourier transform, we have that (with $\underline{x} = \mathcal{F}\big(\tilde{h}\big)$)
\begin{align*}
 \sup_{0<t\leq t_0}e^{cq\cdot\tau_2}\cdot\big\|h(t,\cdot)\big\|_{H^1_{r\,dr}}\lesssim \big\|\underline{x}\big\|_{\tilde{X}_0}.
\end{align*}
For the source terms we use the simpler norm 
\begin{equation}\label{eq:sourceforfnorm0}
\big\|\underline{f}\big\|_{\tilde{Y}_0}: =  \sup_{0<t\leq t_0}\tau_2^{\frac{\beta}{\beta+1}}\cdot e^{cq\cdot\tau_2}\big\|\rho^{\frac12}\cdot f_1(\tau_2,\cdot)\big\|_{L^2_{d\xi}} +  \sup_{0<t\leq t_0}\tau_2^{\frac{\beta}{\beta+1}}\cdot e^{cq\cdot\tau_2}\big|f_0(\tau_2)\big|. 
\end{equation}
In order to solve \eqref{eq:distFouriereqnKST0}, let us write the right hand side as $\left(\begin{array}{c}g_1\\ g_0\end{array}\right)$, and setting $\mathcal{F}\tilde{h}  = \left(\begin{array}{c}x_1\\ x_0\end{array}\right)$, we infer the system  
\begin{equation}\label{eq:xzeroonedecoupled0}\begin{split}
&{-}\partial_{\tau_2}\big(\partial_{\tau_2} - \omega\big)x_0 = g_0\\
&\big({-}D_{\tau_2}^2 - \omega D_{\tau_2} - \xi\big)x_1 = g_1. 
\end{split}\end{equation}
In order to solve the first equation on the right, we can use a direct analogue of \eqref{eq:xzeropropagator} where we replace $\tau_1$ by $\tau_2$ and $\lambda_1$ by $\lambda_2$. Using the bounds 
\begin{align*}
\lambda_2(\tau_2)\cdot\int_{\tau_2}^{\sigma_2}\lambda_2^{-1}(s)\,ds\lesssim \tau_2^{\frac{\beta}{\beta+1}},\,\Big|\partial_{\tau_2}\Big(\lambda_2(\tau_2)\cdot\int_{\tau_2}^{\sigma_2}\lambda_2^{-1}(s)\,ds\Big)\Big|\lesssim 1,\,\sigma_2\geq \tau_2, 
\end{align*}
we find that the first equation of \eqref{eq:xzeroonedecoupled0} admits a solution which satisfies the bound 
\begin{equation}\label{eq:xzeroboundintautwo}
 \sup_{0<t\leq t_0}e^{cq\cdot \tau_2}\cdot\big(\big|x_0(\tau_2)\big| + \tau_2^{\frac{\beta}{\beta+1}}\big|\dot{x}_{0}(\tau_2)\big|\big)\lesssim_q \sup_{0<t\leq t_0}\tau_2^{\frac{\beta}{\beta+1}}\cdot e^{cq\cdot\tau_2}\big|g_0(\tau_2)\big|.
\end{equation}
For the second equation in \eqref{eq:xzeroonedecoupled0} we take advantage of the propagator \eqref{eq:Duhamellambdaonetauone} with $\lambda_1, \tau_1$ replaced $\lambda_2, \tau_2$. Calling this $\tilde{U}\big(\tau_2,\sigma_2,\xi\big)$, we have the estimates 
\begin{equation}\label{eq:tildeUbounds}\begin{split}
&\big|\tilde{U}\big(\tau_2,\sigma_2,\xi\big)\big|\lesssim \frac{\rho^{\frac12}\big(\xi\frac{\lambda_2^2(\tau_2)}{\lambda_2^2(\sigma_2)}}{\rho^{\frac12}(\xi)}\cdot \frac{\lambda_2(\tau_2)}{\lambda_2(\sigma_2)}\cdot \min\{\xi^{-\frac12}, \tau_2^{\frac{\beta}{\beta+1}}\},\\
&\big|\partial_{\tau_2}\tilde{U}\big(\tau_2,\sigma_2,\xi\big)\big|\lesssim  \frac{\rho^{\frac12}\big(\xi\frac{\lambda_2^2(\tau_2)}{\lambda_2^2(\sigma_2)}}{\rho^{\frac12}(\xi)}\cdot \frac{\lambda_2(\tau_2)}{\lambda_2(\sigma_2)}.
\end{split}\end{equation}
Then the solution $x_1$ which vanishes at $\tau_2 = +\infty$ is given by the Duhamel propagator 
\begin{equation}\label{eq:tildeUpropagator}
x_1(\tau_2, \xi) =  \int_{\tau_2}^{\infty}\tilde{U}(\tau_2, \sigma_2, \xi)\cdot g_1\big(\sigma_2, \frac{\lambda_2^2(\tau_2)}{\lambda_2^2(\sigma_2)}\xi\big)\,d\sigma_2. 
\end{equation}
In light of the bounds \eqref{eq:tildeUbounds}, we deduce the following propagator bound:
\begin{equation}\label{eq:tildeUpropagatorbounds}\begin{split}
&\sup_{0<t\leq t_0}e^{cq\cdot\tau_2}\cdot\big\|\rho^{\frac12}\cdot x_1(\tau_2,\cdot)\big\|_{L^2_{d\xi}} + \sup_{0<t\leq t_0}\tau_2^{\frac{\beta}{\beta+1}}\cdot e^{cq\cdot\tau_2}\cdot\big\|\rho^{\frac12}\cdot D_{\tau_2}x_1(\tau_2,\cdot)\big\|_{L^2_{d\xi}}\\
& +  \sup_{0<t\leq t_0}\tau_2^{\frac{\beta}{\beta+1}}\cdot e^{cq\cdot\tau_2}\cdot\big\|\rho^{\frac12}\cdot \xi^{\frac12}x_1(\tau_2,\cdot)\big\|_{L^2_{d\xi}}\lesssim   \sup_{0<t\leq t_0}\tau_2^{\frac{\beta}{\beta+1}}\cdot e^{cq\cdot\tau_2}\big\|\rho^{\frac12}\cdot g_1(\tau_2,\cdot)\big\|_{L^2_{d\xi}}. 
\end{split}\end{equation}
We can rephrase \eqref{eq:xzeroboundintautwo} and \eqref{eq:tildeUpropagatorbounds} more succinctly as
\begin{equation}\label{eq:tautwoinhomeqnbounds}
\big\|\underline{x}\big\|_{\tilde{X}_0}\lesssim\big\|\underline{g}\big\|_{\tilde{Y}_0}. 
\end{equation}
We shall now solve the fixed point problem \eqref{eq:distFouriereqnKST0} by relying on these bounds. For this we need to estimate the terms on the right hand side and gain smallness for those depending on $\tilde{h}$. We do this for the various terms appearing there:
\\

{\it{(1): Estimating the term $\mathcal{F}\tilde{G}$.}} We observe that 
\begin{align*}
\big\|\mathcal{F}\tilde{G}\big\|_{\tilde{Y}_0}&\lesssim  \sup_{0<t\leq t_0}\tau_2^{\frac{\beta}{\beta+1}}\cdot e^{cq\cdot\tau_2}\cdot \lambda_2^{-1}\big\|F(t,\cdot)\big\|_{L^2_{r\,dr}}\\&\hspace{4cm} +  \sup_{0<t\leq t_0}\tau_2^{\frac{\beta}{\beta+1}}\cdot e^{cq\cdot\tau_2}\cdot\big\|G_1(t,\cdot)\big\|_{L^2_{R\,dR}}.
\end{align*}
Here we let $G_1 = 4\frac{\cos\big(2\tilde{Q}_2\big) - \cos\big(2Q_2\big)}{R^2}\cdot \tilde{h}$. For the first term on the right we have the estimate 
\[
\sup_{0<t\leq t_0}\tau_2^{\frac{\beta}{\beta+1}}\cdot e^{cq\cdot\tau_2}\cdot \lambda_2^{-1}\big\|F(t,\cdot)\big\|_{L^2_{r\,dr}}\lesssim_{t_0} \sup_{0<t\leq t_0}e^{cp\cdot\tau_2}\cdot \lambda_2^{-1}\big\|F(t,\cdot)\big\|_{L^2_{r\,dr}}.
\]
For the second term we use the estimate
\begin{align*}
\Big|4\frac{\cos\big(2\tilde{Q}_2\big) - \cos\big(2Q_2\big)}{R^2}\Big|\lesssim \big(\lambda_2 t\big)^{-2}\sim \tau_2^{-\frac{2\beta}{\beta+1}},
\end{align*}
which follows from Theorem~\ref{thm:approxoutersolnYManalogue}. We conclude that 
\begin{align*}
\sup_{0<t\leq t_0}\tau_2^{\frac{\beta}{\beta+1}}\cdot e^{cq\cdot\tau_2}\cdot\big\|G_1(t,\cdot)\big\|_{L^2_{R\,dR}}\ll_{t_0}\sup_{0<t\leq t_0}e^{cq\cdot\tau_2}\cdot\big\|\tilde{h}(t,\cdot)\big\|_{L^2_{R\,dR}}\lesssim \big\|\mathcal{F}\tilde{h}\big\|_{\tilde{X}_0}. 
\end{align*}
{\it{(2): Estimates for the terms involving the transference operators $\mathcal{K}_{*}$.}} Here we rely on Proposition~\ref{prop:Kcctransference}, Proposition~\ref{prop:transferencemappingbounds}. Also recalling \eqref{eq:omegalambdatwo}, we infer that 
\begin{equation}\label{eq:transferenceboundslambdatwo1}\begin{split}
\big\| 2\omega\mathcal{K}_{nd}D_{\tau_2}\mathcal{F}\tilde{h} \big\|_{\tilde{Y}_0}&\lesssim \sup_{0<t\leq t_0}\tau_2^{-\frac{\beta}{\beta+1}}\cdot e^{cq\cdot\tau_2}\cdot\tau_2^{\frac{\beta}{\beta+1}}\cdot \big\|D_{\tau_2}\mathcal{F}\tilde{h}(t,\cdot)\big\|_{L^2_{\rho\,d\xi}}\ll_{t_0}\big\|\mathcal{F}\tilde{h}\big\|_{\tilde{X}_0}. 
\end{split}\end{equation}
Similarly we deduce the bound
\begin{equation}\label{eq:transferenceboundslambdatwo2}\begin{split}
\big\|\omega^2\big[\mathcal{K}_{nd}, \mathcal{K}_d\big]\mathcal{F}\tilde{h}\big\|_{\tilde{Y}_0} + \big\| \omega^2\cdot\big(\mathcal{K}_{nd}^2 - \mathcal{K}_{nd}\big)\mathcal{F}\tilde{h}\big\|_{\tilde{Y}_0} + \big\|\dot{\omega}\mathcal{K}_{nd}\mathcal{F}\tilde{h}\big\|_{\tilde{Y}_0}\ll_{t_0}\big\|\mathcal{F}\tilde{h}\big\|_{\tilde{X}_0}. 
\end{split}\end{equation}
Taking advantage of the propagator bound \eqref{eq:tautwoinhomeqnbounds} for the solution of \eqref{eq:xzeroonedecoupled0} vanishing at $\tau_2 = +\infty$, we infer the existence of a solution of \eqref{eq:distFouriereqnKST0} and satisfying the bound
\begin{equation}\label{eq:tildehboundundifferentiated}
\big\|\mathcal{F}\tilde{h}\big\|_{\tilde{X}_0}\lesssim \sup_{0<t\leq t_0}e^{cp\tau_2}\cdot\lambda_2^{-1}(t)\cdot\big\|F(t,\cdot)\big\|_{L^2_{r\,dr}}.
\end{equation}
Again using Plancherel's theorem for the distorted Fourier transform, this implies the estimate 
\begin{equation}\label{eq:hboundsl=0noweights}
\big\|h(t,\cdot)\big\|_{H^1_{r\,dr}}\lesssim_p \tau^{-p+}.  
\end{equation}
In order to complete the proof of the lemma, we need to establish analogous bounds for the differentiated functions $S^lh$, as well as the bounds with singular weights $r^{-\kappa}, \kappa = 1, 2$. 
\\

To begin with, we apply the operator $S = t\partial_t + r\partial_r$ to the equation \eqref{eq:heqnreformulated}. This results in 
\begin{equation}\label{eq:Sequationlambdatwo}\begin{split}
&{-}(Sh)_{tt} +  (Sh)_{rr} + \frac{1}{r}(Sh)_r - 4\frac{\cos\big(2Q_2\big)}{r^2}\cdot Sh\\& = 2F + SF +  4S\Big(\frac{\cos\big(2\tilde{Q}_2\big) - \cos\big(2Q_2\big)}{r^2}\Big)\cdot h + 4\frac{S\big(\cos\big(2Q_2\big)\big)}{r^2}\cdot h
\end{split}\end{equation}
Thanks to the easily verified bound 
\begin{align*}
\Big|4\frac{S\big(\cos\big(2Q_2\big)\big)}{r^2}\Big|\lesssim \lambda_2^2, 
\end{align*}
together with the bound 
\begin{align*}
\Big|4S\Big(\frac{\cos\big(2\tilde{Q}_2\big) - \cos\big(2Q_2\big)}{r^2}\Big)\Big|\lesssim t^{-2}\cdot |\log t|^{-1}, 
\end{align*}
we conclude that if we denote the right hand side of \eqref{eq:Sequationlambdatwo} by $H$, then we have the estimate 
\begin{align*}
\sup_{0<t\leq t_0}\tau_2^{\frac{\beta}{\beta+1}}\cdot e^{c q\cdot \tau_2}\cdot \lambda_2^{-1}\big\|H(t,\cdot)\big\|_{L^2_{r\,dr}}\lesssim_q\sum_{l = 0, 1}\sup_{0<t\leq t_0}e^{cp\tau_2}\cdot\big\|S^l F(t,\cdot)\big\|_{L^2_{r\,dr}}, 
\end{align*}
where $q<p$ can be chosen arbitrarily, This is indeed a consequence of the already established bounds on $h$. Applying the already established undifferentiated bounds with $Sh$ taking the role of $h$, $H$ the role of $F$, $q_1<q$ the role of $q$ and $q$ the role of $p$, we infer the bound 
\begin{equation}\label{eq:hboundsl=1noweights}
\big\|Sh(t,\cdot)\big\|_{H^1_{r\,dr}}\lesssim_p \tau^{-p+}.  
\end{equation}
Using induction on $l$, we similarly infer the estimate 
\begin{equation}\label{eq:hboundsl=lnoweights}
\big\|S^lh(t,\cdot)\big\|_{H^1_{r\,dr}}\lesssim_{p,l} \tau^{-p+}.  
\end{equation}
It remains to establish the weighted bounds stated at the end of Lemma~\ref{lem:outerpropagatorinhombounds}. We prove the case $l = 0$, and the remaining estimates follow by differentiating the equation as before. The key is to invoke identity \eqref{eq:trickidentity} as well as estimate \eqref{eq:rminustwoestimate}, with $h$ replacing $\epsilon$. Thanks to the latter, we infer the bound 
\begin{equation}\label{eq:rminustwohbound}
\big\|r^{-2}h\big\|_{L^2_{r\,dr}}\lesssim\big\|t^{-1}\nabla h\big\|_{L^2_{r\,dr}} + \big\|t^{-1}r^{-1}h\big\|_{L^2_{r\,dr}} + \Big\|\Big(\frac{t^2 - r^2}{t^2}\partial_{rr} + \frac{1}{r}\partial_r - \frac{4}{r^2}\Big)h\Big\|_{L^2_{r\,dr}}.
\end{equation}
Thanks to the estimate 
\begin{align*}
\big\|\nabla h\big\|_{L^2_{r\,dr}} + \big\|\frac{h}{r}\big\|_{L^2_{r\,dr}}\lesssim \big\|L_t^{\frac12}h\big\|_{L^2_{r\,dr}} + \lambda_2(t)\cdot \big\|h\big\|_{L^2_{r\,dr}}, 
\end{align*}
the already established bounds imply that 
\begin{align*}
\big\|t^{-1}\nabla h\big\|_{L^2_{r\,dr}} + \big\|t^{-1}r^{-1}h\big\|_{L^2_{r\,dr}} \lesssim \tau^{-p+}. 
\end{align*}
It remains to control the degenerate second order elliptic operator above, for which we use \eqref{eq:trickidentity}. This implies in particular that 
\begin{align*}
\lambda_2^{-1}\cdot \Big\|\Big(\frac{t^2 - r^2}{t^2}\partial_{rr} + \frac{1}{r}\partial_r - \frac{4}{r^2}\Big)h\Big\|_{L^2_{r\,dr}}&\lesssim \big(\lambda_2\cdot t\big)^{-2}\cdot\big[\lambda_2\big\|S^2h\big\|_{L^2_{r\,dr}} + \lambda_2\big\|Sh\big\|_{L^2_{r\,dr}}\big]\\
& +  \big(\lambda_2\cdot t\big)^{-1}\cdot\big[\big\|\partial_t Sh\big\|_{L^2_{r\,dr}} + \big\|\partial_t h\big\|_{L^2_{r\,dr}}\big]\\
& + \lambda_2^{-1}\cdot\big\| \Box h - \frac{4h}{r^2}\big\|_{L^2_{r\,dr}}. 
\end{align*}
The first two terms are bounded in terms of the already established estimates, while for the last term on the right we take advantage of \eqref{eq:linearisationaroundtildeQtwo}. We conclude that 
\begin{align*}
\lambda_2^{-1}\cdot\big\| \Box h - \frac{4h}{r^2}\big\|_{L^2_{r\,dr}}&\lesssim \lambda_2^{-1}\cdot\big\|F\big\|_{L^2_{r\,dr}} +  \lambda_2^{-1}\cdot\big\|4\frac{\cos\big(2\tilde{Q}_2\big)-1}{r^2}\cdot h\big\|_{L^2_{r\,dr}}\\
&\lesssim \lambda_2^{-1}\cdot\big\|F\big\|_{L^2_{r\,dr}} + \lambda_2\cdot \big\|h\big\|_{L^2_{r\,dr}}\lesssim \tau^{-p+}. 
\end{align*}
Noting that $\lambda_2\lesssim \tau^{0+}$, we then infer that 
\[
\Big\|\Big(\frac{t^2 - r^2}{t^2}\partial_{rr} + \frac{1}{r}\partial_r - \frac{4}{r^2}\Big)h\Big\|_{L^2_{r\,dr}}\lesssim \tau^{-p+}. 
\]
Using \eqref{eq:rminustwohbound} and the other estimates above, we then obtain the desired estimate 
\begin{align*}
\big\|r^{-2}h\big\|_{L^2_{r\,dr}}\lesssim\tau^{-p+}. 
\end{align*}
Interpolating between this bound and 
\[
\big\|h\big\|_{L^\infty_{dr}}\lesssim \big\|\nabla h\big\|_{L^2_{r\,dr}} + \big\|\frac{h}{r}\big\|_{L^2_{r\,dr}}\lesssim \tau^{-p+}, 
\]
the estimate 
\begin{align*}
\big\|\frac{h}{r}\big\|_{L^4_{r\,dr}}\lesssim \tau^{-p+}
\end{align*}
follows.
\end{proof}

\section{Construction of two bubble solution; the main mechanism for the dynamics of inner bubble}

Here we briefly describe a key detail of our method to construct an approximate finite time blow up solution of the form 
\begin{equation}\label{eq:twobubbleblowup}
u_N(t, r) = Q\big(\lambda_1(t)r) - \tilde{Q}_2(t, r)  + v_N(t, r),\,v_N = \sum_{j=0}^N h_j. 
\end{equation}
The exact scaling parameter $\lambda_1(t)$ will be chosen in tandem with the correction $v_N$. Its leading order already emerges with the choice of the first approximation 
\[
h_0.
\]
This correction shall be chosen by essentially approximating the first equation in \eqref{eq:exactvNequation} by 
\begin{equation}\label{eq:firstapproximationcrude}
h_{0,rr} + \frac{1}{r}h_{0,r} - \frac{4\cos\big(2Q_1\big)}{r^2}h_0 = E_2,\,Q_1 = Q(R),\,R = \lambda_1(t)\cdot r.
\end{equation}
Thus we neglect the effect of the lower frequency potential term $\tilde{Q}_2$, and think of $h_0$ as essentially localised to the inner region $r\lesssim \lambda_1^{-1}$, where we shall pick $\lambda_1\gg \lambda_2$. 
In fact, we shall include a further correction term on the right to accommodate small adjustments of the scaling parameter $\lambda_1$, but to leading order, the preceding equation gives the precise asymptotics. We infer the precise form of $E_2$ from \eqref{eq:exactvNequation}. We shall use the variation of constants formula associated to the operator $\mathcal{L}$ in \eqref{eq:operatormathcalL} with its fundamental system 
\[
\Phi(R): = R^{-\frac12}\cdot \phi_0(R), \Theta(R): = R^{-\frac12}\cdot \theta_0(R),
\]
recalling \eqref{eq:phi0theta0}, resulting in 
\begin{equation}\label{eq:simplevariationofconstantshzero}
h_0 = \Theta(R)\cdot \int_0^R \lambda_1^{-2}\cdot E_2\Phi(s)\,sds - \Phi(R)\cdot \int_0^R \lambda_1^{-2}\cdot E_2\Theta(s)\,sds.
\end{equation}
The term with an outer factor $\Theta(R)$ leads generically to quadratic growth toward $R = +\infty$, which forces the vanishing condition 
\begin{equation}\label{eq:Etwoorthogonality}
\int_0^\infty \lambda_1^{-2}\cdot E_2\cdot\Phi(s)\,sds = 0. 
\end{equation}
For technical reasons, it is better to approximate the function $\lambda_1^{-2}\cdot E_2$ by the following one:
\begin{equation}\label{eq:tildeE2}
 \tilde{E}_2: = \frac{\lambda_1''}{\lambda_1^3}\cdot\Phi(R) + \big(\frac{\lambda_1'}{\lambda_1^2}\big)^2\cdot \big(R\Phi'(R) - \Phi(R)\big) - 8\cdot\big(\frac{\lambda_2}{\lambda_1}\big)^2\cdot \big[\cos\big(2Q(R)\big) - 1\big]
\end{equation}
This expression will easily emerge when explicitly computing $E_2$. Making the corresponding replacement in \eqref{eq:simplevariationofconstantshzero}, as well as in \eqref{eq:Etwoorthogonality}, we infer that 
\begin{align*}
&\int_0^\infty  \Big[\frac{\lambda_1''}{\lambda_1^3}\cdot\Phi(R) + \big(\frac{\lambda_1'}{\lambda_1^2}\big)^2\cdot \big(R\Phi'(R) - \Phi(R)\big)\Big]\cdot \Phi(R)\,R\,dR\\
& =  -\big(\frac{\lambda_2}{\lambda_1}\big)^2\cdot 8\int_0^\infty \big[{-}\cos\big(2Q(R)\big) + 1\big]\cdot R\Phi(R)\,dR. 
\end{align*}
Taking advantage of the relations
\begin{equation}\label{eq:integralrelationsone}
\int_0^\infty\Phi^2(R)R\,dR = 3\pi,\,8\int_0^\infty \big[{-}\cos\big(2Q(R)\big) + 1\big]\cdot R\Phi(R)\,dR = 12\pi. 
\end{equation}
We then derive the following equation which gives the leading order approximation to the high frequency scaling parameter $\lambda_1(t)$: 
\begin{equation}\label{eq:lambdaoneleadingorder}
\boxed{\frac{\lambda_1''}{\lambda_1^3} - 2 \big(\frac{\lambda_1'}{\lambda_1^2}\big)^2 = -4\cdot\big(\frac{\lambda_2}{\lambda_1}\big)^2}
\end{equation}
It can be easily verified that this equation admits a solution $\lambda_1(t)$ of the form $\lambda_1(t) = e^{\alpha(t)}$ with $\alpha(t)\sim \big|\log t\big|^{\beta+1}$, provided $\lambda_2(t) = \frac{\big|\log t\big|^{\beta}}{t}$ as before. We shall do this below, and moreover we shall analyse suitable perturbations of \eqref{eq:lambdaoneleadingorder} which shall be required to {\it{refine the first approximation}} $h_0$, resulting in the 
\[
v_N = \sum_{j=0}^N h_j. 
\]

\section{Details on the modulation equation \eqref{eq:lambdaoneleadingorder} and a perturbed version}

 We start by giving details on the solution of 
 \[
 \frac{\lambda_1''}{\lambda_1^3} - 2\cdot \big(\frac{\lambda_1'}{\lambda_1^2}\big)^2 = -4\frac{\lambda_2^2}{\lambda_1^2}\Longleftrightarrow \frac{\lambda_1''}{\lambda_1} - 2\cdot \big(\frac{\lambda_1'}{\lambda_1}\big)^2 = -4\lambda_2^2. 
 \]
 We shall set 
 \[
 \lambda_1(t) = e^{\alpha(t)}.
 \]
 Then the preceding equation becomes 
 \[
 \alpha'' - (\alpha')^2 = -4\lambda_2^2
 \]
 Here we expect $\alpha'(t)\sim -2\lambda_2(t)$, and it is natural to consider the variable $\zeta(t): = (\alpha')^{-1}(t)$ instead, which results in 
 \[
 \zeta' = (2\lambda_2\zeta)^2 - 1. 
 \]
 Write 
 \begin{equation}\label{eq:zetaansatz}
 \zeta(t) = -\frac{t}{2|\log t|^{\beta}} + \frac{t}{8|\log t|^{2\beta}} + \frac{\beta\cdot t}{8|\log t|^{2\beta+1}} + w. 
 \end{equation}
 Then we infer 
 \begin{align*}
 \zeta' &= -\frac{1}{2|\log t|^{\beta}} - \frac{\beta}{2|\log t|^{\beta+1}} + O\big(\frac{1}{|\log t|^{2\beta}}\big)+ w'\\
 & = \big({-}1 + \frac{1}{4|\log t|^{\beta}} + \frac{\beta}{4|\log t|^{\beta+1}} + 2\lambda_2 w\big)^2  - 1\\
 & = -\frac{1}{2|\log t|^{\beta}} - \frac{\beta}{2|\log t|^{\beta+1}} + O\big(\frac{1}{|\log t|^{2\beta}}\big) - 4\lambda_2 w\cdot \big(1+ o(1)\big)\\
 & + O\big(\lambda_2^2 w^2\big). 
 \end{align*}
 Here the term $o(1)$ vanishes as $t\rightarrow 0$ like a inverse power of $|\log t|$. 
 We then deduce the equation 
 \begin{align*}
 w' = -4\lambda_2 w\cdot \big(1+ o(1)\big) +  O\big(\frac{1}{|\log t|^{2\beta}}\big) + O\big(\lambda_2^2 w^2\big). 
 \end{align*}
 By formulating this as a fixed point problem, we infer the existence of a solution of class $w\in C^1\big((0, 1]\big)$: in fact, denoting 
 \begin{align*}
  \lambda_2 \cdot \big(1+ o(1)\big) = \tilde{\lambda}_2, 
 \end{align*}
 we can write
 \begin{equation}\label{eq:wfixedpoint}
 w(t) = e^{4\int_t^{t_0} \tilde{\lambda}_2(s)\,ds}\cdot \int_{0}^{t} e^{-4\int_{t'}^{t_0} \tilde{\lambda}_2(s')\,ds'}\cdot F(t',w)\,dt',
 \end{equation}
 where we set 
 \[
 F(t, w) = O\big(\frac{1}{|\log t|^{2\beta}}\big) + O\big(\lambda_2^2 w^2\big).
 \]
 Observe that setting $\tau': = 4\int_{t'}^{t_0} \tilde{\lambda}_2(s')\,ds'$, and similarly $\tau: = 4\int_{t}^{t_0} \tilde{\lambda}_2(s')\,ds'$, we have after a simple change of variables
 \begin{align*}
  e^{4\int_t^{t_0} \tilde{\lambda}_2(s)\,ds}\cdot \int_{0}^{t} e^{-4\int_{t'}^{t_0} \tilde{\lambda}_2(s')\,ds'}\cdot\big|\log t'\big|^{-2\beta}\,dt'\lesssim  e^{\tau}\cdot \int_{\tau}^\infty e^{-\tau'}\cdot t'\cdot |\log t'|^{-3\beta}\,d\tau', 
 \end{align*}
 where we interpret $t' = t'(\tau')$ as a function of $\tau'$. Since $t'$ is a decreasing function of $\tau'$, we infer that 
  \begin{align*}
 e^{\tau}\cdot \int_{\tau}^\infty e^{-\tau'}\cdot t'\cdot |\log t'|^{-3\beta}\,d\tau'\leq t\cdot |\log t|^{-3\beta}. 
 \end{align*}
 If we pick $t_0$ sufficiently small depending on $\beta$, we find that 
\begin{align*}
 e^{4\int_t^{t_0} \tilde{\lambda}_2(s)\,ds}\cdot \int_0^t e^{-4\int_{t'}^{t_0} \tilde{\lambda}_2(s')\,ds'}\cdot \frac{1}{|\log t'|^{2\beta}}\,dt' \ll \frac{t}{|\log t|^{2\beta}}. 
\end{align*}
Using a standard continuity argument, we deduce that \eqref{eq:wfixedpoint} admits a solution on $(0, t_0]$ satisfying the bound 
\begin{equation}\label{eq:wbasebound}
\big|w(t)\big|\lesssim \frac{t}{|\log t|^{2\beta}},\,t\in (0, t_0]. 
\end{equation}
From here we obtain the function $\zeta$ by taking advantage of \eqref{eq:zetaansatz}. \\
We can also infer derivative bounds on the function $\zeta$ just constructed, by applying $t\partial_t$ to the fixed point equation for $w$. This results in  
\begin{equation}\label{eq:wfixedpointdifferentiated}\begin{split}
\big(t\partial_t\big)^lw(t) &= \sum_{l_1+l_2 = l, l_2\geq 1}C_{l_1,l_2}\big(t\partial_t\big)^{l_1}\big(e^{4\int_t^{t_0} \tilde{\lambda}_2(s)\,ds}\big)\cdot \big(t\partial_t\big)^{l_2-1}\big(e^{-4\int_{t}^{t_0} \tilde{\lambda}_2(s')\,ds'}\cdot F(t,w)\big)\\
& +  \big(t\partial_t\big)^{l}\big(e^{4\int_t^{t_0} \tilde{\lambda}_2(s)\,ds}\big)\cdot \int_{0}^{t} e^{-4\int_{t'}^{t_0} \tilde{\lambda}_2(s')\,ds'}\cdot F(t',w)\,dt',\,l\geq 1. 
\end{split}\end{equation}
Using the bounds 
\begin{align*}
\Big| \big(t\partial_t\big)^{l}\big(e^{4\int_t^{t_0} \tilde{\lambda}_2(s)\,ds}\Big|\lesssim_l \big|\log t\big|^{l\cdot \beta}\cdot e^{4\int_t^{t_0} \tilde{\lambda}_2(s)\,ds},\,l\geq 0, 
\end{align*}
and exploiting that $l_2 -1<l$, an induction on $l\geq 0$, starting with \eqref{eq:wbasebound} leads to the bounds 
\begin{equation}\label{eq:wderivativebounds}
\big|\big(t\partial_t\big)^lw(t)\big|\lesssim_l \big|\log t\big|^{\beta\cdot (l-2)}\cdot t,\,l\geq 0. 
\end{equation}
In turn we infer the similar bound 
\begin{equation}\label{eq:zetaderivativebounds}
\big|\big(t\partial_t\big)^l\zeta(t))\big|\lesssim_l \big|\log t\big|^{\beta\cdot (l-2)}\cdot t,\,l\geq 0. 
\end{equation}

In our construction of the approximate solution, we shall have to replace the modulation equation for $\lambda_1$ by replacing $\lambda_2$ by a perturbation which in turn depends on $\lambda_1$. Specifically, we shall replace $\zeta$ as constructed in the preceding by 
\[
\tilde{\zeta} = \zeta + \nu, 
\]
and we replace $\lambda_2$ by 
\[
\lambda_2 + m.
\]
Here $m$ shall depend (mildly) on $\nu$, via the dependence of $\lambda_1$ on $\nu$. We derive the following equation for $\nu$, where all displayed functions are interpreted as functions of $t\in (0, t_0]$:
\begin{equation}\label{eq:nueqn}
\nu' = 4\big(m^2 + 2\lambda_2 m\big)\zeta^2 + 4(\lambda_2 + m)^2\big(\nu^2 + 2\nu\zeta\big). 
\end{equation}
 Here $\zeta\sim -\frac{t}{2|\log t|^{\beta}}$ is the function constructed earlier. Then we shall require the following lemma:
 \begin{lem}\label{lem:modulationfinetuning} Let 
 \[
 \tau: = e^{2(\beta+1)^{-1}|\log t|^{\beta+1}},
 \]
 and assume the bound 
 \[
 |m(t)|\leq \tau^{-\frac12}.
 \]
 Then letting $t_0 = t_0(\beta)$ be sufficiently small and positive, the equation \eqref{eq:nueqn} admits a solution $\nu\in C^1\big((0, t_0]\big)$ satisfying the bound 
 \begin{align*}
 \big|\nu(t)\big|\leq \tau^{-\frac12}. 
 \end{align*}
 In particular, we have $\big|\nu(t)\big|\ll \big|\zeta(t)\big|$ for $0<t\leq t_0$, $t_0$ sufficiently small. 
 Finally, we have the estimate 
 \begin{align*}
 \big|(t\partial_t)^k\nu\big|\lesssim  \tau^{-\frac12}\cdot \log^k\tau\cdot \big(1 +\sum_{j=0}^{k-1}\sup_{0<t\leq t_0}\tau^{\frac12}\big|(t\partial_t)^j m\big|\big)^l.
 \end{align*}
 
 \end{lem}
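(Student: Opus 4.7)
The plan is to recast \eqref{eq:nueqn} as a Duhamel-type fixed point problem with a carefully chosen integrating factor, and then recover the higher derivative bounds by differentiating the fixed-point formula inductively, in exact analogy to the treatment of $w$ in \eqref{eq:wfixedpoint}--\eqref{eq:wderivativebounds}.

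First I would identify the leading linear structure. Using $\lambda_2\zeta\sim-\tfrac12$ which is immediate from \eqref{eq:zetaansatz}, the coefficient of $\nu$ on the right of \eqref{eq:nueqn} satisfies $8(\lambda_2+m)^2\zeta = -4\lambda_2(1+o(1)) + O(\lambda_2 m)$, where the correction is bounded by $(|\log t|^{-1}+\tau^{-1/2})\lambda_2$. Absorbing this leading term into an effective density $\tilde\lambda_2 = \lambda_2(1+o(1))$ and noting that $4\int_t^{t_0}\lambda_2(s)\,ds\sim 2\log\tau(t)$, so that $e^{4\int_t^{t_0}\tilde\lambda_2(s)\,ds}\sim\tau^2$, the equation becomes
\[
\nu(t) = e^{4\int_t^{t_0}\tilde\lambda_2(s)\,ds}\int_0^t e^{-4\int_{t'}^{t_0}\tilde\lambda_2(s')\,ds'}\, G(t',\nu(t'))\,dt',
\]
with $G$ collecting the inhomogeneity $4(m^2+2\lambda_2 m)\zeta^2$, the quadratic term $4(\lambda_2+m)^2\nu^2$ and the subleading linear piece acting on $\nu$.

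The main analytic step is the estimate
\[
\tau(t)^2\int_0^t e^{-4\int_{t'}^{t_0}\lambda_2\,ds'}\cdot\tau(t')^{-1/2}\cdot\frac{t'}{|\log t'|^{\beta}}\,dt' \ll \tau(t)^{-1/2},
\]
applied to the dominant source term $\lambda_2 m\zeta^2\lesssim\tau^{-1/2}\cdot t/|\log t|^\beta$. I would verify this via the substitution $\sigma=2(\beta+1)^{-1}|\log t'|^{\beta+1}$, under which $e^{-4\int_{t'}^{t_0}\lambda_2\,ds'}\sim e^{-2\sigma}$ and $dt'\sim -(t'/|\log t'|^\beta)\,d\sigma$, reducing the integral to $\int_{\sigma_0}^\infty e^{-5\sigma/2}\,t'(\sigma)^2/|\log t'(\sigma)|^{2\beta}\,d\sigma$ with $\sigma_0\sim 2(\beta+1)^{-1}|\log t|^{\beta+1}$. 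Since $t'(\sigma)$ is decreasing, the integral is controlled by its left endpoint $e^{-5\sigma_0/2}\cdot t^2/|\log t|^{2\beta}$; multiplying by $\tau(t)^2=e^{2\sigma_0}$ produces $\lesssim \tau(t)^{-1/2}\cdot t^2/|\log t|^{2\beta}$, which is far smaller than $\tau^{-1/2}$. The quadratic contribution $\lambda_2^2\nu^2$ is similarly absorbed, since on the ball $\{\|\nu\|_\infty\leq\tau^{-1/2}\}$ one picks up an extra small factor $\tau^{-1/2}\lambda_2^2/\lambda_2\ll 1$ for $t_0$ sufficiently small. A standard continuity / contraction argument on $C^0((0,t_0])$ with the sup norm $\sup \tau^{1/2}|\nu|$ then produces the solution satisfying $|\nu(t)|\leq\tau^{-1/2}$, and the comparison with $|\zeta|\sim t/|\log t|^\beta$ follows because $\tau^{-1/2}$ decays faster than any polynomial in $t$.

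For the differentiated bounds, I would apply $(t\partial_t)^k$ to the Duhamel formula exactly as in \eqref{eq:wfixedpointdifferentiated} and induct on $k$. Each derivative falling on the integrating factor $e^{4\int_t^{t_0}\tilde\lambda_2\,ds}$ produces a factor of $4t\tilde\lambda_2\lesssim|\log t|^\beta\lesssim\log\tau$; derivatives of $\zeta$ respect the bounds \eqref{eq:zetaderivativebounds} with the same kind of logarithmic loss; and derivatives of $m$ are packaged into the factor $(1+\sum_{j<k}\sup\tau^{1/2}|(t\partial_t)^j m|)^l$ via the product rule applied to $G$. The main technical obstacle is purely book-keeping: verifying that after $k$ differentiations the accumulated $(\log\tau)^k$ and $m$-derivative losses still leave the integral dominated by $\tau^{-1/2}\log^k\tau$ times the announced polynomial in the $m$-seminorms. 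This succeeds because the $\tau^2$ gain from the integrating factor is exponentially larger than any polynomial-in-$\log\tau$ loss, exactly as in the proof of \eqref{eq:wderivativebounds}.
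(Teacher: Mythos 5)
Your proposal is correct and follows essentially the same route as the paper: recasting \eqref{eq:nueqn} as a Duhamel fixed point with an exponential integrating factor $\sim e^{4\int_t^{t_0}\lambda_2}$, checking that the source terms $(m^2+2\lambda_2 m)\zeta^2$ and $(\lambda_2+m)^2\nu^2$ contribute $\ll\tau^{-1/2}$, closing by a continuity/contraction argument, and obtaining the $(t\partial_t)^k$ bounds by differentiating the fixed-point formula inductively as in \eqref{eq:wfixedpointdifferentiated}. The only (cosmetic) difference is that the paper absorbs the full linear coefficient $8(\lambda_2+m)^2\zeta$ into the integrating factor via $\tilde\lambda_2=4(\lambda_2+m)^2\zeta$, whereas you keep the $O(\lambda_2 m+|\log t|^{-1}\lambda_2)$ subleading linear piece in the source and absorb it perturbatively, which works equally well.
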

 \begin{proof} This follows by rewriting \eqref{eq:nueqn} in fixed point form 
 \begin{equation}\label{eq:nuformula}
 \nu(t) = e^{-2\int_t^{t_0} \tilde{\lambda}_2(s)\,ds}\cdot \int_0^{t} e^{2\int_{t'}^{t_0} \tilde{\lambda}_2(s')\,ds'}\cdot G(t', \nu)\,dt',
 \end{equation}
 where we set
 \begin{align*}
  \tilde{\lambda}_2 = 4 (\lambda_2 + m)^2\cdot \zeta
 \end{align*}
 whence $| \tilde{\lambda}_2|\lesssim \lambda_2$, and further 
 \begin{align*}
 G =  4\big(m^2 + 2\lambda_2 m\big)\zeta^2 +  4(\lambda_2 + m)^2\nu^2. 
 \end{align*}
 Our assumptions imply that 
 \begin{align*}
  \big(m^2 + 2\lambda_2 m\big)\zeta^2\ll \tau^{-\frac12}.
 \end{align*}
 Moreover, if $\nu\leq \tau^{-\frac12}$, then 
 \begin{align*}
 (\lambda_2 + m)^2\nu^2\ll \tau^{-\frac12}
 \end{align*}
 if $t_0$ is sufficiently small. The first part of the lemma follows by a simple continuity argument, while the derivative bounds follow via induction and differentiating the fixed point equation satisfied by $\nu$, and the bound 
 \begin{align*}
\big|(t\partial_t)^l\big(e^{2\int_t^{t_0} \tilde{\lambda}_2(s;\cdot)\,ds}\big)\big|\lesssim_l \big|\log t\big|^{l\beta}\cdot \big(1 +\tau^{-\frac12}\cdot\sum_{j=0}^{l}\sup_{0<t\leq t_0}\tau^{\frac12}\big|(t\partial_t)^j m\big|\big)^l. 
\end{align*}
 In order to get derivative type bounds with respect to $m$, we need to refine the preceding approach a bit. First, introduce the following family of norms to control the function $m$:
 \begin{equation}\label{eq:mnorms}
 \big\|m\big\|_{p,l} := \sup_{0<t\leq t_0}\tau^p\cdot \sum_{j=0}^l\big|(t\partial_t)^jm(t,\cdot)\big|,\,0\leq l, p. 
 \end{equation}
 where we interpret $\tau = \tau(t)$. 
 \\
 Next, returning to \eqref{eq:nuformula}, write
 \begin{align*}
 2\int_t^{t_0} \tilde{\lambda}_2(s;m)\,ds &= 2\int_t^{t_0} \tilde{\lambda}_2(s;0)\,ds + 2\int_0^{t_0} [\tilde{\lambda}_2(s;m)\,ds - \tilde{\lambda}_2(s;0)]\,ds
 \\& -  2\int_0^{t} [\tilde{\lambda}_2(s;m)\,ds - \tilde{\lambda}_2(s;0)]\,ds
 \end{align*}
 Proceeding similarly for the integral $2\int_{t'}^{t_0}$, the second term on the right cancels in the difference of these integrals: for $0<t'\leq t\leq t_0$, we write
 \begin{align*}
& -2\int_t^{t_0} \tilde{\lambda}_2(s;m)\,ds + 2\int_{t'}^{t_0} \tilde{\lambda}_2(s;m)\,ds\\
& = 2\int_{t'}^{t} \tilde{\lambda}_2(s;0)\,ds + 2\int_{t'}^{t} [\tilde{\lambda}_2(s;m)\,ds - \tilde{\lambda}_2(s;0)]\,ds
\end{align*}
 
 Now consider the expression 
 \begin{align*}
 H(t; m, n ;\alpha): = e^{2\int_t^{t'} \tilde{\lambda}_2(s;m+\alpha n)\,ds}. 
 \end{align*}
 Observe that if we write
 \begin{align*}
  \triangle\big(e^{2\int_t^{t_0} \tilde{\lambda}_2(s;\cdot)\,ds}\big)(t; m, n): = e^{2\int_t^{t_0} \tilde{\lambda}_2(s; m+n)\,ds} - e^{2\int_t^{t_0} \tilde{\lambda}_2(s; m)\,ds}, 
 \end{align*}
  then we have the identity 
 \begin{align*}
&\triangle\big(e^{2\int_t^{t_0} \tilde{\lambda}_2(s;\cdot)\,ds}\big)(t; m, n) = \int_0^1\frac{\partial}{\partial\alpha} H(t; m, n,\alpha)\,d\alpha. 
 \end{align*}
 Due to the bound 
 \begin{align*}
 \Big|\frac{\partial}{\partial\alpha}H(t; m, n ;\alpha)\Big|\lesssim \tau^{-p}\cdot\Big\|n\Big\|_{p,0},
 \end{align*}
 for $p\geq 0$, valid provided $\Big\|m\Big\|_{p,0} + \Big\|n\Big\|_{p,0}\leq 1$, 
we deduce that 
\begin{align*}
\Big|\triangle\big(e^{2\int_t^{t_0} \tilde{\lambda}_2(s;\cdot)\,ds}\big)(t; m, n)\Big|\lesssim \tau^{-p}\cdot\Big\|n\Big\|_{p,0},\,p\geq 0, 
\end{align*}
again under the condition that 
\begin{equation}\label{eq:aprioriboundonmn}
\Big\|m\Big\|_{p,0} + \Big\|n\Big\|_{p,0}\leq 1.
\end{equation}
Using the Leibniz type relation 
\begin{equation}\label{eq:Leibnizdifferencing}\begin{split}
&\triangle\big(f\cdot g\big)(t;m,n)\\
& = \big(\triangle f\cdot  \triangle g\big)(t;m,n) + \big(f\cdot \triangle g\big)(t;m,n) +  \big(g\cdot \triangle f\big)(t;m,n),
\end{split}\end{equation}
applied iteratively, as well as a continuity argument, one infers the bound
\begin{equation}\label{eq:nudifferencebounds}
\big|\triangle\nu(t;m,n)\big|\lesssim \tau^{-p}\cdot\Big\|n\Big\|_{p,0},\,p\geq 0, 
\end{equation} 
under the assumption \eqref{eq:aprioriboundonmn}.
We shall also require a version of these inequalities involving multiple applications of the operator $t\partial_t$. 
Using induction as well as continuity arguments, one concludes in analogy to the preceding that 
\begin{equation}\label{eq:generalnubounds}
\big|\big(t\partial_t\big)^l\triangle\nu(t;m,n)\big|\lesssim_{l} \tau^{-p}\cdot\big(\Big\|m\Big\|_{p,l} +\Big\|n\Big\|_{p,l}+1\big)^{l-1}\cdot\Big\|n\Big\|_{p,l},\,p\geq 0. 
\end{equation}
We note that setting
 \[
 \alpha'(t) = -\big(\zeta(t) + \nu(t)\big)^{-1}, 
 \]
 with $\zeta, \nu$ as before, and 
 \begin{equation}\label{eq:alphadefinition}
 \alpha(t) = {-}\int_0^{t}\big[\big(\zeta(s) + \nu(s)\big)^{-1} - \zeta^{-1}(s)\big]\,ds +  \int_t^{t_0}\big(\zeta(s)\big)^{-1}\,ds,
 \end{equation}
 we have 
 \begin{align*}
 \lim_{t\rightarrow 0}\Big(\alpha(t) -  \int_t^{t_0}\big(\zeta(s)\big)^{-1}\,ds\Big) = 0
 \end{align*}
 uniformly over $\nu$ as determined via the preceding lemma, with arbitrary function $m$ satisfying the bound in the statement of the lemma. In particular, it follows that setting 
 \[
 \lambda_1(t) = e^{\alpha(t)},
 \]
 we have 
 \[
 \lambda_1(t)\geq \tau^{1-}
 \]
 provided $t\in (0, t_0]$ with $t_0$ sufficiently small, uniformly over $\nu$ determined via the preceding lemma. 
 \\
 Taking advantage of the preceding bound for $\nu$, we then infer the following similar bounds for $\lambda_1(t)$: 
 \begin{equation}\label{eq:generallambdaonebounds}\begin{split}
&\big|\lambda_1^{-1}\cdot\big(t\partial_t\big)^l\lambda_1(t;m)\big|\lesssim_{l}\tau^{0+}\cdot\big(1+\Big\|m\Big\|_{p,l}\big)^{l-1},\\
&\big|\lambda_1^{-1}\cdot\big(t\partial_t\big)^l\triangle\lambda_1(t;m,n)\big|\lesssim_{l} \tau^{-p+}\cdot\big(\Big\|m\Big\|_{p,l} +\Big\|n\Big\|_{p,l}+1\big)^{l-1}\cdot\Big\|n\Big\|_{p,l}
\end{split}\end{equation}
 \end{proof}
  where we restrict to $p\geq \frac12$, say, and we again assume \eqref{eq:aprioriboundonmn}.
 
  \section{Construction of accurate approximate solution}
  
  The goal in this part is to prove the following 
  \begin{prop}\label{prop:approxsoln} Let $\beta>\frac32$ and set $\lambda_2(t): = \frac{|\log t|^{\beta}}{t}$. Let $N\geq 1$ be given. Then there exists $t_0 = t_0(\beta,  N)$, such that there is a continuously differentiable function $\lambda_1\in C^1\big((0, t_0]\big)$ with 
  \[
  \lambda_1(t)\in [e^{\frac12 (\beta+1)^{-1}|\log t|^{\beta+1}}, e^{2(\beta+1)^{-1}|\log t|^{\beta+1}}]
  \]
 and an approximate solution 
  \[
  u_N(t, r) = Q\big(\lambda_1(t)r\big) - \tilde{Q}_2(t, r) + v_N(t, r)
  \]
  satisfying the bound 
  \begin{align*}
  \big\|{-}u_{N, tt} + u_{N, rr} + \frac{1}{r}u_{N,r} - 2\frac{\sin (2u_N)}{r^2}\big\|_{L^2_{r\,dr}(r\lesssim t)}\lesssim \tau^{-N},
  \end{align*}
  where we define the {\it{inner wave time $\tau$}} by means of 
  \[
  \tau = \frac{\lambda_1(t)}{\lambda_2(t)}. 
  \]
  \end{prop}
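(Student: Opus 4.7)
\textbf{Proof plan for Proposition \ref{prop:approxsoln}.}

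The plan is to build $v_N = \sum_{j=0}^N h_j$ inductively, together with the scaling parameter $\lambda_1(t)$, by alternating between an outer wave solve (using the propagator bounds of Lemma~\ref{lem:outerpropagatorinhombounds}) and an inner elliptic correction (using variation of constants for the operator $\mathcal{L}$ of \eqref{eq:operatormathcalL}). At each stage, compatibility of the inner correction with the far-field behavior of $\Theta(R)$ forces an orthogonality condition against $\Phi(R)$ that is absorbed into a perturbation of the modulation equation \eqref{eq:lambdaoneleadingorder}, yielding successive refinements $m_j$ of the source driving the equation for $\lambda_1$. Lemma~\ref{lem:modulationfinetuning} then produces the corresponding $\nu_j$, hence $\lambda_1$ via \eqref{eq:alphadefinition}.

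For the base case, I would fix $\lambda_1$ initially by solving \eqref{eq:lambdaoneleadingorder} (i.e.\ taking $m=0$ in Lemma~\ref{lem:modulationfinetuning}) and compute $E_2$ from \eqref{eq:exactvNequation} using $v_N=0$. The leading part of $\lambda_1^{-2}E_2$ is precisely $\tilde E_2$ of \eqref{eq:tildeE2}, and the orthogonality $\int_0^\infty \tilde E_2\Phi(R)R\,dR=0$ reproduces \eqref{eq:lambdaoneleadingorder} via \eqref{eq:integralrelationsone}. The remainder $\lambda_1^{-2}E_2 - \tilde E_2$ decays like $\tau^{-1}$ up to $R$--polynomial weights, and after forcing orthogonality by a small adjustment $m_0$ to the modulation (subsumed into $\nu$ of Lemma~\ref{lem:modulationfinetuning}) the inner variation of constants \eqref{eq:simplevariationofconstantshzero} produces $h_0^{\mathrm{inner}}$ with controlled growth as $R\to\infty$. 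The resulting outer error — generated both by the neglected time derivatives $-\partial_t^2 h_0^{\mathrm{inner}}$ and by the truncation at $r\sim t$ — is then fed as a source into the outer wave equation \eqref{eq:outerwavevzero}, solved via Lemma~\ref{lem:outerpropagatorinhombounds} to produce $h_0^{\mathrm{outer}}$ satisfying $\|S^l h_0^{\mathrm{outer}}\|_{H^1}\lesssim \tau^{-1+}$ for $l\leq M$. Setting $h_0 = h_0^{\mathrm{inner}} + h_0^{\mathrm{outer}}$ gives a first approximate correction whose residual error in \eqref{eq:exactvNequation}, after accounting for the mismatches between the two potentials, is of size $\tau^{-2+}$ in $L^2(r\lesssim t)$.

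The inductive step proceeds identically: given $v_{(j)} = \sum_{i\le j} h_i$ with residual error $\mathcal{E}_j$ of size $\tau^{-j-1+}$ in $L^2(r\lesssim t)$ (together with derivative analogues under $S$), solve the outer wave equation with source $\mathcal{E}_j$ to obtain an outer piece $h_{j+1}^{\mathrm{outer}}$ by Lemma~\ref{lem:outerpropagatorinhombounds}; then compensate the mismatch between the full potential $4\cos(2Q_1-2\tilde Q_2)/r^2$ and the outer one $4\cos(2\tilde Q_2)/r^2$ via the inner elliptic solve \eqref{eq:innerwavevone}, using variation of constants. The orthogonality condition needed for boundedness of this inner correction is imposed by picking up a small increment $m_{j+1}$ in the source of the modulation ODE for $\lambda_1$; Lemma~\ref{lem:modulationfinetuning} and \eqref{eq:generallambdaonebounds} guarantee that this increment is of order $\tau^{-j-\frac12}$ and only mildly disturbs the previously constructed pieces (so the induction closes without circular dependence). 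The contributions from $E_1$ and $E_3$ in \eqref{eq:exactvNequation}, being quadratic/higher in $v_{(j)}$, produce strictly better errors by the Moser--type bounds obtained from the $L^\infty$ estimates $|v_{(j)}|\lesssim \tau^{0-}$. After $N$ steps we obtain the claimed bound $\lesssim \tau^{-N}$.

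The main obstacle will be the inner orthogonality bookkeeping: each inductive step modifies $\lambda_1$, which in turn perturbs the potential $4\cos(2Q_1)/r^2$ entering the inner solve at previous stages, and thus perturbs all previous $h_i$. One must set up a joint fixed point, or an order--by--order scheme in which the perturbation of $\lambda_1$ at stage $j+1$ contributes to the error at order $\tau^{-(j+1)-\frac12+}$ without disturbing the leading balance at prior orders; this is exactly the role of the a priori bound $|\triangle \lambda_1|\lesssim \lambda_1\cdot\tau^{-p+}\|n\|_{p,l}$ in \eqref{eq:generallambdaonebounds}. The second delicate point is translating the inner estimates (naturally phrased in $R=\lambda_1 r$ with pointwise polynomial bounds) into the $L^2(r\lesssim t)$ bound required for the outer propagator input, which is handled by splitting $r\lesssim \lambda_1^{-1}$ versus $\lambda_1^{-1}\ll r\lesssim t$ and using the explicit $R$--polynomial decay of $\Phi,\Theta$ in the latter region.
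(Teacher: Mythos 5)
Your plan follows the same core strategy as the paper's proof: an $N$-step iteration in which each increment is produced by an outer wave solve (Lemma~\ref{lem:outerpropagatorinhombounds}) followed by an inner elliptic solve via variation of constants for the operator $\mathcal{L}$, with the orthogonality against $\Phi$ tied to the modulation of $\lambda_1$, and a gain of roughly one power of $\tau$ per step. The genuine difference lies in how the modulation is administered. You propose to refine the ODE for $\lambda_1$ at every stage (successive $m_j$, hence successive $\nu_j$ and successive $\lambda_1$'s), which creates exactly the circularity you flag: every previously built $h_i$ and the inner potential $4\cos(2Q_1)/r^2$ would have to be revisited. The paper sidesteps this: $\lambda_1=\lambda_1(\cdot\,;m)$ is built once from a single \emph{undetermined} function $m$ (via Lemma~\ref{lem:modulationfinetuning} and \eqref{eq:alphadefinition}); at each stage the orthogonality is bought not by changing $\lambda_1$ but by adding an explicit correction source $m_k\cdot[\cos(2Q_1)-1]$ to the equation for $h_k$ (the boxed terms), with $m_k$ given by an explicit integral such as \eqref{eq:m1definition}; every object ($h_j$, $m_j$, the errors) is estimated together with Lipschitz-type differencing bounds in $m$ (the $\triangle$-estimates, e.g.\ \eqref{eq:nzerodifferencingbounds}, \eqref{eq:generallambdaonebounds}); and only at the very end is $m$ chosen, by approximately solving the scalar equation $8(2m\lambda_2+m^2)+\sum_{k=1}^{N}m_k(m)=0$ through the iteration of Lemma~\ref{lem:mchoice}, which renders the accumulated correction terms $e_{N,1}$ of size $\tau^{-N+1+}$. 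This is precisely the ``joint fixed point'' you mention as one option, but performed on the single scalar function $m$ rather than on the whole collection $(\lambda_1,h_0,\dots,h_N)$, which is what keeps the bookkeeping tractable. Two smaller discrepancies with the paper: the first correction $h_0$ there is a purely inner elliptic solve (the outer propagator first enters at $h_1$ through $h_{10}$), and the modulation increments obey $|m_{j+1}|\lesssim\tau^{-j-1+}$ rather than your $\tau^{-j-\frac12}$; neither affects the viability of your outline.
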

  \begin{rem}\label{rem:tauredefinition} We have slightly changed here the definition of $\tau$ compared to Lemma~\ref{lem:modulationfinetuning}. However, these two definitions essentially agree, since $\lambda_1$ will be constructed via Lemma~\ref{lem:modulationfinetuning} and suitable choice of $m$ satisfying the bound stated in that lemma, and then we have that 
  \begin{align*}
  \big(\frac{\lambda_1(t)}{\lambda_2(t)}\big)^{1-}\leq  e^{2(\beta+1)^{-1}|\log t|^{\beta+1}}\leq  \big(\frac{\lambda_1(t)}{\lambda_2(t)}\big)^{1+}
  \end{align*}
  for $0<t\leq t_0$ with $t_0$ sufficiently small. 
   \end{rem}
  \begin{proof} In the following, we shall first let $t_0$ a fixed small number, which shall have to be chosen small enough depending on various conditions depending on $\beta, N$. 
  Let $m\in C^0\big((0, t_0]\big)$ a function satisfying the bound 
  \[
  \big|m(t)\big|\leq e^{-(\beta+1)^{-1}|\log t|^{\beta+1}},\,t\in (0, t_0].
  \]
  Determine $\nu(t)$ by means of Lemma~\ref{lem:modulationfinetuning}, already requiring $t_0$ to be sufficiently small, and then set 
  \[
  \lambda_1(t): = e^{\alpha(t)},
  \]
   where 
   \[
   \alpha(t) = -\int_0^{t}\big[\big(\zeta(s) + \nu(s)\big)^{-1} - \zeta(s)^{-1}\big]\,ds + \int_{t}^{t_0}\zeta(s)^{-1}\,ds.
   \]
  We shall construct $v_N$ by a process of adding finitely many increments. The end result of this process will indeed be a highly precise approximate solution, provided $m$ solves a certain fixed point property approximately, which then determines the final choice of $\lambda_1(t)$. 
  As noted in the preceding section, we have 
  \[
  \lambda_1(t)\geq e^{\frac12 (\beta+1)^{-1}|\log t|^{\beta+1}}
  \]
  provided $t_0$ is sufficiently small. 
  \\
  The construction of the correction $v_N$ proceeds in the following steps. We shall set $v_N = \sum_{j=0}^N h_j$. To begin with, we record the equation which $v_N$ would have to satisfy if it led to an exact solution: 
  \begin{equation}\label{eq:exactvNequation-rec}\begin{split}
  &{-}v_{N,tt} + v_{N, rr} + \frac{1}{r}v_{N, r} - \frac{4\cos\big(2Q_1- 2\tilde{Q}_2\big)}{r^2}v_N = \sum_{j=1}^3 E_j,\\
  &E_1 = \frac{2\cos\big(2Q_1- 2\tilde{Q}_2\big)}{r^2}\cdot \big[\sin\big(2v_N\big) - 2v_N\big],\\
  &E_2 = \frac{2}{r^2}\big[\sin\big(2Q_1-2\tilde{Q}_2\big) - \sin\big(2Q_1\big) + \sin\big(2\tilde{Q}_2\big)\big] +Q_{1,tt},\\
  &E_3 =  \frac{2\sin\big(2Q_1- 2\tilde{Q}_2\big)}{r^2}\cdot \big[\cos\big(2v_N\big) - 1\big]. 
  \end{split}\end{equation}
  Throughout we set $Q_1 = Q(\lambda_1(t)r)$.
  \\
  
  {\bf{Step 0}}: {\it{Construction of the correction $h_0$.}} Here we use the simple elliptic equation 
  \begin{equation}\label{eq:elliptic}
   h_{0, rr} + \frac{1}{r}h_{0, r} - \frac{4\cos\big(2Q_1\big)}{r^2}h_0 = E_2 - \boxed{8(2m\lambda_2 + m^2)\cdot \big[\cos(2Q_1) - 1\big]}. 
  \end{equation}
  The boxed term on the far right is a correction term which is required due to the modified law for $\lambda_1$. Then we can formulate 
  \begin{lem}\label{lem:hzerocorrection} The equation \eqref{eq:elliptic} admits a solution $h_0\in H^3_{r\,dr}$ satisfying the uniform bound 
  \begin{align*}
  \big|h_0\big|\lesssim \tau^{-2+},\,r\lesssim t. 
  \end{align*}
  Denoting the scaling vector field $S = t\partial_t + r\partial_r$, we have the uniform bounds 
  \begin{align*}
  \big\|S^k h_0\big\|_{H^1_{r\,dr}}\lesssim_k  \tau^{-2+},\,k\geq 0,\,r\lesssim t. 
  \end{align*}
  provided $\big|(t\partial_t)^jm\big|\lesssim 1$, $0\leq j\leq k$. More precisely, interpreting $h_0$ as a function depending on $m$, and correspondingly writing 
\[
h_0(t, r) = h_0(t,r; m),
\]
we can estimate
  \begin{align*}
  \big|S^k h_0(\cdot,\cdot;m)\big|\lesssim _k \tau^{-2+}\cdot \big(1 + \tau^{-p}\cdot\Big\|m\Big\|_{p,k}\big),\,r\lesssim t, 
  \end{align*}
  provided we have the a priori bound \eqref{eq:aprioriboundonmn}.
    \end{lem}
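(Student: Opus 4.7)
\textbf{Proof plan for Lemma \ref{lem:hzerocorrection}.} The strategy is to solve \eqref{eq:elliptic} via the variation-of-constants formula \eqref{eq:simplevariationofconstantshzero} built from $\Phi,\Theta$, where the choice of $\lambda_1$ via Lemma~\ref{lem:modulationfinetuning} will turn out to be exactly the orthogonality requirement that prevents the $\Theta$-coefficient from growing.

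First I would rewrite \eqref{eq:elliptic} in the inner variable $R=\lambda_1(t)r$, so that the left-hand side becomes $-\lambda_1^{2}\mathcal L h_0$ with $\mathcal L$ from \eqref{eq:operatormathcalL}. Using $RQ'(R)=\Phi(R)$ and $R^{2}Q''(R)=R\Phi'(R)-\Phi(R)$, I obtain $Q_{1,tt}=\frac{\lambda_1''}{\lambda_1}\Phi(R)+\big(\frac{\lambda_1'}{\lambda_1}\big)^{2}(R\Phi'(R)-\Phi(R))$. Taylor-expanding $\sin(2Q_1-2\tilde Q_2)$ around $2Q_1$ in the outer perturbation $\tilde Q_2$ (whose size in the inner region $r\lesssim t$ is controlled by Theorem~\ref{thm:outersolnYManalogue}), the leading sine-piece of $E_2$ simplifies to $\frac{4\tilde Q_2}{r^2}[1-\cos(2Q_1)]$, whose leading contribution in the inner zone $\lambda_2 r\lesssim 1$ is $8\lambda_2^{2}[1-\cos(2Q_1)]$. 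Combined with the boxed correction from \eqref{eq:elliptic}, the source $F:=-\lambda_1^{-2}[\text{RHS}]$ equals $\tilde E_2^{(m)}+\mathcal R$, where $\tilde E_2^{(m)}$ is obtained from \eqref{eq:tildeE2} by replacing $\lambda_2^{2}$ with $(\lambda_2+m)^{2}$, and $\mathcal R$ collects higher Taylor terms and contributions from the outer correction $v$, of size $O(\tau^{-4+})$ pointwise.

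Second, solving $\mathcal L h_0=F$ via \eqref{eq:simplevariationofconstantshzero}: since $\Theta(R)\sim R^{2}$ at infinity while $\Phi(R)\sim R^{-2}$, uniform control of $h_0$ on $R\lesssim \lambda_1 t\lesssim \tau^{1+}$ requires $\int_{0}^{\infty} F\,\Phi(R)\,R\,dR=0$. Evaluating this using \eqref{eq:integralrelationsone}, together with $\int_{0}^{\infty}(R\Phi'-\Phi)\Phi\, R\,dR=-6\pi$ (via integration by parts, boundary terms vanishing), the orthogonality reduces exactly to
\begin{equation*}
\frac{\lambda_1''}{\lambda_1^{3}}-2\Big(\frac{\lambda_1'}{\lambda_1^{2}}\Big)^{2}=-4\Big(\frac{\lambda_2+m}{\lambda_1}\Big)^{2},
\end{equation*}
which is precisely the identity enforced by the construction of $\lambda_1=e^{\alpha}$ in Lemma~\ref{lem:modulationfinetuning} with the given $m$. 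The residual $\int F\Phi R\,dR=O(\tau^{-4+})$ coming from $\mathcal R$ is absorbed by rewriting $\Theta(R)\!\int_0^R F\Phi R'dR'=-\Theta(R)\!\int_R^\infty F\Phi R'dR'$, bounded by $\Theta(R)\cdot\tau^{-4+}\lesssim \tau^{-4+}\cdot R^{2}\lesssim \tau^{-2+}$ on the relevant range. The $\Phi$-integral is estimated directly from $|F|\lesssim\tau^{-2}\langle R\rangle^{-4}$, yielding the pointwise bound $|h_0|\lesssim\tau^{-2+}$ for $r\lesssim t$; the $H^{3}_{r\,dr}$ regularity follows from standard elliptic theory for $\mathcal L$ with smooth right-hand side.

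Third, for the $S^{k}$-bounds I would apply $S=t\partial_t+r\partial_r$ to the equation. Since $SR=(1+t\lambda_1'/\lambda_1)R$ with $|t\lambda_1'/\lambda_1|\lesssim|\log t|^{\beta}\lesssim\tau^{0+}$, $S$ acts essentially as $(1+O(\tau^{0+}))R\partial_R$ at fixed $R$, and derivative bounds \eqref{eq:generallambdaonebounds} give $S^{k}F$ the same size $\tau^{-2+}$. The key point is that the orthogonality $\int S^{k}F\,\Phi R\,dR=O(\tau^{-4+})$ persists: it follows by differentiating the modulation identity in time, which is valid thanks to the $C^{k}$-regularity of $\lambda_1$ established in Lemma~\ref{lem:modulationfinetuning}. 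Iterating the variation-of-constants argument then gives $\|S^{k}h_0\|_{H^1_{r\,dr}}\lesssim\tau^{-2+}$ under $|(t\partial_t)^{j}m|\lesssim 1$. The sharper $m$-dependent bound is obtained by differencing through $m$ via the Leibniz formula \eqref{eq:Leibnizdifferencing} and invoking \eqref{eq:generallambdaonebounds}, \eqref{eq:generalnubounds}. The main obstacle is ensuring that the orthogonality is preserved under differentiation: all top-order growth-inducing terms must be killed by the $k$-th time derivative of the modulation equation, and tracking this requires keeping $\mathcal R$ and its time derivatives inside the $\tau^{-4+}$ remainder class throughout.
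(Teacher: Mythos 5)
Your treatment of the undifferentiated bound follows essentially the same route as the paper: rewrite \eqref{eq:elliptic} in the variable $R=\lambda_1 r$, split the source into the model term \eqref{eq:tildeE2} with $\lambda_2^2$ replaced by $(\lambda_2+m)^2$ (this is exactly what the boxed correction produces) plus a remainder of size $\tau^{-4+}$, use the variation-of-constants formula \eqref{eq:simplevariationofconstantshzero}, and exploit that the modulation law from Lemma~\ref{lem:modulationfinetuning} together with \eqref{eq:integralrelationsone} and $\int_0^\infty(R\Phi'-\Phi)\Phi R\,dR=-6\pi$ makes the $\Theta$-coefficient of the model part vanish at infinity, while the remainder only loses $R^2\lesssim(\lambda_1 t)^2\lesssim\tau^{2+}$. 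This is the paper's $h_1+h_2+h_3$ decomposition in slightly different packaging, and it is fine; the only small inaccuracy is the claim $|F|\lesssim\tau^{-2}\langle R\rangle^{-4}$ — the pieces $\frac{\lambda_1''}{\lambda_1^3}\Phi$ and $(\frac{\lambda_1'}{\lambda_1^2})^2(R\Phi'-\Phi)$ only decay like $\langle R\rangle^{-2}$, which still suffices for the $\Phi$-coefficient and the tail integral.

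The weak point is your step for the $S^k$ (and $m$-differencing) bounds. You propose to apply $S$ to the \emph{equation} and re-run variation of constants, asserting that the orthogonality $\int S^kF\,\Phi\,R\,dR=O(\tau^{-4+})$ ``persists by differentiating the modulation identity in time.'' As stated this is not a proof: (i) the differentiated equation has additional source terms beyond $S$ of the source, namely the commutator contributions $S\big(\tfrac{4\cos(2Q_1)}{r^2}\big)h_0$ and lower-order multiples of the original source, and the near-orthogonality of the \emph{full} new source is not a consequence of differentiating the modulation equation — when $S$ passes through $R=\lambda_1(t)r$ one picks up $(1+t\lambda_1'/\lambda_1)R\partial_R$, and integration by parts shows $\int (Sf)\Phi R\,dR$ contains $\int f(2\Phi+R\Phi')R\,dR$, which is of size $\tau^{-2}$ times a factor $\sim|\log t|^{\beta}$; only after combining with the commutator terms (a Wronskian/self-adjointness cancellation, since $A(\Lambda\Phi)=\tfrac{\Lambda W}{R^2}\Phi$ with $A\Phi=0$) does the dangerous part cancel; (ii) even granting that, $Sh_0$ is already a fixed function, so one must additionally identify it with the particular variation-of-constants solution of the differentiated equation (solutions vanishing like $R^2$ at $R=0$ are only determined up to multiples of $\Phi$). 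The paper avoids both issues by differentiating the explicit formula \eqref{eq:simplevariationofconstantshzero} itself: the boundary terms at the upper limit cancel by the Wronskian structure, factors $S^k\Phi(R)$, $S^k\Theta(R)$, $S^kR$ are controlled via \eqref{eq:generallambdaonebounds}, and when $t\partial_t$ hits $\tilde Q_2$-dependent factors one writes $t\partial_t=S-s\partial_s$ and integrates by parts in $s$, using Theorem~\ref{thm:outersolnYManalogue} for $S^k\tilde Q_2$; the $m$-dependence is then obtained by differencing the same formula via \eqref{eq:Leibnizdifferencing} and Lemma~\ref{lem:technicalsymbolfunctionsdifferencing} under \eqref{eq:aprioriboundonmn}. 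You should either switch to this formula-level differentiation or supply the commutator/identification arguments explicitly; as written, that step would not go through.
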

  \begin{proof} To begin with, we can write 
  \begin{align*}
  E_2 &=  \frac{\lambda_1''}{\lambda_1}\cdot\Phi(R) + \big(\frac{\lambda_1'}{\lambda_1}\big)^2\cdot \big(R\Phi'(R) - \Phi(R)\big) - 2\frac{\sin\big(2 \tilde{Q}_2\big)}{r^2}\cdot \big[\cos(2Q_1\big) - 1\big]\\
& +  2\frac{\big[\cos\big(2 \tilde{Q}_2\big) - 1\big]}{r^2}\cdot \sin\big(2Q_1\big)
  \end{align*}
  Here we use the notation 
  \[
  \Phi(R) = RQ'(R) = \frac{4R^2}{1+R^4}. 
  \]
  Introduce the following modified source term, which also includes an extra factor $\lambda_1^{_2}$: 
  \begin{align*}
  \tilde{E}_2: = \frac{\lambda_1''}{\lambda_1^3}\cdot\Phi(R) + \big(\frac{\lambda_1'}{\lambda_1^2}\big)^2\cdot \big(R\Phi'(R) - \Phi(R)\big) - 8\cdot\big(\frac{\lambda_2}{\lambda_1}\big)^2\cdot \big[\cos\big(2Q(R)\big) - 1\big]
  \end{align*}
  This term, when adding the term 
  \[
 -8 \lambda_1^{-2}\cdot (2m\lambda_2 + m^2)\cdot \big[\cos(2Q_1) - 1\big],
  \]
  enjoys a special cancellation condition in light of the equation satisfied by $\lambda_1$, as will become apparent when using the variation of constants formula below. For this consider the time independent operator 
  \begin{equation}\label{eq:mathcalL}
\mathcal{L}: = \partial_{RR} + \frac{1}{R}\partial_R - \frac{4\cos(2Q(R))}{R^2},\,R = \lambda_1(t)r. 
\end{equation}
We have the following fundamental system for the homogeneous equation $\mathcal{L}f = 0$: 
\[
\frac14\Phi(R) = \frac{R^2}{1+R^4},\,\Theta(R): = \frac{-1 + 8R^4\log R + R^8}{4R^2(1+R^4)}. 
\]
We can then solve $\mathcal{L}h= f$ by the formula
\begin{equation}\label{eq:hformula}\begin{split}
h(t, R) &= \frac14\Theta(R)\cdot \int_0^R f(s)\Phi(s)\,s ds\\
& -  \frac14\Phi(R)\cdot \int_0^R f(s)\Theta(s)\,s ds.
\end{split}\end{equation}
We shall set 
\[
f = \lambda_1^{-2}E_2 - 8 \lambda_1^{-2}\cdot (2m\lambda_2 + m^2)\cdot \big[\cos(2Q_1) - 1\big].
\]
We can then write
\[
h = \sum_{j=1}^3 h_j, 
\]
where we define (with $f$ defined as in the immediately preceding)
\begin{align*}
&h_1: =  -  \frac14\Phi(R)\cdot \int_0^R f(s)\Theta(s)\,s ds\\
&h_2: =  \frac14\Theta(R)\cdot \int_0^R \tilde{f}(s)\Phi(s)\,s ds,
\end{align*}
where we set (recalling \eqref{eq:tildeE2})
\[
\tilde{f} = \tilde{E}_2 - 8 \lambda_1^{-2}(2m\lambda_2 + m^2)\cdot \big[\cos(2Q_1) - 1\big].
\]
Finally, we define 
\begin{align*}
h_3: =  \frac14\Theta(R)\cdot \int_0^R\big[f(s) - \tilde{f}(s)\big]\Phi(s)\,s ds
\end{align*}
In order to estimate these terms, the following observations, which result from Theorem~\ref{thm:approxoutersolnYManalogue}, Theorem~\ref{thm:epssolutioncompletion},  shall be useful:
\begin{equation}\label{eq:curdetileQ2bound1}
\big|\tilde{Q}_2(t,r)\big|\lesssim \big(\lambda_2 r\big)^2 + \frac{r^2}{t^2|\log t|},\,r\lesssim t. 
\end{equation}
In particular, we infer the bound
\begin{equation}\label{eq:curdetileQ2bound2}
\big(\lambda_1 r\big)^{-2}\cdot \big|\tilde{Q}_2(t,r)\big|\lesssim\big(\frac{\lambda_2}{\lambda_1}\big)^2,\,r\lesssim t. 
\end{equation}
We also have the estimate (again resulting from Theorem~\ref{thm:approxoutersolnYManalogue} and its proof)
\begin{equation}\label{eq:curdetileQ2bound3}
\big|\tilde{Q}_2(t,r) - Q\big(\lambda_2\cdot r\big)\big|\lesssim (\log |t|)^{-1}\big(\frac{r}{t}\big)\cdot \frac{(r\lambda_2)^2}{(r\lambda_2)^2 + 1},\,r\lesssim t. 
\end{equation}
The same bounds apply if we apply an arbitrary power of $S$ to the expressions on the left. 
\\

{\it{(1): The estimate for $h_1(t, r)$.}} Using \eqref{eq:curdetileQ2bound2}, observe the estimates
\begin{align*}
&\lambda_1^{-2}\cdot \big|2\frac{\sin\big(2 \tilde{Q}_2\big)}{r^2}\cdot \big[\cos(2Q_1\big) - 1\big]\big|\lesssim \big(\frac{\lambda_2}{\lambda_1}\big)^2\cdot \frac{R^4}{1+R^8},\\
&\lambda_1^{-2}\cdot \big| 2\frac{\big[\cos\big(2 \tilde{Q}_2\big) - 1\big]}{r^2}\cdot \sin\big(2Q_1\big)\big|\lesssim \big(\frac{\lambda_2}{\lambda_1}\big)^2\cdot\frac{R^2}{1+R^4},
\end{align*}
and finally 
\begin{align*}
\Big|\frac{\lambda_1''}{\lambda_1^3}\cdot\Phi(R) + \big(\frac{\lambda_1'}{\lambda_1^2}\big)^2\cdot \big(R\Phi'(R) - \Phi(R)\big)\Big|\lesssim( \lambda_1\cdot t)^{-2}\cdot\big|\log t\big|^{2\beta}\cdot \frac{R^2}{1+R^4}. 
\end{align*}
In light of $\big(\frac{\lambda_2}{\lambda_1}\big)^2 + ( \lambda_1\cdot t)^{-2}\cdot\big|\log t\big|^{2\beta}\lesssim \tau^{-2}$, the estimate 
\[
\big|h_1(t, r)\big|\lesssim \tau^{-2}
\]
follows easily from the asymptotic bounds for $\Phi(R), \Theta(R)$. 
\\

{\it{(2): The estimate for $h_2(t, r)$.}} Thanks to the equation 
\[
\frac{\lambda_1''}{\lambda_1} - 2\big(\frac{\lambda_1'}{\lambda_1}\big)^2 = -4(\lambda_2+m)^2, 
\]
as well as the identities 
\begin{equation}\label{eq:identities1}\begin{split}
&\int_0^\infty\Phi^2(R)R\,dR = 3\pi,\,8\int_0^\infty \big[{-}\cos\big(2Q(R)\big) + 1\big]\Phi(R)R\,dR = 12\pi,
\end{split}\end{equation}
we can write 
\begin{align*}
h_2 &= \chi_{R\lesssim 1}\cdot  \frac14\Theta(R)\cdot \int_0^R\tilde{f}(s)\Phi(s)\,s ds\\
& - \chi_{R\gtrsim 1}\cdot  \frac14\Theta(R)\cdot \int_R^\infty\tilde{f}(s)\Phi(s)\,s ds.
\end{align*}
Taking advantage of the same bound for $\frac{\lambda_1''}{\lambda_1^3}\cdot\Phi(R) + \big(\frac{\lambda_1'}{\lambda_1^2}\big)^2\cdot \big(R\Phi'(R) - \Phi(R)\big)$ as before, as well as the estimate 
\begin{align*}
\Big|4\cdot\big(\frac{\lambda_2}{\lambda_1}\big)^2\cdot \big[\cos\big(2Q(R)\big) - 1\big]\Big|\lesssim \big(\frac{\lambda_2}{\lambda_1}\big)^2\cdot \frac{R^4}{1+R^8},
\end{align*}
the bound 
\[
\big|h_2(t, r)\big|\lesssim \tau^{-2}
\]
easily follows, due to the fact that 
\begin{align*}
&\chi_{R\gtrsim 1}\cdot\big|\Theta(R)\cdot\int_R^\infty \frac{s^2}{1+s^4}\cdot \Phi(s)s\,ds\big|\lesssim 1,\\
&\chi_{R\lesssim 1}\cdot\big|\Theta(R)\cdot\int_0^R \frac{s^2}{1+s^4}\cdot \Phi(s)s\,ds\big|\lesssim 1
\end{align*}

{\it{(3): The estimate for $h_3(t, r)$.}} By definition of this contribution, it suffices to estimate the following two integrals:
\begin{align*}
&\big(I\big) := 2\Theta(R)\int_0^R\frac{\big[\cos\big(2 \tilde{Q}_2\big) - 1\big]}{S^2}\cdot \sin\big(2Q_1(S\big)\cdot \Phi(S)S\,dS\\
&\big(II\big) := 2\Theta(R)\int_0^R2\frac{\big[\sin\big(2 \tilde{Q}_2\big) - 4\big(\frac{\lambda_2}{\lambda_1}S\big)^2\big]}{S^2}\cdot \big[\cos(2Q_1\big) - 1\big]\cdot \Phi(S)S\,dS.
\end{align*}
Here $S = \lambda_1(t)\cdot s$ is the re-scaled variable. Estimating $\big|\sin\big(2Q_1(S\big)\big|\lesssim \frac{S^2}{1+S^4}$, and using \eqref{eq:curdetileQ2bound2}, we find concerning the first integral
\begin{align*}
\Big|\frac{\big[\cos\big(2 \tilde{Q}_2\big) - 1\big]}{S^2}\cdot \sin\big(2Q_1(S)\big)\Big|\lesssim \big(\frac{\lambda_2}{\lambda_1}\big)^4,
\end{align*}
and from there, under the restriction $R\lesssim \lambda_1\cdot t$,
\begin{align*}
\big|\big(I\big)\big|\lesssim \big(\lambda_1\cdot t\big)^2\cdot \big|\log \big(\lambda_1\cdot t\big)\big|\cdot \big(\frac{\lambda_2}{\lambda_1}\big)^4\lesssim \tau^{-2}\cdot (\log \tau)^C,\,C = C(\beta). 
\end{align*}
In order to estimate the integral $\big(II\big)$, we take advantage of \eqref{eq:curdetileQ2bound3} and Taylor expansion of the sine function to deduce (for $r\lesssim t$)
\begin{align*}
\Big|2\frac{\big[\sin\big(2 \tilde{Q}_2\big) - 4\big(\frac{\lambda_2}{\lambda_1}S\big)^2\big]}{S^2}\cdot \big[\cos(2Q_1\big) - 1\big]\Big|\lesssim  \big(\frac{\lambda_2}{\lambda_1}\big)^4.
\end{align*}
Then we infer the same bound for $\big(II\big)$ as for $\big(I\big)$. 
\\

In order to control the differentiated expressions $S^k h_0$, we need to apply powers of $S = t\partial_t + r\partial_r$ to the variation of constants formula, interpreting $R = \lambda_1(t)\cdot r$. Then we inductively infer the bounds (recall \eqref{eq:generallambdaonebounds})
\begin{align*}
&\big|S^k(R)\big|\lesssim _kR\cdot \tau^{0+}\cdot \big(1 + \tau^{-p}\cdot\Big\|m\Big\|_{p,k}\big)\\
&\big|S^k\big(\Phi(R)\big)\big|\lesssim_k \Phi(R)\cdot \tau^{0+}\cdot \big(1 + \tau^{-p}\cdot\Big\|m\Big\|_{p,k}\big),\\
&\big|S^k\big(\Theta(R)\big)\big|\lesssim_k \Theta(R)\cdot \tau^{0+}\cdot \big(1 + \tau^{-p}\cdot\Big\|m\Big\|_{p,k}\big).
\end{align*}
Now consider the differentiated expression 
\begin{align*}
&S^k\Big(\Phi(R)\cdot \int_0^R \frac{\cos\big(2Q(S)\big) - 1}{S^2}\cdot \sin\big(\tilde{Q}_2(t, s)\big)\cdot S\Theta(S)\,dS\Big)\\
& = S^k\Big(\Phi(r\lambda_1(t))\cdot \lambda_1(t)\int_0^{r} \frac{\cos\big(2Q(s\lambda_1(t))\big) - 1}{(s\lambda_1(t))^2}\cdot \sin\big(\tilde{Q}_2(t, s)\big)\cdot s\lambda_1(t)\Theta(s\lambda_1(t))\,ds\Big)\\
\end{align*}
which occurs in $h_1$. When the temporal derivative $t\partial_t$ hits the function $\sin\big(\tilde{Q}_2(t, s)\big)$, we write this as 
\begin{align*}
t\partial_t\big(\sin\big(\tilde{Q}_2(t, s)\big)\big) = S\big(\sin\big(\tilde{Q}_2(t, s)\big)\big) - s\partial_s\big(\sin\big(\tilde{Q}_2(t, s)\big)\big).
\end{align*}
While the first term on the right is bounded, and in fact a function in $H^1_{sds}$, see Theorem~\ref{thm:outersolnYManalogue}, the second term leads to an expression which can be integrated by parts, resulting in the integral 
\begin{align*}
\Phi(r\lambda_1(t))\cdot \lambda_1(t)\int_0^{r}  (\partial_s s)\big(s\lambda_1(t)\Theta(s\lambda_1(t))\cdot\frac{\cos\big(2Q(s\lambda_1(t))\big) - 1}{(s\lambda_1(t))^2}\big)\cdot \sin\big(\tilde{Q}_2(t, s)\big)\cdot\,ds
\end{align*}
as well as the boundary term 
\begin{align*}
R\Phi(R)\cdot \Theta(R)\cdot \frac{\cos(2Q(R)) - 1}{R^2}\cdot \sin\big(\tilde{Q}_2(t, r)\big).
\end{align*}
The last part of the lemma follows by applying this procedure sufficiently many times, and also recalling Lemma~\ref{lem:modulationfinetuning}.
\end{proof}

We will also require differencing bounds with respect to $m$ of the type as in \eqref{eq:generallambdaonebounds}. For this the following basic technical lemma is useful:
\begin{lem}\label{lem:technicalsymbolfunctionsdifferencing} Let $H: \mathbb{R}_+\longrightarrow \mathbb{R}_+$ a $C^\infty$ function satisfying the symbol bounds 
\begin{align*}
\big|\big(R\partial_R)^j H\big|\lesssim \langle R\rangle^{-\gamma},\,j\geq 0,
\end{align*}
for some fixed $\gamma\geq 0$. 
Writing $R = \lambda_1(t)r$ and considering (keeping in mind that $\lambda_1 = \lambda_1(t;m)$ depends implicitly on $m$)
\[
h(t,r; m): = H(\lambda_1(t)r), 
\]
and assuming the a priori bound \eqref{eq:aprioriboundonmn}, we then have the differencing bound
\begin{align*}
\big(t\partial_t\big)^l\triangle h(t,r;m,n)\big|\lesssim_{l}&\frac{1}{\tau^{p-}\langle R\rangle^{\gamma}}\cdot\big(\Big\|m\Big\|_{p,l} +\Big\|n\Big\|_{p,l}+1\big)^{l}\cdot\Big\|n\Big\|_{p,l},\\
&p\geq \frac12. 
\end{align*}
\end{lem}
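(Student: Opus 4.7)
The plan is to apply the fundamental theorem of calculus in a parameter $\alpha \in [0,1]$ interpolating between $m$ and $m+n$, and then combine the symbol assumption on $H$ with the already established estimates \eqref{eq:generallambdaonebounds} for $\lambda_1(t;\cdot)$. Writing $R_\alpha := \lambda_1(t;m+\alpha n)\,r$ and applying the chain rule, one obtains
\begin{equation*}
\triangle h(t,r;m,n) = \int_0^1 (R\partial_R H)(R_\alpha)\cdot \frac{\partial_\alpha \lambda_1(t;m+\alpha n)}{\lambda_1(t;m+\alpha n)}\,d\alpha.
\end{equation*}
The symbol assumption yields $|(R\partial_R H)(R_\alpha)| \lesssim \langle R_\alpha\rangle^{-\gamma}$, while the a priori bound \eqref{eq:aprioriboundonmn} together with \eqref{eq:generallambdaonebounds} ensures that $\lambda_1(t;m+\alpha n) \sim \lambda_1(t;0)$ uniformly in $\alpha \in [0,1]$, so that $\langle R_\alpha\rangle \sim \langle R\rangle$ uniformly in $\alpha$.

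The next step is to control the directional derivative $\partial_\alpha \lambda_1(t;m+\alpha n)/\lambda_1(t;m+\alpha n)$, which represents the linearization of $\triangle \lambda_1$ along the segment joining $m$ and $m+n$. An argument essentially identical to the derivation of \eqref{eq:generallambdaonebounds}---differentiating the fixed-point formulation \eqref{eq:nuformula} for $\nu$ in $\alpha$ and then passing through \eqref{eq:alphadefinition}---gives, uniformly in $\alpha \in [0,1]$,
\begin{equation*}
\Big|\frac{\partial_\alpha \lambda_1(t;m+\alpha n)}{\lambda_1(t;m+\alpha n)}\Big| \lesssim \tau^{-p+}\cdot \Big\|n\Big\|_{p,0},\qquad p\geq \tfrac12.
\end{equation*}
Combined with the symbol bound on $(R\partial_R H)(R_\alpha)$, this settles the base case $l=0$.

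For $l \geq 1$ I would commute $(t\partial_t)^l$ past the $\alpha$-integral and expand with Leibniz. Each derivative either hits $(R\partial_R H)(R_\alpha)$---producing $(R\partial_R)^{j+1}H(R_\alpha)$, which still obeys the symbol bound $\langle R\rangle^{-\gamma}$, multiplied by $t\partial_t R_\alpha/R_\alpha = t\lambda_1'/\lambda_1$, whose $(t\partial_t)^{l'}$-derivatives are controlled by $\tau^{0+}(1+\Big\|m\Big\|_{p,l}+\Big\|n\Big\|_{p,l})^{l'}$ via \eqref{eq:generallambdaonebounds}---or it hits the $\lambda_1$-quotient, whose higher $(t\partial_t)^{l'}$-derivatives admit a direct analog of the second estimate in \eqref{eq:generallambdaonebounds}, retaining the single factor $\Big\|n\Big\|_{p,l}$ with an additional power of $(1+\Big\|m\Big\|_{p,l}+\Big\|n\Big\|_{p,l})^{l'-1}$. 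The main obstacle is the purely combinatorial bookkeeping: since exactly one $\partial_\alpha$-differentiation has been performed, every term in the Leibniz expansion carries precisely one factor of $\Big\|n\Big\|_{p,l}$, while the $l$ applications of $t\partial_t$ distribute over at most $l$ additional factors of $(1+\Big\|m\Big\|_{p,l}+\Big\|n\Big\|_{p,l})$. Summing over the finitely many terms and retaining the slowest decay $\tau^{-p+}$ produces the stated bound.
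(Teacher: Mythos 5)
Your identity and the ensuing bookkeeping are fine in outline, and the spirit (fundamental theorem of calculus plus the symbol bounds on $H$ plus \eqref{eq:generallambdaonebounds}) is the same as the paper's. The one genuine gap is that your formula
\begin{equation*}
\triangle h(t,r;m,n)=\int_0^1 (R\partial_R H)(R_\alpha)\cdot\frac{\partial_\alpha\lambda_1(t;m+\alpha n)}{\lambda_1(t;m+\alpha n)}\,d\alpha
\end{equation*}
presupposes that the map $\alpha\mapsto\lambda_1(t;m+\alpha n)$ is differentiable, i.e.\ that $\lambda_1$ (equivalently the implicitly defined $\nu$ of Lemma~\ref{lem:modulationfinetuning}) depends differentiably on the parameter $m$. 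Nothing of the sort is established in the paper: \eqref{eq:nudifferencebounds}, \eqref{eq:generalnubounds} and \eqref{eq:generallambdaonebounds} are purely finite-difference (Lipschitz-type) bounds, obtained by differencing the fixed-point formula, and $\nu$ itself is only defined through a fixed point whose smooth dependence on $m$ in the $\|\cdot\|_{p,l}$ topology would require an additional linearization/implicit-function argument (differentiating \eqref{eq:nuformula} in $\alpha$ means first proving $\partial_\alpha\nu$ exists and solving a linear fixed-point problem for it). Your quantitative bound on $\partial_\alpha\lambda_1/\lambda_1$ would indeed follow from \eqref{eq:generallambdaonebounds} by a limiting argument \emph{once} differentiability is known, but that existence statement is exactly the missing step, and it is not a one-line consequence of the cited estimates.

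The gap is easily avoided, and this is what the paper does: apply the fundamental theorem of calculus in the argument of $H$ rather than in the parameter direction, writing
\begin{align*}
&H\big(\lambda_1(t;m+n)r\big)-H\big(\lambda_1(t;m)r\big)\\
&=\frac{\triangle\lambda_1(t;m,n)}{\lambda_1(t;m)}\cdot\int_0^1 \lambda_1(t;m)r\,H'\big([s\lambda_1(t;m+n)+(1-s)\lambda_1(t;m)]r\big)\,ds .
\end{align*}
Here only the finite difference $\triangle\lambda_1(t;m,n)$ appears, which is directly controlled by the second estimate in \eqref{eq:generallambdaonebounds}, while the integrand is handled by the symbol hypothesis together with the observation (which you already made) that $\lambda_1(t;m+n)\sim\lambda_1(t;m)$ under \eqref{eq:aprioriboundonmn}, so the interpolated argument is comparable to $R$. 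The $(t\partial_t)^l$ bounds then follow by the same Leibniz bookkeeping you describe, now using only the difference and derivative bounds already proved in \eqref{eq:generallambdaonebounds}. If you prefer to keep your parameter-interpolation route, you must first prove Gateaux differentiability of $m\mapsto\nu(t;m)$ (hence of $\lambda_1$), which is additional work the statement does not require.
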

\begin{proof} This follows by using 
\begin{align*}
&H\big(\lambda_1(t;m+n)r\big) - H\big(\lambda_1(t;m)r\big)\\& = \frac{\triangle\lambda_1(t;m,n)}{\lambda_1(t;m)}\cdot \int_0^1  \lambda_1(t;m)r\cdot H'\big([s\lambda_1(t;m+n) + (1-s)\lambda_1(t;m)]r\big)\,ds
\end{align*}
as well as the bounds \eqref{eq:generallambdaonebounds}.
\end{proof}

Applying the previous lemma to the terms in the variation of constants formula for $h_0$, we deduce the following refined bounds for $h_0$, the latter as in Lemma~\ref{lem:hzerocorrection}: 
\begin{equation}\label{eq:nzerodifferencingbounds}
\big|S^l\triangle h_0(t,r;m,n)\big|\lesssim_{k,l} \tau^{-2-p+}\cdot\big(\Big\|m\Big\|_{p,l} + \Big\|n\Big\|_{p,l}+1\big)^{l}\cdot\Big\|n\Big\|_{p,l},\,r\lesssim t. 
\end{equation}

 We next analyse the error generated by the approximation $v_N = h_0$ in \eqref{eq:exactvNequation-rec}. This error is given by 
 \begin{equation}\label{eq:e0error}
 e_0 = -h_{0,tt} + E_1(h_0) + E_3(h_0) - 8(2m\lambda_2 + m^2)\cdot \big[\cos(2Q_1) - 1\big].
 \end{equation}
 Then we have the following properties of this source term:
 \begin{lem}\label{lem:ezerobound} We have the estimate 
 \begin{align*}
 &\lambda_2^{-1}(t)\cdot \big\|S^k \big(e_0 +  8(2m\lambda_2 + m^2)\cdot \big[\cos(2Q_1) - 1\big]\big)\big\|_{L^2_{r\,dr}(r\lesssim t)}\\&\lesssim_{k,l} \tau^{-2+}\cdot\big(1 + \tau^{-l}\sum_{0\leq j\leq k+2}\sup_{0<t\leq t_0}\tau^{l}\cdot \big|(t\partial_t)^jm(t)\big|\big)
 \end{align*}
 provided we restrict to $l\geq \frac12$. Furthermore, setting $E_0: = e_0 +  8(2m\lambda_2 + m^2)\cdot \big[\cos(2Q_1) - 1\big]$ and writing $E_0 = E_0(t,r;m)$, we have the differencing bounds 
 \begin{align*}
&\lambda_2^{-1}(t)\cdot \big\|S^l\triangle E_0(t,r;m,n)\big\|_{L^2_{r\,dr}}\lesssim_{l} \tau^{-2-p+}\cdot\big(\Big\|m\Big\|_{p,l+2} +\Big\|n\Big\|_{p,l+2}+1\big)^{l}\cdot\Big\|n\Big\|_{p,l+2}.
 \end{align*}
 Here we restrict to $p\geq \frac12$, and assume \eqref{eq:aprioriboundonmn}.
 \end{lem}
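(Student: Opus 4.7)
The plan is to estimate each contribution to $E_0 = -h_{0,tt} + E_1(h_0) + E_3(h_0)$ separately, exploiting the pointwise and $H^1$ bounds on $S^k h_0$ from Lemma~\ref{lem:hzerocorrection} together with the elliptic equation \eqref{eq:elliptic} that $h_0$ satisfies. We work throughout on $\{r\lesssim t\}$.

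First, for $-h_{0,tt}$, the strategy is to trade time derivatives for the scaling vector field via $t\partial_t = S - r\partial_r$, which yields
\[
t^2 h_{0,tt} = (S^2 - S) h_0 - 2\,r\partial_r(Sh_0) + \bigl((r\partial_r)^2 - r\partial_r\bigr) h_0.
\]
The only term not directly controlled by Lemma~\ref{lem:hzerocorrection} is $(r\partial_r)^2 h_0$; we eliminate it using \eqref{eq:elliptic}, which after multiplying by $r^2$ reads
\[
(r\partial_r)^2 h_0 = r\partial_r h_0 + 4\cos(2Q_1)\, h_0 + r^2\bigl(E_2 - 8(2m\lambda_2 + m^2)[\cos(2Q_1)-1]\bigr).
\]
Combined with Lemma~\ref{lem:hzerocorrection} (applied with $k+2$ derivatives, which explains the shift in the statement) and the bounds of Theorem~\ref{thm:approxoutersolnYManalogue} on $r^2 E_2$, this gives $\|S^k h_{0,tt}\|_{L^2(r\lesssim t)}\lesssim \tau^{-2+}$ after absorbing the $t^2$ in the denominator by the domain restriction. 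Dividing by $\lambda_2\to\infty$ preserves the target.

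For the nonlinearities, we use $|\sin(2h_0)-2h_0|\lesssim|h_0|^3$ and $|\cos(2h_0)-1|\lesssim|h_0|^2$ together with $|h_0|\lesssim\tau^{-2+}$. The coefficient $\sin(2Q_1-2\tilde{Q}_2)/r^2$ in $E_3$ obeys $\|\sin(2Q_1-2\tilde{Q}_2)/r^2\|_{L^2_{r\,dr}(r\lesssim t)}\lesssim\lambda_1$ (from the explicit form of $\sin(2Q)(R)/R^2$ and Theorem~\ref{thm:outersolnYManalogue} for the $\tilde{Q}_2$ contribution), so $\|E_3(h_0)\|_{L^2}\lesssim\tau^{-4+}\lambda_1 = \tau^{-3+}\lambda_2$, more than sufficient. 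For $E_1$ the coefficient is $O(1/r^2)$, but $h_0$ vanishes like $R^2$ at the origin, so $h_0^3/r^2$ is integrable; combining $\|h_0/r\|_{L^4}$ (interpolated from $\|h_0/r\|_{L^2}\lesssim\tau^{-2+}$ by Hardy and $\|h_0\|_{L^\infty}$) with the pointwise bound yields $\|E_1(h_0)\|_{L^2}\lesssim\tau^{-6+}\lambda_1$, again sufficient. The $S^k$ estimates follow by Leibniz together with the chain-rule bounds on the $m$-dependent coefficients supplied by Lemmas~\ref{lem:modulationfinetuning} and~\ref{lem:technicalsymbolfunctionsdifferencing}.

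For the differencing bound on $\triangle E_0(t,r;m,n)$, the plan is to write $\triangle E_0 = -(\triangle h_0)_{tt} + \triangle E_1 + \triangle E_3$ and repeat the above, replacing Lemma~\ref{lem:hzerocorrection} by \eqref{eq:nzerodifferencingbounds} for the $h_0$-dependent factors and invoking Lemma~\ref{lem:technicalsymbolfunctionsdifferencing} on the symbols $\cos(2Q(\cdot))/(\cdot)^2$ and $\sin(2Q(\cdot))/(\cdot)^2$ to produce the differences of the coefficients with the required factor $\tau^{-p+}\|n\|_{p,l}$. The main technical obstacle is careful bookkeeping of derivative losses: the $+2$ in $\|n\|_{p,l+2}$ traces back to the elliptic reduction of $h_{0,tt}$ above, which trades two time derivatives for two additional orders of $S$; one then checks that all other contributions are either of the same or lower type and recombine cleanly via the Leibniz identity \eqref{eq:Leibnizdifferencing}.
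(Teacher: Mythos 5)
Your treatment of the nonlinear terms $E_1(h_0)$, $E_3(h_0)$ and of the differencing bounds is essentially the paper's: Taylor expansion of $\sin(2h_0)-2h_0$ and $\cos(2h_0)-1$ against the pointwise bound $|h_0|\lesssim\tau^{-2+}$, an $L^2_{r\,dr}$ bound of size roughly $\lambda_1\lesssim\tau^{1+}$ for the coefficient $\sin(2Q_1-2\tilde{Q}_2)/r^2$, and then \eqref{eq:Leibnizdifferencing}, \eqref{eq:nzerodifferencingbounds} and Lemma~\ref{lem:technicalsymbolfunctionsdifferencing} for the $m$-differences. Where you genuinely diverge is the main term $-h_{0,tt}$: the paper goes back to the variation-of-constants representation of $h_0$, writes $\partial_t^2=t^{-2}(t\partial_t)^2-t^{-2}(t\partial_t)$, converts $t\partial_t$ acting on the $\tilde{Q}_2$-dependent factors under the integral into $S-s\partial_s$ and integrates by parts, exactly as in the proof of Lemma~\ref{lem:hzerocorrection}; you instead substitute the elliptic equation \eqref{eq:elliptic} to express $(r\partial_r)^2h_0$, hence $t^2h_{0,tt}$, through $S$-derivatives of $h_0$ (already controlled by Lemma~\ref{lem:hzerocorrection}) plus $r^2$ times the source. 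Both devices trade the two time derivatives for two extra orders of $S$, which is exactly where the $k+2$ (resp.\ $l+2$) loss in the statement comes from, so your accounting of derivative losses agrees with the paper's. Your route is arguably cleaner in that it never re-enters the integral representation, but it shifts the burden onto $S^k$-bounds for $r^2E_2$ and the correction term; these do hold on $r\lesssim t$, but they come from the two-scale computations inside the proof of Lemma~\ref{lem:hzerocorrection} (the $\tilde{E}_2$ decomposition, \eqref{eq:curdetileQ2bound1}--\eqref{eq:curdetileQ2bound3}, and the modulation bounds \eqref{eq:generallambdaonebounds}), not from Theorem~\ref{thm:approxoutersolnYManalogue} alone, which only concerns the outer profile — you should cite accordingly.

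Two harmless algebra slips you should fix: since $t^2\partial_t^2=(t\partial_t)^2-t\partial_t$ and $t\partial_t=S-r\partial_r$ with $[S,r\partial_r]=0$, the correct identity is $t^2h_{0,tt}=(S^2-S)h_0-2r\partial_r(Sh_0)+\big((r\partial_r)^2+r\partial_r\big)h_0$ (plus sign on the last first-order term); and multiplying \eqref{eq:elliptic} by $r^2$ gives $(r\partial_r)^2h_0=4\cos(2Q_1)h_0+r^2\big(E_2-8(2m\lambda_2+m^2)[\cos(2Q_1)-1]\big)$ with no extra $r\partial_r h_0$, since $(r\partial_r)^2=r^2\partial_r^2+r\partial_r$ already absorbs it. Neither affects the estimates, as the stray first-order terms are controlled by the $H^1$ bounds on $S^{k}h_0$ in any case.
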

  \begin{proof} For the term $-h_{0,tt}$ one relies on the proof of the preceding lemma, and uses that 
  \[
  \partial_t^2 = t^{-2}\cdot (t\partial_t)^2 - t^{-2}(t\partial_t), 
  \]
  When this operator falls on a factor of the form $g(\tilde{Q}_2)$ in the variations of constants formula, one re-writes this in terms of the scaling operator $S = t\partial_t + r\partial_r$ and performs integration by parts as before. The two extra temporal derivatives force control over $l+2$ derivatives of $m, n $. 
  The remaining terms $E_1(h_0), E_3(h_0)$ are much smaller. In fact, consider the term $E_3$. Using the Leibniz rule, we infer
  \begin{align*}
 &S^l\big(E_3(h_0)\big)\\& = \sum_{l_1+l_2 = l}C_{l_{1,2}}\cdot S^{l_1}\big( \frac{2\sin\big(2Q_1- 2\tilde{Q}_2\big)}{r^2}\big)\cdot  S^{l_2}\big(\cos(2h_0) - 1\big).
  \end{align*}
 Then we can directly estimate (expanding $\sin\big(2Q_1 - 2\tilde{Q}_2\big)$ and using Lemma~\ref{lem:technicalsymbolfunctionsdifferencing})
 \begin{align*}
&\big\|S^{l_1}\big( \frac{2\sin\big(2Q_1- 2\tilde{Q}_2\big)}{r^2}\big)(t,r)\big\|_{L^2_{r\,dr}}\lesssim_{l_1} \tau^{1+}, \\
&\big\|S^{l_1}\triangle\big( \frac{2\sin\big(2Q_1 - 2\tilde{Q}_2\big)}{r^2}\big)(t,r;m,n)\big\|_{L^2_{r\,dr}}\lesssim_{l_1} \tau^{1-p+}\cdot \big(1+ \Big\|m\Big\|_{p,l_1} +  \Big\|n\Big\|_{p,l_1}\big)^{l_1}\cdot\Big\|n\Big\|_{p,l_1},\\
&\,p\geq \frac12.
 \end{align*}
 Furthermore, expanding $\cos(2h_0)$ in a Taylor series and iteratively using \eqref{eq:Leibnizdifferencing} as well as \eqref{eq:nzerodifferencingbounds}, we deduce 
 \begin{align*}
&\big|S^{l_2}\big(\cos(2h_0) - 1\big)\big|\lesssim_{l_2} \tau^{-4+},\\
&\big|S^{l_2}\triangle \big(\cos(2h_0) - 1\big)(t,r;m,n)\big|\lesssim_{l_2} \tau^{-4-p+}\cdot \big(1+\Big\|m\Big\|_{p,l_2} + \Big\|n\Big\|_{p,l_2}\big)^{l_2}\cdot \Big\|n\Big\|_{p,l_2}.\\
 \end{align*}
 Further taking advantage of \eqref{eq:Leibnizdifferencing} as well as Holder's inequality, we infer that 
 \begin{align*}
 \big\|S^l\triangle \big(E_3(h_0)(\cdot,\cdot;m,n)\big)\big\|_{L^2_{r\,dr}}\lesssim_l\tau^{-3-p+}\cdot \big(\Big\|m\Big\|_{p,l} +\Big\|n\Big\|_{p,l}+1\big)^{l}\cdot\Big\|n\Big\|_{p,l}.
 \end{align*}
 The term $E_1(h_0)$ can be handled similarly. 
 
  \end{proof}

{\bf{Step 1}}:  {\it{Construction of the correction $h_1$.}} This step involves approximately solving the following wave equation 
\begin{equation}\label{eq:honewave}
-h_{1,tt} + h_{1,rr} + \frac{1}{r}h_{1,r} - \frac{4\cos\big(2Q_1 - 2\tilde{Q}_2\big)}{r^2}h_1 = e_{00} + \boxed{m_1\cdot \big[\cos\big(2Q_1\big) - 1\big]}, 
\end{equation}
 Here the term 
 \[
 e_{00} = e_0 + 8(2m\lambda_2 + m^2)\cdot \big[\cos(2Q_1) - 1\big],
 \]
  and the added boxed term on the right in \eqref{eq:honewave} serves two purposes, namely on the one hand ensuring a cancellation required to construct $h_1$ of smaller size than $h_0$, and second it removes a portion of the other boxed term in \eqref{eq:elliptic}, provided $m$ is chosen suitably (which we will do at the end). 
\\
The strategy to construct $h_1$ will be via a two stage process: first we solve an {\it{outer wave equation}} involving the potential term $-\frac{4\cos\big(2\tilde{Q}_2\big)}{r^2}$; then we solve an {\it{inner elliptic equation}} akin to the one satisfied by $h_0$. The former step has a s{\it{moothing effect at the outer scale $\lambda_2^{-1}$}}, while the latter step will have a {\it{shrinking effect}}. The role of the boxed term in \eqref{eq:honewave} will be to ensure good bounds for the second step. 
\begin{lem}\label{lem:honeequation} Let $m$ be as before. Then there exists a function $m_1$ on $(0, t_0]$ satisfying the bound 
\[
\big|m_1(t)\big|\ll_{t_0}\tau^{-\frac12}
\]
such that equation \eqref{eq:honewave} admits an approximate solution on $(0, t_0]\times [0,\infty)$ satisfying the bound ($k\geq 0, l\geq \frac12$)
\begin{align*}
&\big\|S^k h_1(t,\cdot)\big\|_{H^1_{r\,dr}}\lesssim_{k} \tau^{-2+}\cdot\big(1 + \tau^{-l}\cdot\sum_{0\leq j\leq k+4}\sup_{0<t\leq t_0}\tau^l\cdot \big|\big(t\partial_t\big)^j m(t)\big|\big),\\& 
\end{align*}
and further, denoting by $e_{10}$ the difference of the left hand side in \eqref{eq:honewave} and the right hand side there (the solution $h_1$ asserted here is only approximate!), we have  
\begin{align*}
&\lambda_2^{-1}(t)\cdot \big\|S^k e_{10}\big\|_{L^2_{r\,dr}(r\lesssim t)}\\&\hspace{2cm}\lesssim _{k} \tau^{-3+}\cdot\big(1 + \tau^{-l}\cdot\sum_{0\leq j\leq k+6}\sup_{0<t\leq t_0}\tau^l\cdot \big|\big(t\partial_t\big)^j m(t)\big|\big)
\end{align*}
We similarly have the differencing bounds 
\begin{align*}
&\big\| S^l\triangle h_{1}(t,r;m; n)\big\|_{H^1_{r\,dr}}\lesssim_{l} \tau^{-3-p+\epsilon}\cdot\big(1+\Big\|m\Big\|_{p, l+4}+\Big\|n\Big\|_{p, l+4}\big)^l\cdot\Big\|n\Big\|_{p, l+4},\\
&\lambda_2^{-1}(t)\cdot \big\| S^l\triangle e_{10}(t,r;m; n)\big\|_{L^2_{r\,dr}}\\&\hspace{2cm}\lesssim_{l} \tau^{-3-p+\epsilon}\cdot \big(1+\Big\|m\Big\|_{p, l+6} + \Big\|n\Big\|_{p, l+6}\big)^l\cdot \Big\|n\Big\|_{p, l+6}.
\end{align*}
\end{lem}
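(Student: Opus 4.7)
The plan is to decompose $h_1 = v_0 + v_1$ and follow the two-step procedure indicated in the strategy section: $v_0$ solves the outer wave equation with the potential $-4\cos(2\tilde Q_2)/r^2$, and $v_1$ is an inner elliptic correction concentrated at the scale $R\lesssim 1$. First I would set
\begin{equation*}
-v_{0,tt} + v_{0,rr} + \tfrac{1}{r}v_{0,r} - \tfrac{4\cos(2\tilde Q_2)}{r^2}\,v_0 = e_{00}
\end{equation*}
and apply Lemma~\ref{lem:outerpropagatorinhombounds} with $F=e_{00}$. The source bound $\lambda_2^{-1}\|S^k e_{00}\|_{L^2_{r\,dr}}\lesssim \tau^{-2+}$ from Lemma~\ref{lem:ezerobound} then furnishes
\begin{equation*}
\|S^k v_0\|_{H^1_{r\,dr}} + \|r^{-2}S^k v_0\|_{L^2_{r\,dr}} + \|r^{-1}S^k v_0\|_{L^4_{r\,dr}} \lesssim \tau^{-2+},
\end{equation*}
with the full $m$-dependence and differencing bounds inherited from $e_{00}$.

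Next I would let $v_1$ absorb the residual potential mismatch by solving the inner elliptic equation
\begin{equation*}
\mathcal{L}\,v_1 = \tfrac{4[\cos(2\tilde Q_2) - \cos(2Q_1 - 2\tilde Q_2)]}{r^2}\,v_0 + m_1\cdot [\cos(2Q_1) - 1]
\end{equation*}
via the variation-of-constants formula \eqref{eq:hformula} built from the fundamental system $(\Phi,\Theta)$ of $\mathcal{L}$. Since $\Theta(R)$ grows like $R^2$, requiring $v_1$ to be bounded at $R=\infty$ forces the $\Phi$-moment of the right-hand side to vanish; this uniquely determines $m_1$ using the non-degeneracy $\int_0^\infty[\cos(2Q(R))-1]\Phi(R)\,R\,dR = -3\pi/2\neq 0$ from \eqref{eq:identities1}. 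The claimed bound $|m_1|\ll \tau^{-1/2}$ follows from the smoothing effect of the outer propagator on $v_0$ combined with the cancellation $\cos(2\tilde Q_2)-\cos(2Q_1-2\tilde Q_2) = O(Q_1^2)+O(Q_1\tilde Q_2) = O(R^4)$ on $R\lesssim 1$ where $\Phi$ concentrates, which reduces the moment by several powers of $\tau^{-1}$ relative to the naive estimate; the fact that $\lambda_2 = \tau^{o(1)}$ absorbs the remaining slack. With $m_1$ fixed, the bound $\|S^k v_1\|_{H^1_{r\,dr}}\lesssim \tau^{-2+}$ follows from direct Green's-function estimates on $\Phi\int f\Theta - \Theta\int f\Phi$, exactly as for $h_0$ in Lemma~\ref{lem:hzerocorrection}.

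The residual error $e_{10}$ splits into (a) the dropped time-derivative $-v_{1,tt}$, (b) the potential mismatch $\tfrac{4[\cos(2Q_1-2\tilde Q_2)-\cos(2Q_1)]}{r^2}\,v_1$, and (c) the portion of the inner source not killed by the orthogonality of $m_1$. For (a) I would write $\partial_t^2 = t^{-2}(t\partial_t)^2 - t^{-2}(t\partial_t)$; since $v_1$ is concentrated at the inner scale $R\lesssim 1$, the $t^{-2}$ prefactor effectively becomes $(\lambda_1 t)^{-2}\lambda_1^2\sim \lambda_2^2\cdot\tau^{-2+}$, and each $t\partial_t$ converts into $S-r\partial_r$ with inner-scale $r\partial_r=R\partial_R$ which is bounded on $v_1$. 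For (b), the expansion $\cos(2Q_1-2\tilde Q_2)-\cos(2Q_1)=2\tilde Q_2\sin(2Q_1)+O(\tilde Q_2^2) = O((\lambda_2/\lambda_1)^2)$ on $R\lesssim 1$ produces the same gain of $\tau^{-2}$. Together this yields the claimed $\tau^{-3+}$ bound for $\lambda_2^{-1}\|S^k e_{10}\|_{L^2_{r\,dr}}$. The differencing bounds with respect to $m\mapsto m+n$ propagate through the whole construction by iterated use of \eqref{eq:Leibnizdifferencing}, Lemma~\ref{lem:technicalsymbolfunctionsdifferencing}, and the already-established estimates \eqref{eq:generallambdaonebounds}, \eqref{eq:nzerodifferencingbounds}, and the differencing part of Lemma~\ref{lem:ezerobound}; the extra $S$-derivatives forced by $\partial_t^2$ and by the $\lambda_1(t;m)$-dependence of $R=\lambda_1(t;m)r$ account for the shift from $k+2$ derivatives of $m$ in Lemma~\ref{lem:ezerobound} to $k+4$ and $k+6$ in the present statement.

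The main obstacle is the orthogonality step: verifying that a single scalar modulation $m_1(t)$ simultaneously annihilates the $\Theta$-growth of $v_1$ at $R=\infty$ and inherits the $\tau^{-1/2}$ decay together with the full hierarchy of $(t\partial_t)^j$ and $\triangle$ bounds needed to feed this lemma into the inductive construction of $h_2,h_3,\ldots$. The required smallness of $m_1$ relies on the subtle cancellation inside $\cos(2\tilde Q_2) - \cos(2Q_1 - 2\tilde Q_2)$ on $R\lesssim 1$ together with the smoothing of $v_0/r^2$ by the outer propagator, both of which must be tracked through the $S$-derivative and differencing hierarchy without losing powers of $\lambda_1$.
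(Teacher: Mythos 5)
Your overall architecture is the paper's: split $h_1=h_{10}+h_{11}$, solve the outer wave equation with the $\cos(2\tilde Q_2)$-potential via Lemma~\ref{lem:outerpropagatorinhombounds}, solve the inner elliptic equation by variation of constants for $\mathcal{L}$, and fix $m_1$ through the $\Phi$-moment condition using $\int_0^\infty[\cos(2Q)-1]\Phi\,R\,dR\neq 0$; the treatment of $\partial_t^2$ via $t\partial_t=S-r\partial_r$ and integration by parts, and the propagation of the differencing bounds, also match. However, there is a genuine quantitative gap in your error accounting. You only claim $\|S^k v_1\|_{H^1_{r\,dr}}\lesssim\tau^{-2+}$, ``exactly as for $h_0$,'' and then try to recover the missing power of $\tau$ in $e_{10}$ from the prefactors: you assert that the neglected time derivative carries $(\lambda_1 t)^{-2}\lambda_1^2\sim\lambda_2^2\tau^{-2+}$ and that $[\cos(2Q_1-2\tilde Q_2)-\cos(2Q_1)]/r^2$ ``produces the same gain of $\tau^{-2}$.'' Neither is correct: $(\lambda_1 t)^{-2}\lambda_1^2=t^{-2}=\lambda_2^2|\log t|^{-2\beta}$, which is only logarithmically smaller than $\lambda_2^2$, and the potential mismatch divided by $r^2$ is of size $\lambda_2^2$ (e.g.\ $2\tilde Q_2\sin(2Q_1)/r^2\sim\lambda_2^2 R^2$ for $R\lesssim 1$), so in the $\lambda_2$-normalized bookkeeping $\lambda_2^{-1}\|\cdot\|_{L^2_{r\,dr}}$ both error terms are linear in $h_{11}$ with prefactors carrying no power of $\tau$. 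With your $\tau^{-2+}$ bound on $v_1$ this yields only $\lambda_2^{-1}\|S^k e_{10}\|_{L^2}\lesssim\tau^{-2+}$, one full power of $\tau$ short of the stated $\tau^{-3+}$, which is exactly the gain the induction on $j$ needs to close.

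The correct mechanism (the one in the paper, via Lemma~\ref{lem:honeonebounds}) is that the inner correction itself is one power smaller, $\|S^l h_{11}\|_{H^1_{r\,dr}}\lesssim\tau^{-3+\epsilon}$: in the Green's function integrals for \eqref{eq:honeone} one pairs the weighted smallness $\|h_{10}/r^2\|_{L^2_{r\,dr}}\lesssim\tau^{-2+}$ (the smoothing from Lemma~\ref{lem:outerpropagatorinhombounds}, which you do invoke) with the pointwise bound $\big|[\cos(2Q_1-2\tilde Q_2)-\cos(2\tilde Q_2)]\big|\lesssim\langle S\rangle^{-2}$ and the scale conversion $r^2/S^2=\lambda_1^{-2}$, which together give the $L^1$-type bound on $S\Phi(S)\cdot\lambda_1^{-2}G$ of size $\tau^{-3+}$; then $e_{10}=-h_{11,tt}+4[\cos(2Q_1-2\tilde Q_2)-\cos(2Q_1)]r^{-2}h_{11}$ inherits $\tau^{-3+}$ with no gain needed from the prefactors. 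Relatedly, your justification of $|m_1|\ll\tau^{-1/2}$ through the cancellation being ``$O(R^4)$ on $R\lesssim 1$'' does not supply any $\tau$-smallness (the difference contains $[\cos(2Q_1)-1]\cos(2\tilde Q_2)$, which is order one at $R\sim 1$); the bound $|m_1|\lesssim\tau^{-1+}$ again comes from Cauchy--Schwarz against $\|h_{10}/r^2\|_{L^2}$ together with the $\lambda_1^{-2}$ from rescaling, so the conclusion survives but not by the route you describe. You should redo the estimate of $v_1$ to extract the extra $\tau^{-1}$ there (and also correct the error decomposition: since both sub-equations are solved exactly, there is no residual ``portion of the inner source'' term (c)); as written, the claimed $\tau^{-3+}$ bound for $e_{10}$ is not established.
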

\begin{proof} We shall construct $h_1 = h_{10} + h_{11}$, where $h_{10}$ solves the following equation precisely
\begin{equation}\label{eq:honezero}
-h_{10,tt} + h_{10,rr} + \frac{1}{r}h_{10,r} - \frac{4\cos\big(2\tilde{Q}_2\big)}{r^2}h_{10} = e_{00},  
\end{equation}
while $h_{11}$ precisely solves 
\begin{equation}\label{eq:honeone}\begin{split}
h_{11,rr} + \frac{1}{r}h_{11,r} - \frac{4\cos\big(2Q_1\big)}{r^2}h_{11} &= -\frac{4\big[\cos(2Q_1 - 2\tilde{Q}_2\big) - \cos\big(2\tilde{Q}_2\big)\big]}{r^2}h_{10}\\
&\hspace{1cm} + \boxed{m_1\cdot \big[\cos\big(2Q_1\big) - 1\big]}
\end{split}\end{equation}
for suitable choice of $m_1$. The remaining error generated by the approximate solution $h_1 = h_{10} + h_{11}$ for \eqref{eq:honewave} is then given by 
\begin{equation}\label{eq:eonezero}
e_{10} = -h_{11,tt} + \frac{4\big[\cos(2Q_1 - 2\tilde{Q}_2\big) - \cos\big(2Q_1\big)\big]}{r^2}h_{11}.
\end{equation}
{\it{(i): Control of the solution of \eqref{eq:honezero}}}. We use Lemma~\ref{lem:outerpropagatorinhombounds} in conjunction with Lemma~\ref{lem:ezerobound} to infer the existence of a solution $h_{10}$ satisfying the bounds (as before $k\geq 0,\,l\geq \frac12$)
\begin{align*}
\big\|S^k h_{10}\big\|_{H^1_{r\,dr}}\lesssim_{k} \tau^{-2+}\cdot\big(1 + \tau^{-l}\sum_{0\leq j\leq k+2}\sup_{0<t\leq t_0}\tau^{l}\cdot \big|(t\partial_t)^jm(t)\big|\big)
\end{align*}
Moreover, the solution $h_{10}$ depends continuously on $m$ in the following precise sense: writing 
\[
h_{10} = h_{10}(t,r;m),
\]
we have the estimate 
\begin{align*}
\big\|S^l\triangle h_{10}(t,r;m,n)\big\|_{H^1_{r\,dr}}\lesssim_{k}\tau^{-2-p+}\cdot\big(\Big\|m\Big\|_{p,l+2} +\Big\|n\Big\|_{p,l+2}+1\big)^{l}\cdot\Big\|n\Big\|_{p,l+2},\,p\geq \frac12. 
\end{align*}
This is a consequence of Lemma~\ref{lem:ezerobound}.
\\ 

{\it{(ii): Choice of the parameter in $m_1 = m_1(t)$ in \eqref{eq:honeone}.}} We shall solve \eqref{eq:honeone} by the variation of constants formula. Letting $S = \lambda_1(t)s$, we set\footnote{For the factor $12\pi$ recall \eqref{eq:identities1}.} 
\begin{equation}\label{eq:m1definition}
12\pi m_1(t) = \lambda_1^2\cdot 8\cdot \int_0^\infty \frac{4\big[\cos(2Q_1 - 2\tilde{Q}_2\big) - \cos\big(2\tilde{Q}_2\big)\big]}{S^2}h_{10}(t, s)\cdot S\Phi(S)\,dS. 
\end{equation}
Here we let $Q_1 = Q(S) = Q(\lambda_1(t)s)$. Then we have the following 
\begin{lem}\label{lem:m1properties} The function $m_1$ satisfies the bounds
\begin{align*}
\big||(t\partial_t)^l m_1(t)\big|\lesssim_{l}\tau^{-1+\epsilon}\cdot \big(1 + \Big\|m\Big\|_{p,l+4}\big)^l,\,p\geq \frac12. 
\end{align*}
 More generally, interpreting $m_1 = m_1(t;m)$, we have the estimate 
 \begin{align*}
 &\big|(t\partial_t)^l\triangle m_1(t ;m,n)\big|\lesssim_{l} \tau^{-1-p+\epsilon}\cdot\big(1+\Big\|m\Big\|_{p_0, l+4}+\Big\|n\Big\|_{p, l+4}\big)^l\cdot\Big\|n\Big\|_{p, l+4}.
 \end{align*}
 where as before $p\geq \frac12$. 
\end{lem}
\begin{proof}(Lemma~\ref{lem:m1properties}) We first consider the simplest case $l = 0$ of the first estimate. Then we use that for $t_0$ sufficiently small
\begin{align*}
\Big|\frac{4\big[\cos(2Q_1 - 2\tilde{Q}_2\big) - \cos\big(2\tilde{Q}_2\big)\big]}{S^2}\cdot s^2\Big|\lesssim \langle S\rangle^{-2}\cdot \lambda_1^{-2} \leq \langle S\rangle^{-2}\cdot \tau^{-2}. 
\end{align*}
 Then we use the Cauchy-Schwarz inequality to bound the integral defining $m_1$, resulting in 
 \begin{align*}
 \big|\lambda_1^{-2}m_1(t)\big|&\lesssim \big\| S\Phi(S)\cdot \frac{4\big[\cos(2Q_1 - 2\tilde{Q}_2\big) - \cos\big(2\tilde{Q}_2\big)\big]}{S^2}\cdot s^2\big\|_{L^2_{S\,dS}}\cdot \big\|\frac{h_{10}}{s^2}\big\|_{L^2_{S\,dS}}\\
 &\lesssim \tau^{-2}\cdot  \big\|\frac{h_{10}}{s^2}\big\|_{L^2_{S\,dS}}. 
 \end{align*}
 Further we have the bound 
 \[
  \big\|\frac{h_{10}}{s^2}\big\|_{L^2_{S\,dS}} = \lambda_1\cdot \big\|\frac{h_{10}}{s^2}\big\|_{L^2_{s\,ds}}\lesssim \lambda_1\cdot \lambda_2^2\cdot \tau^{-2+}\cdot \big(1 + \Big\|m\Big\|_{p,4}\big),\,p\geq \frac12.
 \]
 This is indeed a consequence of Lemma~\ref{lem:outerpropagatorinhombounds},
 together with the bounds on $h_{10}$ from {\it{(i)}}. 
 It follows that 
 \begin{align*}
 \big|\lambda_1^{-2}m_1(t)\big|\lesssim \tau^{-3+}\cdot \big(1 + \Big\|m\Big\|_{p, 4}\big),\,p\geq \frac12.
 \end{align*}
 The differencing bound is obtained similarly, taking advantage of the estimate 
 \begin{align*}
 &\big|\triangle \Big(\frac{4\big[\cos(2Q_1 - 2\tilde{Q}_2\big) - \cos\big(2\tilde{Q}_2\big)\big]}{R^2}\cdot R\Phi(R)\cdot r^2\Big)\big|(t,r;m,n)\\&\lesssim \langle R\rangle^{-2}\tau^{-2-p+}\cdot \Big\|n\Big\|_{p,0}. 
 \end{align*}
 This estimate is a consequence of Lemma~\ref{lem:technicalsymbolfunctionsdifferencing}. \\
 The differentiated bound follows by re-writing the integral with respect to the variable $s = \frac{S}{\lambda_1(t)}$ and re-writing things in terms of $t\partial_t + r\partial_r$, and further performing integration by parts with respect to $s$.Thus we write 
 \begin{align*}
&(t\partial_t)\Big(  \frac{4\big[\cos(2Q_1 - 2\tilde{Q}_2\big) - \cos\big(2\tilde{Q}_2\big)\big]}{S^2}h_{10}(t, s)\Big)\\
&= (t\partial_t + s\partial_s)\Big(  \frac{4\big[\cos(2Q_1 - 2\tilde{Q}_2\big) - \cos\big(2\tilde{Q}_2\big)\big]}{S^2}h_{10}(t, s)\Big)\\
& - (s\partial_s)\Big(  \frac{4\big[\cos(2Q_1 - 2\tilde{Q}_2\big) - \cos\big(2\tilde{Q}_2\big)\big]}{S^2}h_{10}(t, s)\Big)
 \end{align*}
  and handle the contribution of the second term on the right by integration by parts with respect to $s$. The contribution of the first term on the right is handled by observing that 
  \begin{align*}
  \Big|(t\partial_t + s\partial_s)\Big(  \frac{4\big[\cos(2Q_1 - 2\tilde{Q}_2\big) - \cos\big(2\tilde{Q}_2\big)\big]}{S^2}\Big)\cdot s^2\Big|\lesssim S^{-2}\cdot \tau^{-2}, 
  \end{align*}
 and also taking advantage of the differentiated bounds for $h_{10}$. 
\\
Higher derivatives, as well as the differencing bounds with respect to $m$, are handled analogously. 
\end{proof}

{\it{(iii): Bounds for the solution $h_{11}$ of \eqref{eq:honeone}.}} We accomplish this via 
\begin{lem}\label{lem:honeonebounds} There is a solution of \eqref{eq:honeone} satisfying the bounds 
\begin{align*}
&\big\| S^l\triangle h_{11}(t,r;m; n)\big\|_{H^1_{r\,dr}}\\&\hspace{2cm}\lesssim_{l} \tau^{-3-p+\epsilon}\cdot\big(1+\Big\|m\Big\|_{p, l+4}+\Big\|n\Big\|_{p, l+4}\big)^l\cdot\Big\|n\Big\|_{p, l+4}.
\end{align*}
Here $k, l\geq 0$ and as usual $p\geq \frac12$.
\end{lem}
\begin{proof}(Lemma~\ref{lem:honeonebounds})
Calling the right hand side in  \eqref{eq:honeone} $G$, we have to estimate 
\begin{align*}
\frac14\Theta(R)\cdot\int_0^R \lambda_1^{-2}G(t,S)\cdot S\Phi(S)\,dS - \frac14\Phi(R)\cdot\int_0^R \lambda_1^{-2}G(t,S)\cdot S\Theta(S)\,dS.
\end{align*}
Due to our choice of $m_1$, the first term can also be re-written as 
\begin{align*}
&\chi_{R\lesssim 1}\cdot \frac14\Theta(R)\cdot\int_0^R \lambda_1^{-2}G(t,S)\cdot S\Phi(S)\,dS\\&\hspace{3cm} - \chi_{R\gtrsim 1}\cdot \frac14\Theta(R)\cdot\int_R^\infty \lambda_1^{-2}G(t,S)\cdot S\Phi(S)\,dS.
\end{align*}
As for the estimate of $m_1$, we observe the bounds ($p\geq \frac12$)
\begin{align*}
\Big\|R\Phi(R))\frac{4\big[\cos(2Q_1 - 2\tilde{Q}_2\big) - \cos\big(2\tilde{Q}_2\big)\big]}{R^2}h_{10}\Big\|_{L^1_{R\,dR}}\lesssim_p \tau^{-3+\epsilon}\cdot \big(1+\Big\|m\Big\|_{p,2}\big), 
\end{align*}
and similarly if we replace $\Phi$ by $\Theta$. 
\\
It is then straightforward to verify that with $G$ representing the first term on the right in \eqref{eq:honeone}, we have 
\begin{align*}
&\Big\|\chi_{R\lesssim 1}\cdot \frac14\Theta(R)\cdot\int_0^R \lambda_1^{-2}G(t,S)\cdot S\Phi(S)\,dS\Big\|_{H^1_{r\,dr}}\lesssim _p \tau^{-3+\epsilon}\cdot \big(1+\Big\|m\Big\|_{p,2}\big),\\
&\Big\| \chi_{R\gtrsim 1}\cdot \frac14\Theta(R)\cdot\int_R^\infty \lambda_1^{-2}G(t,S)\cdot S\Phi(S)\,dS\Big\|_{H^1_{r\,dr}}\lesssim _p \tau^{-3+\epsilon}\cdot \big(1+\Big\|m\Big\|_{p,2}\big),\\
&\Big\| \frac14\Phi(R)\cdot\int_0^R \lambda_1^{-2}G(t,S)\cdot S\Theta(S)\,dS\Big\|_{H^1_{r\,dr}}\lesssim _p \tau^{-3+\epsilon}\cdot \big(1+\Big\|m\Big\|_{p,2}\big).
\end{align*}
The analogous terms when $G$ is replaced by $m_1\cdot \big[\cos\big(2Q_1\big) - 1\big]$ are handled similarly, taking advantage of Lemma~\ref{lem:m1properties}. The differentiated bounds follow as usual by integration by parts within the integral and the bounds established in {\it{(i)}}. 
The differencing bounds follow from the corresponding bounds for $m_1, h_{10}$. 
\end{proof}
{\it{(iv): Estimate of the error term $e_{10}$ from \eqref{eq:eonezero}}}. Finally, we estimate the remaining error term via the following 
\begin{lem}\label{lem:e10errorestimate} We have the estimates 
\begin{equation}\label{eq:eonezerobounds}\begin{split}
&\lambda_2^{-1}(t)\cdot \big\| S^l\triangle e_{10}(t,r;m; n)\big\|_{L^2_{r\,dr}}\\&\hspace{2cm}\lesssim_{l} \tau^{-3-p+\epsilon}\cdot \big(1+\Big\|m\Big\|_{p, l+6} + \Big\|n\Big\|_{p, l+6}\big)^l\cdot \Big\|n\Big\|_{p, l+6}.
\end{split}\end{equation}
\end{lem}
\begin{proof}(Lemma~\ref{lem:e10errorestimate}) This follows by converting $\partial_{tt}$ into $\frac{1}{t}(S- r\partial_r)\big(\frac{1}{t}(S- r\partial_r)\big)$ when this operator falls on the integrand in the variation of constants formula and getting rid of the $r\partial_r$ via integration by parts. 
\end{proof}

The proof of Lemma~\ref{lem:honeequation} is completed by combining {\it{(i)}} - {\it{(iv)}}. 
\end{proof}

Let us summarise what we have accomplished thus far via steps 1 and 2: The approximation 
\begin{equation}\label{eq:steponeandtwoapproximation}
u_1(t, r): = Q_1 - \tilde{Q}_2 + v_1,\,v_1 = h_0 + h_1
\end{equation}
solves the following equation 
\begin{equation}\label{eq:uoneequationwitherror}\begin{split}
&{-}u_{1, tt} + u_{1, rr} + \frac{1}{r}u_{1,r} - 2\frac{\sin (2u_1)}{r^2} = e_1,\\
&e_1 = e_{11} + e_{10} + \sum_{j=1}^3 E_{1j}(h_0, h_1)\\
&e_{11} = \boxed{8(2m\lambda_2 + m^2)\cdot \big[\cos(2Q_1) - 1\big]} + \boxed{m_1\cdot \big[\cos\big(2Q_1\big) - 1\big]},
\end{split}\end{equation}
where the terms $E_{1j}$ are given by the following explicit expressions:
\begin{equation}\label{eq:Eonejerrorterms}\begin{split}
&E_{11}(h_0, h_1) = \frac{2\cos\big(2Q_1 - 2\tilde{Q}_2 + 2h_0\big)}{r^2}\cdot\big(\sin\big(2h_1\big) - 2h_1\big),\\
&E_{12}(h_0, h_1) =  2h_1\cdot \Big[ \frac{2\cos\big(2Q_1 - 2\tilde{Q}_2 + 2h_0\big)}{r^2} - \frac{\cos\big(2Q_1 - 2\tilde{Q}_2\big)}{r^2}\Big]\\
&E_{13}(h_0, h_1) =  \frac{2\sin\big(2Q_1 - 2\tilde{Q}_2 + 2h_0\big)}{r^2}\cdot\big(\cos(2h_1) - 1\big)
\end{split}\end{equation}

The terms in $e_{11}$ are boxed in order to emphasize that they are correction terms, and we carefully note the bounds for $m_1$ in Lemma~\ref{lem:m1properties}.
\\
Then we can formulate the following error bound 
\begin{lem}\label{lem:eoneerrorbound} We have the error bound 
\begin{align*}
&\lambda_2^{-1}\cdot\big\| S^l\triangle\big(e_1 - e_{11}\big)(t,r;m; n)\big\|_{L^2_{r\,dr}}\\&\hspace{2cm}\lesssim_{l} \tau^{-3-p+\epsilon}\cdot \big(1+\Big\|m\Big\|_{p, l+6}+\Big\|n\Big\|_{p, l+6}\big)^l\cdot \Big\|n\Big\|_{p, l+6}.
\end{align*}
\end{lem}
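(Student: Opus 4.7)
The proof decomposes into the previously established bound on $e_{10}$ and new estimates for the three nonlinear terms $E_{11},E_{12},E_{13}$. Since $e_1-e_{11}=e_{10}+\sum_{j=1}^3E_{1j}(h_0,h_1)$, Lemma~\ref{lem:e10errorestimate} already delivers the desired estimate (together with its differencing counterpart) for $e_{10}$. The entire task therefore reduces to bounding each $E_{1j}$, and its difference under $m\mapsto m+n$, in the weighted norm $\lambda_2^{-1}\|\cdot\|_{L^2_{r\,dr}}$.

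To capture the principal sizes I would Taylor expand the trigonometric factors in \eqref{eq:Eonejerrorterms}. The expansion $\sin(2h_1)-2h_1=-\frac{4}{3}h_1^3+O(h_1^5)$ gives $|E_{11}|\lesssim h_1^3/r^2$; writing $\cos(2Q_1-2\tilde Q_2+2h_0)-\cos(2Q_1-2\tilde Q_2)=-2h_0\sin(2Q_1-2\tilde Q_2)+O(h_0^2)$ together with $|\sin(2Q_1-2\tilde Q_2)|\lesssim \lambda_1^2 r^2/(1+(\lambda_1 r)^4)+\lambda_2^2 r^2$ controls $E_{12}$ by a bounded-weight factor times $h_0 h_1$; and $\cos(2h_1)-1=O(h_1^2)$ gives $|E_{13}|\lesssim h_1^2/r^2$. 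The basic inputs I would invoke are $\|h_j\|_{L^\infty}\lesssim \|\nabla h_j\|_{L^2_{r\,dr}}+\|h_j/r\|_{L^2_{r\,dr}}\lesssim\tau^{-2+}$ (2D radial Sobolev, as used in the proof of Lemma~\ref{lem:outerpropagatorinhombounds}), together with the weighted bounds $\|h_j/r\|_{L^4_{r\,dr}}+\|h_j/r^2\|_{L^2_{r\,dr}}\lesssim\tau^{-2+}$ for $j=0,1$. For $h_0$ these follow from the explicit representation \eqref{eq:hformula}; for $h_{10}$ they are part of Lemma~\ref{lem:outerpropagatorinhombounds}; for $h_{11}$ they follow from the variation-of-constants formula inside the proof of Lemma~\ref{lem:honeequation} (the operator $\mathcal L^{-1}$ trades two powers of $r$ for the $\cos(2Q_1)$ potential). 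Inserted into the pointwise bounds, these already give $\|E_{1j}\|_{L^2_{r\,dr}}\lesssim\tau^{-4+}$ without any differencing.

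To handle $S^l$ and $\triangle$ I would apply the Leibniz rule for $S$ and the difference Leibniz identity~\eqref{eq:Leibnizdifferencing} iteratively, placing the difference on one factor at a time. The required differencing inputs are~\eqref{eq:nzerodifferencingbounds} for $h_0$ and the corresponding bound for $h_1=h_{10}+h_{11}$ stated in Lemma~\ref{lem:honeequation}; each contributes one factor of $\tau^{-p+\epsilon}$ and the polynomial weight $(1+\|m\|_{p,\cdot}+\|n\|_{p,\cdot})^l\|n\|_{p,\cdot}$. When combined with the undifferenced sizes above, each contribution to $\triangle E_{1j}$ is of order $\tau^{-5-p+\epsilon}$ or better, and multiplication by $\lambda_2^{-1}\lesssim\tau^{0+}$ keeps it comfortably inside the required $\tau^{-3-p+\epsilon}$.

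The only delicate point is derivative bookkeeping: each $S$ that lands on $h_0$ or $h_1$ costs up to $l+4$ derivatives of $m$ via Lemmas~\ref{lem:hzerocorrection} and~\ref{lem:honeequation}, whereas the $e_{10}$ contribution alone forces $l+6$ derivatives (through the $\partial_t^2$ conversion inside Lemma~\ref{lem:e10errorestimate}); the index $l+6$ appearing in the statement is dictated by this term, and the $E_{1j}$ pieces then fit inside with room to spare. No new analytic ingredient beyond the estimates already built in Lemmas~\ref{lem:outerpropagatorinhombounds}, \ref{lem:hzerocorrection}, \ref{lem:honeequation}, \ref{lem:e10errorestimate} and the 2D radial Sobolev inequality is required.
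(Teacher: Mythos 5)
Your proposal is correct and follows essentially the same route as the paper: split $e_1-e_{11}=e_{10}+\sum_{j=1}^3E_{1j}(h_0,h_1)$, quote Lemma~\ref{lem:e10errorestimate} for $e_{10}$, and control each $E_{1j}$ by Taylor expansion of the trigonometric factors, product/H\"older estimates using the previously established $L^\infty$, $H^1$ and weighted ($h_j/r^2$, $h_j/r$) bounds, together with the Leibniz rule, the differencing identity \eqref{eq:Leibnizdifferencing}, \eqref{eq:nzerodifferencingbounds} and Lemma~\ref{lem:technicalsymbolfunctionsdifferencing}, with the $l+6$ derivative count dictated by the $e_{10}$ term. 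Two intermediate claims are loose --- the factor $r^{-2}\sin\big(2Q_1-2\tilde Q_2\big)$ is of size $\lambda_1^2(1+R^4)^{-1}$ rather than uniformly bounded, and for $E_{12}$ the gain is only $\tau^{-3-p+}$ rather than $\tau^{-5-p+}$ unless one instead places the $r^{-2}$ weight on $h_0$ or $h_1$ as your listed inputs allow --- but this does not affect the stated conclusion.
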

\begin{proof}(Lemma~\ref{lem:eoneerrorbound}) For the contribution of $e_{10}$ this follows from the preceding lemma. For the nonlinear terms $E_{1j}(h_0, h_1)$ this follows by combining the previously established bounds with product estimates. Precisely, we have 
\begin{align*}
&\Big\|  S^l\triangle\Big(2\cos\big(2Q_1 - 2\tilde{Q}_2 + 2h_0\big)\Big)(t,r;m; n)\Big\|_{L^2_{r\,dr}}\lesssim \tau^{-p+}\cdot\big(1 + \Big\|m\Big\|_{p,l} + \Big\|n\Big\|_{p,l}\big)^l\cdot  \Big\|n\Big\|_{p,l},\\
&\Big\|  S^l\Big(2\cos\big(2Q_1 - 2\tilde{Q}_2 + 2h_0\big)\Big)(t,r;m)\Big\|_{L^2_{r\,dr}}\lesssim \tau^{0+}\cdot\big(1 + \Big\|m\Big\|_{p,l}\big)^l,
\end{align*}
as follows from Lemma~\ref{lem:technicalsymbolfunctionsdifferencing}, \eqref{eq:nzerodifferencingbounds}, as well as repeated use of the Leibniz rule as well as \eqref{eq:Leibnizdifferencing}. Similar bounds apply when replacing $\cos$ by $\sin$. 
\\
We can further estimate 
\begin{align*}
&\Big\|S^l\big(\frac{\cos(2h_1) - 1}{r^2}\big)\Big\|_{L^2_{r\,dr}}\\&\lesssim_l \big(\sum_{l_1=0}^{l} \big\|\frac{S^{l_1}h_1}{r^2}\big\|_{L^2_{r\,dr}}\big)\cdot \big(\sum_{l_2=0}^l \big\|S^{l_2}h_1\big\|_{L^\infty_{r\,dr}}\big)\cdot \big(1+\sum_{l_3=0}^\infty \big\|S^{l_3}h_1\big\|_{L^\infty_{r\,dr}}\big)^{l-2}.
\end{align*}
The preceding expression on the right can be bounded by 
\begin{align*}
\tau^{-4+}\cdot \big(1+\Big\|m\Big\|_{p,l+6}\big)^{l^2}
\end{align*}
We similarly infer the differencing bound 
\begin{align*}
&\Big\|S^l\triangle\big(\frac{\cos(2h_1) - 1}{r^2}\big)(t,r;m,n)\Big\|_{L^2_{r\,dr}}\lesssim \tau^{-4-p+}\cdot \big(1+\Big\|m\Big\|_{p,l+6} + \Big\|n\Big\|_{p,l+6}\big)^{l^2}\cdot\Big\|n\Big\|_{p,l+6}.
\end{align*}
Combining the preceding estimates and taking advantage of the Leibniz rule and \eqref{eq:Leibnizdifferencing}, we derive the desired bounds for the term $E_{13}(h_0, h_1)$. The other terms $E_{1j}(h_0, h_1),\,j = 1, 2$, are handled analogously. 
\end{proof}

{\bf{Step 2}}: {\it{Inductive construction of the correction $h_{j+1},\,j\geq 1$.}} Assume that we have constructed the corrections $h_k,\,k = 0,\ldots, j$, such that setting $v_j: = \sum_{k=0}^j h_k$, the approximate solution 
\[
u_j: = Q_1 - \tilde{Q}_2 + v_j
\]
satisfies the equation 
\begin{equation}
\begin{split}
&{-}u_{j,tt} + u_{j, rr} + \frac{1}{r}u_{j,r} - 2\frac{\sin (2u_j)}{r^2} = e_j,\\
&e_j = e_{j1} + e_{j2}\\
&e_{j1} = \boxed{8(2m\lambda_2 + m^2)\cdot \big[\cos(2Q_1) - 1\big]} + \boxed{\big(\sum_{k=1}^jm_k\big)\cdot \big[\cos\big(2Q_1\big) - 1\big]},
\end{split}\end{equation}
and we assume the following estimates(all for $j\geq k\geq 1$): 
\begin{equation}\label{eq:hjbounds}\begin{split}
&\big\|S^lh_k(t,\cdot;m)\big\|_{H^1_{r\,dr}}\lesssim_{k,l}\tau^{-1-k+\epsilon}\cdot \big(1+\Big\|m\Big\|_{p,l+4k+2}\big)^{C_{k,l}}\\
&\big\|S^l\triangle h_k(t,\cdot;m,n)\big\|_{H^1_{r\,dr}}\lesssim_{k,l}\tau^{-1-p-k+\epsilon}\cdot \big(1+\Big\|m\Big\|_{p,l+4k+2}\big)^{C_{k,l}}\cdot \Big\|n\Big\|_{p,l+4k+2},\\
\end{split}\end{equation}
for the increments, and further 
\begin{equation}\label{eq:ejtwomkbounds}\begin{split}
&\lambda_2^{-1}(t)\cdot\big\|S^l e_{j2}(t,\cdot;m))\big\|_{L^2_{r\,dr}}\lesssim_{j,l}\tau^{-2-j+\epsilon}\cdot \big(1+\Big\|m\Big\|_{p,l+4j+2}\big)^{C_{j,l}}\\
&\lambda_2^{-1}(t)\cdot\big\|S^l \triangle e_{j2}(t,\cdot;m,n))\big\|_{L^2_{r\,dr}}\lesssim_{j,l}\tau^{-2-j-p+\epsilon}\cdot \big(1+\Big\|m\Big\|_{p,l+4j+2}\big)^{C_{j,l}}\cdot \Big\|n\Big\|_{p,l+4j+2},\\
&\big|(t\partial_t)^lm_k(t;m)\big|\lesssim_{k,l}\tau^{-k+\epsilon}\cdot \big(1+\Big\|m\Big\|_{p,l+4k}\big)^{C_{k,l}},\,1\leq k\leq j,\\
&\big|(t\partial_t)^l\triangle m_k(t;m,n)\big|\lesssim_{k,l}\tau^{-k-p+\epsilon}\cdot \big(1+\Big\|m\Big\|_{p,l+4k}\big)^{C_{k,l}}\cdot \Big\|n\Big\|_{p,l+4k}\,1\leq k\leq j.\\
\end{split}\end{equation}

Then we have the 
\begin{lem}\label{lem:hjplusoneconstruction} Under the previous assumptions, there exist corrections $h_{j+1}, m_{j+1}$ satisfying the bounds 
\begin{align*}
&\big\|S^lh_{j+1}(t,\cdot;m)\big\|_{H^1_{r\,dr}}\lesssim_{j,l}\tau^{-2-j+\epsilon}\cdot \big(1+\Big\|m\Big\|_{p,l+4(j+1)+2}\big)^{C_{j+1,l}}\\
&\big\|S^l\triangle h_{j+1}(t,\cdot;m,n)\big\|_{H^1_{r\,dr}}\lesssim_{j,l}\tau^{-2-j-p+\epsilon}\cdot \big(1+\Big\|m\Big\|_{p,l+4(j+1)+2}\big)^{C_{j+1,l}}\cdot \Big\|n\Big\|_{p,l+4(j+1)+2},\\
\end{align*}
as well as 
\begin{align*}
&\big|(t\partial_t)^lm_{j+1}(t;m)\big|\lesssim_{j,l}\tau^{-j-1+\epsilon}\cdot \big(1+\Big\|m\Big\|_{p,l+4j+4}\big)^{C_{j+1,l}},\\
&\big|(t\partial_t)^l\triangle m_{j+1}(t;m,n)\big|\lesssim_{j,l}\tau^{-j-1-p+\epsilon}\cdot \big(1+\Big\|m\Big\|_{p,l+4j+4}\big)^{C_{j+1,l}}\cdot \Big\|n\Big\|_{p,l+4j+4},
\end{align*}
for a suitable constant $C_{j+1,l}$, and further the new approximation 
\[
u_{j+1}: = Q_1 - \tilde{Q}_2 + v_{j+1},\,v_{j+1} = \sum_{k=0}^{j+1}h_k
\]
satisfies the equation
\begin{equation}\label{eq:ujequation}\begin{split}
&{-}u_{j+1,tt} + u_{j+1, rr} + \frac{1}{r}u_{j+1,r} - 2\frac{\sin (2u_{j+1})}{r^2} = e_{j+1},\\
&e_{j+1} = e_{j+1,1} + e_{j+1,2}\\
&e_{j+1,1} = \boxed{8(2m\lambda_2 + m^2)\cdot \big[\cos(2Q_1) - 1\big]} + \boxed{\big(\sum_{k=1}^{j+1}m_k\big)\cdot \big[\cos\big(2Q_1\big) - 1\big]}.
\end{split}\end{equation}
The error term $e_{j+1,2}$ satisfies bounds analogous to those of $e_{j2}$ stated above, but with $j$ replaced by $j+1$. 
\end{lem}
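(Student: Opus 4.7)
The plan is to repeat the two-stage construction used for $h_1$ in Lemma~\ref{lem:honeequation}, but now applied to the error $e_{j2}$ produced by $u_j$. Concretely, I will write $h_{j+1}=h_{j+1,0}+h_{j+1,1}$ where $h_{j+1,0}$ is constructed by solving the outer wave equation
\[
-\partial_{tt}h_{j+1,0}+\partial_{rr}h_{j+1,0}+\tfrac{1}{r}\partial_r h_{j+1,0}-\tfrac{4\cos(2\tilde{Q}_2)}{r^2}h_{j+1,0}=e_{j2},
\]
while $h_{j+1,1}$ solves the inner elliptic equation
\[
\partial_{rr}h_{j+1,1}+\tfrac{1}{r}\partial_r h_{j+1,1}-\tfrac{4\cos(2Q_1)}{r^2}h_{j+1,1}=-\tfrac{4[\cos(2Q_1-2\tilde{Q}_2)-\cos(2\tilde{Q}_2)]}{r^2}h_{j+1,0}+\boxed{m_{j+1}\cdot[\cos(2Q_1)-1]},
\]
with $m_{j+1}$ chosen precisely so as to enforce the orthogonality condition
\[
\int_0^\infty\Bigl[-\tfrac{4[\cos(2Q_1-2\tilde{Q}_2)-\cos(2\tilde{Q}_2)]}{S^2}h_{j+1,0}+\tfrac{m_{j+1}[\cos(2Q(S))-1]}{\lambda_1^2}\Bigr]\Phi(S)\,S\,dS=0,
\]
analogously to \eqref{eq:m1definition}. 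This enforces decay at $R=+\infty$ of the term with the outer factor $\Theta(R)$ in the variation of constants formula.

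For the first stage, Lemma~\ref{lem:outerpropagatorinhombounds} directly yields the bounds
\[
\|S^l h_{j+1,0}\|_{H^1_{r\,dr}}\lesssim \tau^{-2-j+\epsilon}\bigl(1+\|m\|_{p,l+4j+4}\bigr)^{C_{j,l}},
\]
with the associated estimates on $r^{-2}h_{j+1,0}$ and $r^{-1}h_{j+1,0}$ in $L^2$ and $L^4$ respectively, once we substitute the inductive bound on $e_{j2}$ from \eqref{eq:ejtwomkbounds}. The differencing bound follows by applying the same propagator to $\triangle e_{j2}$ and exploiting the already-established Lipschitz dependence of $\tilde{Q}_2$-dependent potentials on $m$ (which does not actually occur here since $\tilde{Q}_2$ is $m$-independent).

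For $m_{j+1}$ I apply the Cauchy--Schwarz argument of Lemma~\ref{lem:m1properties}, using the pointwise bound $|[\cos(2Q_1-2\tilde{Q}_2)-\cos(2\tilde{Q}_2)]r^2/S^2|\lesssim \tau^{-2}\langle S\rangle^{-2}$ together with the $L^2$ bound for $r^{-2}h_{j+1,0}$; this produces the asserted bound $\tau^{-j-1+\epsilon}$ for $m_{j+1}$, and the differencing bound is analogous via Lemma~\ref{lem:technicalsymbolfunctionsdifferencing}. For $h_{j+1,1}$ I then invoke the variation-of-constants formula with $\Phi,\Theta$, splitting the $\Theta$-integral at $R\sim 1$ using the orthogonality relation above, exactly as in Lemma~\ref{lem:honeonebounds}; this gives the claimed $H^1_{r\,dr}$ bound.

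Finally, the new error is
\[
e_{j+1,2}=e_{j2}^{\mathrm{lin}}-\partial_{tt}h_{j+1,1}+\tfrac{4[\cos(2Q_1-2\tilde{Q}_2)-\cos(2Q_1)]}{r^2}h_{j+1,1}+\sum_{k=1}^3 E_{(j+1)k}(v_j,h_{j+1}),
\]
where the nonlinear terms $E_{(j+1)k}$ are defined as in \eqref{eq:Eonejerrorterms} with $h_0,h_1$ replaced by $v_j,h_{j+1}$. The bounds for the first two terms follow as in Lemma~\ref{lem:e10errorestimate}, converting $\partial_{tt}$ via the identity $\partial_{tt}=t^{-2}(S-r\partial_r)^2-t^{-2}(S-r\partial_r)$ and integrating by parts inside the variation-of-constants integrals. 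The nonlinear terms are handled in direct analogy with Lemma~\ref{lem:eoneerrorbound}, combining product estimates with the inductive bounds \eqref{eq:hjbounds} and the new bound for $h_{j+1}$; the smallness gained is $\tau^{-1+\epsilon}$ from the fact that $h_{j+1}$ itself is now of order $\tau^{-2-j+\epsilon}$.

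The main obstacle is bookkeeping: one has to verify that the number of $S$-derivatives and the power of $\|m\|_{p,\cdot}$ required at each stage grows by only a fixed amount ($+2$ extra derivatives for each application of $\partial_{tt}$, $+2$ for passing through the wave propagator, and similarly for differencing). Tracking this carefully yields the constants $C_{j+1,l}$, and the inductive structure closes because every source term in $e_{j+1,2}$ is smaller by a factor $\tau^{-1+\epsilon}$ than the corresponding term in $e_{j2}$, which is exactly what the asserted bounds require.
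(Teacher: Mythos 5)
Your proposal follows essentially the same route as the paper: the splitting $h_{j+1}=h_{j+1,0}+h_{j+1,1}$ into the outer wave equation with source $e_{j2}$ (handled via Lemma~\ref{lem:outerpropagatorinhombounds}) and the inner elliptic equation with the correction $m_{j+1}\cdot[\cos(2Q_1)-1]$ chosen by the orthogonality condition analogous to \eqref{eq:m1definition}, followed by the same treatment of the residual $-\partial_{tt}h_{j+1,1}$ term and of the nonlinear errors $E_{j+1,k}$ with $h_0,h_1$ replaced by $\sum_{k=0}^j h_k$ and $h_{j+1}$. This matches the paper's proof, and the bookkeeping of derivative losses and powers of $\|m\|_{p,\cdot}$ you describe is exactly what the paper does to close the induction.
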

\begin{proof}(Lemma~\ref{lem:hjplusoneconstruction}) This relies on a close analogue of Lemma~\ref{lem:honeequation}, involving a source term with faster decay. The following serves as the equation for $h_{j+1}$ which we {\it{strive to solve}}, although only approximately:
\begin{equation}\label{eq:idealhjplusoneequation}
-h_{j+1,tt} + h_{j+1,rr} + \frac{1}{r}h_{j+1,r} - \frac{4\cos\big(2Q_1 - 2\tilde{Q}_2\big)}{r^2}h_{j+1} = e_{j2} + \boxed{m_{j+1}\cdot \big[\cos\big(2Q_1\big) - 1\big]}.
\end{equation} 
We formulate the following lemma asserting the existence of a suitable approximate solution, as follows 
\begin{lem}\label{lem:solutionofhj+1equation} Assuming the bounds \eqref{eq:ejtwomkbounds}, there is a function $m_{j+1}$ satisfying the bounds stated in Lemma~\ref{lem:hjplusoneconstruction} such that \eqref{eq:idealhjplusoneequation} admits an approximate solution $h_{j+1}$ satisfying the bounds stated in 
Lemma~\ref{lem:hjplusoneconstruction}. Calling $\tilde{\Box}h_{j+1}$ the expression on the left of \eqref{eq:idealhjplusoneequation}, and $E_j$ the expression on the right, and further setting 
\[
e_{j+1,0} := \tilde{\Box}h_{j+1} - E_j,
\]
we have the following bounds for suitable constant $C_{j+1,l}$:
\begin{align*}
&\lambda_2^{-1}(t)\cdot \big\|S^le_{j+1,0}(t,\cdot;m)\big\|_{L^2_{r\,dr}}\lesssim_l  \tau^{-3-j+\epsilon}\cdot \big(1+\Big\|m\Big\|_{p,l+4(j+1)+2}\big)^{C_{j+1,l}},\\
&\lambda_2^{-1}(t)\cdot \big\|S^l\triangle e_{j+1,0}(t,\cdot;m,n)\big\|_{L^2_{r\,dr}}\lesssim_l  \tau^{-3-j-p+\epsilon}\cdot \big(1+\Big\|m\Big\|_{p,l+4(j+1)+2}\big)^{C_{j+1,l}}\cdot \Big\|n\Big\|_{p,l+4(j+1)+2}
\end{align*}
\end{lem}
\begin{proof}(Lemma~\ref{lem:solutionofhj+1equation}) This follows exactly along the lines of the proof of Lemma~\ref{lem:honeequation}. We set 
\[
h_{j+1} = h_{j+1,0} + h_{j+1,1},
\]
where $h_{j+1,0}$ solves the {\it{outer wave equation}} 
\begin{equation}\label{eq:hjplusonezero}
-h_{j+1,0;tt} + h_{j+1,0;rr} + \frac{1}{r}h_{j+1,0;r} - \frac{4\cos\big(2\tilde{Q}_2\big)}{r^2}h_{j+1,0} = e_{j2},
\end{equation}
while the second part $h_{j+1,1}$ solves the following 
\begin{equation}\label{eq:hjplusoneone}\begin{split}
h_{j+1,1;rr} + \frac{1}{r}h_{j+1,1;r} - \frac{4\cos\big(2Q_1\big)}{r^2}h_{j+1,1} & =  -\frac{4\big[\cos(2Q_1 - 2\tilde{Q}_2\big) - \cos\big(2\tilde{Q}_2\big)\big]}{r^2}h_{j+1,0}\\
&\hspace{1cm} + \boxed{m_{j+1}\cdot \big[\cos\big(2Q_1\big) - 1\big]}.
\end{split}\end{equation}
The solution of the second equation will rely on suitable choice of $m_{j+1}$. 
\\

{\it{(i): Solution of \eqref{eq:hjplusonezero}.}} Taking advantage of Lemma~\ref{lem:outerpropagatorinhombounds} as well as \eqref{eq:ejtwomkbounds}, we infer that \eqref{eq:hjplusonezero} admits a solution satisfying the bounds 
\begin{equation}\label{eq:hjplusonezerobound}\begin{split}
&\big\|S^l h_{j+1,0}(t,\cdot;m))\big\|_{H^1_{r\,dr}}\lesssim_{j,l}\tau^{-2-j+\epsilon}\cdot \big(1+\Big\|m\Big\|_{p,l+4j+2}\big)^{C_{j,l}}\\
&\big\|S^l \triangle h_{j+1,0}(t,\cdot;m,n))\big\|_{H^1_{r\,dr}}\lesssim_{j,l}\tau^{-2-j-p+\epsilon}\cdot \big(1+\Big\|m\Big\|_{p,l+4j+2}\big)^{C_{j,l}}\cdot \Big\|n\Big\|_{p, l+4j+2}.\\
\end{split}\end{equation}

{\it{(ii): Choice of $m_{j+1}$ in \eqref{eq:hjplusoneone}.}} We shall set 
\begin{equation}\label{eq:mjplusoneformula}
12\pi m_{j+1}(t) = \lambda_1^2\cdot8\cdot \int_0^\infty \frac{4\big[\cos(2Q_1 - 2\tilde{Q}_2\big) - \cos\big(2\tilde{Q}_2\big)\big]}{S^2}h_{j+1,0}\cdot S\Phi(S)\,ds. 
\end{equation}
Then in exact analogy to step {\it{(ii)}} in the proof of Lemma~\ref{lem:honeequation}, we deduce the bounds 
\begin{equation}\label{eq:mjplusonebounds}\begin{split}
&\big|(t\partial_t)^lm_{j+1}(t;m)\big|\lesssim_{j,l}\tau^{-j-1+\epsilon}\cdot \big(1+\Big\|m\Big\|_{p, l + 4j + 4}\big)^{C_{j,l+2}},\\
&\big|(t\partial_t)^l\triangle m_{j+1}(t;m,n)\big|\lesssim_{j,l}\tau^{-j-1-p+\epsilon}\cdot \big(1+\Big\|m\Big\|_{p, l + 4j + 4}\big)^{C_{j,l+2}}\cdot \Big\|n\Big\|_{p,l+4j+4}. 
\end{split}\end{equation}

{\it{(iii): Solution of \eqref{eq:hjplusoneone}.}} Using the same argument as in {\it{(iii)}} in the proof of Lemma~\ref{lem:honeequation}, we deduce the existence of a solution $h_{j+1,1}$ satisfying the bounds (also recall the bounds at the beginning of the proof of Lemma~\ref{lem:eoneerrorbound})
\begin{align*}
&\big\|S^l h_{j+1,1}(t,\cdot;m))\big\|_{H^1_{r\,dr}}\lesssim_{j,l}\tau^{-3-j+\epsilon}\cdot  \big(1+\Big\|m\Big\|_{p, l + 4j + 4}\big)^{C_{j,l+2}+l},\\
&\big\|S^l \triangle h_{j+1,1}(t,\cdot;m,n))\big\|_{H^1_{r\,dr}}\lesssim_{j,l}\tau^{-3-j-p+\epsilon}\cdot  \big(1+\Big\|m\Big\|_{p, l + 4j + 4}\big)^{C_{j,l+2}+l}\cdot \Big\|n\Big\|_{p,l+4j+4}.
\end{align*}
It follows that 
\[
h_{j+1} =   h_{j+1,0} + h_{j+1,1}
\]
satisfies the desired bounds, provided we arrange that 
\[
C_{j+1,l}\geq C_{j, l+2}+l. 
\]
{\it{(iv): Estimate of the error term $e_{j+1,0}$.}} This is given by 
\begin{align*}
e_{j+1,0} = -\partial_{tt} h_{j+1,1} + \frac{4\big[\cos\big(2Q_1 - 2\tilde{Q}_2\big) - \cos \big(2Q_1\big)\big]}{r^2}\cdot h_{j+1,1}. 
\end{align*}
Then the bounds stated in Lemma~\ref{lem:solutionofhj+1equation} follow from those for $h_{j+1,1}$, in analogy to the bounds for $e_{10}$. 
\end{proof}

In order to complete the proof of Lemma~\ref{lem:hjplusoneconstruction}, it suffices to estimate the error generated by the approximation 
\[
u_{j+1} =  Q_1 - \tilde{Q}_2 + v_{j+1},\,v_{j+1} = \sum_{k=0}^{j+1}h_k. 
\]
We can write this error in the form 
\[
e_{j+1} = e_{j+1,1} + e_{j+1,2},
\]
where $e_{j+1,1}$ is given in \eqref{eq:ujequation}, while 
\[
e_{j+1,2} = e_{j+1,0} + \sum_{k=1}^3 E_{j+1,k}
\]
where the terms $E_{j+1,k},\,k = 1, 2, 3$ are as in \eqref{eq:Eonejerrorterms} but with $h_0$ replaced by $\sum_{k=0}^j h_k$, and $h_1$ replaced by $h_{j+1}$.We now estimate each of these terms, which will suffice to estimate $e_{j+1,2}$ in light of the preceding lemma.
\\

{\it{The estimate for $E_{j+1,1}$.}} First using the bounds \eqref{eq:hjbounds} as well as Taylor expansion, the Leibniz rule as well as \eqref{eq:Leibnizdifferencing}, we infer 
\begin{align*}
&\big\|S^l\cos\big(2Q_1 - 2\tilde{Q}_2 + \sum_{k=0}^j h_k\big)\big\|_{H^1_{r\,dr}}\lesssim_{l,j}\tau^{0+}\cdot\big(1+\Big\|m\Big\|_{p,l+4j+2}\big)^{C_{j,l}},\\
&\big\|S^l\triangle\cos\big(2Q_1 - 2\tilde{Q}_2 + \sum_{k=0}^j h_k\big)(t,r;m,n)\big\|_{H^1_{r\,dr}}\\&\hspace{4cm}\lesssim_{l,j}\tau^{-p+}\cdot\big(1+\Big\|m\Big\|_{p,l+4j+2}\big)^{C_{j,l}}\cdot \Big\|n\Big\|_{p,l+4j+2}. 
\end{align*}
Furthermore, we infer the bounds 
\begin{align*}
&\big\|S^l\Big(\frac{\sin(2h_{j+1}) - 2h_{j+1}}{r^2}\Big)\big\|_{L^2_{r\,dr}}\\&\hspace{2cm}\lesssim_l\sum_{l_1+l_2\leq l}\big\|\frac{S^{l_1}h_{j+1}}{r^2}\big\|_{L^2_{r\,dr}}\cdot \big\|S^{l_2}h_{j+1}\big\|_{L^\infty_{r\,dr}}\cdot\big(1+\sum_{r=0}^l\Big\|S^r h_{j+1}\Big\|_{L^\infty_{r\,dr}}\big)^{l-2},
\end{align*}
using Taylor expansion, the Leibniz rule and Holder inequality. Taking advantage of the already established bounds for $h_{j+1}$, we can estimate the preceding expression by (with $l_1 + l_2\leq l$)
\begin{align*}
&\big\|\frac{S^{l_1}h_{j+1}}{r^2}\big\|_{L^2_{r\,dr}}\cdot \big\|S^{l_2}h_{j+1}\big\|_{L^\infty_{r\,dr}}\cdot\big(1+\sum_{r=0}^l\Big\|S^r h_{j+1}\Big\|_{L^\infty_{r\,dr}}\big)^{l-2}\\
&\lesssim_{l,j}\tau^{-j-1+}\cdot \tau^{-j-2+}\cdot \big(1+\Big\|m\Big\|_{p,l+4j+4}\big)^{D_{l,j}}
\end{align*}
and we set 
\[
D_{l,j} = l\cdot\big(C_{j,l+2} + l\big). 
\]
Again using the Leibniz rule, we then infer that 
\begin{align*}
\lambda_2^{-1}\cdot\big\|S^lE_{j+1,1}\big\|_{L^2_{r\,dr}}\lesssim_{j,l}\tau^{-3-2j+}\cdot  \big(1+\Big\|m\Big\|_{p,l+4j+4}\big)^{D_{l,j}+C_{j,l}}. 
\end{align*}
Using similar estimates in conjunction with  \eqref{eq:Leibnizdifferencing}, we deduce the differencing bound
\begin{align*}
\lambda_2^{-1}\cdot\big\|S^l\triangle E_{j+1,1}(t,\cdot;m,n)\big\|_{L^2_{r\,dr}}\lesssim_{j,l}\tau^{-3-2j-p+}\cdot  \big(1+\Big\|m\Big\|_{p,l+4j+4}\big)^{D_{l,j}+C_{j,l}}\cdot \Big\|n\Big\|_{p,l+4j+4}.
\end{align*}
The desired bounds for this contribution to $e_{j+1,2}$ follow if we ensure that $C_{j+1,l}\geq D_{l,j}+C_{j,l}$.
\\

For the term $E_{j+2,2}$, we take advantage of the estimates
\begin{align*}
&\big\|S^l\big[\frac{2\cos\big(2Q_1 - 2\tilde{Q}_2 + \sum_{k=0}^j h_k\big)}{r^2} - \frac{2\cos\big(2Q_1 - 2\tilde{Q}_2 \big)}{r^2}\big]\big\|_{L^2_{r\,dr}}\leq F_1 + F_2,
\end{align*}
where 
\begin{align*}
&F_1\lesssim_l \sum_{l_1+l_2=l}\big\|S^{l_1}\big(\cos\big(2Q_1 - 2\tilde{Q}_2\big)\big)\big\|_{L^\infty_{r\,dr}}\cdot \big\|S^{l_2}\big(\frac{\cos\big(\sum_{k=0}^j h_k\big) - 1}{r^2}\big)\big\|_{L^2_{r\,dr}}\\
&F_2\lesssim \sum_{l_1+l_2=l}\big\|S^{l_1}\big(\sin\big(2Q_1 - 2\tilde{Q}_2\big)\big)\big\|_{L^\infty_{r\,dr}}\cdot \big\|S^{l_2}\big(\frac{\sin\big(\sum_{k=0}^j h_k\big)}{r^2}\big)\big\|_{L^2_{r\,dr}}.\\
\end{align*}
In analogy to the estimate for $E_{j+2,1}$, and recalling Lemma~\ref{lem:hzerocorrection} as well as the bounds \eqref{eq:hjbounds}, we have the bounds (here $l_2\leq l$)
\begin{align*}
&\big\|S^{l_2}\big(\frac{\cos\big(\sum_{k=0}^j h_k\big) - 1}{r^2}\big)\big\|_{L^2_{r\,dr}}\lesssim_{j,l}\tau^{-3+}\cdot \big(1+\Big\|m\Big\|_{p,l+4j+2}\big)^{C_{j,l}},\\
&\big\|S^{l_2}\big(\frac{\sin\big(\sum_{k=0}^j h_k\big)}{r^2}\big)\big\|_{L^2_{r\,dr}}\lesssim_{j,l}\tau^{-1+}\cdot \big(1+\Big\|m\Big\|_{p,l+4j+2}\big)^{C_{j,l}}.
\end{align*}
We then conclude that 
\begin{align*}
F_1 + F_2\lesssim_{j,l}\tau^{-1+}\cdot \big(1+\Big\|m\Big\|_{p,l+4j+2}\big)^{C_{j,l}}.
\end{align*}
In turn we deduce that 
\begin{align*}
\lambda_2^{-1}(t)\cdot \big\|S^l E_{j+2,2}\big\|_{L^2_{r\,dr}}&\lesssim_{l,j}\big(\sum_{0\leq l_1\leq l}\big\|S^{l_1}h_{j+1}\big\|_{L^\infty_{r\,dr}}\big)\cdot\big(F_1 + F_2\big)\\
&\lesssim_{l,j}\tau^{-3-j+}\cdot \big(1+\Big\|m\Big\|_{p,l+4j+2}\big)^{C_{j+1,l}}
\end{align*}
by increasing $C_{j+1,l}$ if necessary. The difference bound for 
\[
S^l \triangle E_{j+2,2}
\]
is more of the same.  
\\

The term $E_{j+2,3}$ can be estimated in analogy to the term $E_{j+1,1}$, using the bound 
\begin{align*}
\big\|S^l\big(\frac{\cos\big(2h_{j+1}\big) - 1}{r^2}\big)\big\|_{L^2_{r\,dr}}\lesssim_{l,j}\tau^{-2j-3+}\cdot \big(1+\Big\|m\Big\|_{p,l+4j+4}\big)^{D_{l,j}}
\end{align*}
where $D_{l,j}$ is defined as in the estimates for $E_{j+1,1}$. From here one obtains the same estimates for $E_{j+1,3}$ as for the term $E_{j+1,1}$. 
\\

The estimates for $E_{j+1,k}$, $k = 1, 2, 3$ furnish the desired bound for $e_{j+1,2}$ completing the proof of Lemma~\ref{lem:hjplusoneconstruction}.
  \end{proof}
  
  {\bf{Step 3}}: {\it{Proof of Proposition~\ref{prop:approxsoln}}}. Repeating application of Lemma~\ref{lem:hjplusoneconstruction} sufficiently many times, we arrive at an approximate solution 
  \[
  u_N = Q_1 - \tilde{Q}_2 + \sum_{j=0}^N h_j,
  \]
  which generates the error 
  \begin{align*}
  -u_{N,tt} + u_{N,rr} + \frac{1}{r}u_{N,r} - 2\frac{\sin\big(2u_N\big)}{r^2} = e_N,
  \end{align*}
  such that we can write 
  \[
  e_N = e_{N,1} + e_{N,2}.
  \]
 Here we have on the one hand the bounds  
 \begin{equation}\label{eq:eNtwobound}\begin{split}
 &\lambda_2^{-1}(t)\cdot \big\|S^l e_{N,2}(t,\cdot;m)\big\|_{L^2_{r\,dr}}\lesssim_{l,N}\tau^{-2-N+}\cdot \big(1 + \big\|m\big\|_{p,l+4N+2}\big)^{C_{N,l}},\\
 &\lambda_2^{-1}(t)\cdot \big\|S^l e_{N,2}(t,\cdot;m,n)\big\|_{L^2_{r\,dr}}\lesssim_{l,N}\tau^{-2-N-p+}\cdot \big(1 + \big\|m\big\|_{p,l+4N+2}\big)^{C_{N,l}}\cdot \big\|n\big\|_{p,l+4N+2}.\\
\end{split}\end{equation}
 On the other hand, we have the representation formula
 \begin{equation}\label{eq:eNoneformula}
 e_{N,1} =  \boxed{8(2m\lambda_2 + m^2)\cdot \big[\cos(2Q_1) - 1\big]} + \boxed{\big(\sum_{k=1}^{N}m_k\big)\cdot \big[\cos\big(2Q_1\big) - 1\big]}
 \end{equation} 
 The $m_k$ in turn depend on $m$ in the way detailed in Lemma~\ref{lem:hjplusoneconstruction} and the preparations leading to it. Then we need the following 
 \begin{lem}\label{lem:mchoice} Given $N\geq 1$, there is $t_0 = t_0(N)>0$ sufficiently small, so that there is a function $m\in C^{\infty}\big((0,t_0]\big)$ with 
 \[
 \big\|m\big\|_{p, 10N}\leq 1
 \]
 for some fixed $p\geq \frac12$, and such that 
 \begin{align*}
 \lambda_2^{-1}\cdot \big\|e_{N,1}\big\|_{L^2_{r\,dr}}\leq \tau^{-N+1+}. 
 \end{align*}
 \end{lem}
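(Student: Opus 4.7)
My strategy exploits the fact that $e_{N,1}$ factors as $\Sigma(t;m)\cdot\bigl[\cos(2Q_1)-1\bigr]$, where
$$\Sigma(t;m) := 16\lambda_2(t)m(t) + 8m(t)^2 + \sum_{k=1}^{N} m_k(t;m)$$
is a purely time-dependent coefficient. Since $\bigl\|\cos(2Q_1)-1\bigr\|_{L^2_{r\,dr}}\sim\lambda_1^{-1}(t)$, it suffices to choose $m$ so that $\Sigma\equiv 0$; then $e_{N,1}\equiv 0$ and the required bound holds trivially. I would therefore reformulate the task as the fixed-point problem $m=\mathcal{T}(m)$ with
$$\mathcal{T}(m) := -\frac{1}{16\lambda_2}\Big(8m^2+\sum_{k=1}^N m_k(t;m)\Big),$$
and produce a solution by iteration starting from $m^{(0)}\equiv 0$.

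For the size and contraction estimates, the bounds of Lemma~\ref{lem:m1properties} and Lemma~\ref{lem:hjplusoneconstruction}, applied for any $m$ with $\|m\|_{p,L}\leq 1$ (taking $p=\tfrac12$), give whenever $l+4k\leq L$
$$\bigl|(t\partial_t)^l m_k(t;m)\bigr|\lesssim\tau^{-k+\epsilon},\qquad \bigl|(t\partial_t)^l\triangle m_k(t;m,n)\bigr|\lesssim\tau^{-k-\tfrac12+\epsilon}\|n\|_{p,l+4k}.$$
Combining with Remark~\ref{rem:tauredefinition}, which yields $\lambda_2^{-1}\tau^{-1}=\lambda_1^{-1}\leq\tau^{-1+\epsilon_0}$, and using the moderate growth $|(t\partial_t)^l\lambda_2^{-1}|\lesssim_l\lambda_2^{-1}|\log t|^{l\beta}$ together with the Leibniz rule, I expect to obtain
$$\bigl|(t\partial_t)^l\mathcal{T}(m)\bigr|\lesssim\tau^{-1+O(\epsilon)},\qquad \bigl\|\mathcal{T}(m)-\mathcal{T}(m')\bigr\|_{1/2,L-4N}\lesssim\tau(t_0)^{-1+O(\epsilon)}\bigl\|m-m'\bigr\|_{1/2,L},$$
for $l\leq L-4N$. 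For $p=\tfrac12$ the first estimate places $\mathcal{T}(m)$ well inside the unit $\|\cdot\|_{p,L-4N}$-ball once $t_0$ is small, while the second gives a contraction in a weaker norm.

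The principal technical obstacle is the finite derivative loss of $4N$ at each application of $\mathcal{T}$, arising from the $(1+\|m\|_{p,l+4k})^{C_{k,l}}$ factors. I would resolve it by the standard trick of iterating from a smooth initial guess: since $m_k(t;0)$ is smooth with uniform control in every $\|\cdot\|_{p,L'}$ (by Lemma~\ref{lem:hjplusoneconstruction} at $m=0$), so is $m^{(1)}=\mathcal{T}(0)$, and each $m^{(j+1)}=\mathcal{T}(m^{(j)})$ remains smooth. The telescoping bound
$$\bigl\|m^{(j+1)}-m^{(j)}\bigr\|_{1/2,10N}\leq \bigl(C\tau(t_0)^{-1+\epsilon}\bigr)^j\bigl\|m^{(1)}\bigr\|_{1/2,10N+4Nj},$$
combined with the at most polynomial-in-$L'$ growth of $\|m^{(1)}\|_{1/2,L'}$ inherited from Lemma~\ref{lem:hjplusoneconstruction}, then yields convergence in $\|\cdot\|_{1/2,10N}$, provided $\tau(t_0)^{-1+\epsilon}$ is sufficiently small, to a $C^\infty$ fixed point $m$ satisfying $\|m\|_{1/2,10N}\leq 1$. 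At this $m$, $\Sigma(\cdot;m)\equiv 0$, hence $e_{N,1}\equiv 0$ and both conclusions of the lemma follow.
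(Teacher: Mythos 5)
There is a genuine gap, and it sits exactly at the point your scheme is designed around. Your reformulation $m=\mathcal{T}(m)$ is the same starting point as the paper's (the paper writes the equation as $m=P_N(m)+d$), but you then try to solve it \emph{exactly} by an infinite iteration, whereas the map loses derivatives: by Lemma~\ref{lem:m1properties} and Lemma~\ref{lem:hjplusoneconstruction}, controlling $l$ derivatives of $m_k(t;m)$ (hence of $\mathcal{T}(m)$) requires $\big\|m\big\|_{p,l+4k}$, so your contraction estimate necessarily maps $\|\cdot\|_{1/2,L}$ into the weaker $\|\cdot\|_{1/2,L-4N}$. Your telescoping bound therefore needs $\big\|m^{(1)}\big\|_{1/2,10N+4Nj}$ with index growing linearly in $j$, and convergence hinges on your claim that this grows ``at most polynomially in $L'$''. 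Nothing in the cited lemmas supports that: the constants $C_{k,l}$ and the implicit constants $\lesssim_{k,l}$ are not tracked in the number of derivatives $l$ (and repeated application of $t\partial_t$ to quantities like $e^{c|\log t|^{\beta+1}}$, $\lambda_1$, $\tilde Q_2$ generically produces super-polynomial, e.g.\ factorial, growth of these constants). Without such quantitative control — or a genuinely different Nash--Moser-type scheme with smoothing, which is not what you wrote — the product $\big(C\tau(t_0)^{-1+\epsilon}\big)^j\big\|m^{(1)}\big\|_{1/2,10N+4Nj}$ cannot be shown to tend to $0$ for a fixed $t_0$, so the iteration is not known to converge in $\|\cdot\|_{1/2,10N}$ and the claim $e_{N,1}\equiv 0$ is not established.

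The paper avoids this obstruction entirely by not solving the equation exactly. It performs only finitely many iterations: setting $P_1(d)=P(d)$, $P_{k+1}(d)=P_1\big(d+P_k(d)\big)$ and $m_k=d+P_k(d)$, it takes $m=m_{N+1}$ and estimates the residual $m_{N+1}-d-P(m_{N+1})=P_{N+1}(d)-P_{N+2}(d)$ through the nested differencing identity \eqref{eq:Pkdifferencerelation}; each application of $\triangle P$ gains a factor $\tau^{-p}$ (via \eqref{eq:PNbounds}), so after $\sim N$ steps the residual is $O(\tau^{-N+})$, which after multiplication by $\lambda_2$ and $\cos(2Q_1)-1$ yields exactly the bound $\lambda_2^{-1}\|e_{N,1}\|_{L^2_{r\,dr}}\leq\tau^{-N+1+}$ demanded by the lemma, with only a fixed finite number of derivatives of $m$ ever being used. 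If you want to salvage your exact-solution approach, you would have to either prove quantitative (analytic-type) derivative bounds for $m_k(t;m)$ uniform in the order, or recast the argument as a finite iteration with an error term as the paper does.
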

 \begin{proof}(Lemma~\ref{lem:mchoice}) We need to solve the equation 
 \begin{equation}\label{eq:mequation1}
 8(2m\lambda_2 + m^2) + \sum_{k=1}^{N}m_k = 0
 \end{equation}
 approximately. We rewrite this equation in the form 
  \begin{equation}\label{eq:mequation2}
 m +  (16\lambda_2)^{-1}\sum_{k=1}^{N}m_k +  (2\lambda_2)^{-1}m^2 = 0
 \end{equation}
 Let us denote 
 \[
 (16\lambda_2)^{-1}\sum_{k=1}^{N}m_k +  (2\lambda_2)^{-1}m^2=: Q_N(t;m).
 \]
 In light of the bounds \eqref{eq:ejtwomkbounds}, we have the estimates (for $p>\frac12$)
 \begin{equation}\label{eq:PNbounds}\begin{split}
&\big|(t\partial_t)^lQ_N(t;m)\big|\lesssim_{l,N}\tau^{-1+}\cdot \big(1+\big\|m\big\|_{p, l+2N+2}\big)^{C_{N,l}},\\
&\big|(t\partial_t)^l \triangle Q_N(t;m,n)\big|\lesssim_{l,N}\tau^{-1-p+}\cdot \big(1+\big\|m\big\|_{p, l+2N+2}\big)^{C_{N,l}}\cdot \big\|n\big\|_{p,l+2N+2}. 
 \end{split}\end{equation}
 Let us now set 
 \[
 Q_N(t;m) - Q_N(t;0) = : -P_N(t;m),\,Q_N(t;0) =: -d(t). 
 \]
 Then we can reformulate \eqref{eq:mequation2} as 
 \begin{equation}\label{eq:mdeqn}
 m = P_N(m) + d,
 \end{equation}
 where we suppress the dependence on the variable $t$. We note right away that on account of \eqref{eq:PNbounds} we have 
 \[
 \big|(t\partial_t)^l d\big|\lesssim_{l,N}\tau^{-1+}. 
 \]
 Setting for simplicity $P_N = P$, define inductively
\[
P_1(d) = P(d),\,P_{k+1}(d) = P_1\big(d + P_k(d)\big),\,k\geq 1. 
\]
Further, set 
\[
m_k: = d + P_k(d) 
\]
Using \eqref{eq:PNbounds} we inductively infer the bounds 
\begin{equation}\label{eq:mkbounds}
\big|(t\partial_t)^l m_k\big|\lesssim_{l,k}\tau^{-1+}. 
\end{equation}
Observe that 
\begin{align*}
P_1(d+f) - P_1(d) = \triangle P(d,f),
\end{align*}
and hence 
\[
P_2(d) = P_1(d) +  \triangle P(d,P(d)). 
\]
From here we deduce 
\begin{align*}
P_3(d) -P_2(d) & = P_1\big(d+P_2(d)\big) - P_1\big(d+P_1(d)\big)\\
&= \triangle P\big(d+P_1(d), \triangle P(d,P(d))\big)\\
& =  \triangle P\big(m_1, \triangle P(d,P(d))\big).
\end{align*}
Inductively we derive the relation 
\begin{equation}\label{eq:Pkdifferencerelation}
P_{k+1}(d) -P_k(d) = \triangle P\big(m_{k-1}, \triangle P\big(m_{k-2},\ldots, \triangle P\big(d, P(d)\big)\ldots\big)\big).
\end{equation}
Observing that $P_N$ satisfies the same bounds \eqref{eq:PNbounds} as $Q_N$, we then infer that
\begin{align*}
\big|m_{N+1} - d - P(m_{N+1})\big| &= \big|P_{N+1}(d) - P\big(d + P_{N+1}(d)\big)\big|\\
& = \big|P_{N+1}(d) - P_{N+2}(d)\big|
\end{align*}
satisfies the bound 
\begin{equation}\label{eq:mNerrorbound}
\big|\big(t\partial_t\big)^l\big(m_{N+1} - d - P(m_{N+1})\big)\big|\lesssim_{N,l} \tau^{-N+}. 
\end{equation}
The desired bound for $e_{N,1}$ is then an immediate consequence. 
 \end{proof}
 
 Picking the function $m$ obtained in the preceding lemma, Proposition~\ref{prop:approxsoln} follows from the bounds \eqref{eq:eNtwobound} with $l = 0$. 
 
 \end{proof}
 
\section{Preparations for the last perturbation step leading to the exact solution}

Let 
\[
\lambda_1(\tau): = e^{\alpha(t)},
\]
where $\alpha(t)$ is given by formula \eqref{eq:alphadefinition}, and $\nu$ given by Lemma~\ref{lem:modulationfinetuning}, and the $m$ used there coming from Lemma~\ref{lem:mchoice}. We deduce in particular 
that for any $\gamma>0$ there exists $t_0 = t_0(\gamma, \beta,N)>0$ such that 
\begin{equation}\label{eq:trutaubounds}
\tau_1: = \int_{t}^1 \lambda_1(s)\,ds\in [t|\log t|^{-\beta}\cdot e^{2(1-\gamma)(\beta+1)^{-1}|\log t|^{\beta+1}},\,t|\log t|^{-\beta}\cdot e^{2(\beta+1)^{-1}|\log t|^{\beta+1}}],
\end{equation}
provided $0<t\leq t_0$. In the sequel we shall have to work with the precise time $\tau_1$, while we used an approximation $\tau$ before. We carefully observe the following asymptotic estimates: first we have 
\[
\alpha(t) = -\int_t^{t_0}\zeta^{-1}(s)\,ds + O(1) = \int_t^{t_0}\frac{2\big|\log s\big|^{\beta}}{s}\cdot\big(1 +O\big(\frac{1}{\big|\log s\big|^{\beta}}\big)\big)\,ds
\]
where the term $O\big(\ldots\big)$ obeys symbol bounds with respect to $s$. It follows that
\begin{equation}\label{eq:alphaasympto}
\alpha(t) =  2(\beta+1)^{-1}\cdot|\log t|^{\beta+1} + O\big(\big|\log t\big|\big). 
\end{equation}
Letting $\lambda_1(t) = e^{\alpha(t)}$, we then deduce that 
\begin{equation}\label{eq:lambdaoneasympto}
\tau_1 = \frac{t}{2}\cdot \big|\log t\big|^{-\beta}\cdot \lambda_1(t)\cdot \big(1 + O\big(\frac{1}{\big|\log t\big|^{\beta}}\big)\big). 
\end{equation}
where the error term again satisfies symbol type bounds. In particular, we have 
\[
\tau_1\sim \frac{\lambda_1}{\lambda_2}.
\]
We also derive the following asymptotic bounds:
\begin{equation}\label{eq:lambdaonederivative}\begin{split}
&\frac{\lambda_{1,\tau_1}}{\lambda_1} = \big|\frac{\lambda_{1,t}}{\lambda_1}\big|\cdot \lambda_1^{-1} = \frac{2\big|\log t\big|^{\beta}}{t}\cdot \lambda_1^{-1}\sim \tau_1^{-1},\\
&\big|\partial_{\tau_1}\big(\frac{\lambda_{1,\tau_1}}{\lambda_1}\big)\big|\sim \tau_1^{-2}. 
\end{split}\end{equation}
Furthermore, we shall later need to control the size of the following integral expression (where we restrict to $\sigma_1\geq \tau_1$)
\begin{equation}\label{eq:xzeroDuhamelpropagator}\begin{split}
&\lambda_1(\tau_1)\cdot\int_{\tau_1}^{\sigma_1}\lambda_1^{-1}(s)\,ds \lesssim \lambda_1(\tau_1)\cdot\int_{\tau_1}^{\sigma_1}s^{-1}\cdot t(s)\cdot \big|\log t(s)\big|^{-\beta}\,ds\lesssim \log\big(\frac{\sigma_1}{\tau_1}\big)\cdot \tau_1,\\
&\big|\partial_{\tau_1}\big(\lambda_1(\tau_1)\cdot\int_{\tau_1}^{\sigma_1}\lambda_1^{-1}(s)\,ds\big)\big|\lesssim \log\big(\frac{\sigma_1}{\tau_1}\big).
\end{split}\end{equation}

We now introduce the function spaces we shall work with, in terms of suitable norms: setting $N_j= \frac{N}{2^j},\,j = 0, 1, 2$, define   
\begin{equation}\label{eq:Xnorm}\begin{split}
\big\|\epsilon\big\|_{X}: &= \sup_{0<t\leq t_0}\tau_1^{N_0}\cdot\big\|\epsilon(t,\cdot)\big\|_{H^1_{r\,dr}} +  \sup_{0<t\leq t_0}\tau_1^{N_1}\cdot\big\|S\epsilon(t,\cdot)\big\|_{H^1_{r\,dr}} +  \sup_{0<t\leq t_0}\tau_1^{N_2}\cdot\big\|S^2\epsilon(t,\cdot)\big\|_{H^1_{r\,dr}}\\
& + \sup_{0<t\leq t_0}\tau_1^{N_2}\cdot\lambda_1^{-1}(t)\big\|\frac{\epsilon}{r^2}\big\|_{L^2_{r\,dr}} +  \sup_{0<t\leq t_0}\tau_1^{N_2}\cdot\lambda_1^{-\frac12}(t)\big\|\frac{S\epsilon}{r}\big\|_{L^4_{r\,dr}}.
\end{split}\end{equation}
Here in analogy to \cite{KST3} we carefully define the norm $\big\|\cdot\big\|_{H^1_{r\,dr}}$ as follows 
\begin{equation}\label{eq:Honenormdefn}
\big\|\epsilon(t,\cdot)\big\|_{H^1_{r\,dr}}: = \big\|L_t^{\frac12}\epsilon\big\|_{L^2_{r\,dr}} + \big\|\epsilon_t\big\|_{L^2_{r\,dr}} + \lambda_1(t)\cdot\big\|\epsilon\big\|_{L^2_{r\,dr}}.
\end{equation}

For the source terms, we use the norm 
\begin{equation}\label{eq:Ynorm}\begin{split}
\big\|f\big\|_{Y}: &=  \sup_{0<t\leq t_0}\tau_1^{N_0+2}\cdot\lambda_1^{-1}(t)\big\|f(t,\cdot)\big\|_{L^2_{r\,dr}} +  \sup_{0<t\leq t_0}\tau_1^{N_1+2}\cdot\lambda_1^{-1}(t)\big\|Sf(t,\cdot)\big\|_{L^2_{r\,dr}}\\& +  \sup_{0<t\leq t_0}\tau_1^{N_2+2}\cdot\lambda_1^{-1}(t)\big\|S^2f(t,\cdot)\big\|_{L^2_{r\,dr}}
\end{split}\end{equation}

Consider now the inhomogeneous linear problem 
\begin{equation}\label{eq:inhomhighfreq}
-\partial_{tt}\epsilon + \partial_{rr}\epsilon + \frac{1}{r}\partial_r\epsilon - \frac{4\cos\big(2Q_1\big)}{r^2}\epsilon = f. 
\end{equation}
Then we have 
\begin{lem}\label{lem:highfreqinhom} Let $N\gg 1$ be large enough. Assuming $\big\|f\big\|_{Y}<\infty$, the problem \eqref{eq:inhomhighfreq} admits a solution satisfying the bound 
\begin{align*}
\big\|\epsilon\big\|_{X}\lesssim N^{-1}\big\|f\big\|_{Y}. 
\end{align*}
\end{lem}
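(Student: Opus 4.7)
The plan is to mimic the scheme used for the outer propagator (Proposition~\ref{prop:linearinhombounds} and Lemma~\ref{lem:outerpropagatorinhombounds}), now taking advantage of the inner scale $\lambda_1$ and the time $\tau_1 = \int_t^1 \lambda_1(s)\,ds$. First, pass to the rescaled variable $R = \lambda_1(t) r$ and the conformal unknown $\tilde{\epsilon} = R^{1/2}\epsilon$, so that the spatial operator becomes the time-independent $\tilde{\mathcal{L}}$ from \eqref{eq:operatortildemathcalL}. Applying the distorted Fourier transform $\mathcal{F}$ of Proposition~\ref{prop:Fourierbasis}, and abbreviating $\omega_1 = \frac{\lambda_{1,\tau_1}}{\lambda_1}$, one obtains (compare \eqref{eq:distFouriereqnKST0}) the system
\begin{equation*}
\Big[-D_{\tau_1}^2 - \omega_1 D_{\tau_1} - \xi\Big]\mathcal{F}\tilde{\epsilon} = \mathcal{F}\tilde{f} - 2\omega_1 \mathcal{K}_{nd} D_{\tau_1}\mathcal{F}\tilde{\epsilon} + \omega_1^2[\mathcal{K}_{nd},\mathcal{K}_d]\mathcal{F}\tilde{\epsilon} + \omega_1^2(\mathcal{K}_{nd}^2-\mathcal{K}_{nd})\mathcal{F}\tilde{\epsilon} - \dot{\omega}_1\mathcal{K}_{nd}\mathcal{F}\tilde{\epsilon},
\end{equation*}
with $D_{\tau_1} = \partial_{\tau_1} - \omega_1(1+\mathcal{K}_d)$ and $\tilde{f} = \lambda_1^{-2}R^{1/2}f$.

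Next, build the analogues of the norms in \eqref{eq:normforxcomponents0}--\eqref{eq:sourceforfnorm0}, where the polynomial weight $e^{cq\tau_2}$ is replaced by a pure power weight $\tau_1^{N_j}$ tailored to \eqref{eq:Xnorm}--\eqref{eq:Ynorm} (and $\tau_2^{\beta/(\beta+1)}$ is replaced by $\tau_1$ in view of \eqref{eq:lambdaonederivative}). Decoupling the discrete and continuous components as in \eqref{eq:xzeroonedecoupled0} reduces matters to two scalar estimates: for the $x_0$ equation one integrates explicitly and uses \eqref{eq:xzeroDuhamelpropagator}; for the $x_1$ equation one constructs the Duhamel propagator $\tilde{U}(\tau_1,\sigma_1,\xi)$ enjoying bounds analogous to \eqref{eq:tildeUbounds} (with $\lambda_2, \tau_2$ replaced by $\lambda_1, \tau_1$, and $\tau_2^{\beta/(\beta+1)}$ by $\tau_1$), and one solves backwards from $\tau_1 = +\infty$. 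The exponential weights there become pure power weights $\tau_1^{N}$ here; crucially, the factor of $\log(\sigma_1/\tau_1)$ appearing in \eqref{eq:xzeroDuhamelpropagator} is absorbed by the polynomial loss between $\tau_1^{-N-2}$ on the source side and $\tau_1^{-N}$ on the solution side, which is exactly what yields the gain $N^{-1}$ in the final estimate. The transference contributions on the right are then estimated using Proposition~\ref{prop:transferencemappingbounds} and $|\omega_1|\lesssim \tau_1^{-1}$, $|\dot{\omega}_1|\lesssim \tau_1^{-2}$; each of them costs at most one factor of $\tau_1^{-1}$ and is therefore absorbed (again producing the $N^{-1}$ smallness) provided $N$ is large.

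To upgrade the undifferentiated bound to the $S$-differentiated versions present in $\|\cdot\|_X$, commute $S = t\partial_t + r\partial_r$ through the equation \eqref{eq:inhomhighfreq}. The commutator $[S,\,\partial_{rr}+\tfrac{1}{r}\partial_r - \partial_{tt}] = -2(\partial_{rr}+\tfrac{1}{r}\partial_r - \partial_{tt})$ is harmless, and the commutator with the potential $\frac{4\cos(2Q_1)}{r^2}$ produces, using $|S^k(\cos(2Q_1))|\lesssim 1$ with symbol-type bounds from \eqref{eq:lambdaonederivative}, an error controllable by the already established bound on $\epsilon$. A simple induction on $k = 0,1,2$ then propagates the $\tau_1^{N_k}$ estimates. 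Finally, the weighted bounds on $r^{-2}\epsilon$ and $r^{-1}S\epsilon$ at the end of \eqref{eq:Xnorm} are obtained exactly as at the end of the proof of Lemma~\ref{lem:outerpropagatorinhombounds}: invoke the pointwise identity \eqref{eq:trickidentity} / estimate \eqref{eq:rminustwoestimate} to express the singular part of $r^{-2}\epsilon$ through $\nabla\epsilon$, $S^2\epsilon$, $S\epsilon$ and the box-term $\Box\epsilon - \tfrac{4}{r^2}\epsilon = f - \tfrac{4(\cos(2Q_1)-1)}{r^2}\epsilon$, then control the latter by $\|f\|_Y + \lambda_1^2\|\epsilon\|_{L^2}$ and interpolate to recover the $L^4$-bound on $r^{-1}S\epsilon$.

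The main technical obstacle is producing the $N^{-1}$ smallness in the contraction, which requires that every instance of the transference operators or of the Duhamel convolution yields a loss $\tau_1^{-1}\log\tau_1$ that can be beaten by the polynomial weight gap $\tau_1^{N+2}$ versus $\tau_1^N$; this forces one to be careful with the logarithm in \eqref{eq:xzeroDuhamelpropagator} and explains why one needs $N$ large (with the gaps $N_0 \gg N_1 \gg N_2$ reserved for the nonlinear step to come). The other delicate point is the weighted inequality for $r^{-2}\epsilon$, where one must carry exactly enough $S$-regularity to apply \eqref{eq:trickidentity}, which is precisely why the norm in \eqref{eq:Xnorm} includes $S^2\epsilon$.
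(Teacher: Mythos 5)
Your proposal follows essentially the same route as the paper: pass to $\tilde{\epsilon}=R^{1/2}\epsilon$ with $R=\lambda_1(t)r$, solve the distorted-Fourier system via the decoupled $x_0/x_1$ Duhamel propagators with power weights $\tau_1^{N_0}$ (absorbing the $\log(\sigma_1/\tau_1)$ loss through the weight gap to produce the $N^{-1}$ smallness, and treating the transference terms with Proposition~\ref{prop:transferencemappingbounds} and $\omega\sim\tau_1^{-1}$, $\dot{\omega}\sim\tau_1^{-2}$), then commute $S$, $S^2$ through the equation for the differentiated bounds, and finally obtain the $r^{-2}\epsilon$ and $r^{-1}S\epsilon$ estimates from \eqref{eq:trickidentity}, \eqref{eq:rminustwoestimate} and interpolation. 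This matches the paper's three-step argument, so no further comment is needed.
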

\begin{proof} Letting as usual $R = \lambda_1(t)r$ and $\tau_1$ as before, we change to the variable 
\[
\tilde{\epsilon} = R^{\frac12}\cdot \epsilon.
\]
Setting further
\[
\omega: = \frac{\lambda_{\tau_1}}{\lambda},\,\dot{\omega}: = \partial_{\tau_1}\omega, 
\]
we derive the following analogue of \eqref{eq:generalinhomlin}:
\begin{equation}\label{eq:inhomgeneraltildeepsilon}\begin{split}
&\Big[-\big(\partial_{\tau_1} + \omega R\partial_R\big)^2 - \omega\big(\partial_{\tau_1} + \omega R\partial_R\big) +  \frac12\dot{\omega} + \frac14 \omega^2 - \tilde{\mathcal{L}}\Big]\tilde{\epsilon} = \tilde{f},\\
& \tilde{f} : = \lambda_1^{-2}\cdot R^{\frac12}f. 
\end{split}\end{equation}
We observe right away that 
\begin{align*}
\big\|\tilde{f}\big\|_{L^2_{dR}} = \lambda_1^{-1}\cdot \big\|f\big\|_{L^2_{r\,dr}}. 
\end{align*}

We shall derive the desired standard energy bounds first for $\epsilon$ and by differentiating the equation for $S\epsilon, S^2\epsilon$. Then we derive the more delicate weighted bounds for $\epsilon$ included at the end in the $\big\|\cdot\big\|_{X}$-norm. 
\\

In the following we shall refer freely to the results from Proposition~\ref{prop:Fourierbasis}, Proposition~\ref{prop:WeylTitchmarsh}, as well as Proposition~\ref{prop:Kcctransference}, Proposition~\ref{prop:transferencemappingbounds}. 
Denoting the distorted Fourier transform, which is a vector valued function, by 
\begin{equation}\label{eq:distortedFourier}
\mathcal{F}\tilde{\epsilon} = \left(\begin{array}{c}\langle \tilde{\epsilon}, \phi_0(R)\rangle_{L^2_{dR}}\\ \langle \tilde{\epsilon}, \phi(R, \xi)\rangle_{L^2_{dR}}\end{array}\right), 
\end{equation}
where the second component on the right is a function in $L^2(\mathbb{R}_+, \rho)$, we have the following equation for the Fourier transform:
\begin{equation}\label{eq:distFouriereqnKST}\begin{split}
\Big[{-}D_{\tau_1}^2 - \omega D_{\tau_1} - \xi\Big]\mathcal{F}\tilde{\epsilon} &= \mathcal{F}\tilde{f} - 2\omega\mathcal{K}_{nd}D_{\tau_1}\mathcal{F}\tilde{\epsilon} + \omega^2\big[\mathcal{K}_{nd}, \mathcal{K}_d\big]\mathcal{F}\tilde{\epsilon} \\
& + \omega^2\cdot\big(\mathcal{K}_{nd}^2 - \mathcal{K}_{nd}\big)\mathcal{F}\tilde{\epsilon} - \dot{\omega}\mathcal{K}_{nd}\mathcal{F}\tilde{\epsilon}.
\end{split}\end{equation} 
The operator $D_{\tau_1}$ here is defined by 
\[
D_{\tau_1} = \partial_{\tau_1} - \omega(1+\mathcal{K}_d). 
\]
Let is now write 
\begin{equation}\label{eq:tildeepsilonFouriercomponents}
\mathcal{F}\tilde{\epsilon}  =:\underline{x}= \left(\begin{array}{c}x_0\\ x_1\end{array}\right),\,x_0 = x_0(\tau_1),\,x_1 = x_1(\tau_1,\xi). 
\end{equation}
Let us introduce the energy type norm 
\begin{equation}\label{eq:normforxcomponents}\begin{split}
\big\|\underline{x}\big\|_{\tilde{X}}:& =  \sup_{0<t\leq t_0}\tau_1^{N_0}\big\|\rho^{\frac12}\cdot x_1(\tau_1,\cdot)\big\|_{L^2_{d\xi}} +   \sup_{0<t\leq t_0}\tau_1^{N_0+1}\big\|\rho^{\frac12}\cdot D_{\tau_1}x_1(\tau_1,\cdot)\big\|_{L^2_{d\xi}}\\
& +  \sup_{0<t\leq t_0}\tau_1^{N_0+1}\big\|\rho^{\frac12}\cdot \xi^{\frac12}x_1(\tau_1,\cdot)\big\|_{L^2_{d\xi}} + \sup_{0<t\leq t_0}\tau_1^{N_0}\cdot \big(\big|x_0(\tau_1)\big| + \tau_1\big|\dot{x}_{0}(\tau_1)\big|\big).
\end{split}\end{equation}
On the other hand, for the source terms we use the simpler norm
\begin{equation}\label{eq:sourceforfnorm}
\big\|\underline{f}\big\|_{\tilde{Y}}: =  \sup_{0<t\leq t_0}\tau_1^{N_0+2}\big\|\rho^{\frac12}\cdot f_1(\tau_1,\cdot)\big\|_{L^2_{d\xi}} +  \sup_{0<t\leq t_0}\tau_1^{N_0+2}\big|f_0(\tau_1)\big|. 
\end{equation}
We observe that application of the Plancherel's theorem for the distorted Fourier transform implies 
\[
\sup_{0<t\leq t_0}\tau_1^{N_0}\cdot \big\|\epsilon\big\|_{H^1_{r\,dr}}\lesssim \big\|\mathcal{F}\tilde{\epsilon} \big\|_{\tilde{X}}. 
\]
{\bf{Step 1}}: {\it{For large enough $N$, \eqref{eq:distFouriereqnKST} admits a solution $\mathcal{F}\tilde{\epsilon} = \underline{x}$ satisfying the bound $\big\|\underline{x}\big\|_{\tilde{X}}\lesssim N_0^{-1}\cdot \big\|\mathcal{F}\tilde{f}\big\|_{\tilde{Y}}$.}} To see this, we argue in close analogy to the proof of Proposition 6.2 in \cite{KST3}. Denoting the right hand side of \eqref{eq:distFouriereqnKST} as $\underline{g} = \left(\begin{array}{c}g_0\\ g_1\end{array}\right)$, the equations for $x_j, j = 0, 1$ become 
\begin{equation}\label{eq:xzeroonedecoupled}\begin{split}
&{-}\partial_{\tau_1}\big(\partial_{\tau_1} - \omega\big)x_0 = g_0\\
&\big({-}D_{\tau_1}^2 - \omega D_{\tau_1} - \xi\big)x_1 = g_1. 
\end{split}\end{equation}
Then we choose the solution $x_0$ of the first equation which is given by the Duhamel formula 
\begin{equation}\label{eq:xzeropropagator}
x_0(\tau_1) = \int_{\tau_1}^\infty U_0\big(\tau_1,\sigma_1\big)\cdot g_0(\sigma_1)\,d\sigma_1,\,U_0\big(\tau_1,\sigma_1\big) = \lambda_1(\tau_1)\cdot\int_{\tau_1}^{\sigma_1}\lambda_1^{-1}(s)\,ds. 
\end{equation}
Then taking advantage of \eqref{eq:xzeroDuhamelpropagator} we infer the bound
\begin{align*}
&\sup_{0<t\leq t_0}\tau_1^{N_0}\big|x_0(\tau_1)\big| + \sup_{0<t\leq t_0}\tau_1^{N_0+1}\big|\dot{x}_0(\tau_1)\big|\\
& \lesssim \sup_{0<t\leq t_0}\tau_1^{N_0}\cdot \int_{\tau_1}^\infty \Big[\big|U_0\big(\tau_1,\sigma_1\big)\big| + \tau_1\big|\partial_{\tau_1}U_0\big(\tau_1,\sigma_1\big)\big|\Big] \cdot \big|g_0(\sigma_1)\big|\,d\sigma_1\\
&\lesssim  \sup_{0<t\leq t_0}\tau_1^{N_0+1}\cdot \int_{\tau_1}^\infty \log\big(\frac{\sigma_1}{\tau_1}\big)\cdot \big|g_0(\sigma_1)\big|\,d\sigma_1\\
\end{align*}
The last expression is easily seen to be bounded by 
\begin{equation}\label{eq:gzeropropagatorbound}
 \sup_{0<t\leq t_0}\tau_1^{N_0+1}\cdot \int_{\tau_1}^\infty \log\big(\frac{\sigma_1}{\tau_1}\big)\cdot \big|g_0(\sigma_1)\big|\,d\sigma_1\lesssim  \frac{1}{N_0}\sup_{0<t\leq t_0}\tau_1^{N_0+2}\cdot\big|g_0(\tau_1)\big|.
\end{equation}
As for the second equation in \eqref{eq:xzeroonedecoupled}, we can express its solution via the Duhamel propagator 
\begin{equation}\label{eq:Duhamellambdaonetauone}
U(\tau_1, \sigma_1, \xi): = \frac{\rho^{\frac12}\big(\xi\frac{\lambda_1^2(\tau_1)}{\lambda_1^2(\sigma_1)}}{\rho^{\frac12}(\xi)}\big)\frac{\lambda_1(\tau_1)}{\lambda_1(\sigma_1)}\frac{\sin\big(\xi^{\frac12}\lambda_1(\tau_1)\cdot\int_{\tau_1}^{\sigma_1}\lambda_1^{-1}(s)\,ds\big)}{\xi^{\frac12}},
\end{equation}
which already appears in the proof of Lemma 6.4 in \cite{KST3}. We observe the following estimate, which results from the bounds \eqref{eq:xzeroDuhamelpropagator}:
\begin{equation}\label{eq:Upointwisebounds}\begin{split}
&\big|U(\tau_1, \sigma_1, \xi)\big|\lesssim \frac{\rho^{\frac12}\big(\xi\frac{\lambda_1^2(\tau_1)}{\lambda_1^2(\sigma_1)}}{\rho^{\frac12}(\xi)}\cdot \frac{\lambda_1(\tau_1)}{\lambda_1(\sigma_1)}\cdot \min\{\xi^{-\frac12},\,\log\big(\frac{\sigma_1}{\tau_1}\big)\cdot\tau_1\},\\
&\big|\partial_{\tau_1}U(\tau_1, \sigma_1, \xi)\big|\lesssim  \frac{\rho^{\frac12}\big(\xi\frac{\lambda_1^2(\tau_1)}{\lambda_1^2(\sigma_1)}}{\rho^{\frac12}(\xi)}\cdot \log\big(\frac{\sigma_1}{\tau_1}\big).
\end{split}\end{equation} 

Then a solution of the second equation of \eqref{eq:xzeroonedecoupled} is given by the Duhamel type formula
\begin{equation}\label{eq:xonepropagator}
x_1\big(\tau_1,\xi\big) = \int_{\tau_1}^{\infty}U(\tau_1, \sigma_1, \xi)\cdot g_1\big(\sigma_1, \frac{\lambda^2(\tau_1)}{\lambda^2(\sigma_1)}\xi\big)\,d\sigma_1. 
\end{equation}
The following bound follows from the bounds for the propagator $U$:
\begin{equation}\label{eq:gonepropagatorbound}\begin{split}
&\sup_{0<t\leq t_0}\tau_1^{N_0}\big\|\rho^{\frac12}\cdot x_1(\tau_1,\cdot)\big\|_{L^2_{d\xi}} +   \sup_{0<t\leq t_0}\tau_1^{N_0+1}\big\|\rho^{\frac12}\cdot D_{\tau_1}x_1(\tau_1,\cdot)\big\|_{L^2_{d\xi}}\\
&\hspace{6cm} +  \sup_{0<t\leq t_0}\tau_1^{N_0+1}\big\|\rho^{\frac12}\cdot \xi^{\frac12}x_1(\tau_1,\cdot)\big\|_{L^2_{d\xi}}\\
&\lesssim N_0^{-1}\cdot \sup_{0<t\leq t_0}\tau_1^{N_0+2}\big\|\rho^{\frac12}\cdot g_1(\tau_1,\cdot)\big\|_{L^2_{d\xi}}.
\end{split}\end{equation}
In order to solve \eqref{eq:distFouriereqnKST}, we implement a fixed point argument for $\underline{x} =  \mathcal{F}\tilde{\epsilon} $. Using Proposition~\ref{prop:transferencemappingbounds}, \eqref{eq:lambdaonederivative}, we deduce the following estimate:
\begin{equation}\label{eq:linearsourcetermbounds}\begin{split}
&\Big\|2\omega\mathcal{K}_{nd}D_{\tau_1}\underline{x}\Big\|_{\tilde{Y}} + \Big\| \omega^2\big[\mathcal{K}_{nd}, \mathcal{K}_d\big]\underline{x}\Big\|_{\tilde{Y}} + \Big\| \omega^2\cdot\big(\mathcal{K}_{nd}^2 - \mathcal{K}_{nd}\big)\underline{x}\Big\|_{\tilde{Y}}\\
& + \Big\| \dot{\omega}\mathcal{K}_{nd}\underline{x}\Big\|_{\tilde{Y}} \lesssim \big\|\underline{x}\big\|_{\tilde{X}}. 
\end{split}\end{equation}
Choosing $N_0$ large enough, the existence of a solution $\mathcal{F}\tilde{\epsilon} = \underline{x}$  of \eqref{eq:distFouriereqnKST} is then a consequence of the estimates \eqref{eq:linearsourcetermbounds}, \eqref{eq:gonepropagatorbound}, \eqref{eq:gzeropropagatorbound} and an application of the Banach fixed point theorem. 
\\

{\bf{Step 2}}: {\it{Control over the second and third norm component in \eqref{eq:Xnorm}. }} This is accomplished by applying $S^j, j = 1, 2$ to the equation \eqref{eq:inhomhighfreq} and taking advantage of the bounds established in the previous step. To begin with, we observe that 
\begin{equation}\label{eq:Sepsilon}
-\partial_{tt}S\epsilon + \partial_{rr}S\epsilon + \frac{1}{r}\partial_rS\epsilon - \frac{4\cos\big(2Q_1\big)}{r^2}S\epsilon = Sf + 2f+ \mathcal{R}_1\epsilon,
\end{equation}
where we set 
\[
\mathcal{R}_1\epsilon =  \frac{4S\big(\cos\big(2Q_1\big)\big)}{r^2}\epsilon.
\]
Then we observe the estimate 
\begin{align*}
\big| \frac{4S\big(\cos\big(2Q_1\big)\big)}{r^2}\big|\lesssim \tau_1^{0+}\cdot \lambda_1^2. 
\end{align*}
We conclude that 
\begin{align*}
\lambda_1^{-1}\cdot \big\| \frac{4S\big(\cos\big(2Q_1\big)\big)}{r^2}\epsilon\big\|_{L^2_{r\,dr}}&\lesssim \lambda_1^{-2}\cdot \big\| \frac{4S\big(\cos\big(2Q_1\big)\big)}{r^2}\big\|_{L^\infty_{r\,dr}}\cdot \lambda_1\big\|\epsilon\big\|_{L^2_{r\,dr}}\\
&\lesssim \tau_1^{0+}\cdot  \lambda_1\big\|\epsilon\big\|_{L^2_{r\,dr}}. 
\end{align*}
It follows that 
\begin{equation}\label{eq:Ronebound}
\sup_{0<t\leq t_0}\tau_1^{N_1+2}\cdot \lambda_1^{-1}\big\|\mathcal{R}_1\epsilon(t,\cdot)\big\|_{L^2_{r\,dr}}\ll_{t_0}\big\|\epsilon\big\|_{X}, 
\end{equation}
provided $N_0>N_1+2$, which is the case provided $N$ is sufficiently large. 
Applying the distorted Fourier transform and applying the estimates from the previous step, we arrive at the bound 
\begin{align*}
\big\|\mathcal{F}\big(S\epsilon\big)\big\|_{\tilde{X}_1}\lesssim \big\|\mathcal{F}\big(\widetilde{Sf}\big)\big\|_{\tilde{Y}_1} + \big\|\mathcal{F}\tilde{f}\big\|_{\tilde{Y}}.
\end{align*}
Here we define $\big\|\cdot\big\|_{\tilde{X}_j}, \big\|\cdot\big\|_{\tilde{Y}_j}$, $j = 1, 2$, like the norms $\big\|\cdot\big\|_{\tilde{X}}, \big\|\cdot\big\|_{\tilde{Y}}$, except we replace $N_0$ by $N_j$. 
\\

Deducing the corresponding bound for $S^2\epsilon$ is handled analogously, by first deriving the equation 
\begin{equation}\label{eq:S2epsilon}\begin{split}
-\partial_{tt}S^2\epsilon + \partial_{rr}S^2\epsilon + \frac{1}{r}\partial_rS^2\epsilon - \frac{4\cos\big(2Q_1\big)}{r^2}S^2\epsilon &= S\big(Sf + 2f+ \mathcal{R}_1\epsilon\big)\\
& + 2\big(Sf + 2f+ \mathcal{R}_1\epsilon\big) + \mathcal{R}_1S\epsilon.
\end{split}\end{equation}
Assuming $N_1>N_2+2$, which is ensured if $N$ is sufficiently large, we deduce that 
\begin{align*}
\big\|\mathcal{F}\big(S^2\epsilon\big)\big\|_{\tilde{X}_2}\lesssim \big\|\mathcal{F}\big(\widetilde{S^2f}\big)\big\|_{\tilde{Y}_2} + \big\|\mathcal{F}\big(\widetilde{Sf}\big)\big\|_{\tilde{Y}_1} + \big\|\mathcal{F}\tilde{f}\big\|_{\tilde{Y}}.
\end{align*}

{\bf{Step 3}}: {\it{Control over the fourth and fifth norm components in \eqref{eq:Xnorm}. }} Here we use the preceding estimates together with \eqref{eq:inhomhighfreq} to establish the remaining bounds, following closely the procedure in \cite{KST3}. To begin with, note the identity (from \cite{KST3})
\begin{equation}\label{eq:trickidentity}\begin{split}
\Big(\frac{t^2 - r^2}{t^2}\partial_{rr} + \frac{1}{r}\partial_r - \frac{4}{r^2}\Big)\epsilon &= t^{-2}\cdot\Big({-}S^2\epsilon + 2t\partial_t S\epsilon + S\epsilon - 2t\partial_t\epsilon\Big)\\
& + \Box\epsilon - \frac{4\epsilon}{r^2},
\end{split}\end{equation}
where we recall our convention $\Box = -\partial_{tt} + \partial_{rr} + \frac{1}{r}\partial_r$. It follows that 
\begin{align*}
\lambda_1^{-1}\cdot \Big\|\Big(\frac{t^2 - r^2}{t^2}\partial_{rr} + \frac{1}{r}\partial_r - \frac{4}{r^2}\Big)\epsilon\Big\|_{L^2_{r\,dr}}&\lesssim \big(\lambda_1\cdot t\big)^{-2}\cdot\big[\lambda_1\big\|S^2\epsilon\big\|_{L^2_{r\,dr}} + \lambda_1\big\|S\epsilon\big\|_{L^2_{r\,dr}}\big]\\
& +  \big(\lambda_1\cdot t\big)^{-1}\cdot\big[\big\|\partial_t S\epsilon\big\|_{L^2_{r\,dr}} + \big\|\partial_t\epsilon\big\|_{L^2_{r\,dr}}\big]\\
& + \lambda_1^{-1}\cdot\big\| \Box\epsilon - \frac{4\epsilon}{r^2}\big\|_{L^2_{r\,dr}}. 
\end{align*}
Here the first two terms can be estimated in terms of the already established norm bounds for $S^j\epsilon$, $j = 0, 1, 2$. For the last term on the right, taking advantage of \eqref{eq:inhomhighfreq} we can write this as 
\begin{align*}
 \lambda_1^{-1}\cdot\big\| \Box\epsilon - \frac{4\epsilon}{r^2}\big\|_{L^2_{r\,dr}} &=  \lambda_1^{-1}\cdot\big\|\frac{4\big[\cos\big(2Q_1\big)-1\big]}{r^2}\epsilon\big\|_{L^2_{r\,dr}} +  \lambda_1^{-1}\cdot\big\|f\big\|_{L^2_{r\,dr}}\\
 & \lesssim \lambda_1\cdot\big\|\epsilon\big\|_{L^2_{r\,dr}} + \lambda_1^{-1}\cdot\big\|f\big\|_{L^2_{r\,dr}}. 
\end{align*}
We conclude that 
\begin{align*}
\sup_{0<t\leq t_0}\tau^{N_2}\cdot \lambda_1^{-1}\cdot \Big\|\Big(\frac{t^2 - r^2}{t^2}\partial_{rr} + \frac{1}{r}\partial_r - \frac{4}{r^2}\Big)\epsilon\Big\|_{L^2_{r\,dr}}\lesssim \big\|f\big\|_{Y},
\end{align*}
taking into account steps 1 and 2. 
\\
Finally invoking the estimate (2.15) from \cite{KST3} which is 
\begin{equation}\label{eq:rminustwoestimate}
\big\|r^{-2}\epsilon\big\|_{L^2_{r\,dr}}\lesssim\big\|t^{-1}\nabla \epsilon\big\|_{L^2_{r\,dr}} + \big\|t^{-1}r^{-1}\epsilon\big\|_{L^2_{r\,dr}} + \Big\|\Big(\frac{t^2 - r^2}{t^2}\partial_{rr} + \frac{1}{r}\partial_r - \frac{4}{r^2}\Big)\epsilon\Big\|_{L^2_{r\,dr}},
\end{equation}
the estimate 
\begin{equation}\label{eq:epsilonoverrtwobound}
\sup_{0<t\leq t_0}\tau^{N_2}\cdot \lambda_1^{-1}\big\|r^{-2}\epsilon\big\|_{L^2_{r\,dr}}\lesssim\big\|f\big\|_{Y}
\end{equation}
follows from the preceding estimates. For this we also need to take advantage of the elementary bound 
\begin{equation}\label{eq:basicenergybound}
\big\|\nabla\epsilon\big\|_{L^2_{r\,dr}} + \big\|r^{-1}\epsilon\big\|_{L^2_{r\,dr}}\lesssim \big\|L_t^{\frac12}\epsilon\big\|_{L^2_{r\,dr}} + \lambda_1\cdot \big\|\epsilon\big\|_{L^2_{r\,dr}},
\end{equation}
which corresponds to (2.14) in \cite{KST3}. 
\\

In order to complete the proof of Lemma~\ref{lem:highfreqinhom}, it only remains to establish the estimate  
\begin{align*}
 \sup_{0<t\leq t_0}\tau_1^{N_2}\cdot\lambda_1^{-\frac12}(t)\big\|\frac{S\epsilon}{r}\big\|_{L^4_{r\,dr}}\lesssim \big\|f\big\|_{Y}. 
\end{align*}
As in \cite{KST3}, this bound follows by interpolating between the preceding estimate for $r^{-2}\epsilon$ and the following one:
\begin{align*}
\big\|S^2\epsilon\big\|_{L^\infty_{r\,dr}}\lesssim \big\|\nabla S^2\epsilon\big\|_{L^2_{r\,dr}} + \big\|\frac{S^2\epsilon}{r}\big\|_{L^2_{r\,dr}}, 
\end{align*}
which together with the last bound but one yields
\begin{align*}
\sup_{0<t\leq t_0}\tau_1^{N_2}\cdot \big\|S^2\epsilon\big\|_{L^\infty_{r\,dr}}\lesssim \sup_{0<t\leq t_0}\tau_1^{N_2}\cdot\big\|S^2\epsilon\big\|_{H^1_{r\,dr}}\lesssim \big\|f\big\|_{Y}. 
\end{align*}
\end{proof}

 \section{Construction of exact solution}
 
 It remains to find a final correction $\epsilon(t,\cdot)\in H^1_{r\,dr}$ with the property that 
 \[
 u: = u_N + \epsilon
 \]
 is an exact solution for \eqref{eq:2corotational} in the cone $r\lesssim t$, where $u_N$ is as in Proposition~\ref{prop:approxsoln}. The equation for $\epsilon$ is then the following 
 \begin{equation}\label{eq:epsilonfinaleqn}
 -\epsilon_{tt} + \epsilon_{rr} + \frac{1}{r}\epsilon_r - \frac{4\cos\big(2Q_1 - 2\tilde{Q}_2\big)}{r^2}\epsilon = e_N + \sum_{k=1}^3 E_k, 
 \end{equation}
where the terms $E_k = E_k(\epsilon)$ are given by the following expressions:
\begin{equation}\label{eq:finalEkterms}\begin{split}
&E_1 = \frac{2\cos\big(2Q_1 - 2\tilde{Q}_2 + v_N\big)}{r^2}\cdot\big[\sin\big(2\epsilon\big) - 2\epsilon\big],\\
& E_2 = 2\epsilon\cdot\big[\frac{2\cos\big(2Q_1 - 2\tilde{Q}_2 + 2v_N\big)}{r^2} - \frac{2\cos\big(2Q_1 - 2\tilde{Q}_2 \big)}{r^2}\big]\\
& E_3 =   \frac{2\sin\big(2Q_1 - 2\tilde{Q}_2 + v_N\big)}{r^2}\cdot\big[\cos\big(2\epsilon\big) - 1\big]. 
\end{split}\end{equation}
Furthermore, we define 
\begin{align*}
e_N: =\chi_{r\lesssim t}\cdot\Big( {-}u_{N, tt} + u_{N, rr} + \frac{1}{r}u_{N,r} - 2\frac{\sin (2u_N)}{r^2}\Big).
\end{align*}
Then we can assert the following 
\begin{prop}\label{prop:finalepsiloncorrection} There exist $N\gg1$ sufficiently large as well as $t_0 = t_0(\beta, N)>0$ sufficiently small so that the equation \eqref{eq:epsilonfinaleqn} admits a solution 
\[
\epsilon\in C^0\big((0, t_0]; H^1_{r\,dr}\big)
\]
satisfying the bound
\[
\sum_{k=0}^2\big\|S^k\epsilon(t,\cdot)\big\|_{H^1_{r\,dr}}\ll_{N,t_0} \tau^{-N+},\,S = t\partial_t + r\partial_r. 
\]
\end{prop}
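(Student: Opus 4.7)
The plan is to solve \eqref{eq:epsilonfinaleqn} as a fixed-point problem built on the linear propagator of Lemma~\ref{lem:highfreqinhom}. I would first rewrite \eqref{eq:epsilonfinaleqn} as
\begin{equation}\label{eq:proof-rewrite}
-\epsilon_{tt} + \epsilon_{rr} + \frac{1}{r}\epsilon_r - \frac{4\cos(2Q_1)}{r^2}\epsilon \;=\; e_N \;+\; V\epsilon \;+\; \sum_{k=1}^{3}E_k(\epsilon),
\end{equation}
where the ``reference'' potential $-4\cos(2Q_1)/r^2$ is as in Lemma~\ref{lem:highfreqinhom} and the remainder
\[
V := \frac{4\bigl[\cos(2Q_1-2\tilde{Q}_2)-\cos(2Q_1)\bigr]}{r^2}
\]
is absorbed into the source. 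Denoting by $\Phi\colon Y\to X$ the linear solution operator from Lemma~\ref{lem:highfreqinhom}, which satisfies $\|\Phi f\|_X\lesssim N^{-1}\|f\|_Y$, I seek a fixed point of
\[
T(\epsilon):=\Phi\bigl(e_N+V\epsilon+\textstyle\sum_k E_k(\epsilon)\bigr)
\]
in a small ball of $X$.

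Second, I would verify three families of bounds. \emph{(i) Source estimate.} From Proposition~\ref{prop:approxsoln} together with the $S$-differentiated bounds built throughout its inductive proof, one has $\lambda_1^{-1}\|S^l e_N\|_{L^2(r\lesssim t)}\lesssim \tau^{-N-1+}$ for $l=0,1,2$, so that $\|e_N\|_Y\lesssim \tau_0^{-N+N_0+O(1)+}$, which is arbitrarily small once the approximation order $N$ in Proposition~\ref{prop:approxsoln} is chosen $\gg N_0$ and $t_0$ is correspondingly small. \emph{(ii) Potential estimate.} Expanding $V=\frac{4\cos(2Q_1)[\cos(2\tilde Q_2)-1]+4\sin(2Q_1)\sin(2\tilde Q_2)}{r^2}$ and invoking Theorem~\ref{thm:outersolnYManalogue} together with the pointwise bounds \eqref{eq:curdetileQ2bound1}, \eqref{eq:curdetileQ2bound2} on $\tilde Q_2$ and its $S$-derivatives, one derives $|S^l V|\lesssim \lambda_2^2\langle\log\tau_1\rangle^{O(1)}$ uniformly on $r\lesssim t$, which combined with the weighted components of the $X$-norm yields $\|V\epsilon\|_Y\lesssim (\lambda_2/\lambda_1)^{2-}\|\epsilon\|_X\lesssim \tau_1^{-2+}\|\epsilon\|_X$, a genuine smallness factor as $t\to 0$. \emph{(iii) Nonlinear estimate.} Each $E_k(\epsilon)$ is handled exactly as in Lemma~\ref{lem:sourcetermnonlinbounds}, using the embedding $\|S^k\epsilon\|_{L^\infty}\lesssim\|S^k\epsilon\|_{H^1_{r\,dr}}$, H\"older's inequality, and the weighted bounds $\|\epsilon/r^2\|_{L^2}$ and $\|S\epsilon/r\|_{L^4}$ encoded in $X$; this produces $\|E_k(\epsilon)\|_Y\lesssim \|\epsilon\|_X^2 + \|\epsilon\|_X^3$, with analogous Lipschitz bounds for $\triangle E_k$.

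Combining these three estimates with the propagator bound of Lemma~\ref{lem:highfreqinhom} gives
\[
\|T\epsilon\|_X \lesssim N^{-1}\bigl(\|e_N\|_Y+\tau_0^{-2+}\|\epsilon\|_X+\|\epsilon\|_X^2+\|\epsilon\|_X^3\bigr),
\]
with the corresponding Lipschitz inequality for $T\epsilon-T\epsilon'$. Choosing first $N_0,N_1,N_2$ large (for the linear bound), then the approximation order in Proposition~\ref{prop:approxsoln} much larger still, and finally $t_0=t_0(\beta,N)$ small enough, $T$ is a contraction on the ball $\{\|\epsilon\|_X\le \tau_0^{-N+N_0+O(1)+}\}$ in $X$. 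Banach's fixed-point theorem then produces the required $\epsilon$; the stated bound $\sum_{k=0}^{2}\|S^k\epsilon\|_{H^1_{r\,dr}}\ll\tau^{-N+}$ is the $X$-norm estimate after renaming $\tau\sim\tau_1$ per Remark~\ref{rem:tauredefinition}. Continuity in $t$ of $\epsilon$ with values in $H^1_{r\,dr}$ follows from the corresponding continuity of $\Phi f$, which is inherent in the Fourier representation used in Lemma~\ref{lem:highfreqinhom}.

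The main obstacle is the control of $V\epsilon$ in $Y$: because $V$ carries the singular weight $r^{-2}$, one cannot treat it crudely as a bounded multiplier, and one must carefully exploit both the quadratic vanishing of $\tilde Q_2$ at $r=0$ (which kills the $r^{-2}$ singularity on the inner region $\lambda_2 r\lesssim 1$) and the outer profile structure from Theorem~\ref{thm:outersolnYManalogue} on $\lambda_2 r\gtrsim 1$, in order to extract the smallness factor $\tau_1^{-2+}$. An additional subtlety is that the $S$-derivatives of $V$ involve $St\tilde Q_2$, whose control relies on the higher-$S$-derivative bounds from Theorem~\ref{thm:outersolnYManalogue}; these must propagate cleanly through $l=0,1,2$ for \eqref{eq:proof-rewrite} to close in the full $X$-norm.
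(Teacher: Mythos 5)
Your proposal follows essentially the same route as the paper: the same recasting of \eqref{eq:epsilonfinaleqn} with the reference potential $-4\cos(2Q_1)/r^2$ and the difference $V\epsilon$ moved to the source, the same use of the solution operator from Lemma~\ref{lem:highfreqinhom} with its $N^{-1}$ gain, the same weighted-norm ($\|\epsilon/r^2\|_{L^2}$, $\|S\epsilon/r\|_{L^4}$, $L^\infty$-embedding) treatment of $e_N$, $V\epsilon$ and the $E_k$, and the same Banach fixed point after choosing $N$ large and $t_0$ small. The only slight imprecision is labeling $E_2$ as quadratic in $\epsilon$ (it is linear in $\epsilon$ with a $\tau^{-2}$ smallness factor coming from $v_N$), which does not affect the contraction argument.
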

\begin{proof} The main idea is to take advantage of the fast decay of the source term by simplifying the linear operator. Specifically, we recast the equation \eqref{eq:epsilonfinaleqn} in the form 
 \begin{equation}\label{eq:epsilonfinaleqn1}
 -\epsilon_{tt} + \epsilon_{rr} + \frac{1}{r}\epsilon_r - \frac{4\cos\big(2Q_1\big)}{r^2}\epsilon =    \big(\frac{4\cos\big(2Q_1 - 2\tilde{Q}_2\big) - 4\cos\big(2Q_1\big)}{r^2}\big)\epsilon + e_N + \sum_{k=1}^3 E_k.
 \end{equation}
For the additional term on the right we take advantage of the following estimate, where we recall $R = \lambda_1(t)\cdot r$:
\begin{align*}
\big| \frac{4\cos\big(2Q_1 - 2\tilde{Q}_2\big) - 4\cos\big(2Q_1\big)}{R^2}\big|\lesssim \frac{\lambda_2^2}{\lambda_1^2}\sim \tau^{-2}. 
\end{align*}
Using that $\big\|\cdot\big\|_{L^2_{r\,dr}} = \lambda_1^{-1}\big\|\cdot\big\|_{L^2_{R\,dR}}$, It follows that 
\begin{equation}\label{eq:extrasourcerermestimate1}\begin{split}
&\lambda_1^{-1}\cdot \Big\|\big(\frac{4\cos\big(2Q_1 - 2\tilde{Q}_2\big) - 4\cos\big(2Q_1\big)}{r^2}\epsilon\Big\|_{L^2_{r\,dr}}
\lesssim \tau^{-2}\cdot \big\|\epsilon\big\|_{L^2_{R\,dR}}. 
\end{split}\end{equation}
More generally, we infer the estimate 
\begin{equation}\label{eq:extrasourcerermestimate2}\begin{split}
&\lambda_1^{-1}\cdot \Big\|S^l\Big(\frac{4\cos\big(2Q_1 - 2\tilde{Q}_2\big) - 4\cos\big(2Q_1\big)}{r^2}\epsilon\Big)\Big\|_{L^2_{r\,dr}}\\&\lesssim \tau^{-2}\cdot \big(\sum_{k=0}^l\big\|S^k\epsilon\big\|_{L^2_{R\,dR}}\big). 
\end{split}\end{equation}
Concerning the nonlinear terms $E_k$, $k = 1,2,3$, we have the estimates 
\begin{equation}\label{eq:Eonefinal1}
\big\|\lambda_1^{-1}\cdot S^l E_1\big\|_{L^2_{r\,dr}}\lesssim \lambda_1^{-1}\cdot \big\|\frac{\epsilon}{r^2}\big\|_{L^2_{r\,dr}}\cdot \big(\sum_{k=0}^2\big\|S^{k}\epsilon\big\|_{L^\infty}^2\big),\,l\leq 2. 
\end{equation}
Next, we write the term $E_2$ as 
\begin{align*}
E_2 =  2\epsilon\cdot \big[\cos\big(2Q_1 - 2\tilde{Q}_2\big)\cdot \frac{\cos\big(2v_N\big) - 1}{r^2} - \sin\big(2Q_1 - 2\tilde{Q}_2\big)\cdot \frac{\sin\big(2v_N\big)}{r^2}\big]
\end{align*}
We then take advantage of the weighted estimate 
\[
\big|\langle R\rangle^{-1}v_N\big|\lesssim \tau^{-2},
\]
which yields
\begin{align*}
\big|\sin\big(2Q_1 - 2\tilde{Q}_2\big)\cdot \frac{\sin\big(2v_N\big)}{r^2}\big|&\lesssim r^{-2}\cdot \langle R^2\rangle^{-1}\cdot \big|v_N\big| + \frac{\big|\tilde{Q}_2\big|}{r^2}\cdot \big|v_N\big|\\
&\lesssim  \langle R\rangle^{-1}\cdot \tau^{-2}\cdot r^{-2} + \lambda_2^2\cdot \tau^{-2+}. 
\end{align*}
It follows that 
\begin{align*}
\lambda_1^{-1}\big\|\epsilon\cdot \sin\big(2Q_1 - 2\tilde{Q}_2\big)\cdot \frac{\sin\big(2v_N\big)}{r^2}\big\|_{L^2_{r\,dr}}&=\lambda_1^{-2}\big\|\epsilon\cdot \sin\big(2Q_1 - 2\tilde{Q}_2\big)\cdot \frac{\sin\big(2v_N\big)}{r^2}\big\|_{L^2_{R\,dR}}\\
&\lesssim \tau^{-2}\cdot \big\|\epsilon\big\|_{L^\infty_{r\,dr}}. 
\end{align*}
Furthermore, since 
\begin{align*}
\big\|\frac{\cos\big(2v_N\big) - 1}{R^2}\big\|_{L^2_{R\,dR}}\lesssim \tau^{-4+}, 
\end{align*}
we deduce the estimate 
\begin{align*}
\lambda_1^{-1}\cdot \big\|E_2\big\|_{L^2_{r\,dr}}\lesssim \tau^{-2}\cdot \big\|\epsilon\big\|_{L^\infty_{r\,dr}}. 
\end{align*}
Due to the estimates \eqref{eq:hjbounds}, we also deduce the differentiated bounds 
\begin{equation}\label{eq:Etwofinal1}
\lambda_1^{-1}\cdot \big\|S^lE_2\big\|_{L^2_{r\,dr}}\lesssim \tau^{-2}\cdot \big(\sum_{k=0}^l\big\|S^k\epsilon\big\|_{L^\infty_{r\,dr}}\big),\,l\leq 2. 
\end{equation}
Using the elementary bound 
\begin{equation}\label{eq:Linftyenergybound}
\big\|S^k\epsilon\big\|_{L^\infty_{r\,dr}}\lesssim \big\|\nabla S^k\epsilon\big\|_{L^2_{r\,dr}} + \big\|\frac{S^k\epsilon}{r}\big\|_{L^2_{r\,dr}}
\end{equation}
as well as \eqref{eq:basicenergybound}, we find in light of \eqref{eq:Honenormdefn} the estimate 
\begin{equation}\label{eq:Etwofinal2}
\lambda_1^{-1}\cdot \big\|S^lE_2(t,\cdot)\big\|_{L^2_{r\,dr}}\lesssim \tau^{-2}\cdot \big(\sum_{0\leq k\leq l}\big\|S^k\epsilon(t,\cdot)\big\|_{H^1_{r\,dr}}\big),\,l\leq 2. 
\end{equation}
Similarly, the bound \eqref{eq:Eonefinal1} leads to the following one 
\begin{equation}\label{eq:Eonefinal2}
\big\|\lambda_1^{-1}\cdot S^l E_1\big\|_{L^2_{r\,dr}}\lesssim \lambda_1^{-1}\cdot \big\|\frac{\epsilon}{r^2}\big\|_{L^2_{r\,dr}}\cdot\big(\sum_{0\leq k\leq l}\big\|S^k\epsilon(t,\cdot)\big\|_{H^1_{r\,dr}}\big),\,l\leq 2. 
\end{equation}

Finally, we turn to the term $E_3$, which is again nonlinear and similar to the term $E_1$. We have the estimate 
\begin{align*}
\lambda_1^{-1}\big\|S^l\big(\frac{\cos(2\epsilon) - 1}{r^2}\big)\big\|_{L^2_{r\,dr}}&\lesssim \lambda_1^{-1}\cdot\big\|\frac{\epsilon}{r^2}\big\|_{L^2_{r\,dr}}\cdot \big(\sum_{k=0}^2\big\|S^k\epsilon\big\|_{L^\infty_{r\,dr}}\big)\\
&+\lambda_1^{-1}\cdot \big\|\frac{S\epsilon}{r}\big\|_{L^4_{r\,dr}}^2,\,l\leq 2.  
\end{align*}
Again taking advantage of the estimates \eqref{eq:hjbounds}, we infer that 
\begin{align*}
\big\|S^l\big(2\cos\big(2Q_1 - 2\tilde{Q}_2 + v_N\big)\big\|_{L^\infty_{r\,dr}}\lesssim_l 1. 
\end{align*}
Using the Leibniz rule and Holder's inequality, we deduce that 
\begin{equation}\label{eq:Ethreefinal1}\begin{split}
\lambda_1^{-1}\cdot \big\|S^lE_3\big\|_{L^2_{r\,dr}}&\lesssim  \lambda_1^{-1}\cdot\big\|\frac{\epsilon}{r^2}\big\|_{L^2_{r\,dr}}\cdot \big(\sum_{k=0}^2\big\|S^k\epsilon\big\|_{L^\infty_{r\,dr}}\big)\\
&+\lambda_1^{-1}\cdot \big\|\frac{S\epsilon}{r}\big\|_{L^4_{r\,dr}}^2,\,l\leq 2.
\end{split}\end{equation}
Taking advantage of \eqref{eq:Linftyenergybound}, we can further estimate this by 
\begin{equation}\label{eq:Ethreefinal2}\begin{split}
\lambda_1^{-1}\cdot \big\|S^lE_3\big\|_{L^2_{r\,dr}}&\lesssim   \lambda_1^{-1}\cdot\big\|\frac{\epsilon}{r^2}\big\|_{L^2_{r\,dr}}\cdot\big(\sum_{k=0}^2\big\|S^k\epsilon\big\|_{H^1_{r\,dr}}\big)\\
&+\lambda_1^{-1}\cdot \big\|\frac{S\epsilon}{r}\big\|_{L^4_{r\,dr}}^2,\,l\leq 2.
\end{split}\end{equation}
Combining the estimates \eqref{eq:extrasourcerermestimate2}, \eqref{eq:Etwofinal2}, \eqref{eq:Eonefinal2} as well as \eqref{eq:Ethreefinal2}, recalling \eqref{eq:Xnorm}, \eqref{eq:Ynorm}, and choosing $N$ large enough, we infer the estimate 
\begin{equation}\label{eq:epsiloneqnsourcetermbound}
\Big\|g\Big\|_{Y}\lesssim \big\|\epsilon\big\|_{X} + o_{t_0}(1), 
\end{equation}
where 
\[
g = \big(\frac{4\cos\big(2Q_1 - 2\tilde{Q}_2\big) - 4\cos\big(2Q_1\big)}{r^2}\big)\epsilon + e_N + \sum_{k=1}^3 E_k
\]
is the right hand side in \eqref{eq:epsilonfinaleqn1}. The proof of Proposition~\ref{prop:finalepsiloncorrection} now follows by taking advantage of the preceding bound, Lemma~\ref{lem:highfreqinhom} and a standard Banach iteration, provided we choose $N$ large enough.
\end{proof}
 
 The proof of Theorem~\ref{thm:Main} is a direct consequence of the preceding proposition, together with \eqref{eq:alphaasympto}, as well as the Huyghen's principle. In fact, the function $u_N+\epsilon$ solves \eqref{eq:2corotational}
 in the light cone 
 \[
\mathcal{A}: =  \{r\leq t\}\cap \{t\in (0, t_0]\}. 
 \]
 Letting $u(t, r)$ be the solution of \eqref{eq:2corotational} with initial\footnote{We use the notation $v[t] = \big(v(t,\cdot), v_t(t, \cdot)\big)$.} data $\big(u_N+\epsilon\big)[t_0]$ at time $t = t_0$, it holds that 
 \[
 u\big|_{r\leq t} = \big(u_N+\epsilon\big)\big|_{r\leq t} 
 \]
 for $t\in (0, t_0]$, due to Huyghen's principle. Energy conservation implies that $u$ cannot develop a singularity outside the light cone $r\leq t$, and so the first singularity of $u$ on $[0, t_0]\times\mathbb{R}_+$ occurs indeed at time $t = 0$ as described by $u_N+\epsilon$, resulting in two bubbles concentrating simultaneously but at different rates at the origin.

\end{document}